\newcommand{\F}{\mathcal{F}}
\newcommand{\diam}{\text{diam}}
\newcommand{\bu}{{\bf u}}
\newtheorem{theorem}{Theorem}[section]
\newtheorem{lemma}[theorem]{Lemma}
\newtheorem{prop}[theorem]{Proposition}
\newcommand{\prob}[1]{\mathscr P(#1)} 
\newcommand{\restr}[1]{\lower3pt\hbox{$|_{#1}$}}
\newcommand{\la}{\langle}
\newcommand{\ra}{\rangle}
\newcommand{\sfd}{{\sf d}}
\newcommand{\ppi}{{\mbox{\boldmath$\pi$}}}
\newcommand{\nn}{\mathbb{N}}
\newcommand{\Lip}{{\rm Lip}}
\theoremstyle{definition}
\newtheorem{definition}[theorem]{Definition}
\renewcommand{\phi}{\varphi}
\renewcommand{\div}{{\rm div}}
\newcommand{\e}{{\rm{e}}}    
\newcommand{\lip}{{\rm lip}}
\newtheorem{cor}[theorem]{Corollary}
\renewcommand{\H}[1]{{\rm Hess}(#1)}
\newcommand{\mea}{\mathfrak{m}}
\newcommand{\Per}{{\rm Per}}
\newcommand{\LIP}{\mathsf{LIP}}
\newcommand{\testloc}{{\sf{Test}}_\loc}
\newcommand{\test}{{\sf{Test}}}
\newcommand{\bd}{{\mathbf\Delta}}
\renewcommand{\Cap}{\mathsf{Cap}}
\renewcommand{\d}{{\mathrm d}}
\newcommand{\supp}{\mathop{\rm supp}\nolimits} 
\renewcommand{\c}{{\mathrm c}}
\newcommand{\loc}{\mathsf{loc}}
\newcommand{\nchi}{{\raise.3ex\hbox{$\chi$}}}
\newcommand{\W}{\mathit{W}^{1,2}}
\DeclareMathOperator\rr{\mathbb{R}}
\DeclareMathOperator\eps{\varepsilon}
\DeclareMathOperator*{\esssup}{ess\,sup}
\DeclareMathOperator*{\essinf}{ess\,inf}
	\numberwithin{equation}{section}
\newcommand{\ch}{{\rm Ch}}
\newcommand{\dis}{{\sf D}}
\newcommand{\X}{{\rm X}}
\newcommand{\fr}{\penalty-20\null\hfill$\blacksquare$}     
\theoremstyle{remark}
\newtheorem{remark}[theorem]{Remark}
\title{Monotonicity formulas for harmonic functions in ${\rm RCD}(0,N)$ spaces}
\begin{document}

\author{Nicola Gigli\ \thanks{SISSA, ngigli@sissa.it}  \and
	Ivan Yuri Violo
	\thanks{SISSA, iviolo@sissa.it}}

\maketitle	
	
\begin{abstract}
	We generalize to the ${\rm RCD}(0,N)$ setting a family of monotonicity formulas by Colding and Minicozzi  for positive harmonic functions in Riemannian manifolds with non-negative Ricci curvature. Rigidity and almost rigidity statements are also proven, the second appearing to be new even in the smooth setting. 
	
	Motivated by the recent work in \cite{AFM} we also introduce the notion of electrostatic potential in ${\rm RCD}$ spaces, which also satisfies our monotonicity formulas.
	
	Our arguments are mainly based  on new estimates for harmonic functions in ${\rm RCD}(K,N)$ spaces and on a new functional version of  the `(almost) outer volume cone implies (almost) outer metric cone' theorem.
\end{abstract}

\tableofcontents

\section{Introduction and main results}

Monotonicity formulas are in general an important tool in analysis and geometry and have  proven to be crucial in the study of functional inequalities, regularity of  PDEs and minimal surfaces, one example being the celebrated Almgren frequency function \cite{A}. We refer also to \cite{CMsurvey} for an overview on this topic and further references.   Recently these formulas also found application in the theory of metric geometry with particular interest in the regularity of singular spaces. 
It is however important to recall that, especially on this context, monotonicity formulas are often coupled with  rigidity and almost rigidity properties. In other words, besides knowing that a quantity is monotone it is often important to know that, if it happens to be constant or almost constant, then some extra regularity or almost regularity of the objects involved is present. An example of this, in the context of singular metric spaces,  is the Bishop-Gromov volume ratio, where this said (almost) regularity is realized as (almost) local conical structure of the space. This feature was exploited first in \cite{CC2} to non-collapsed Ricci-limit spaces and subsequently in \cite{DGnonc} to ${\sf ncRCD}$ spaces in order  show that blow-up limits are cones and was recently developed in \cite{volumebounds}  to deduce volume bounds for the singular set, extending the analogous result for Ricci-limit spaces (\cite{ChNa}). These works fall in the more general theory of quantitative differentiation  which, roughly said, allows to pass from a monotone quantity with almost rigidity properties (also called coerciveness) to non trivial results about effective regularity of  the space (or a particular function). We refer to \cite{quantdiff} for a detailed overview on this topic.

All of this motivates the study of new monotonicity formulas in ${\rm RCD}$ spaces, where the Bishop-Gromov volume ratio  and the Perelman $\cal W$ functional (see \cite{KL}) were up to now essentially the unique examples. 
	
Our  interest will be in extending a class of monotonicity formulas for harmonic functions in Riemannian manifolds with nonnegative Ricci curvature, to the ${\rm RCD}$ setting. 
These type of formulas were first introduced by a series of works by Colding and Minicozzi (\cite{CMmonotone}, \cite{CMmonotone2}, \cite{CMsurvey}) and were used by the same authors to prove the uniqueness of tangent cones for Einstein manifolds (\cite{CMuniq}).  Recently the same monotone quantities were reinterpreted in \cite{AFM} in the study of the electrostatic potential of a set and  lead to the proof of new Willmore-type geometrical inequalities for Riemannian manifolds (see also \cite{AMeucl} for the Euclidean setting).

\bigskip

\textbf{The monotonicity formula}

Our first main result is the extension to the non-smooth setting of the whole class of monotonicity formulas derived in  \cite{AFM}.
More precisely we show that in a nonparabolic ${\rm RCD}(0,N)$ space  (see Def. \ref{def:nonparabolic}), if $u\in L^\infty(\Omega)\cap C(\Omega)$ solves
\begin{equation}\label{mainpde}\tag{P}
	\begin{cases}
		\bd|_{\Omega} u=0,\\
		\liminf_{y\to x} u(y)\ge 1, & \text{ for every }x \in \partial \Omega,\\
		u(x) \to 0 & \text{as }\sfd(x,\partial\Omega )\to +\infty,\\
	\end{cases}
\end{equation}
where $\Omega$ is open, unbounded and with $\partial \Omega$ bounded, then the function 
\[
(0,1)\ni t \mapsto U_{\beta}(t)\coloneqq\frac{1}{t^{\beta \frac{N-1}{N-2}}}\int |\nabla u|^{\beta+1}\, \d \Per(\{u<t\}), \,
\]
with $\beta \ge \frac{N-2}{N-1}$, is (continuous) and non-decreasing (see the beginning of Section \ref{sec:actual monotonicity} for a discussion on the well-definition of the function $U_{\beta}$). 

\bigskip

\textbf{Rigidity and almost rigidity}

As said above we are also interested in the rigidity and almost rigidity properties of the functions $U_\beta$ and it turns out that, similarly to the Bishop-Gromov inequality, the special configurations connected to $U_\beta$ are cones. This is not a coincidence, indeed as we will see the reason behind the conical structure arising from these two quantities is the same, i.e. the existence of a function satisfying a precise  PDE: $\Delta v=N$ and $|\nabla \sqrt {2v}|^2=1$. We refer to the introduction of \cite{CMmonotone} for a more detailed parallelism with the  Bishop-Gromov inequality and the interpretation of   $U_{\beta}$ as an area functional.

We will prove that
\begin{itemize}
	\item  if the derivative of $U_{\beta}$ vanishes at $t_0$, then  $\{u<t_0\}$ is isometric to a truncated ${\rm RCD(0,N)}$ cone,
	\item  if the  derivative $U_{\beta}$ at $t_0$ is sufficiently small, then  $\{u<t_0\}$ is close, in a suitable sense, to a truncated ${\rm RCD(0,N)}$ cone.
\end{itemize}
The first result is a generalization of the rigidity result in \cite{AFM}, while the second is new even in the smooth setting.  It has to be said that almost rigidity results were already present in the work of Colding-Minicozzi (cf. with \cite[Sec. 1.7]{CMuniq} and \cite[Sec. 2.2]{CMsurvey}).  However, one of the novelty of our analysis is that, similarly to what happens for the splitting theorem (\cite{split}), we are able to show that $\{u<t_0\}$ is close to a cone which is itself an ${\rm RCD(0,N)}$ space, while they only prove closeness to a `generic' cone, moreover we prove that  almost rigidity holds for all the functions $U_{\beta}$, while they only consider the case $\beta=3.$ 

Let us point out that, as it is now well understood,  non-smooth (${\rm RCD}$) spaces enter into play naturally in almost-rigidity statements under Ricci curvature lower bounds even in the smooth case. We recall for example the existence of Riemannian manifolds having tangent cones at infinity with non-smooth cross-section (see \cite{tangentcone}).

\bigskip

\textbf{Existence of solutions}

To justify the interest on the function $U_\beta$, it is clearly important to have many example of solutions to \eqref{mainpde}. The main examples in the smooth setting are the Green function (mainly explored in the papers of Colding-Minicozzi) and the electrostatic potential, which was the object of interest in \cite{AFM}. In ${\rm RCD}$ spaces the Green function was already built and studied in \cite{constancy}, while little or nothing was know about the electrostatic potential. One of our results will be the existence of an electrostatic (or capacitary)  potential for a bounded open set $E$, under some mild regularity assumptions on its boundary. That is, we will prove the existence of a solution to \eqref{mainpde} with $\Omega=\X\setminus \bar E$, continuous up to the boundary of $\Omega$ with $u=1$ in $\partial \Omega$. Moreover we will also prove a relation that links the Cheeger energy of $u$ to the Capacity of $E$ (see \eqref{eq:capacity lim}). 

\bigskip

\textbf{New functional version of the ``(almost) outer volume-cone implies (almost) outer metric-cone"  theorem}

The rigidity and almost rigidity results for $U_{\beta}$ that we described above will follow from a new functional and ``outer"  version of the volume-cone to metric-cone theorem for ${\rm RCD}(0,N)$ spaces proved in \cite{volcon}. In particular we prove that in an ${\rm RCD}(0,N)$ space if a function $u$ satisfies $\Delta u=N$ and $|\nabla \sqrt {2u}|^2=1$  then its superlevel sets are isometric to truncated cones (see Theorem \ref{thm:functional cone}). 

Moreover we will prove an effective almost version of the above result, that appears to be new also in the smooth setting. More precisely we will show that  if a function $u$ satisfies $\Delta u=N|\nabla \sqrt {2u}|^2$ and $|\nabla \sqrt {2u}|$  is almost constant, i.e. $|\nabla|\nabla \sqrt u|| $ is small (in an integral sense), then the whole space $\X$ is close in the $pmGH$-topology to an ${\rm RCD}(0,N)$ space $\X'$, that is a truncated cone outside a bounded set (see Theorem \ref{thm:almconev1}).

We will also provide a second version of the  almost rigidity result in Theorem \ref{thm:almconev1}, which is more in the spirit of the ``almost volume annulus implies metric annulus" theorems of Cheeger-Colding (\cite{CC}). Roughly said under the same hypotheses of Theorem \ref{thm:almconev1} we prove that the sets $\{t_2<u<t_1\}$, when endowed with their intrinsic metrics, are close to annuli of an ${\rm RCD(0,N)}$ cone, also endowed with their intrinsic metrics (see Theorem \ref{thm:almconev2}).

Let us remark that a main portion of the proof of Theorem \ref{thm:functional cone}  is a repetition of the arguments in \cite{volcon}, however in writing this note the authors realized that some steps in \cite{volcon} were overlooked. For this reason along our exposition we will also take the chance to fix and adjust some of the original arguments.

\bigskip

\textbf{New  estimates for harmonic functions}

As a by-product of our argument we prove some regularity results and estimates for  harmonic functions in ${\rm RCD}(K,N)$ spaces, which appear to be new in the non-smooth setting and  interesting on their own. In particular we prove that if $u$ is harmonic, then the function $|\nabla u|^{\beta}$ has $\W$-regularity for every $\beta \ge \frac12 \frac{N-2}{N-1}$, together with an explicit bound for $|\nabla |\nabla u|^{\beta}|$ (see Theorem \ref{thm:harm estimates} for the precise statement).  It is important to observe that the exponent $\beta$ is allowed to be strictly smaller than 1, which makes the result non-trivial. The estimates that we obtain are similar and strongly inspired by the ones for Riemannian manifolds obtained in \cite{codim4}.

\bigskip

\textbf{Future applications}

This work is a part of a  project that we are pursuing and whose objective is to investigate the possibility of developing  a second order analysis on ``codimension-1" sets in ${\rm RCD}$ spaces (recall that a ``first order"  theory in this setting has already been extensively studied, see eg. \cite{bakry}, \cite{rectper} and \cite{GPcap}).  This is motivated by the recent \cite{AFM} where, in the context of smooth Riemannian manifolds, it is proved that the monotonicity of $U_\beta$ (see above) implies a family of inequalities related to the mean curvature of hyperpsurfaces (more precisely to a lower bounds on their $p$-Willmore energy).

In particular in  a forthcoming work (\cite{willmore}) we will propose a  notion of mean curvature and Willmore energy
for the boundary of subsets of ${\rm RCD}$ spaces. Our definitions will be tailored to the monotonicity results obtained in this note (in particular formula \eqref{eq:Uprimo} and the estimate \eqref{eq:U'positive})  to obtain lower bounds for the Willmore energy, in the spirit of \cite{AFM}.

\bigskip

\textbf{Plan of the paper}

The exposition will be organized as follows. In Section \ref{sec:preliminari} we will introduce the needed tools and fix some notations. In Section \ref{sec:nonpara} we introduce the notion of nonparabolic ${\rm RCD}$ space and derive its main features.
In Section \ref{sec:positive div} we will introduce a class of vector fields with nonnegative divergence (see Corollary \ref{cor:div estimate}), which are at the core of the proof of the monotonicity formula. In such section we will also deduce new estimate for harmonic functions.  In Section \ref{sec:estimates for u} we will prove some key decay estimates for solution of \eqref{mainpde} and subsequently in Section \ref{sec:actual monotonicity} we will prove the main monotonicity result. In Section \ref{sec:functioncone} we will state a new functional version of the ``from outer functional cone to outer metric-cone" theorem and prove the main new ingredients for its proof. The rest of the argument, being analogous to the one in \cite{volcon}, will be postponed to Appendix \ref{ap:cone}. The `almost' version of the previous theorem will be proved in Section \ref{sec:almcone}. Then we will use these functional rigidity and almost rigidity results to deduce in Section \ref{sec:rigidity from monotonicty} the main rigidity and almost rigidity statements from the monotonicity formula. Finally the existence for the electrostatic potential will  be given in Section \ref{sec:electro}. This argument relies on results mainly taken from \cite{bjorn}, however to make the proof self contained and more readable, in Appendix \ref{ap:bjorn} we will  redo (and simplify) the proofs of all the results we need in the setting of ${\rm RCD}$ spaces.

	\section{Preliminaries and notations}\label{sec:preliminari}
	
	\subsection{Calculus tools}
	Throughout all this note a \emph{metric measure space} (abbreviated in \emph{m.m.s.}) is a triple $(\X,\sfd,\mea)$ where
	\begin{align*}
		&(\X,\sfd) \text{ is a complete and separable metric space and $\mea$  is a nonnegative and nonzero Borel}\\
		& \text{measure on $\X$, finite on bounded sets and such that $\supp \mea=\X$.}
	\end{align*}
	We will also use the notion of \emph{pointed metric measure space} (abbreviated in \emph{p.m.m.s.}), which is a quadruple $(\X,\sfd,\mea,\bar x)$, where $(\X,\sfd,\mea)$ is a metric measure space and $\bar x \in \X.$ 
	
	Two metric measure spaces $(\X_i,\sfd_i,\mea_i)_{i=1,2}$ are said to be \emph{isomorphic} if there exists an isometry $\iota : \X_1\to \X_2$ such that $\iota_*\mea_1=\mea_2.$

	We will denote by $\LIP(\X),\LIP_{\loc}(\X),\LIP_{b}(\X),\LIP_{bs}(\X)$ and $C_{bs}(\X),$ respectively the spaces of  Lipschitz functions, locally Lipschitz functions, bounded Lipschitz functions, Lipschitz functions with bounded support and bounded continuous functions with bounded support in $(\X,\sfd)$.
	For a function $f \in \LIP_{\loc}(\X)$ we denote by $\lip f: \X \to [0,+\infty)$ its local-Lipschitz constant defined by
	\begin{equation}\label{eq:local lip}
	\lip f (x)\coloneqq \limsup_{y \to x} \frac{|f(x)-f(y)|}{\sfd(x,y)}.
	\end{equation}
	\subsubsection{Sobolev spaces via test plans}
	We will adopt the notion of Sobolev spaces on metric measures spaces via test plans introduced in \cite{AGS}. This approach turns out to be equivalent (\cite{AGS})  both to the notion of Sobolev space firstly introduced by Cheeger (\cite{cheegerlip}) and to the one introduced by Shanmugalingam (\cite{shan}).
	
	A curve $\gamma \in C([0,1],\X)$ belongs to the space of \emph{absolutely continuous} curves $AC([0,1],\X)$ if there exists $f \in L^1(0,1)$ such that $\sfd(\gamma_t,\gamma_s)\le \int_s^t f(r)\, \d r,$ for every $0\le s<t\le 1.$ In this case it holds that the limit $|\dot \gamma_t|\coloneqq\lim_{h\to 0}h^{-1}\sfd(\gamma_{t+h},\gamma_t)$ exists for a.e. $t \in (0,1)$ and is called \emph{metric speed} at time $t$. The length $L(\gamma)$ of an absolutely continuous curve $\gamma$ is defined by
	\[
	L(\gamma)\coloneqq\int_0^1|\dot \gamma_t|\, \d t.
	\]
	We recall also the  evaluation map $\e_t : C([0,1],\X)\to \X$ defined by $e_t(t)\coloneqq\gamma_t.$
	
	A Borel probability measure $\ppi$ on $AC([0,1],\X)$ is said to be a \emph{test plan} if 
	\begin{align*}
		&\exists C>0 \, : \, {\e_t}_*\ppi \le C\mea, \quad \forall t \in[0,1],\\
		&\int \int_0^1 |\dot \gamma_t|^2\, \d t\, \d \ppi<+\infty.
	\end{align*}
	\begin{definition}[Sobolev class]\label{def:sobolev class}
		The Sobolev class ${\rm S}^2(\X)$ is the space of all functions $f \in L^0(\mea)$ such that there exists a non-negative $G\in L^2(\mea)$ for which
		\begin{equation}\label{eq:sobolev def}
		\int |f(\gamma_1)-f(\gamma_0)|\, \d \ppi \le \int \int_0^1 |\dot \gamma_t|G(\gamma_t)\, \d t \, \d \ppi, \quad \forall \ppi\text{ test plan.}
		\end{equation}
	\end{definition}
	For every $f \in {\rm S}^2(\X)$ there exists a unique function $G$ with minimal $L^2(\mea)$ norm such that \eqref{eq:sobolev def} holds, called \emph{minimal weak upper gradient} and denoted by $|Df|.$ 
	The minimal weak upper gradient has the following lower semicontinuity property: suppose that $(f_n)\subset  {\rm S}^2(\X)$ converges to $f\in L^0(\mea)$ $\mea$-a.e. and $|Df_n|$ converges weakly in $L^2(\mea)$ to a function $G \in L^2(\mea)$, then 
	\begin{equation}\label{eq:stability wug}
	f\in {\rm S}^2(\X) \quad \text{ and } \quad |Df|\le G, \, \mea\text{-a.e..}
	\end{equation}
	We  define the Sobolev space $\W(\X)\coloneqq L^2(\mea)\cap {\rm S}^2(\X)$. $\W(\X)$ becomes a Banach space, when endowed with the norm 
	\[
	\|f\|_{\W(\X)}\coloneqq\sqrt{\|f\|_{L^2(\mea)}^2+\||Df|\|_{L^2(\mea)}^2}.
	\]
	The \emph{Cheeger-energy} functional $\ch: L^2(\mea)\to [0,+\infty]$ is defined by
	\[
	\ch(f)\coloneqq\begin{cases}
		\frac12\int |Df|^2\, \d \mea, & f \in \W(\X),\\
		+\infty\, & \text{otherwise}.
	\end{cases}
	\]
	It follows from \eqref{eq:stability wug} that $\ch$ is a convex and lower semicontinuous functional on $L^2(\mea)$. 
	
	A metric measure space is said to be \emph{infinitesimally Hilbertian}  if  $\W(\X)$ is a Hilbert space or equivalently if $\ch$ is a quadratic form (see \cite{Gappl}).   
\subsubsection{Tangent module}\label{sec:normed mod}
We assume the reader to be familiar with theory of $L^0$-normed modules on a m.m.s. $(\X,\sfd,\mea)$ and we refer to \cite{G14} for a detailed account on this theory. 

\begin{definition}[$L^0$-normed $L^0$-module]
	An $L^0$-normed $L^0$-module is a triple $(\mathscr M,|.|,\tau)$, where $\mathscr M$ is a module over the commutative ring $L^0(\mea)$, \((\mathscr{M},\tau)\) is a topological vector space,  $|.|: \mathscr M  \to L^0(\mea)$ is a map satisfying (in the $\mea$-a.e. sense)
	\begin{equation*}\begin{split}
			|v|\geq 0&\quad\text{ for every }v\in\mathscr M,
			\text{ with equality if and only if }v=0,\\
			|v+w|\leq|v|+|w|&\quad\text{ for every }v,w\in\mathscr M,\\
			|fv|=|f||v|&\quad\text{ for every }f\in L^0(\mea)\text{ and }v\in\mathscr M,
	\end{split}\end{equation*} 
 and such that $\tau$ is induced by the distance $\sfd_{0}(v,w)\coloneqq\int |v-w|\wedge 1\, \d \mea'$ (where $\mea'$ is a probability measure on $\X$ such that $\mea\ll\mea'\ll\mea$), which is also assumed to be complete.
\end{definition}
An $L^0$-normed module $\mathscr M$ is a Hilbert module if for every $v,w \in \mathscr M$  it holds
\[
|v+w|^2+|v-w|^2=2|v|^2+2|w|^2, \quad \text{$\mea$-a.e.}.
\]
If this is the case, by polarization we can  define a scalar product $\la,\ra: \mathscr{M}^0\times \mathscr{M}^0\to L^0(\mea)$ that is symmetric, $L^0(\mea)$-bilinear and satisfies $\la v,v\ra=|v|^2$, $|\la v,w\ra|\le |v||w|$, $\mea$-a.e. for every $v,w \in \mathscr{M}^0.$

Given an $L^0$-normed module $\mathscr{M}$ and  $E$ a Borel subset of $\X$ we define the localized module
\[
\mathscr{M}\restr{E}\coloneqq \{ \nchi_Ev \ : \ v \in \mathscr{M}\}.
\]
$\mathscr{M}\restr{E}$ inherits naturally a structure of $L^0$-normed module and  if $\mathscr{M}$ is a Hilbert module then $\mathscr{M}\restr{E}$ is Hilbert as well with $\la.,.\ra_{\mathscr{M}\restr{E}}=\nchi_E\la.,.\ra_{\mathscr{M}}$. $\mathscr{M}\restr{E}$ can also be seen as the quotient of $\mathscr{M}$ by the equivalence relation `$v \sim w$ if and only if $|v-w|=0$ $\mea$-a.e. in $E$'. This identification will be used in the rest of the note without further notice.

\begin{definition}[Tangent module]
	Suppose that $(\X,\sfd,\mea)$ is an infinitesimally Hilbertian m.m.s.. Then there exists a (unique) couple $(L^0(T\X),\nabla)$, where $L^0(T\X)$ is an $L^0$-normed module and $\nabla : \W(\X)\to L^0(T\X)$, called \emph{gradient operator},  is a linear and continuous map such that
	\begin{align*}
		&|\nabla f| \text{ coincides with the minimal weak upper gradient of $f$}\\
		&\left \{ \sum_{i=1}^n \nchi_{E_i}\nabla f_i \ :  \{E_i\}_{i=1}^n \text{ Borel partition of $\X$}, \{f_i\}_{i=1}^n \subset \W(\X) \right \} \text{ is dense in $L^0(T\X)$}.
	\end{align*}
\end{definition}
The gradient operator has the following properties
\begin{align*}
	&\text{\emph{Locality:} $\nabla f=\nabla g$, $\mea$-a.e. in $\{f=g\}$},\\
	& \text{\emph{Leibniz rule:} $\nabla(fg)=g\nabla f+f\nabla g,$ for every $f,g \in \W(\X)\cap L^\infty(\mea),$}\\
	&  \text{\emph{Chain rule:} $\nabla (\phi(f))=\phi'(f)\nabla f$, for every $f\in \W(\X),\, \phi \in C^1(\rr)\cap \LIP(\rr)$}.
\end{align*}
Finally we denote by $L^2(T\X)$ the subset of $L^0(T\X)$ containing the elements with square integrable pointwise norm.
\subsubsection{Local Sobolev spaces}
Let $(\X,\sfd,\mea)$ be a proper and infinitesimally Hilbertian m.m.s. and let $\Omega$ be an open subset of $\X$. We define 
		\[ \W_\loc(\Omega ):=\{u \in L_\loc^2(\Omega) \ | \ u\eta \in \W(X), \text{ for every }\eta \in \LIP_\c(\Omega)\} ,\]
which is actually equivalent to ask that for every Borel set  $\Omega'\subset\subset \Omega$ there exists $u' \in \W(\X)$ such that $u=u'$, $\mea$-a.e. in $\Omega'.$

For any $u\in \W_\loc(\Omega )$ we define its gradient $\nabla u$ as the unique element of $L^0(TX)\restr{\Omega}$ such that
\[\nabla u\coloneqq\nabla (\eta u),\text{ $\mea$-a.e. in  $\{\eta=1\}$}, \quad \forall\eta \in \LIP_\c(\Omega), \]
which is well defined thanks to the locality property of the gradient. In particular $|\nabla u|\in L^2_{\loc}(\Omega)$. It is straightforward to check that $\nabla u$ satisfies the expected locality property, Leibniz rule and chain rule. We only state explicitly a version of the chain rule that we will need:
if $ u \in \W_{\loc}(\Omega)$ and  $\phi \in C^1(I)$, with $I$ open interval, are such that 
\begin{equation}\label{eq:chain rule hyp}
u(\Omega')\subset \subset I \text{ (up to a $\mea$-negligible set),  for every }\Omega'\subset\subset\Omega,
\end{equation}
then $\phi(u)\in \W_{\loc}(\Omega)$ and $\nabla \phi(u)=\phi'(u)\nabla u.$ 

Observe that, since $\nabla u\in L^0(T\X)\restr{\Omega}$, it makes sense to compute the scalar product $\la \nabla u,v\ra\in L^0(\Omega,\mea)$, for every $v \in L^0(T\X)\restr{\Omega}$, moreover this this scalar product also satisfies $|\la \nabla u,v\ra|\le |\nabla u||v|,$ $\mea$-a.e. in $\Omega$ (recall the discussion in Section \ref{sec:normed mod}).

We also define the spaces
\begin{align*}
	&\W(\Omega ):=\{f \in \W_\loc(\Omega ) \ | \ f, |\nabla f|\in L^2(\Omega)\},\\
	& \W_0(\Omega ):=\overline{\LIP_c(\Omega)}^{\W(X)}\subset \W(\X) .
\end{align*}

We end this subsection with the following technical lemma.
\begin{lemma}\label{lem:radice sobolev}
	Let $u \in \W_{\loc}(\Omega)$ be nonnegative and $\alpha \in(0,1)$ be such that
	\[
	\nchi_{\{u>0\}} u^{\alpha-1}|\nabla u| \in L^{2}_{\loc}(\Omega).
	\]
	Then $u^{\alpha }\in \W_{\loc}(\Omega)$ and $\nabla u=\alpha^{-1} u^{1-\alpha}\,\nabla u^{\alpha}$, $\mea$-a.e. in $\Omega.$
\end{lemma}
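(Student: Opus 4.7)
The difficulty is that $\phi(t)=t^\alpha$ is neither Lipschitz nor $C^1$ at the origin, so the chain rule in $\W_\loc$ recalled above cannot be applied directly to $\phi\circ u$. My plan is to regularize, apply the chain rule to the regularized functions, and then pass to the limit via dominated convergence. For $\eps>0$ I would set
\[
\phi_\eps(t)\coloneqq\begin{cases}(t^2+\eps^2)^{\alpha/2}-\eps^\alpha, & t\ge 0,\\ 0, & t<0.\end{cases}
\]
Since $\phi_\eps(0)=0$ and $\phi_\eps'(0)=0$, this defines an element of $C^1(\rr)\cap\LIP(\rr)$, and one verifies the pointwise limits $\phi_\eps(t)\to t_+^\alpha$ and $\phi_\eps'(t)\to\alpha\nchi_{\{t>0\}}t^{\alpha-1}$, together with the uniform bounds $0\le\phi_\eps(t)\le t_+^\alpha\le 1+t_+$ and $|\phi_\eps'(t)|\le\alpha\nchi_{\{t>0\}}t^{\alpha-1}$. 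The last bound follows from $(t^2+\eps^2)^{\alpha/2-1}\le t^{\alpha-2}$ for $t>0$, valid since $\alpha<2$.

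Next I would localize. Fix open sets $\Omega'\subset\subset\Omega''\subset\subset\Omega$, use the characterization of $\W_\loc$ to pick $u'\in\W(\X)$ with $u'=u$ $\mea$-a.e.\ on $\Omega''$, and fix $\eta\in\LIP_\c(\Omega)$ with $\eta\equiv 1$ on $\Omega'$ and $\supp\eta\subset\Omega''$. The chain rule gives $\phi_\eps(u')\in\W(\X)$ with $\nabla\phi_\eps(u')=\phi_\eps'(u')\nabla u'$, so by the Leibniz rule
\[
\nabla\bigl(\eta\,\phi_\eps(u')\bigr)=\phi_\eps(u')\,\nabla\eta+\eta\,\phi_\eps'(u')\,\nabla u'.
\]
On $\supp\eta\subset\Omega''$ we have $u'=u$ and $\nabla u'=\nabla u$, and the two summands are dominated $\mea$-a.e.\ by $(1+u)|\nabla\eta|$ and $\alpha\,\eta\,\nchi_{\{u>0\}}u^{\alpha-1}|\nabla u|$ respectively, both of which belong to $L^2(\X)$ thanks to the integrability hypothesis on $\nchi_{\{u>0\}}u^{\alpha-1}|\nabla u|$ and the compact support of $\eta$. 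Pointwise convergence together with dominated convergence then shows that $\bigl(\eta\,\phi_\eps(u')\bigr)_\eps$ is Cauchy in $\W(\X)$, hence converges in $\W(\X)$ to some $v\in\W(\X)$ which coincides $\mea$-a.e.\ with $\eta\,u^\alpha$; on $\Omega'$ (where $\eta\equiv 1$ and $\nabla\eta\equiv 0$) one then has $v=u^\alpha$ and $\nabla v=\alpha\,\nchi_{\{u>0\}}u^{\alpha-1}\nabla u$.

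Since $\Omega'\subset\subset\Omega$ was arbitrary, the $\W_\loc$ characterization gives $u^\alpha\in\W_\loc(\Omega)$ and $\nabla u^\alpha=\alpha\,\nchi_{\{u>0\}}u^{\alpha-1}\nabla u$ $\mea$-a.e.\ in $\Omega$. Multiplying both sides by $u^{1-\alpha}$, and using the locality of the gradient (which yields $\nabla u=0$ $\mea$-a.e.\ on $\{u=0\}$), produces the stated identity $\nabla u=\alpha^{-1}u^{1-\alpha}\nabla u^\alpha$. The only genuinely delicate step is the construction of $\phi_\eps$ and the uniform bound on $|\phi_\eps'|$, since it is precisely this bound that makes the assumed $L^2_\loc$-function $\nchi_{\{u>0\}}u^{\alpha-1}|\nabla u|$ serve as the dominating function in the limit; everything else is a routine combination of the chain and Leibniz rules with dominated convergence.
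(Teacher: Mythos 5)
Your proof is correct. It takes the same high-level route as the paper — regularize $t\mapsto t^\alpha$ near $t=0$, apply the chain rule to the regularization, and pass to the limit — but the details differ in two respects. First, you regularize with $\phi_\eps(t)=(t^2+\eps^2)^{\alpha/2}-\eps^\alpha$, while the paper works with $f_\eps=\eta(u+\eps)^\alpha$; your choice has the extra feature that $\phi_\eps'$ decreases monotonically to the target integrand as $\eps\downarrow 0$, which is exactly what makes dominated convergence (rather than mere boundedness) available. Second, and more substantively, you extract \emph{both} conclusions from the single limiting step: dominated convergence makes $\eta\phi_\eps(u')$ Cauchy in $\W(\X)$, so you read off $\nabla u^\alpha=\alpha\,\nchi_{\{u>0\}}u^{\alpha-1}\nabla u$ directly from the closed-graph/completeness property of $\W(\X)$ and then multiply by $u^{1-\alpha}$. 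The paper instead uses lower semicontinuity of the Cheeger energy to get $u^\alpha\in\W_\loc$ without identifying its gradient, and then runs a separate truncation argument ($u\wedge n=(u^\alpha\wedge n^\alpha)^{1/\alpha}$ with the Lipschitz chain rule) to recover the gradient identity. Your approach is slightly more economical; the paper's avoids having to argue that the gradients converge (only that they stay bounded), at the price of the second truncation step.

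One small remark for completeness: after writing $\nabla u^\alpha=\alpha\,\nchi_{\{u>0\}}u^{\alpha-1}\nabla u$, the passage to $\nabla u=\alpha^{-1}u^{1-\alpha}\nabla u^\alpha$ uses both that $u^{1-\alpha}\cdot\nchi_{\{u>0\}}u^{\alpha-1}=\nchi_{\{u>0\}}$ and that $\nabla u=0$ $\mea$-a.e.\ on $\{u=0\}$; you state the latter, so the step is fine, but it is worth keeping it explicit since $u^{1-\alpha}\nabla u^\alpha$ is $0\cdot(\text{finite})$ on $\{u=0\}$ only because $1-\alpha>0$.
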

\begin{proof}
	For the first part it is enough to show that $f\coloneqq\eta u^{\alpha}\in \W(\X)$ for every $\eta \in \LIP_{c}(\Omega)$ with $|\eta|\le 1$. Fix $\eps\in(0,1)$ arbitrary and define $f_{\eps}=\eta (u+\eps)^{\alpha}.$ Then from the non-negativity of $u$ we have $f_{\eps}\in \W(\X)$ and recalling that $|\nabla u|=0$ $\mea$-a.e. in $\{u=0\}$ we have
	$$|\nabla f_{\eps}|\le \Lip \eta\, C_{\alpha}(u+2)+\alpha \nchi_{u>0} u^{\alpha-1}|\nabla u|,\quad \text{$\mea$-a.e..}
	$$
	It follows that the family $\{f_{\eps}\}_{\eps \in (0,1)}$ is  bounded in $\W(\X)$. Moreover $f_{\eps}\to f$ in $L^2$, therefore from the lower semicontinuity of the Cheeger energy it follows that  $f \in \W(\X)$. For the second part we observe that $u\wedge n=(u^{\alpha}\wedge n^\alpha)^{1/\alpha}$ $\mea$-a.e. in $\Omega$ and that $(t\wedge n^{\alpha})^{1/\alpha}\in \LIP(\rr)$. Therefore from the locality and the chain rule for the gradient 
	$$\nabla u=\alpha^{-1} u^{1-\alpha}\,\nabla u^{\alpha}, \quad \mea\text{-a.e. in } \{u\le n\}$$
	and we conclude from the arbitrariness of $n.$
\end{proof}

\subsubsection{Laplacian and divergence operators}
In this section we assume $(\X,\sfd,\mea)$ to be a  proper and infinitesimally Hilbertian  m.m.s.
	
	\begin{definition}[Measure-valued Laplacian (\cite{Gappl})]
		Let $\Omega\subset \X$ open. We say that  and $u \in \W_\loc(\Omega) $ belongs to the domain of the \emph{measure-valued Laplacian} $D(\bd,\Omega)$ if there exists a Radon measure $\bd\restr {\Omega} u$ in $\Omega$ such that
		\begin{equation}\label{deflap}
		-\int_{\Omega} \langle \nabla f, \nabla u\rangle \d \mea=\int_{\Omega} f \d \bd\restr{\Omega} u, 
		\end{equation}
		for every $f \in \LIP_c(\Omega).$
	\end{definition}
Let us remark that in the above definition with the term \emph{Radon measure} we denote a set function $\mu : \{ \text{Borel sets relatively compact in $\Omega$} \}\to \rr$ which can be written as $\mu(B)=\mu^+(B)-\mu^-(B)$, for some  positive Radon measures $\mu^+,\mu^-$. In particular we do not require $\mu$ to be a Borel measure on the whole $\Omega$, this weaker assumption is needed for example in Proposition 	\ref{prop:laplineq} and to write the Laplacian of the distance function (see also the discussion in \cite{CMlapl}).

%It is trivial to check that this definition is compatible with the usual measure valued Laplacian, in the sense that if $u \in D(\bd)$, then $u \in D(\bd,\Omega)$ and $\bd u|_{\Omega}=\bd|_{\Omega}(u)$.

When no confusion can occur we will drop the subscript $\Omega$ and simply write $\bd u.$ Moreover we will write $D(\bd)$ in place of $D(\bd,\X)$ and whenever $\bd \ll \mea$ we will use the non bold notation $\Delta.$

A function $u \in D(\bd,\Omega)$ is said to be \emph{subharmonic} if $\bd u\ge 0$, superharmonic if $\bd u\le 0$ and harmonic if $\bd u=0$.

Finally it easily follows from the definition that the Laplacian operator is linear and satisfies the following locality property:
\[
\text{if $u,v \in D(\bd,\Omega)$ and $u=v$ $\mea$-a.e. in $U$, with $U$ open and relatively compact in $\Omega$, then $\bd u\restr U=\bd v\restr U$.}
\]
The following existence and comparison result is proven in \cite[Prop 4.13]{Gappl}.
\begin{prop}\label{prop:laplineq}
	Let $u \in \W_{\loc}(\Omega)$ and suppose that there exists $g \in L^1_{\loc}(\Omega)$ such that
	\[
	-\int_{\Omega} \la \nabla f, \nabla u\ra \, \d \mea\ge \int_{\Omega} gf \, \d \mea, \quad \forall f \in \LIP_c(\Omega), \text{ with } f \ge 0,
	\]
	Then $u \in D(\bd,\Omega)$ and $\bd u\ge g\mea \restr{\Omega}.$
\end{prop}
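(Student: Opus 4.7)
\medskip

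\noindent\textbf{Proof plan.} The strategy is a Riesz--Markov representation: the hypothesis asserts that a certain linear functional on $\LIP_c(\Omega)$ is positive, so up to extending it to $C_c(\Omega)$ it must be given by integration against a positive Radon measure $\nu$, and then $\bd u = \nu + g\mea$ will do the job.

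\smallskip

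\noindent\emph{Step 1: Set up the functional.} Define $L \colon \LIP_c(\Omega) \to \rr$ by
\[
L(f) \coloneqq -\int_{\Omega} \la \nabla f, \nabla u\ra \, \d\mea - \int_{\Omega} g f \, \d\mea.
\]
Linearity is clear, and the hypothesis says exactly that $L(f) \ge 0$ whenever $f \ge 0$.

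\smallskip

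\noindent\emph{Step 2: Continuity in sup-norm on compact supports.} This is the only nontrivial analytic step. Fix a compact $K \subset \Omega$ and, using properness of $\X$, pick $\phi_K \in \LIP_c(\Omega)$ with $0 \le \phi_K \le 1$ and $\phi_K \equiv 1$ on $K$ (e.g.\ a suitable truncation of $\sfd(\cdot,\X\setminus \Omega)$, or $\max(0,1-\sfd(\cdot,K)/\eps)$ for small $\eps$). For any $f \in \LIP_c(\Omega)$ with $\supp f \subset K$ one has $\|f\|_\infty \phi_K \pm f \ge 0$ pointwise on $\Omega$, so positivity of $L$ gives
\[
|L(f)| \le \|f\|_\infty \, L(\phi_K) \eqqcolon C_K \|f\|_\infty.
\]

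\smallskip

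\noindent\emph{Step 3: Extension to $C_c(\Omega)$ and Riesz--Markov.} Since distance functions are Lipschitz, $\LIP_c(\Omega)$ is dense in $C_c(\Omega)$ in the uniform norm (with control of supports: any $h \in C_c(\Omega)$ with $\supp h \subset K$ is the uniform limit of Lipschitz functions with support in an arbitrarily small enlargement $K' \supset K$). Combining this with the bound from Step 2 (applied with $K'$ in place of $K$), $L$ extends by continuity to a positive linear functional $\tilde L$ on $C_c(\Omega)$. The Riesz--Markov theorem, applicable since $\Omega$ is a locally compact $\sigma$-compact Hausdorff space, yields a unique positive Radon measure $\nu$ on $\Omega$ with $\tilde L(f) = \int_\Omega f \, \d\nu$ for all $f \in C_c(\Omega)$, and in particular for all $f \in \LIP_c(\Omega)$.

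\smallskip

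\noindent\emph{Step 4: Conclusion.} Unwinding the definition of $L$, for every $f \in \LIP_c(\Omega)$,
\[
-\int_\Omega \la \nabla f,\nabla u\ra \, \d\mea = \int_\Omega f \, \d\nu + \int_\Omega g f \, \d\mea = \int_\Omega f \, \d(\nu + g\mea\restr{\Omega}).
\]
Since $g \in L^1_\loc(\Omega)$, the signed set function $\mu \coloneqq \nu + g\mea\restr{\Omega}$ has finite variation on every relatively compact Borel subset of $\Omega$, hence is a Radon measure in the sense adopted here. Thus $u \in D(\bd,\Omega)$ with $\bd u = \mu$, and positivity of $\nu$ gives $\bd u \ge g\mea\restr{\Omega}$.

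\smallskip

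The main technical obstacle is Step 2: the bound $|L(f)| \le C_K \|f\|_\infty$ is not visible from either term of $L$ separately (the Dirichlet term is not controlled by sup-norm for fixed $u$), and only emerges from the one-sided hypothesis through the $\pm$-trick with a cut-off. Once this is in hand, the rest is a standard Riesz--Markov representation, with only a small care required by the fact that $\bd u$ is allowed to be merely locally a signed measure.
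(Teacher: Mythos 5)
Your proof is correct and uses the same Riesz--Markov representation argument as the cited \cite[Prop.\ 4.13]{Gappl} (the present paper only references the result without reproducing a proof); the $\pm$ cut-off trick in Step 2 to extract a two-sided local sup-norm bound from the one-sided positivity hypothesis is exactly the key point. The one small detail left implicit is that positivity of the extended functional $\tilde L$ on $C_c(\Omega)$ requires approximating a nonnegative $h\in C_c(\Omega)$ by \emph{nonnegative} Lipschitz functions of controlled support, which one gets by replacing a generic Lipschitz approximant $f$ with $\max(f,0)$, since $|\max(f,0)-h|\le|f-h|$ when $h\ge 0$.
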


\begin{remark}\label{extension1}
	Let $u \in D(\bd,\Omega)$. Suppose that $\bd u \ll \mea $  (resp. $\bd u\ge g \mea$) with $\frac{\d\bd u }{\d\mea}\in L^2_{\loc}(\Omega)$ (resp.  $g\in L^1_{\loc}(\Omega)$). Then, recalling that in an infinitesimally Hilbertian m.m.s. Lipschitz functions are dense in $\W(\X)$ (see \cite{AGS}), by a truncation and cut off argument it follows that \eqref{deflap} (resp.  $-\int\la \nabla f, \nabla u \ra \d \mea \ge \int gf\d \mea $) holds also for every $f \in \W(X)$ (resp. $f \in \W(X)\cap L^\infty(\mea)$, $f \ge 0$ ) with support compact in $\Omega.$ \fr
\end{remark}

		\begin{definition}[Measure-valued divergence (\cite{GM})]
		Let $v \in L^0(TX)\restr{\Omega}$ be such that $|v|\in L^2_{\loc}(\Omega)$, we say that $v \in D({\bf{div}},\Omega)$ if there exists a Radon measure ${\bf{div}}\restr{\Omega}(v)$ in $\Omega$ such that
		\begin{equation}\label{defdiv}
		 \int_{\Omega} \la \nabla f,v\ra\d \mea=-\int_{\Omega} f \, \d\, {\bf{div}}\restr \Omega(v), 
		 \end{equation}
		for every $f \in \LIP_c(\Omega).$
	\end{definition}
	It is clear from the definition that given $u \in \W_{\loc}(\Omega)$, we have $\nabla f \in D({\bf{div}},\Omega)$ if and only if $u \in D(\bd,\Omega)$ and in this case ${\bf{div}}\restr{\Omega}(\nabla u)=\bd \restr{\Omega}u.$ As for the Laplacian, we will often write simply ${\bf div}(v)$ instead of ${\bf{div}}\restr \Omega(v)$.

\begin{remark}\label{extensiondiv}
	Analogously to the measure valued Laplacian, we have that if ${\bf{div}}(v)  \in L^2_{\loc}(\Omega)$, then \eqref{defdiv}  holds also for every $f \in \W(X)$ with support compact in $\Omega.$\fr 
\end{remark}

\subsection{RCD spaces}\label{sec:rcd}
We assume the reader to be familiar with the definition and theory of ${\rm RCD}(K,N)$ spaces (see \cite{AGSflow}, \cite{Gappl}). We limit ourselves to recall some of their main properties that will be needed.

The \emph{Sobolev-to-Lipschitz property} holds (the definition we recall comes from \cite{split} and so does the argument that we adopt to prove the `local version’ below - the validity of this property on ${\rm RCD }(K,\infty)$ spaces was known from \cite{AGSrcd}): for every $f \in \W(\X)$ such that $|Df|\in L^\infty(\mea)$, $f$ has a Lipschitz representative and $\Lip f\le \||D f|\|_{L^\infty}$.

The local variant we will actually use is the following: 
\begin{prop}[Local Sobolev-to-Lipschitz property]\label{prop:sob to lip}
	Let $\X$ be an ${\rm RCD}(K,N)$ space, $K \in \rr$ and $N\in[1,+\infty)$ and let $\Omega \subset \X$ be open. Suppose  $f \in \W_{\loc}(\Omega )$ is such that $\||\nabla f|\|_{L^{\infty}(\Omega)}<+\infty $, then $f$ has a locally-Lipschitz representative. Moreover for such representative it holds
	\begin{equation}\label{eq:localsobolevtolip}
		|f(x)-f(y)|\le\||\nabla f|\|_{L^{\infty}(\Omega)}\, \sfd(x,y),
	\end{equation}
	for every $x,y \in \Omega$ such that $\sfd(x,y)\le \sfd(x,\partial \Omega).$
\end{prop}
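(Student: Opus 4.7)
The plan is to construct a candidate locally-Lipschitz representative of $f$ via averages, and then to prove the pointwise estimate by testing the Sobolev-class inequality against an optimal transport plan whose geodesic support is confined to $\Omega$. Set $L:=\||\nabla f|\|_{L^\infty(\Omega)}$ and, for $x\in\Omega$, define $\tilde f(x):=\lim_{r\downarrow 0}\fint_{B_r(x)}f\,\d\mea$ whenever the limit exists. By the local doubling property of ${\rm RCD}(K,N)$ spaces and Lebesgue differentiation, $\tilde f=f$ $\mea$-a.e.\ in $\Omega$, so it suffices to show that this limit exists at every point of $\Omega$ and that the resulting function satisfies \eqref{eq:localsobolevtolip}.

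Fix $x,y\in\Omega$ with $\sfd(x,y)<\sfd(x,\partial\Omega)$; the equality case will follow by approximating $y$ from inside the open ball $B_{\sfd(x,\partial\Omega)}(x)$ and using continuity of $\tilde f$. Let $U$ be a small open neighborhood of the compact set of all geodesic interpolations between $x$ and $y$, chosen so that $\overline U\subset\Omega$, and pick $\eta\in\LIP_c(\Omega)$ with $\eta\equiv 1$ on $U$. Then $g:=\eta f\in\W(\X)$ by the very definition of $\W_\loc(\Omega)$, and by the Leibniz and locality rules one has $g=f$ together with $|\nabla g|=|\nabla f|\le L$ $\mea$-a.e.\ on $U$. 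For $\delta>0$ small set $\mu_z^\delta:=\mea(B_\delta(z))^{-1}\mea|_{B_\delta(z)}$ for $z\in\{x,y\}$ and let $\ppi^\delta$ be an $L^2$-optimal transport plan between $\mu_x^\delta$ and $\mu_y^\delta$ concentrated on geodesics. The essentially non-branching character together with the ${\rm CD}(K,N)$ condition delivers a uniform density bound $(\e_t)_*\ppi^\delta\le C_\delta\mea$ for $t\in[0,1]$, so $\ppi^\delta$ is a test plan, and a short compactness argument shows that $\gamma_t\in U$ for every $\gamma\in\supp\ppi^\delta$ and every $t\in[0,1]$, as soon as $\delta$ is small enough.

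Testing the Sobolev-class inequality (Definition \ref{def:sobolev class}) on $g$ against $\ppi^\delta$, and exploiting $g=f$ and $|\nabla g|\le L$ on $U$ together with the confinement of the geodesics to $U$, one obtains
\[
\biggl|\fint_{B_\delta(y)}f\,\d\mea-\fint_{B_\delta(x)}f\,\d\mea\biggr|\le \int|g(\gamma_1)-g(\gamma_0)|\,\d\ppi^\delta \le L\,W_1(\mu_x^\delta,\mu_y^\delta)\le L\bigl(\sfd(x,y)+2\delta\bigr).
\]
Specializing to $y=x$ and two different radii shows that $\delta\mapsto\fint_{B_\delta(x)}f\,\d\mea$ is Cauchy as $\delta\downarrow 0$, so $\tilde f$ is well defined at every $x\in\Omega$; letting $\delta\downarrow 0$ in the above display gives $|\tilde f(y)-\tilde f(x)|\le L\,\sfd(x,y)$, which (after the boundary approximation argument mentioned above) is \eqref{eq:localsobolevtolip}.

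The main technical point is securing both the uniform density bound on $(\e_t)_*\ppi^\delta$ and the confinement of $\supp\ppi^\delta$ to $U$. The former is the standard $L^\infty$-regularity of Wasserstein geodesics between bounded-density marginals in ${\rm CD}(K,N)$ spaces; the latter relies on essential non-branching and on the uniform closeness, as $\delta\downarrow 0$, of the supports of optimal plans with marginals $\mu_x^\delta,\mu_y^\delta$ to the compact set of actual geodesics from $x$ to $y$. Everything else is book-keeping: the cutoff $\eta$ moves the argument from $\W_\loc(\Omega)$ to $\W(\X)$ without affecting $f$ nor $|\nabla f|$ along the transported mass, and the choice of $\tilde f$ as the averaged representative is precisely what makes the endpoint evaluation continuous in $\delta$.
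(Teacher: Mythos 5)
Your proof takes essentially the same route as the paper's: construct a test plan from the unique $L^2$-optimal geodesic plan between small-ball marginals (using the results of Rajala and Rajala--Sturm for the uniform density bound), confine its geodesics to a region compactly contained in $\Omega$ where $|\nabla f|\le L$, and apply Definition \ref{def:sobolev class}. Two small remarks. First, the confinement set can be chosen much more simply: the paper just takes $U=B_r(x)$ with $\sfd(x,y)<r<\sfd(x,\partial\Omega)$ and $\delta<(r-\sfd(x,y))/4$, after which the confinement is an immediate triangle-inequality observation; your geodesic-neighborhood $U$ forces the extra compactness argument about Hausdorff convergence of supports of optimal plans, which is correct but unnecessary. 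Second, the intermediate inequality $\int|g(\gamma_1)-g(\gamma_0)|\,\d\ppi^\delta\le L\,W_1(\mu_x^\delta,\mu_y^\delta)$ is not quite right as written, since $\ppi^\delta$ is an $L^2$-optimal plan, not an $L^1$-optimal one; you should instead estimate $\int\int_0^1|\dot\gamma_t|\,\d t\,\d\ppi^\delta\le\bigl(\int\int_0^1|\dot\gamma_t|^2\,\d t\,\d\ppi^\delta\bigr)^{1/2}=W_2(\mu_x^\delta,\mu_y^\delta)\le\sfd(x,y)+2\delta$, which is how the paper proceeds and gives the same conclusion. A nice feature of your write-up, which the paper does not do, is constructing the continuous representative directly via Lebesgue averages and proving its well-definedness everywhere from the Cauchy estimate, instead of invoking the global Sobolev-to-Lipschitz property through a cut-off argument as a black box; this makes the existence of the representative and the quantitative bound emerge from a single computation.
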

\begin{proof}
	The fact that $f$ has a locally-Lipschitz representative it follows from the Sobolev-to-Lipschitz property and a cut-off argument.
	
	For the second part we observe that it is sufficient to consider the case $\sfd(x,y)<\sfd(x,\partial \Omega),$ since the equality case follows  by continuity. Hence for some $r>0$ we have $\sfd(x,y)<r<\sfd(x,\partial \Omega)$ and we can consider $\tilde f \in \W(\X)$ such that $\tilde f =f $ $\mea$-a.e. in $B_{r}(x)$. Then for $\eps< (r-\sfd(x,y))/4$ we define $\mu^0_{\eps}\coloneqq\mea\restr{B_{\eps}(x)}\mea(B_{\eps}(x))^{-1}$, $\mu^1_{\eps}\coloneqq\mea\restr{B_{\eps}(y)}\mea(B_{\eps}(y))^{-1}$. thanks to the results in \cite{rajala} and \cite{rajalasturm} there exists a unique $\ppi^{\eps}\in {\sf OptGeo(\mu^0_{\eps},\mu^1_{\eps})}$ such that ${\e_t}_*\ppi^{\eps}\le C\mea$, $\forall t \in[0,1]$, for some constant $C$ depending on ${\eps}$. In particular $\ppi^{\eps}$ is a test plan. Moreover  from the triangle inequality it follows that $\ppi^{\eps}$ is concentrated on curves $\gamma$ with support contained in $B_{r}(x)$. 
Therefore from \eqref{eq:sobolev def}
	\begin{align*}
		\left | \int  f \,\d \mu^1_{\eps}- \int f \, \d\mu^0_{\eps}\right|\le \int |\tilde f(\gamma_1)-\tilde f(\gamma_0)|\, \d \ppi^{\eps}(\gamma)\le \||\nabla \tilde f|\|_{L^{\infty}(B_{r}(x))} \int \int_0^1|\dot \gamma_t| \d t d\ppi^{\eps}\le  \||\nabla f|\|_{L^{\infty}}W_2(\mu^0_{\eps},\mu^1_{\eps}).
	\end{align*}
	Letting $r\to 0^+$  from the continuity of $f$ we obtain \eqref{eq:localsobolevtolip}.
\end{proof}
From Proposition \ref{prop:sob to lip} it also follows that
\begin{equation}\label{eq:loc constant}
	\text{if $\Omega$ is connected, $u \in \W_{\loc}(\Omega)$ and $|\nabla u|=0$, $\mea$-a.e., then $u$ is constant in $\Omega.$}
\end{equation}
The \emph{Bishop-Gromov inequality} holds (see \cite{sturm2}), i.e. 
\[
\frac{\mea(B_R(x))}{v_{K,N}(R)}\le \frac{\mea(B_r(x))}{v_{K,N}(r)}, \quad \text{for any $0<r<R$ and any $x \in \X$},
\]
where for the quantities $v_{K,N}(r)$ coincides, for $N \in \mathbb{N}$, with the volume of the ball with radius $r$ in the model space of dimension $N$ and curvature $K$ (see \cite{sturm2} for the definition of $v_{K,N}(r)$ for  arbitrary $N\in[1,\infty)$). In particular $(\X,\sfd,\mea)$ is proper and uniformly locally doubling. We also note that in the case $K=0$ this implies that the limit 
\[
{\sf AVR(\X)}\coloneqq\lim_{r\to +\infty}\frac{\mea(B_r(x))}{r^N}
\]
exists finite and does not depend on the point $x \in\ X$. We call the quantity ${\sf AVR(\X)}$ \emph{asymptotic volume ratio} of $\X$ and if ${\sf AVR(\X)}>0$ we say that $\X$ has \emph{Euclidean-volume growth}.

We will need the following \emph{Laplacian comparison} for ${\rm RCD}(0,N)$ spaces (see	\cite[Corollary 5.15]{Gappl}):
\begin{equation}\label{eq:laplacian comparison}
	\text{$\sfd(x_0,.)^2 \in D(\bd)$ and $\bd \sfd(x_0,.)^2 \le 2N\mea $,} \quad \text{ for every }x_0\in \X,
\end{equation}
moreover in any ${\rm RCD}(K,N)$ space with $N<+\infty$ it holds that
\begin{equation}\label{eq:grad dist}
	|\nabla \sfd (x_0,.)|=1, \quad \mea\text{-a.e.}.
\end{equation}
From to the results in \cite{cheegerlip} and the fact that $\lip \sfd(x_0,.)\equiv 1$ it follows that \eqref{eq:grad dist} actually holds in the general setting of doubling m.m.s. satisfying a Poincaré inequality, however a more direct proof in the setting of ${\rm RCD}(K,N)$ spaces is also available (see for example \cite[Prop. 3.1]{GPmeasure}).

Recall that, since  ${\rm RCD}(K,N)$ spaces are infinitesimally Hilbertian, the \emph{heat flow}  $h_t: L^2(\mea)\to L^2(\mea)$, $t \ge 0$, defined as the gradient flow of $\ch$ on $L^2(\mea)$ is linear, continuous, self-adjoin and satisfies $h_t(f)\in D(\Delta)\cap \W(\X)$, moreover the curve $(0,\infty)\ni t \mapsto h_t f \in L^2(\mea)$ is locally absolutely continuous for every $f \in L^2(\mea)$ and
$$\frac{\d }{\d t}h_t(f)=\Delta h_t(f)\in L^2(\mea), \quad \text{for a.e. } t>0,$$
(see \cite{G14} for further details). Moreover $h_t$ has the so called \emph{$L^\infty$-to Lipschitz regularization property} (see \cite{AGSrcd}), i.e. there exists a constant $C(K)>0$ such that for every $f \in L^\infty\cap L^2(\mea)$ it holds that $|\nabla h_t f|\in L^\infty(\mea)$ and 
\begin{equation}\label{eq:BE}
	\||\nabla h_t f|\|_{L^\infty(\mea)}\le \frac{C(K)}{\sqrt t} \|f\|_{L^\infty(\mea)}, \quad \, \forall \,  t \in(0,1).
\end{equation}

In an ${\rm RCD}(K,N)$ space it can be given also a notion of heat kernel (see \cite{AGSrcd}) $p_t: \X\times \X \to [0,+\infty]$ which has a locally Hölder-continuous representative (see \cite{sturmkernel1,sturmkernel2}), which satisfies the following pointwise bounds (\cite{JLZ}), generalizing the classical estimates of Li and Yau in the smooth case \cite{liyau}:
\begin{equation}\label{eq:kernelestimate}
	\begin{split}
		&\frac{1}{C_1\mea(B(x,\sqrt{t}))}\exp\left\lbrace -\frac{\sfd^2(x,y)}{3t}-ct\right\rbrace\le p_t(x,y)\le \frac{C_1}{\mea(B(x,\sqrt{t}))}\exp\left\lbrace-\frac{\sfd^2(x,y)}{5t}+ct \right\rbrace,\\  
		&	\left |{\nabla p_t(x,\cdot)}(y)\right |\le \frac{C_1}{\sqrt{t}\mea(B(x,\sqrt{t}))}\exp\left\lbrace -\frac{\sfd^2(x,y)}{5t}+ct\right\rbrace \quad\text{for $\mea$-a.e. $y\in X$},
	\end{split}
\end{equation}
for any $x,y\in X$, for any $t>0$ and where $c,C_1$ are positive constants depending only on $K,N$ such that $c=0$ if $K=0$.

We introduce the algebra of test functions $\test(\X)$ (\cite{savarè}) defined as
\[
\test(\X)\coloneqq\{f \in \ L^\infty(\mea) \cap \LIP(\X)\cap D(\Delta) \ | \ \Delta f \in \W(\X)\}.
\]
It turns out that $\test(\X)$ is dense in $\W(\X)$ and $\la\nabla f,\nabla g \ra\in \W(\X)$ for every $f,g \in \W(\X).$

We recall the notion of Hessian for a test function as constructed in  \cite{G14}:  for any $f \in \test(\X)$ there exists $\H f : [L^0(T\X)]^2 \to L^0(\mea)$,  $L^0$-bilinear symmetric and continuous in the sense that $|\H f(v,w)|\le |\H f|_{OP} |v||w|$, for some (minimal) function $|\H f|_{OP}\in L^2(\mea)$. Moreover $\H f$ is  characterized by the identity
\begin{align*}
		2\int h\H f&(\nabla f_1, \nabla f_2) \d \mea =\\
		&- \int \la\nabla f,\nabla f_1\ra \text{div}(h\nabla f_1)+\nabla f,\nabla f_2 \text{div}(h\nabla f_2)- \text{div}(h\nabla f) \la\nabla f_1, \nabla f_2\ra  \d \mea,
\end{align*}
for any choice of $h,f_1,f_2 \in \test(\X)$.   The Hessian also induces an $L^0$-linear and continuous map $\H f : L^0(T\X)\to L^0(T\X)$ characterized by
\[
\la\H f (v),w\ra=\H f(v,w), \quad \text{$\mea$-a.e.}, \quad \forall w \in L^0(T\X)
\]
and which satisfies $|\H f(v)|\le |\H f|_{OP}|v|.$ We recall also the following identity, which essentially is contained in \cite[Prop. 3.3.22]{G14},
\begin{equation}\label{eq:hessian gradient}
2\H f (\nabla f)=\nabla |\nabla f|^2, \quad \forall f \in \test(\X).
\end{equation}

Combining \cite[Theorem 5.1]{GPtangent} with the recent  \cite{constancy} we have the following result (we refer to \cite{G14} for the definition of dimension and local base for a normed module).
\begin{theorem}[Constancy of the dimension]\label{thm:constancy}
	Let $\X$ be any ${\rm RCD}(K,N)$  space with $K\in \rr$ and $N \in [1,\infty)$. Then there exists an integer $\dim(\X)\in [1,N]$ such that the tangent module $L^0(T\X)$ has constant dimension equal to $\dim(\X)$. 
\end{theorem}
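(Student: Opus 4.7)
The plan is to combine two deep inputs verbatim, since the theorem is essentially stated as a direct consequence. The first ingredient is Theorem 5.1 of \cite{GPtangent}, which associates to the Hilbert $L^0$-module $L^0(T\X)$ a measurable local-dimension function $d:\X\to\nn\cup\{+\infty\}$ with the property that, on each Borel set where $d\equiv k$, the module admits a local base of cardinality $k$; moreover $d(x)=k$ for $\mea$-a.e.\ $x$ in the $k$-dimensional regular set $\mathcal R_k$ of $\X$. Thus the tangent module has a well-defined pointwise dimension, compatible with the metric notion of tangent space: the analytic dimension is read off by the points whose pointed-measured-Gromov-Hausdorff tangent is a Euclidean space $\rr^k$.

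The second ingredient is the recent constancy-of-dimension theorem for ${\rm RCD}(K,N)$ spaces from \cite{constancy}: there exists a unique integer $n\in[1,N]$ such that
\[
\mea\Bigl(\X\setminus\mathcal R_n\Bigr)=0.
\]
Combining this with the identification $d\equiv k$ $\mea$-a.e.\ on $\mathcal R_k$ from the first step, one gets $d\equiv n$ $\mea$-a.e.\ on $\X$. Since the dimension function of a Hilbert $L^0$-module is constant $\mea$-a.e.\ if and only if the module has constant dimension in the sense of local bases (by the first step, the local bases on $\{d=n\}$ patch together to a global one on a set of full measure), it follows that $L^0(T\X)$ has constant dimension equal to $n$, and we define $\dim(\X):=n$. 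The bound $n\le N$ is built into the statement of \cite{constancy}, and the lower bound $n\ge 1$ follows from the existence of at least one regular point with a non-trivial Euclidean tangent, which is standard after Mondino-Naber.

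The substantive mathematics is therefore entirely located in \cite{constancy}: the identification of the analytic and metric notions of dimension in \cite{GPtangent} is the routine half, while ruling out the coexistence of regular sets of different dimensions having positive measure is what requires real work (and was open for a long time). Consequently, my proof proposal is essentially a one-line citation argument: invoke \cite{GPtangent} to translate module dimension into metric-regular-set dimension, then invoke \cite{constancy} to conclude the latter is constant $\mea$-a.e.; no further estimate or construction is needed beyond what is provided in these two references.
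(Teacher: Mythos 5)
Your proposal is correct and follows exactly the same route as the paper, which states the result precisely as a combination of Theorem 5.1 of \cite{GPtangent} (translating module dimension into the dimension of the Mondino--Naber regular sets) with the constancy-of-dimension theorem of \cite{constancy}. There is nothing to add: the paper itself gives no further argument beyond this citation.
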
 
A corollary of this result is the existence of a global orthonormal base: $\{e_1,...,e_{\dim(\X)}\}\subset L^0(T\X)$ such that $\la e_i,e_j\ra=\delta_{i,j}$ $\mea$-a.e. for every $i,j=1,...,\dim(\X)$. In particular   for every $v \in L^0(T\X)$ it holds that $v=\sum_{i=1}^{\dim(\X)}v_i e_i$, where $v_i\coloneqq\la v,e_i\ra$. Then, denoted by $(Hf)_{i,j}$ the functions $\H f (e_i, e_j)\in L^0(\mea)$, for $i,j \in \{1,...,\dim(\X)\}$, we can write
\begin{align}
	\H f (v)= \sum_{1\le i,j\le \dim(\X)}(Hf)_{i,j}v_j	\,e_i,\quad \forall v \in L^0(T\X)\label{eq:coordmap}.
\end{align}
Moreover we define the trace and Hilbert-Schmidt norm ${\sf tr}\H f,|\H f|_{HS}  \in L^2(\mea)$ as 
\begin{align}
		&|\H f|_{HS}^2 \coloneqq \sum_{1\le i,j\le \dim(\X)}(Hf)_{i,j}^2, \label{eq:coordHS},\\
	&{\sf tr} \H f\coloneqq \sum_{1\le i\le \dim(\X)}(Hf)_{i,i},\label{eq:coordtr}
\end{align}
which are well defined in the sense that they do not depend on the choice of the base, as can be easily verified by a direct computation. It always holds that $|\H f|_{OP}\le |\H f|_{HS}$ $\mea$-a.e.\ (see \cite[Sec. 3.2]{G14}).

 In view of the above we can restate Theorem 3.3 in \cite{HanRicci} as follows (see also \cite{eks},\cite{AMS} for the ``basic version" of the Bochner inequality.)
\begin{theorem}[Improved Bochner-inequality]
	Let $\X$ be any ${\rm RCD}(K,N)$  space with $K\in \rr$ and $N \in [1,\infty)$. Then for any $f \in \test(\X)$ it holds that $|\nabla f|^2 \in D(\bd)$ and
	\begin{equation}\label{eq:improvedboch}
		\bd \left( \frac{|\nabla f|^2}{2}\right) \ge \left(|\H f|_{HS}^2+K|\nabla f|^2+\la\nabla f,\nabla \Delta f\ra+ \frac{(\Delta f-{\sf tr}\H f)^2}{N-\dim(\X)}\right)\mea,
	\end{equation}
	where $\frac{(\Delta f-{\sf tr}\H f)^2}{N-\dim(\X)}$ is taken to be 0 in the case $\dim(\X)=N.$
\end{theorem}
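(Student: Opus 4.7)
The plan is to obtain the stated inequality as a direct corollary of two already-available ingredients: Han's improved Bochner inequality in $\mathrm{RCD}(K,N)$ spaces and the Constancy of Dimension theorem (Theorem \ref{thm:constancy}) just recalled. In Han's original formulation, for a test function $f$ one has $|\nabla f|^2\in D(\bd)$ and a Bochner inequality of the same form, but with the dimensional defect term written as $\frac{(\Delta f-{\sf tr}\,\H f)^2}{N-n(x)}$, where $n(x)$ is a pointwise notion of ``tangent dimension'' taking integer values in $[1,N]$ and where the trace is computed against any local orthonormal frame of the tangent module at $x$.

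First, I would recall Han's statement and note that everything in it is intrinsic: the operator norm of $\H f$, its Hilbert--Schmidt norm and its trace are all defined independently of the choice of local orthonormal base. Second, I would apply Theorem \ref{thm:constancy} to replace the pointwise $n(x)$ by the constant integer $\dim(\X)$. Because $\dim(\X)$ is constant, a single \emph{global} orthonormal base $\{e_1,\dots,e_{\dim(\X)}\}\subset L^0(T\X)$ exists, so the pointwise definitions \eqref{eq:coordHS} and \eqref{eq:coordtr} of ${\sf tr}\,\H f$ and $|\H f|_{HS}$ agree $\mea$-a.e.\ with the local ones used by Han; the Cauchy--Schwarz-type inequality $|\H f|_{OP}\le |\H f|_{HS}$ recorded just before the theorem guarantees that $|\H f|_{HS}$ is in $L^2(\mea)$ so that all terms make sense as functions in $L^1_{\loc}$.

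Third, I would handle the degenerate case $\dim(\X)=N$, where the denominator $N-\dim(\X)$ vanishes: in this situation Han's inequality forces $\Delta f={\sf tr}\,\H f$ $\mea$-a.e. (otherwise the dimensional term would be $+\infty$), so the convention that $\tfrac{(\Delta f-{\sf tr}\,\H f)^2}{N-\dim(\X)}=0$ is consistent and the stated inequality reduces to the ``basic'' Bochner inequality of \cite{eks,AMS}, which is already available for every test function.

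The main ``obstacle'' is not really mathematical but notational: one has to check that the bilinear map $\H f$ as defined in \cite{G14} and used throughout this note really produces the same matrix $(Hf)_{ij}=\H f(e_i,e_j)$ as the Hessian appearing in Han's work, so that the traces and Hilbert--Schmidt norms are literally the same object. This is straightforward from the characterizing identity of $\H f$ and the $L^0$-bilinearity, but it is worth spelling out since several different conventions for the Hessian coexist in the literature. Once this identification is made, the theorem is a plain rewriting of Han's result under the assumption of constant dimension.
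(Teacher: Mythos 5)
Your proposal is correct and follows essentially the same route as the paper, which presents this theorem explicitly as a restatement of Theorem~3.3 of Han (\cite{HanRicci}) in light of the constancy-of-dimension Theorem~\ref{thm:constancy}. The one small imprecision is in the degenerate case: you argue that Han's inequality \emph{forces} $\Delta f={\sf tr}\,\H f$ when $\dim(\X)=N$, but Han's formulation already builds in the convention that the dimensional term vanishes on the set where the pointwise dimension equals $N$, so the inequality does not by itself produce an impossible $+\infty$; rather, the identity $\Delta f={\sf tr}\,\H f$ in the full-dimensional case is an independent fact from \cite{HanRicci} (used by the paper later, e.g.\ in the remark preceding Lemma~\ref{lem:kato}), and it is that fact which makes the convention consistent. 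With this clarification the argument is complete and matches the paper's intent.
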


See \cite{MN} for a proof of the following result.
\begin{prop}[Good cut-off functions]\label{prop:goodcutoff}
	Let $\X$ be an ${\rm RCD}(K,N)$ space, $K \in \rr$ and $N\in (1,\infty)$. Then for every $0<r<R<+\infty$, every compact set $K$ and every open set $U$ containing $K$, such that $\diam(U)<R$ and $\sfd(K,U^c)>r$, there exists a function $\eta \in \test(\X)$ such that
	\begin{enumerate}
		\item $0\le \eta \le1$, $\eta=1$ in $K$ and $\supp \eta \subset U$,
		\item $|\nabla \eta|+|\Delta \eta|\le C(R,r,N,K)$,
	\end{enumerate}
moreover $C$ does not depend on $R$ in the case $K=0.$
\end{prop}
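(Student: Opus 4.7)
I would follow the strategy of Mondino--Naber \cite{MN}: construct a rough Lipschitz cutoff whose measure-valued Laplacian is absolutely continuous with $L^\infty$ density, smooth it via the heat semigroup to land in $\test(\X)$, and compose with a smooth one-dimensional cutoff to recover the prescribed support/values while preserving the Laplacian bound.

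\emph{Step 1 (rough cutoff).} Denote by $\mathcal{K}$ the compact set in the statement (to avoid notational clash with the curvature bound $K$). By local doubling (Bishop--Gromov), choose finitely many points $p_1,\dots,p_m \in \mathcal{K}$ with $\mathcal{K}\subset\bigcup_i B_{r/4}(p_i)$ and $B_{r/2}(p_i)\subset U$. Laplacian comparison \eqref{eq:laplacian comparison}, with its extension to $K \ne 0$ (yielding a $\coth$-type factor depending on $R$), gives $\bd\sfd(p_i,\cdot)^2 \le C_0(K,N,R)\,\mea$ on $B_R(p_i)$. Pick $\chi\in C_c^\infty([0,\infty))$ with $\chi\equiv 1$ on $[0,(r/4)^2]$ and $\chi\equiv 0$ on $[(r/2)^2,\infty)$; using \eqref{eq:grad dist} and a chain rule for $\bd$, the function $\chi_i:=\chi\circ\sfd(p_i,\cdot)^2$ is Lipschitz with $|\bd\chi_i|\le C_1(K,N,R,r)\,\mea$. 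Setting
\[
\psi := 1-\prod_{i=1}^m(1-\chi_i),
\]
an iterated product rule for $\bd$ yields a Lipschitz function with $\psi\equiv 1$ on $\mathcal{K}$, $\supp\psi\subset U$, and $|\bd\psi|\le C_2(K,N,R,r)\,\mea$.

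\emph{Step 2 (regularize and compose).} Set $\psi_t:=h_t\psi$ for small $t>0$ to be chosen. Since $\Delta$ commutes with $h_t$ and $\bd\psi=g\,\mea$ with $\|g\|_{L^\infty}\le C_2$, we have $\|\Delta\psi_t\|_{L^\infty}\le C_2$. The $L^\infty$-to-Lip estimate \eqref{eq:BE} gives $\||\nabla\psi_t|\|_{L^\infty}\le C(K)/\sqrt{t}$, while the Gaussian upper bound in \eqref{eq:kernelestimate}, combined with $\supp\psi\Subset U$, ensures that for $t=t(K,N,R,r)>0$ small enough one has $\psi_t\ge 1-\delta$ on $\mathcal{K}$ and $\psi_t\le \delta$ on $\X\setminus U$ for a fixed small $\delta\in(0,1/2)$. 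Choose $\Phi\in C^\infty(\rr)$ with $\Phi\equiv 1$ on $[1-\delta,\infty)$, $\Phi\equiv 0$ on $(-\infty,\delta]$, $0\le\Phi\le 1$, and $|\Phi'|+|\Phi''|\le C_\delta$, and set $\eta:=\Phi\circ\psi_t$. The chain rule for test functions gives $\eta\in\test(\X)$ with
\[
\Delta\eta=\Phi'(\psi_t)\Delta\psi_t+\Phi''(\psi_t)|\nabla\psi_t|^2, \qquad |\nabla\eta|=|\Phi'(\psi_t)|\,|\nabla\psi_t|,
\]
whence $|\nabla\eta|+|\Delta\eta|\le C(K,N,R,r)$, together with $\eta\equiv 1$ on $\mathcal{K}$ and $\supp\eta\subset U$. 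In the case $K=0$, the constants in Laplacian comparison, \eqref{eq:BE}, and \eqref{eq:kernelestimate} are all $R$-independent (since $c=0$ in \eqref{eq:kernelestimate} and the doubling constant depends only on $N$), so the final constant depends only on $N$ and $r$.

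\emph{Main obstacle.} The delicate point is Step 1: justifying the chain and product rules for the measure-valued Laplacian on Lipschitz (but not globally $\W$) functions. The chain rule for $\chi\circ\sfd(p_i,\cdot)^2$ is proven by first localizing to $B_R(p_i)$ via a cutoff (so that $\sfd(p_i,\cdot)^2$ becomes a bounded $\W$ function there), then using the classical chain rule for $\bd$ on $\W$ functions; the gradient-squared term in the chain-rule formula is controlled by \eqref{eq:grad dist}, and $\bd\sfd(p_i,\cdot)^2$ is bounded above by Laplacian comparison. Once Step 1 is in place, the remainder is a routine application of the heat-semigroup regularization and the chain rule for test functions.
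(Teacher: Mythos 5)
Your overall strategy --- build a rough Lipschitz cutoff, mollify by the heat semigroup, then compose with a smooth one-dimensional profile --- is indeed the Mondino--Naber approach (the paper does not reproduce the proof; it simply cites \cite{MN}). However, Step 1 contains a genuine gap which Step 2 then inherits.

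The claim $|\bd\chi_i|\le C_1\,\mea$ is not justified by Laplacian comparison, which is \emph{one-sided}: \eqref{eq:laplacian comparison} (and its extension to $K\neq 0$) gives only $\bd\,\sfd(p_i,\cdot)^2\le C_0\,\mea$, with no lower bound. In the chain-rule formula
\begin{equation*}
\bd\chi_i=\chi'\big(\sfd(p_i,\cdot)^2\big)\,\bd\,\sfd(p_i,\cdot)^2+4\,\chi''\big(\sfd(p_i,\cdot)^2\big)\,\sfd(p_i,\cdot)^2\,\mea ,
\end{equation*}
the profile $\chi$ is non-increasing so $\chi'\le 0$, and multiplying the upper comparison bound by $\chi'$ only yields the \emph{lower} bound $\bd\chi_i\ge -C\,\mea$. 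The complementary upper bound $\bd\chi_i\le C\,\mea$ would require $\bd\,\sfd(p_i,\cdot)^2\ge -C'\,\mea$ on the annulus $\{r/4<\sfd(p_i,\cdot)<r/2\}$, and that lower comparison fails already on smooth manifolds: both the absolutely continuous part of $\bd\,\sfd(p_i,\cdot)$ near conjugate points and its non-positive singular part on the cut locus can be unbounded from below (on $S^N$, $\Delta\sfd_p=(N-1)\cot\sfd_p\to-\infty$ near the antipode). The same difficulty persists in ${\rm RCD}$ spaces; this is precisely the subtlety the paper alludes to when, right after introducing $D(\bd,\Omega)$, it stresses that $\bd$ of a distance function must be treated as a genuine signed Radon measure (citing \cite{CMlapl}). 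Since nothing prevents the cut locus of some $p_i$ from meeting the relevant annulus, $\bd\psi$ need not be absolutely continuous with bounded density, and the inference in Step 2 --- ``since $\bd\psi=g\,\mea$ with $\|g\|_{L^\infty}\le C_2$, we have $\|\Delta\psi_t\|_{L^\infty}\le C_2$'' --- has a false premise.

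What Step 1 actually yields is only the one-sided estimate $\bd\psi\ge -C\,\mea$, which via positivity of the heat flow gives $\Delta h_t\psi\ge -C$ but not the reverse inequality, so $|\Delta\eta|$ cannot be bounded after composing with $\Phi$. To close the argument one must bound $\|\Delta h_t\psi\|_{L^\infty}$ by a route that does not pass through a two-sided pointwise Laplacian bound on $\psi$: the standard one is the analyticity of $(h_t)$ on $L^\infty$ (equivalently, time-regularity of the heat kernel), which gives $\|\Delta h_t f\|_{L^\infty}\le (C/t)\|f\|_{L^\infty}$; since your mollification time $t$ is chosen bounded away from $0$ in terms of $K,N,R,r$ alone, this supplies the missing bound. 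Without such an ingredient, or some other replacement for the unavailable lower Laplacian comparison, the proposal is incomplete.
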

We recall that for any $N \in [1,\infty)$ the Euclidean $N$-cone over a m.m.s. $(Z,\mea_Z,\sfd_Z)$ is defined to be the space $C(Z)\coloneqq[0,\infty)\times Z/ (\{0\}\times Z)$ endowed with the following distance and measure
\begin{align*}
	&\sfd_{C(Z)}((t,z),(s,z'))\coloneqq \sqrt{s^2+t^2-2st\cos(\sfd_Z(z,z')\wedge \pi)},\\
	&\mea_{C(Z)}\coloneqq t^{N-1}\d t\,\otimes\, \mea_Z.
\end{align*}
It was proven in \cite{ket} that 
\begin{equation}\label{eq:ketterer}
	\begin{split}
	&\text{if the Euclidan $N$-cone ($N\ge 2$) over a m.m.s. $Z$ is an ${\rm RCD}(0,N)$ space, }\\
	&\text{then $\diam(Z)\le \pi$ and $Z$ is an ${\rm RCD}(N-2,N-1)$ space.}
	\end{split}
\end{equation}

\subsubsection{Localized Bochner inequality}\label{sec:local bochner}
Along this subsection $\Omega$ is an open subset of $\X$. We define 
\[{\sf{Test}}_\loc(\Omega):=\{u \in \LIP_\loc(\Omega)\cap D(\bd,\Omega)\ | \  \bd u \in  \W_\loc(\Omega)\}. \]
This definition is motivated by the following observation:
\begin{equation}\label{loctotest}
	\text{$\eta \in {\sf{Test}}(X)$ with $\supp \eta \subset \subset \Omega$, $u \in {\sf{Test}}_\loc(\Omega)$ $\implies$ $\eta u \in  {\sf{Test}}(X).$}
\end{equation}
This follows from the fact that  for any  $f \in \LIP(\X)\cap D(\Delta)$ it holds that $|\nabla f|^2 \in \W(\Omega)$, which is essentially a consequence of  the following inequality  (see \cite[Corollary 3.3.9]{G14})
\[
	\int |\H f|_{HS}^2\, \d \mea \le \int (\Delta f)^2\, \d \mea -K\int |\nabla f|^2\, \d \mea, \quad \forall f \in \test \X.
\]
 \eqref{loctotest} allows us to define for every $u \in \test_{\loc}(\Omega)$ the functions $|\H u|_{HS}, {\sf tr} \H u\in L^2_{\loc}(\Omega)$ as 
\begin{align*}
	&|\H u|_{HS}\coloneqq|\H{\eta u}|_{HS},\, \mea\text{-a.e. in $\Omega'$},\\
	&{\sf tr} \H u\coloneqq{\sf tr} \H {\eta u},\, \mea\text{-a.e. in $\Omega'$},
\end{align*}
for every $\eta \in \test(\X)$ with compact support in $\Omega$ and such that $\eta=1$ in $\Omega'\subset \subset \Omega$. This definition is well posed thanks to the locality property of the Hessian (see \cite[Prop. 3.3.24]{G14}). Recall also from Proposition \ref{prop:goodcutoff} that many such functions $\eta$  exist.  The following follows directly from \eqref{eq:improvedboch} and the above definitions.
\begin{prop} [Local improved Bochner Inequality]
	Let $u \in \mathsf{Test}_\loc(\Omega)$, then $|\nabla u|^2 \in D(\bd,\Omega)$ and
	\begin{equation}\label{localbochner}
		\bd\restr{\Omega} (|\nabla u|^2)\ge 2\left (|\H u|_{HS}^2+\frac{(\Delta u-{\sf tr}\H { u})^2}{N-\dim(\X)}+\langle \nabla u, \nabla \Delta u\rangle+K|\nabla u|^2 \right )\mea|_{\Omega },
	\end{equation}
	where $\frac{(\Delta u-{\sf tr}\H u)^2}{N-\dim(\X)}$ is taken to be 0 in the case $\dim(\X)=N.$
\end{prop}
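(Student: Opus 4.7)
The strategy is to localize the global improved Bochner inequality \eqref{eq:improvedboch} by using the same cut-off construction already employed to define $|\H u|_{HS}$ and ${\sf tr}\H u$ on $\test_\loc(\Omega)$. Concretely, given an arbitrary $\Omega'\subset\subset\Omega$, I would use Proposition \ref{prop:goodcutoff} to choose $\eta\in\test(\X)$ with $\eta\equiv 1$ on $\Omega'$ and $\supp\eta\subset\subset\Omega$, and then consider $v\coloneqq\eta u$. By the implication \eqref{loctotest}, $v\in\test(\X)$, so the global improved Bochner inequality applies to $v$.

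First I would verify $|\nabla u|^2\in\W_\loc(\Omega)$: since $v\in\test(\X)$ we have $|\nabla v|^2\in\W(\X)$ (this is the $\W$-regularity of $|\nabla f|^2$ for $f\in\test(\X)$ recalled just after \eqref{loctotest}); and because $v=u$ $\mea$-a.e.\ on $\{\eta=1\}\supset\Omega'$, the locality of the gradient yields $|\nabla v|^2=|\nabla u|^2$ $\mea$-a.e.\ on $\Omega'$, giving the desired local membership. Then I would apply \eqref{eq:improvedboch} to $v$, obtaining as a measure inequality on $\X$
\[
\bd\!\left(\tfrac{|\nabla v|^2}{2}\right)\ \ge\ \left(|\H v|_{HS}^2+K|\nabla v|^2+\la\nabla v,\nabla\Delta v\ra+\tfrac{(\Delta v-{\sf tr}\H v)^2}{N-\dim(\X)}\right)\mea.
\]
Testing this against an arbitrary non-negative $f\in\LIP_c(\Omega)$ with $\supp f\subset\Omega'$ (using the weak formulation \eqref{deflap} of the measure-valued Laplacian, together with Remark \ref{extension1}), and then invoking the locality of the gradient, Laplacian and Hessian on the open set $\{\eta=1\}\supset\supp f$ (so that $\nabla v=\nabla u$, $\Delta v=\Delta u$, $\H v=\H u$ and in particular $|\H v|_{HS}=|\H u|_{HS}$ and ${\sf tr}\H v={\sf tr}\H u$ on $\supp f$, by the very definitions of the local Hessian quantities), I can rewrite every integrand in terms of $u$. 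This produces exactly the weak integral inequality
\[
-\int\la\nabla f,\nabla|\nabla u|^2\ra\,\d\mea\ \ge\ 2\int f\!\left(|\H u|_{HS}^2+\tfrac{(\Delta u-{\sf tr}\H u)^2}{N-\dim(\X)}+\la\nabla u,\nabla\Delta u\ra+K|\nabla u|^2\right)\!\d\mea,
\]
and Proposition \ref{prop:laplineq} applied to $|\nabla u|^2$ on $\Omega'$ converts it into the claimed measure inequality on $\Omega'$; arbitrariness of $\Omega'\subset\subset\Omega$ then gives \eqref{localbochner} on the whole $\Omega$.

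The only genuinely delicate point will be the matching of the Hessian-based terms under the cut-off: I need to know that $|\H v|_{HS}$ and ${\sf tr}\H v$ computed for $v=\eta u\in\test(\X)$ agree $\mea$-a.e.\ on $\{\eta=1\}$ with $|\H u|_{HS}$ and ${\sf tr}\H u$ as defined locally. This is precisely the well-posedness of the local definitions recorded in Section \ref{sec:local bochner} (based on the locality of $\H$ from \cite[Prop.\ 3.3.24]{G14}), so it can be invoked directly. All other ingredients (locality of $\nabla$ and $\bd$, $L^1_{\loc}$-integrability of the right-hand side, and the passage from test functions in $\LIP_c(\Omega)$ to the measure formulation via Proposition \ref{prop:laplineq}) are standard within the framework set up in Section \ref{sec:preliminari}.
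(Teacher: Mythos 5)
Your argument is correct and is precisely the standard localization the paper has in mind — the paper itself records no proof beyond remarking that the statement "follows directly from \eqref{eq:improvedboch} and the above definitions," and your expansion (apply the global inequality to $\eta u\in\test(\X)$, test against $f\in\LIP_c(\Omega)$ supported where $\eta\equiv1$, use locality of $\nabla$, $\Delta$, $\H$ and the well-posedness of the local Hessian quantities from Section \ref{sec:local bochner}, then conclude with Proposition \ref{prop:laplineq}) is exactly how one would fill that in.
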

We conclude this subsection observing that
\begin{equation}\label{eq:sobolev gradient}
\text{if $u\in {\sf{Test}}_\loc(\Omega),$ then $|\nabla u|\in \W_{\loc}(\Omega)$ and $\nabla |\nabla u|^2=2|\nabla u| \nabla |\nabla u|$,}
\end{equation}
indeed from \eqref{eq:hessian gradient} it follows that  $|\nabla |\nabla u|^2||\nabla u|^{-1}\le 2|\H u|_{OP},$ $\mea$-a.e. in $\{|\nabla u|>0\}$ and the claim follows applying Lemma \ref{lem:radice sobolev} with $\alpha=1/2$ and $|\nabla u|^2$ in place of $u$.
\subsubsection{(Sub)harmonic functions in RCD spaces}
In this subsection $(\X,\sfd,\mea)$ is an ${\rm RCD}(K,N)$  m.m.s. with $N \in[1,\infty)$.
\begin{prop}\label{prop:maxprinc}
	Let $\Omega$ be an open and bounded subset of  $\X$  and let $u \in D(\bd, \Omega)$ be such that $\bd u \ge 0$. Then 
	\begin{itemize}
		\item weak maximum principle: if $u$ is upper semicontinuous in $\bar \Omega$, then
		\[ {\rm ess} \sup_{\Omega} u \le \sup_{\partial \Omega} u, \]
		\item strong maximum principle:   
		\[\text{if $u \in C(\Omega)$, $\Omega$ is connected and $u(x)=\sup_{\Omega}u$ for some $x \in \Omega$, then $u$ is constant.}\]
	\end{itemize}
\end{prop}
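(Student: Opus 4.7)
My strategy is to handle the two items separately. The weak principle follows from a direct energy/test-function argument together with the Sobolev-to-Lipschitz property recalled in Proposition \ref{prop:sob to lip}; the strong principle will then be reduced to a weak Harnack-type statement for non-negative superharmonic functions.

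\textbf{Weak maximum principle.} Set $M \coloneqq \sup_{\partial \Omega} u$ and assume $M < +\infty$ (otherwise there is nothing to prove). For $\eps > 0$ I would introduce the truncation $v_\eps \coloneqq (u - M - \eps)^+$. Because $u$ is upper semicontinuous on $\bar\Omega$, the set $\{u \ge M+\eps\}\cap\bar\Omega$ is closed in $\bar\Omega$ and, as $u\le M$ on $\partial\Omega$, it is disjoint from $\partial\Omega$; it is therefore a compact subset of $\Omega$. Hence $\supp v_\eps$ is compact in $\Omega$. Using $u \in W_\loc(\Omega)$ and the chain rule for positive parts one gets $v_\eps \in W_\loc(\Omega)$ with $\nabla v_\eps = \nchi_{\{u > M+\eps\}} \nabla u$; combined with a Lipschitz cutoff $\eta \in \LIP_c(\Omega)$ equal to $1$ on $\supp v_\eps$, this yields $v_\eps = \eta v_\eps \in \W(\X)$, nonnegative and with compact support in $\Omega$. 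Applying Remark \ref{extension1} to $\bd u \ge 0$ with $g\equiv 0$ and testing against $v_\eps$,
\[
0 \le -\int \la \nabla v_\eps, \nabla u\ra\,\d\mea = -\int \nchi_{\{u > M+\eps\}}|\nabla u|^2\,\d\mea \le 0,
\]
so $|\nabla v_\eps| = 0$ $\mea$-a.e. Extending $v_\eps$ by zero gives an element of $\W(\X)$ with vanishing weak upper gradient, which by the (local) Sobolev-to-Lipschitz property admits a locally Lipschitz representative with $\Lip = 0$, i.e.\ locally constant. Since the extension vanishes on the nonempty open set $\X\setminus\supp v_\eps$ and $\X$ is connected (RCD spaces are length), $v_\eps \equiv 0$ $\mea$-a.e. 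Letting $\eps \downarrow 0$ yields $\essup_{\Omega} u \le M$.

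\textbf{Strong maximum principle.} Assume $u \in C(\Omega)$, $\Omega$ connected, and $S \coloneqq u(x_0) = \sup_\Omega u$ for some $x_0 \in \Omega$. Let $E \coloneqq \{x \in \Omega : u(x) = S\}$; then $E$ is closed in $\Omega$ by continuity and nonempty. By connectedness it suffices to show $E$ is open. Fix $x_1 \in E$ and $r > 0$ with $\bar B_{2r}(x_1) \subset \Omega$. The function $v \coloneqq S - u \ge 0$ is continuous, belongs to $D(\bd, \Omega)$ with $\bd v \le 0$, and satisfies $v(x_1)=0$. Apply the \emph{weak Harnack inequality} for non-negative superharmonic functions: there exist $p_0 > 0$ and $C = C(K,N,r) > 0$ such that
\[
\left(\fint_{B_r(x_1)} v^{p_0}\,\d\mea\right)^{1/p_0} \le C\,\essinf_{B_r(x_1)} v.
\]
Since $v \ge 0$ is continuous and $v(x_1) = 0$, $\essinf_{B_r(x_1)} v = 0$, whence $v = 0$ $\mea$-a.e.\ on $B_r(x_1)$ and, by continuity, $v \equiv 0$ on $B_r(x_1)$. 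Hence $B_r(x_1) \subset E$, so $E$ is open and thus $E = \Omega$.

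\textbf{Expected obstacle.} The main non-trivial ingredient is the weak Harnack inequality for non-negative superharmonic functions in ${\rm RCD}(K,N)$. In the smooth Riemannian setting this is classical, and in the metric setting it is by now a standard consequence of Moser iteration once one has uniform local doubling and a local $(1,2)$-Poincar\'e inequality (both available in our framework via the Bishop-Gromov inequality and the RCD machinery recalled in Section \ref{sec:rcd}). The care needed is to show that the standard Caccioppoli/De Giorgi inequalities remain valid when the subharmonicity is interpreted in the measure-valued sense of $\bd u \ge 0$, which is done by inserting truncations of $v$ (of the form $\min(v,k)^{-q}$ and powers thereof multiplied by Lipschitz cutoffs with support in $\Omega$) into the definition of $\bd u$, using the extensions recalled in Remark \ref{extension1}. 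I expect the proof of the weak Harnack step to be the only place where a substantial argument (or an explicit citation to the metric-space literature) is needed, while the rest of the statement follows from clean test-function manipulations.
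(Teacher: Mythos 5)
Your argument is correct and is essentially the same as the paper's; the paper simply outsources both steps to \cite[Thm. 2.3, Thm. 2.8]{grigoni}, where the truncation-plus-energy argument you spell out for the weak principle and the ball-level rigidity you derive from the weak Harnack inequality are proven. Two small remarks: (i) the boundedness of $v_\eps$ needed to invoke Remark \ref{extension1} is automatic since $u$ is upper semicontinuous on the compact set $\bar\Omega$ and hence bounded above there; (ii) the weak Harnack estimate you flag as the ``expected obstacle'' is already available within the paper --- it is exactly Proposition \ref{prop:harnack} in Appendix \ref{ap:bjorn} --- so no independent Moser iteration argument is needed.
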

\begin{proof}
	For the first part suppose by contradiction that $\text{ess} \sup_{\Omega} u > \sup_{\partial \Omega} u$. Then there exists $c \in \rr$ such that $\sup_{\partial \Omega} u<c<\text{ess} \sup_{\Omega} $, in particular from the upper semicontinuity and compactness of $\partial \Omega$ we have that $f-\min(f,c)\in \W(\X)$ with compact support in $\Omega$. From this point the proof continue exactly as in  \cite[Thm. 2.3]{grigoni}.
	
	For the second part we apply the strong-maximum principle in \cite[Thm. 2.8]{grigoni} to a ball $B_r(x)\subset \Omega$, obtaining that $u$ is constantly equal to $\sup_{\Omega}u$ in $B_r(x).$ In particular the set $\{u=\sup_{\Omega}u\}\cap \Omega$ is open (and closed in $\Omega$) and thus must coincide with $\Omega$.
\end{proof}

It is known that in  ${\rm RCD}(K,N)$ spaces harmonic functions are continuous. It follows for example from the existence of Harnack inequalities (see Appendix \ref{ap:bjorn}) that they are locally Hölder.  It turns out that they are actually locally-Lipschitz and that the following gradient bound,   analogous to the one due to Cheng and Yau (\cite{chengyau})  in the smooth setting, holds (\cite[Theorem 1.2]{jiang}). We state it only in the case $N=0$ (see also \cite{bobo} for an analogous result).
\begin{theorem}[Gradient estimate]
	For every $N \in [1,\infty)$ there  exists a positive constant $C=C(N)$ such that the following holds. Let $\X$  be  an ${\rm RCD}(0,N)$ space and let $u\in D(\bd,B_{2R}(x))$ be positive and harmonic in $B_{2R}(x)$, then
	\begin{equation} \label{eq:cheng}
		\left \|\frac{|Du|}{u} \right \|_{L^{\infty}(B_R(x))} \le \frac{C}{R}.
	\end{equation}
\end{theorem}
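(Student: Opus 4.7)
The plan is to adapt the classical Cheng--Yau gradient-estimate argument to the ${\rm RCD}(0,N)$ setting, via the localized improved Bochner inequality \eqref{localbochner} applied to $f\coloneqq\log u$. Since $u>0$ is harmonic on $B_{2R}(x)$, formally $\Delta f=-|\nabla f|^2$, so the target bound $|Du|/u\le C/R$ on $B_R(x)$ becomes $w\coloneqq|\nabla f|^2 \le C(N)/R^{2}$ there.

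First I would set up the computation. Continuity of $u$ (known for harmonic functions in ${\rm RCD}$ spaces) together with positivity yields locally uniform positive lower bounds $u\ge c(\Omega')>0$ on every $\Omega'\subset\subset B_{2R}(x)$, so by the chain rule $f\in\W_\loc(B_{2R}(x))$ with $\nabla f=\nabla u/u$, and a direct computation gives $\Delta f=-|\nabla f|^2$. Applying \eqref{localbochner} to $f$ with $K=0$ and absorbing the Hessian terms via the elementary inequality $|\H f|_{HS}^{2}+(\Delta f-{\sf tr}\H f)^{2}/(N-\dim(\X))\ge(\Delta f)^{2}/N$ (itself a consequence of $|\H f|_{HS}^{2}\ge({\sf tr}\H f)^{2}/\dim(\X)$ and the scalar inequality $a^{2}/p+b^{2}/q\ge(a+b)^{2}/(p+q)$) gives the subelliptic inequality
\begin{equation*}
\bd w \;\ge\; \Bigl(\tfrac{2}{N}\,w^{2}\;-\;2\langle \nabla f,\nabla w\rangle\Bigr)\mea
\end{equation*}
on $B_{2R}(x)$ as Radon measures.

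Next I would fix a good cut-off $\eta$ from Proposition \ref{prop:goodcutoff} (using the $K=0$ case, so the constants are $R$-independent upon rescaling) with $\eta\equiv 1$ on $B_R(x)$, $\supp\eta\subset B_{3R/2}(x)$, and $|\nabla\eta|^{2}+|\Delta\eta|\le C(N)/R^{2}$. I would then test the inequality for $\bd w$ against $\eta^{2}w^{p-1}$ for exponents $p\ge 1$, using Cauchy--Schwarz to absorb the drift $\langle\nabla f,\nabla w\rangle$ into a small fraction of $\eta^{2}w^{2}$ plus lower-order terms controlled by $|\nabla\eta|^{2}w^{p}$. Combined with the doubling and weak $(1,2)$-Poincar\'e inequalities valid on ${\rm RCD}(0,N)$ spaces, this yields a Moser-type iteration that bootstraps an initial $L^{1}$ bound (obtained by testing against $\eta^{2}$ alone, where the quadratic term $\tfrac{2}{N}\int \eta^{2} w^{2}\,\d\mea$ dominates) into the pointwise estimate $\sup_{B_R(x)} w \le C(N)/R^{2}$, which is \eqref{eq:cheng}.

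The main obstacle is the circularity in the setup: the use of \eqref{localbochner} requires $f\in\testloc(B_{2R}(x))$, which in particular needs $|\nabla f|\in L^{\infty}_\loc$, precisely what we are trying to prove. I would resolve this by running the entire Moser iteration on smoothed quantities $f^{s,\epsilon}\coloneqq\log(h_{s}u+\epsilon)$, which belong to $\test(\X)$ for $s,\epsilon>0$ thanks to the $L^{\infty}$-to-Lipschitz regularization \eqref{eq:BE}. Uniform bounds in $s,\epsilon$ follow because the Bochner inequality is stable under the heat semigroup and because $\bd(h_{s}u)=h_{s}(\bd u)$ vanishes near $B_{2R}(x)$ for small $s$ (up to a controlled error). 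Passing to the limit $s,\epsilon\to 0^{+}$, using the local convergence $h_{s}u\to u$ and the lower semicontinuity \eqref{eq:stability wug} of the minimal weak upper gradient, then recovers \eqref{eq:cheng}.
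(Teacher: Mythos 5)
The paper does not prove this statement; it cites it directly from Jiang \cite{jiang} (see also \cite{bobo}), so there is no in-paper proof to compare against. On its own merits, your plan follows the standard Cheng--Yau strategy (set $f=\log u$, derive a Bochner-type differential inequality for $w=|\nabla f|^2$, then Moser-iterate), which is indeed the route taken in the cited reference, and your Kato-type absorption $|\H f|_{HS}^2+(\Delta f-{\sf tr}\H f)^2/(N-\dim\X)\ge(\Delta f)^2/N$ is correct.

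However, the regularization step you propose to break the circularity has a genuine gap, and the circularity itself is deeper than you acknowledge. First, $u$ is only given on $B_{2R}(x)$, whereas $h_s$ is a global operator on $L^2(\mea)$: $h_s u$ is not defined, and $h_s(\eta u)$ is not harmonic anywhere, so the identity $\Delta f=-|\nabla f|^2$ fails for $\log(h_s(\eta u)+\epsilon)$ even inside $B_R(x)$, introducing error terms that dominate the estimate unless controlled with real work (this control is precisely the technical core of Jiang's argument, which you are glossing over). Second, membership $f\in\testloc$ requires not only $|\nabla f|\in L^\infty_\loc$ but also $\Delta f=-w\in \W_\loc$, i.e. $W^{1,2}_\loc$-regularity of $|\nabla u|$; this is a second-order regularity fact on the same footing as the estimate you are trying to prove (compare Theorem \ref{thm:harm estimates} in the paper), so the smoothing scheme must also deliver it uniformly, which is not addressed. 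Finally, your claim that testing the differential inequality against $\eta^2$ alone gives the initial integrability ``because $\frac{2}{N}\int\eta^2 w^2\,\d\mea$ dominates'' does not hold: after integrating the drift by parts one picks up $+2\int\eta^2 w^2\,\d\mea$ on the right, which swamps the favourable $\frac{2}{N}\int\eta^2 w^2\,\d\mea$ term for every $N\ge 1$. The correct starting integrability is the Caccioppoli bound $\int\eta^2 w\,\d\mea\le 4\int|\nabla\eta|^2\,\d\mea$ coming from testing $\Delta u=0$ against $\eta^2/u$, not from the Bochner inequality itself, and the Moser iteration needs the exponent $p$ large enough for the $w^2$ term to absorb the drift.
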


\begin{lemma}\label{ascoliarm}
    Let $\X$ be an ${\rm RCD}(0,N)$ space and let $\{u_i\}$ be a sequence of harmonic and continuous functions in $\Omega$. Suppose that
    \[\sup_\Omega |u_i|<C,  \]
    then there exists a subsequence $u_{i_k}$ that converges locally uniformly to a function $u$ harmonic in $\Omega$.
\end{lemma}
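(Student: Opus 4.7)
The plan is to upgrade the uniform $L^\infty$-bound to a local uniform Lipschitz bound via the Cheng–Yau type estimate \eqref{eq:cheng}, then extract a uniformly convergent subsequence by Ascoli–Arzelà, and finally pass to the limit in the weak formulation of harmonicity.

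First, fix any ball $B_{4R}(x)\subset\subset\Omega$. Each $v_i\coloneqq u_i+C+1$ is positive harmonic on $B_{4R}(x)$ with $v_i\le 2C+1$, so \eqref{eq:cheng} applied on $B_{2R}(x)$ yields
\[
\||\nabla u_i|\|_{L^\infty(B_R(x))}=\left\||\nabla v_i|\right\|_{L^\infty(B_R(x))}\le \frac{C'(N)(2C+1)}{R},
\]
with $C'(N)$ independent of $i$. By the local Sobolev-to-Lipschitz property (Proposition~\ref{prop:sob to lip}), on each ball $B_{R/2}(x)$ the functions $u_i$ admit continuous representatives (which coincide with the given continuous $u_i$) that are equi-Lipschitz with constant independent of $i$. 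Since $\Omega$ is covered by such balls, $\{u_i\}$ is locally uniformly bounded and locally equicontinuous on $\Omega$.

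Because $(\X,\sfd)$ is proper (from Bishop–Gromov), every compact $K\subset\Omega$ is totally bounded, and the classical Ascoli–Arzelà theorem provides a subsequence $u_{i_k}$ converging locally uniformly on $\Omega$ to some continuous $u$; the local Lipschitz bounds pass to the limit, so $u\in\LIP_{\loc}(\Omega)$.

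It remains to show that $u$ is harmonic. Let $\Omega'\subset\subset\Omega$ and pick $\eta\in\test(\X)$ with $\supp\eta\subset\subset\Omega$ and $\eta\equiv 1$ on $\Omega'$ (Proposition~\ref{prop:goodcutoff}). The uniform bounds on $u_{i_k}$ and $|\nabla u_{i_k}|$ on $\supp\eta$ imply that $\eta u_{i_k}$ is a bounded sequence in $\W(\X)$, and since $\eta u_{i_k}\to \eta u$ uniformly (hence in $L^2$), up to a further subsequence $\eta u_{i_k}\rightharpoonup \eta u$ weakly in $\W(\X)$. In particular $\nabla(\eta u_{i_k})\rightharpoonup \nabla(\eta u)$ weakly in $L^2(T\X)$. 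For any $f\in \LIP_c(\Omega')$ the harmonicity of $u_{i_k}$ together with $\eta\equiv 1$ on $\supp f$ gives
\[
0=-\int\langle\nabla f,\nabla u_{i_k}\rangle\,\d\mea=-\int\langle\nabla f,\nabla(\eta u_{i_k})\rangle\,\d\mea\xrightarrow[k\to\infty]{}-\int\langle\nabla f,\nabla(\eta u)\rangle\,\d\mea=-\int\langle\nabla f,\nabla u\rangle\,\d\mea.
\]
By the arbitrariness of $\Omega'$ and $f$, $u\in D(\bd,\Omega)$ with $\bd u=0$.

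The main subtlety is the last step: the natural bounds put $u_{i_k}$ only in $\W_{\loc}(\Omega)$, not in $\W(\X)$, so one cannot directly invoke weak $\W(\X)$-compactness. The cut-off trick with $\eta\in\test(\X)$, exploiting the locality of $\nabla$ on $\{\eta=1\}$, is the standard device to bypass this and is the one point where a little care is needed.
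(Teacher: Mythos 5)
Your proof is correct and follows essentially the same route as the paper's: upgrade the $L^\infty$ bound to a local gradient bound via \eqref{eq:cheng} (the paper is terse here, but the ``$+C+1$'' shift to make the functions positive is indeed what is meant), extract a locally uniformly convergent subsequence by Ascoli--Arzelà, and then obtain harmonicity of the limit by the cut-off/weak-$\W(\X)$-compactness argument. The only cosmetic difference is that the paper takes $\eta\in\LIP_c(\Omega)$ rather than $\eta\in\test(\X)$, which is slightly more economical since it avoids invoking Proposition~\ref{prop:goodcutoff}; either choice works.
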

\begin{proof}
    The existence of a (non relabelled) subsequence $u_i$ converging locally uniformly to a continuous function $u$ follows from the gradient estimate  \eqref{eq:cheng} and Ascoli-Arzelà. It remains to prove that $u$ is harmonic in $\Omega.$ Fix $\Omega' \subset \subset \Omega$ and $\eta \in \Lip_c(\Omega)$ with $\eta=1$ in $\Omega'$. Then $\eta u_i\in \W(\X)$ converges to $u\eta $ in $L^2(\Omega').$ Moreover, again by \eqref{eq:cheng}, we have
    \[ \sup_i \||\nabla (\eta u_i)|\|_{L^2(\mea )} <+\infty.\]
    In particular (recall that $\W(\X)$ is Hilbert) up to a subsequence $u_i\eta  \rightharpoonup \eta u$ in $\W(\X)$ and  therefore (from the locality of the gradient) $\int_{\Omega} \la \nabla u, \nabla f\ra\, \d \mea=0$ for every $f \in \LIP_{c}(\Omega')$ (see also \cite[Prop 5.19]{Gappl} for a similar limiting argument). From the arbitrariness of $\Omega'$  we deduce  both that $u\in \W_{\loc}(\Omega)$ and that $u$ is harmonic  in $\Omega.$
\end{proof}

\subsubsection{pmGH-convergence}
We will adopt the following definition of {\emph{pointed-measure Gromov Hausdorff convergence}} of pointed metric measure spaces, which is equivalent to the usual one in the case when $\supp \mea_n=X_n$ and the measures $\mea_n$ are uniformly locally doubling. We refer to  \cite{GMSconv} for a discussion on the various notions of convergence of p.m.m. spaces and their  relations.

\begin{definition}[\emph{pointed-measure Gromov Hausdorff convergence}]\label{def:pmgh}
	We say that the sequence  $(\X_n,\sfd_n,\mea_n,x_n)$ of p.m.m.s. \emph{pointed-measure Gromov Hausdorff}-converges (pmGH-converges in short) to  the p.m.m.s.  $(\X_\infty,\sfd_\infty,\mea_\infty,x_\infty)$, if 
	there are sequences $R_n\uparrow+\infty$, $\eps_n\downarrow 0$ and Borel maps $f_n:X_n\to X_\infty$ such that 
	\begin{itemize}
		\item[1)] $f_n( x_n)=x_\infty$,
		\item[2)] $\sup_{x,y\in B_{R_n}(x_n)}|\sfd_n(x,y)-\sfd_\infty(f_n(x),f_n(y))|\leq\eps_n$,
		\item[3)] the $\eps_n$-neighborhood of $f_n(B_{R_n}( x_n))$ contains $B_{R_n-\eps_n}( x_\infty)$,
		\item[4)] for any $\varphi\in C_{bs}(X_\infty)$ it holds $\lim\limits_{n\to\infty}\int \varphi\circ f_n\,\d\mea_n=\int\varphi\,\d\mea_\infty$.
	\end{itemize}
\end{definition}

It is proven in \cite{GMSconv} that  there exists a distance $\sfd_{pmGH}$  that metrizes the pmGH-convergence for the class of ${\rm RCD}(K,N)$ spaces with $K\in \rr$ and $N<+\infty$ fixed, and more generally for every family of uniformly locally doubling metric measure spaces.	
	
	It will be also useful to recall the so called \emph{extrinsic approach} (see \cite{GMSconv}) to pmGH-convergence, valid  in the case of uniformly doubling and  geodesic m.m.s.:  the pmGH-convergence can be realized by  a proper metric space $(Y,\sfd)$ where $\X_i,\X_\infty$ are subsets of $Y$ such that $\sfd_Y\restr{\X_i\times \X_i}=\sfd_i,\sfd_Y\restr{\X_\infty\times \X_\infty}=\sfd_\infty$, $\sfd_{Y}(x_i,x_\infty)\to 0$, $\mea_i\rightharpoonup\mea_\infty$ in duality with $C_{bs}(Y)$ and  $\sfd^Y_H(B^{\X_n}_R(x_n),B^{\X_\infty}_R(x_\infty))\to 0$ for every $R>0.$

After the works in  \cite{sturm1},\cite{sturm2}, \cite{villani},  \cite{GMSconv}, \cite{AGSrcd} and in view of the Gromov compactness theorem \cite[Sec. 5.A]{gromov}, the following fundamental compactness result for RCD spaces is known.
\begin{prop}\label{prop:compactness}
	Suppose $(\X_n,\sfd_n,\mea_n,x_n)$ are ${\rm RCD}(K_n,N)$ spaces with $N\in[1,\infty)$, $K_n\to K\in \rr$ and $\mea(B_1(x_n))\in[v^{-1},v]$ for some $v>0$. Then there exists a subsequence $(\X_{n_k},\sfd_{n_k},\mea_{n_k},x_{n_k})$ that pmGH-converges to an ${\rm RCD}(K,N)$  space $(X_\infty,\sfd_\infty,\mea_\infty,x_\infty)$.  
\end{prop}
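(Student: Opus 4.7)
The plan is to reduce this to known stability/compactness results for CD spaces and then to stability of infinitesimal Hilbertianity, in the four standard steps outlined below.

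\textbf{Step 1 (metric precompactness).} Since every ${\rm RCD}(K_n,N)$ space is in particular ${\rm CD}(K_n,N)$, the Bishop--Gromov inequality holds with a rate depending only on $K_n$ and $N$. Combined with $K_n\to K$ and the two-sided bound $\mea_n(B_1(x_n))\in[v^{-1},v]$, this gives a uniform local doubling estimate on all balls $B_R(x_n)\subset \X_n$ with constant depending only on $K$, $N$, $v$, $R$. By Gromov's precompactness theorem for proper pointed metric spaces \cite[Sec. 5.A]{gromov}, a subsequence $(\X_{n_k},\sfd_{n_k},x_{n_k})$ converges in the pointed Gromov--Hausdorff sense to some proper pointed metric space $(\X_\infty,\sfd_\infty,x_\infty)$.

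\textbf{Step 2 (measure compactness).} Invoking the extrinsic realization recalled just before the statement, I may isometrically embed all $\X_{n_k}$ and $\X_\infty$ in a single proper metric space $(Y,\sfd_Y)$ with $\sfd_Y(x_{n_k},x_\infty)\to 0$ and $\sfd^Y_H(B_R^{\X_{n_k}}(x_{n_k}),B_R^{\X_\infty}(x_\infty))\to 0$ for every $R>0$. The uniform doubling together with $\mea_{n_k}(B_1(x_{n_k}))\le v$ yields $\sup_k \mea_{n_k}(B_R^Y(x_\infty))<\infty$ for every $R>0$; via a diagonal argument and Prokhorov's theorem, a further subsequence satisfies $\mea_{n_k}\rightharpoonup \mea_\infty$ in duality with $C_{bs}(Y)$, for some Radon measure $\mea_\infty$ on $\X_\infty$. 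The lower bound $\mea_n(B_1(x_n))\ge v^{-1}$ and local doubling prevent concentration of mass at infinity and ensure $\supp(\mea_\infty)=\X_\infty$, so that $(\X_\infty,\sfd_\infty,\mea_\infty,x_\infty)$ is a genuine p.m.m.s.\ and the convergence is in the pmGH sense of Definition \ref{def:pmgh}.

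\textbf{Step 3 (stability of ${\rm CD}(K,N)$).} By the stability theorems of Sturm \cite{sturm1,sturm2} and Lott--Villani (see also Villani's treatment that allows variable $K_n\to K$), the ${\rm CD}(K_n,N)$ condition is preserved under pmGH-limits, so $(\X_\infty,\sfd_\infty,\mea_\infty)$ is ${\rm CD}(K,N)$.

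\textbf{Step 4 (stability of infinitesimal Hilbertianity).} This is the main delicate point. By \cite{GMSconv} (and the characterization of ${\rm RCD}$ via EVI$_K$ gradient flows of the relative entropy in \cite{AGSrcd}), along pmGH-convergent sequences of uniformly locally doubling Poincar\'e spaces the Cheeger energies Mosco--converge. Since each $\ch_{n_k}$ is a quadratic form, the parallelogram identity passes to the Mosco--limit $\ch_\infty$, so $\ch_\infty$ is a quadratic form, i.e.\ $\X_\infty$ is infinitesimally Hilbertian. Equivalently one checks that the EVI$_K$ gradient flow of the entropy on $\X_\infty$ exists, which together with ${\rm CD}(K,N)$ yields ${\rm RCD}(K,N)$.

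The genuinely hard step is Step 4: neither the ${\rm CD}$ stability nor the metric/measure compactness use the Riemannian character, and the whole point of the ${\rm RCD}$ theory developed in \cite{AGSrcd,GMSconv} was precisely to identify a curvature--dimension condition strong enough to be stable \emph{together with} infinitesimal Hilbertianity under pmGH-convergence. Once those results are invoked the proof is essentially a packaging of Steps 1--3 with their stability statement.
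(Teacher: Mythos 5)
The paper itself gives no proof of this proposition: it states the result as known and refers the reader to the cited compactness/stability literature (\cite{gromov}, \cite{sturm1,sturm2}, \cite{villani}, \cite{GMSconv}, \cite{AGSrcd}). Your four-step reconstruction is a faithful and correct summary of how that literature combines to yield the statement: uniform Bishop--Gromov doubling from the ${\rm CD}(K_n,N)$ condition gives Gromov precompactness of the metric spaces; the extrinsic embedding plus Prokhorov yields a convergent subsequence of measures; and the ${\rm CD}(K,N)$ condition is stable under pmGH limits with variable $K_n\to K$. You correctly identify Step 4 as the genuinely nontrivial part.

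One refinement on Step 4 worth flagging: the assertion that "the parallelogram identity passes to the Mosco-limit" is slightly more delicate than you present, because Mosco convergence of functionals over \emph{varying} base spaces does not transfer a pointwise algebraic identity quite so directly; the recovery sequences in the $\Gamma$-liminf/$\Gamma$-limsup machinery must be handled carefully. This is exactly what \cite{GMSconv} does (and what \cite{AGSrcd} achieves more directly via the $EVI_K$ characterization of the entropy gradient flow, which you also mention). Since you invoke those references and the statement is genuinely a theorem there, this is an expository caveat rather than a gap. Your proposal is consistent with the paper's intent, which is simply to record the result; your write-up usefully unpacks what the citation trail actually amounts to.
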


\subsubsection{Stability results under pmGH-convergence}

In this subsection $(X_i,\sfd_i,\mea_i,x_i)$ is a pmGH-converging sequence of ${\rm RCD}(K,N)$ spaces, $N<+\infty$, and $(Y,\sfd)$ is a proper metric space which realizes such convergence through the extrinsic approach (see the previous subsection). 

\begin{definition}[Locally uniform / uniform convergence]\label{def:pointwise conv}
	Let $f_i: \X_i \to \rr$, $f_\infty : \X_\infty\to \rr$. We say that $f_i$ converges locally uniformly to $f_\infty$ if for every $y \in \X_\infty$ and every sequence $y_i\in \X_i$ such that $\sfd_Y(y_i,y)\to 0$ it holds that $\lim_i f_i(y_i)=f_\infty(y)$. 
	
	We say that $f_i$ converges uniformly to $f_\infty$ if for every $\eps>0$  there exists  $\delta>0$ such that $|f_i(y_i)-f_\infty(y)|<\eps$ for every $ i \ge \delta^{-1}$ and every $y_i$ such that $\sfd_Y(y_i,y)<\delta.$
\end{definition}
We point out that in the case of a fixed proper metric space the two notions of convergence in the above definition coincide respectively with the usual locally uniform and uniform convergence.

The following is a version of the Ascoli-Arzelà theorem for varying metric spaces (see also \cite[Prop. 27.20]{oldnew}). The proof can be achieved  arguing as in case of a fixed  (proper) metric  metric space and we will skip it.
\begin{prop}\label{prop:Ascoli}
	Let $f_i: \X_i \to \rr$ be equiLipschitz, equibounded functions with $\supp f_i \subset B_R(x_i)$, then there exists a subsequence  that converges uniformly to a Lipschitz function $f:\X_\infty \to \rr.$
\end{prop}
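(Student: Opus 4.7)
The plan is to reduce the statement to the classical Ascoli--Arzelà theorem on a fixed proper metric space by invoking the extrinsic realization of pmGH-convergence recalled just above the statement. Let $(Y,\sfd_Y)$ be a proper metric space into which all $\X_i$ and $\X_\infty$ embed isometrically, with $\sfd_Y(x_i,x_\infty)\to 0$ and $\sfd_H^Y(B^{\X_i}_{R'}(x_i),B^{\X_\infty}_{R'}(x_\infty))\to 0$ for every $R'>0$; let $L,M$ denote a common Lipschitz constant and pointwise bound for the family $\{f_i\}$.

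The first step is to transport the problem to $Y$ by extending each $f_i$ via the McShane formula
\[
\tilde f_i(y)\coloneqq\inf_{z\in\X_i}\bigl(f_i(z)+L\,\sfd_Y(z,y)\bigr),
\]
followed by truncation at $\pm M$. This produces $\hat f_i\colon Y\to[-M,M]$ that is $L$-Lipschitz on the whole of $Y$ and agrees with $f_i$ on $\X_i$. Because $Y$ is proper, each closed ball $\bar B^Y_{R'}(x_\infty)$ is compact, so a direct application of the classical Ascoli--Arzelà theorem combined with a diagonal argument over an exhaustion $R'\uparrow\infty$ extracts a subsequence $\hat f_{i_k}$ converging locally uniformly on $Y$ to some $L$-Lipschitz function $g\colon Y\to[-M,M]$. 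The candidate limit is then $f\coloneqq g\restr{\X_\infty}$, which is automatically $L$-Lipschitz with respect to $\sfd_\infty$.

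The second step is to verify the uniform convergence demanded by Definition~\ref{def:pointwise conv}. Given $\eps>0$, the Hausdorff convergence of balls guarantees that for $R'>R+1$ and $i$ large one has $B^{\X_i}_R(x_i)\subset \bar B^Y_{R'}(x_\infty)$, so the local uniform convergence of $\hat f_{i_k}$ on this fixed compact set, together with the Lipschitz bound for $g$, allows one to choose $k_0$ and $\delta>0$ such that for any $y\in\X_\infty$ and $y_{i_k}\in\X_{i_k}$ with $\sfd_Y(y_{i_k},y)<\delta$ the triangle inequality
\[
|f_{i_k}(y_{i_k})-f(y)|\le|\hat f_{i_k}(y_{i_k})-g(y_{i_k})|+|g(y_{i_k})-g(y)|
\]
bounds the right-hand side by $\eps$.

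The main obstacle I anticipate is the bookkeeping needed to handle points $y\in\X_\infty$ far from $x_\infty$: the McShane extensions $\hat f_i$ need not vanish outside $\supp f_i$, so one must separately verify that $g$ vanishes on $\X_\infty\setminus\bar B^{\X_\infty}_R(x_\infty)$. This is achieved by approximating such a $y$ by points $y_{i_k}\in\X_{i_k}$ via the Hausdorff convergence of balls and observing that $y_{i_k}$ eventually lies outside $B^{\X_{i_k}}_R(x_{i_k})$, where $\hat f_{i_k}\equiv f_{i_k}=0$, so passing to the limit gives $g(y)=0$. Beyond this adjustment the argument is routine Ascoli--Arzelà in the ambient proper space $Y$.
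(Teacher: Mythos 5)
The paper does not supply a proof of this proposition (it says the argument is ``as in the case of a fixed proper metric space'' and skips it), so there is nothing in the source to compare against directly. Your argument via McShane extension into the ambient realization $(Y,\sfd_Y)$ followed by classical Ascoli--Arzel\`a on the compact balls of $Y$ is a correct and natural implementation of the reduction the authors have in mind, and the restriction $f\coloneqq g\restr{\X_\infty}$ is indeed $L$-Lipschitz. You also correctly isolate the one nontrivial wrinkle: the extensions $\hat f_i$ need not vanish off a fixed ball, so a separate verification that $g\equiv 0$ on $\X_\infty\setminus \bar B^{\X_\infty}_R(x_\infty)$ is needed, and your approximation of such a $y$ by $y_{i_k}\in\X_{i_k}$ (which eventually exit $B^{\X_{i_k}}_R(x_{i_k})$, where $\hat f_{i_k}=f_{i_k}=0$) handles this. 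The only expository adjustment is logical ordering: the claim in your second step that the triangle-inequality bound holds \emph{for every} $y\in\X_\infty$ already uses the vanishing of $g$ far from $x_\infty$ to control $|g(y_{i_k})|\le |g(y_{i_k})-g(y)|+|g(y)|\le L\delta$, so that fact should be established before (or the estimate split into ``$y$ near $x_\infty$'', where uniform convergence on a fixed compact set applies, and ``$y$ far'', where both $f_{i_k}(y_{i_k})$ and $f(y)$ are zero outright). With that reordering the proof is complete.
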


\begin{definition}[Weak/strong $L^2$-convergence]
	A sequence of functions $f_i \in L^2(\mea_i)$ converges weakly in $L^2$ to a function $f\in L^2(\mea_\infty)$ if $f_i\mea_i\rightharpoonup f\mea_\infty$ in duality with $C_{bs}(Y)$ and $\sup_i \|f_i\|_{L^2(\mea_i)}<+\infty.$
	
	A sequence of functions $f_i \in L^2(\mea_i)$ converges strongly in $L^2$ to a function $f\in L^2(\mea_\infty)$ if it converges weakly in $L^2$ to $f$ and $\lim_i \|f_i\|_{L^2(\mea_i)}=\|f\|_{L^2(\mea_\infty)}.$
\end{definition}

In the following proposition we collect some basic facts about strong and weak $L^2$ convergence.
\begin{prop}\label{prop:l2prop}
	\begin{enumerate}[label=\alph*)]
		\item If $f_i \in L^2(\mea_i) $ converge strongly in $L^2$ to $f \in L^2(\mea_\infty)$ and $f_i$ have uniformly bounded support, then $\phi\circ f_i$ converge strongly in $L^2$ to $\phi \circ f$, for every $\phi \in C(\rr)$ such that  $\phi(0)=0$ and $|\phi(t)|\le C(1+|t|)$ for some positive constant $C>0$.
		\item If $f_i,g_i \in L^2(\mea_i) $ are uniformly bounded in $L^\infty(\mea_i)$ and converge strongly in $L^2$  respectively to $f,g \in L^2(\mea_\infty)$, then $f_ig_i$ converge strongly in $L^2$ to $fg.$
		\item if $f_i : \X_i \to \rr$, with $\supp f_i \subset B^{\X_i}_{R}(x_i)$, converge uniformly to a bounded function $f: \X_\infty \to \rr$ then $f_i$  converge strongly in $L^2$ to $f$.
	\end{enumerate}
\end{prop}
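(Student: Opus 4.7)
These three statements are standard facts about strong $L^2$-convergence on varying metric measure spaces. I work in the extrinsic realization inside a common proper $(Y,\sfd)$, and prove them in the order (c), then the weak part of (b), then (a), and finally the norm part of (b) (which is deduced from (a) and (b)-weak).

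\emph{Part (c).} First I note that the uniform-convergence hypothesis of Definition~\ref{def:pointwise conv} forces $f$ to be uniformly continuous on $\X_\infty$: for $y,y'\in \X_\infty$ with $\sfd_Y(y,y')$ small, pick $y_i\in \X_i$ close to both and apply the definition twice. Since moreover $\supp f\subset \bar B_R(x_\infty)$, Tietze's extension theorem produces $\tilde f\in C_{bs}(Y)$ with $\tilde f|_{\X_\infty}=f$. For $\varphi\in C_{bs}(Y)$ the product $\varphi \tilde f$ lies in $C_{bs}(Y)$, so $\int \varphi\tilde f\,\d\mea_i\to \int \varphi f\,\d\mea_\infty$ by $\mea_i\rightharpoonup\mea_\infty$. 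The error $|\int \varphi(f_i-\tilde f)\,\d\mea_i|$ is bounded by $\|\varphi\|_\infty\cdot\sup_{\X_i\cap K}|f_i-\tilde f|\cdot\mea_i(K)$ for a compact $K\subset Y$ containing the supports, and this vanishes by uniform convergence combined with the uniform bound on $\mea_i(K)$ coming from Bishop-Gromov. The same argument applied to $f_i^2\to f^2$ (still a uniform convergence with the same uniform supports) gives the norm part.

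\emph{Weak part of (b).} Fix $\varphi\in C_{bs}(Y)$ and $\epsilon>0$, and choose $h\in C_{bs}(Y)$ with $\|h-g\|_{L^2(\mea_\infty)}<\epsilon$. Expanding $\|g_i-h\|_{L^2(\mea_i)}^2 = \|g_i\|_{L^2}^2 - 2\int g_i h\,\d\mea_i + \int h^2\,\d\mea_i$ and using (i) the norm-convergence $\|g_i\|_{L^2}\to\|g\|_{L^2}$, (ii) $\int g_i h\,\d\mea_i\to\int gh\,\d\mea_\infty$ by strong convergence of $g_i$ with $h\in C_{bs}$, and (iii) $\int h^2\,\d\mea_i\to\int h^2\,\d\mea_\infty$ since $h^2\in C_{bs}$, we obtain $\|g_i-h\|_{L^2(\mea_i)}\to \|g-h\|_{L^2(\mea_\infty)}<\epsilon$. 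Then splitting
\[
\int \varphi f_i g_i\,\d\mea_i \;=\; \int (\varphi h) f_i\,\d\mea_i \;+\; \int \varphi f_i(g_i-h)\,\d\mea_i,
\]
the first term converges to $\int (\varphi h) f\,\d\mea_\infty$ by strong convergence of $f_i$ (since $\varphi h\in C_{bs}$), while Cauchy-Schwarz, the uniform $L^\infty$-bound on $f_i$ and the uniform bound on $\mea_i(\supp\varphi)$ give $|\int \varphi f_i(g_i-h)\,\d\mea_i|\le C\|g_i-h\|_{L^2(\mea_i)}\le 2C\epsilon$ for $i$ large. Letting $\epsilon\to 0$ closes the argument.

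\emph{Part (a) and norm part of (b).} For (a), truncate $\phi$ at level $L$ to a bounded continuous $\phi_L$ (equal to $\phi$ on $[-L,L]$, constant outside). Since $|\phi-\phi_L|(t)\le 2C(1+|t|)\mathbf{1}_{|t|\ge L}$,
\[
\int |\phi(f_i)-\phi_L(f_i)|^2\,\d\mea_i \;\le\; 8C^2\int_{\{|f_i|\ge L\}}(1+f_i^2)\,\d\mea_i,
\]
and I claim this vanishes uniformly in $i$ as $L\to\infty$. The $1$-part is controlled by Chebyshev and the uniform bound on $\mea_i(\supp f_i)$. The $f_i^2$-part requires uniform integrability of $\{f_i^2\}$, which I extract as follows: the Lipschitz truncation $T_L(f_i)$ satisfies $T_L(f_i)\to T_L(f)$ strongly in $L^2$ (weak part by duality with $C_{bs}$ and approximation; norm part by the same Tietze-based trick as in (c), since $T_L$ is Lipschitz), hence $\int(f_i^2-T_L(f_i)^2)\,\d\mea_i \to \int(f^2-T_L(f)^2)\,\d\mea_\infty$, and the right-hand side tends to $0$ as $L\to\infty$ by dominated convergence. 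For the central term, I approximate $\phi_L$ uniformly on $[-M,M]$ by polynomials (Weierstrass), replace $f_i$ by its $L^\infty$-truncation $T_M(f_i)$ at a level $M>L$, and apply the weak part of (b) iteratively to each monomial; summing, taking $M\to\infty$ via uniform integrability, and combining with the analogous argument for norm convergence (test against a cut-off $\chi\in C_{bs}(Y)$ equal to $1$ on the supports) yields strong $L^2$-convergence of $\phi_L(f_i)$ to $\phi_L(f)$. Finally, for the norm part of (b): by (a) applied with $\phi(t)=t^2$ the squares $f_i^2, g_i^2$ converge strongly in $L^2$ and are uniformly $L^\infty$-bounded, so the weak part of (b) gives $\int f_i^2 g_i^2\,\d\mea_i\to \int f^2g^2\,\d\mea_\infty$, which is exactly $\|f_ig_i\|_{L^2(\mea_i)}^2\to\|fg\|_{L^2(\mea_\infty)}^2$.

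\emph{Main obstacle.} The technical heart is the uniform-integrability input in (a): one must extract from the abstract norm-convergence definition of strong $L^2$-convergence (which only asserts convergence of the total $L^2$-masses) that the $L^2$-mass of $f_i$ escaping to $\{|f_i|>L\}$ is uniformly small in $i$. This cannot be done by a simple Chebyshev bound and requires the bootstrap through Lipschitz truncations described above, which is precisely the bridge from mere norm convergence to the stronger property of convergence of push-forwards under nonlinear, linearly growing $\phi$.
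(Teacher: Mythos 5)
Your treatments of (c) and of the weak half of (b) are correct and in fact take a somewhat different route from the paper: for (c) the paper invokes a Fatou-type lemma for varying measures (applied to $\pm f_i\phi$ and then $\pm f_i^2$), whereas you use a Tietze extension of $f$ to a $C_{bs}(Y)$ function and compare against it directly — both are fine. For (b) the paper simply cites a proposition of Ambrosio--Honda; your direct splitting argument via a $C_{bs}(Y)$-approximation of $g$ is a legitimate self-contained alternative for the weak half.

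The gap is in (a), precisely at the uniform-integrability bootstrap, which is the step you correctly flag as the crux but then do not actually close. You want $T_L(f_i)\to T_L(f)$ strongly in $L^2$ (both the weak and the norm parts). Neither half of your justification holds up. The ``weak part by duality with $C_{bs}$'' asserts $T_L(f_i)\mea_i \rightharpoonup T_L(f)\mea_\infty$, but this is a statement about the push-forward of the $f_i$'s under a nonlinear map, and it does not follow from the data $f_i\mea_i\rightharpoonup f\mea_\infty$ and $\|f_i\|_{L^2(\mea_i)}\to\|f\|_{L^2(\mea_\infty)}$ by any duality or approximation argument alone: on a fixed space one would pass to an a.e.\ convergent subsequence, but with varying measures no such representative exists, and indeed stability of $L^2$-strong convergence under Lipschitz postcomposition is a nontrivial theorem. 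The ``norm part by the same Tietze-based trick as in (c)'' is worse: the Tietze argument in (c) needs uniform convergence of $f_i$, which is the hypothesis of (c) and not available here — $T_L$ being Lipschitz does not convert $L^2$-strong convergence into uniform convergence. So the claim that $T_L(f_i)\to T_L(f)$ strongly (and with it the uniform integrability of $\{f_i^2\}$, the truncation step, and the deduced norm part of (b)) is unproved; the argument is essentially circular, since this is a special case of what (a) asserts.

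This is exactly the difficulty that the paper resolves by appealing to the equivalence, proved in Gigli--Mondino--Savar\'e, between $L^2$-strong convergence and weak convergence of the graph measures $(\mathsf{id}\times f_i)_*\mea_i$ tested against continuous functions $\zeta(y,t)$ with growth $\psi(y)+C|t|^2$. With that characterization in hand, one simply observes that $(y,t)\mapsto\eta(y)\zeta(y,\phi(t))$ is again an admissible test function (using $\phi(0)=0$, the linear growth of $\phi$, and a cut-off $\eta\in C_{bs}(Y)$ that equals $1$ on the uniform supports), so the graph convergence of $f_i$ directly yields the graph convergence of $\phi\circ f_i$. You should either cite that characterization or prove an equivalent ``closing'' lemma; the Weierstrass-plus-iterated-(b) strategy cannot get off the ground without it. A minor further point: your deduction of the norm part of (b) applies (a) with $\phi(t)=t^2$, which violates the hypothesis $|\phi(t)|\le C(1+|t|)$; this is easily repaired by replacing $t^2$ with a bounded continuous modification that agrees with $t^2$ on $[-M,M]$ where $M$ is the uniform $L^\infty$-bound, but it is worth stating.
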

\begin{proof}
	a) follows from the characterization of $L^2$-strong convergence as weak convergence of the graphs (see \cite[Sec. 5.2]{AGSflow}, \cite{GMSconv} and also \cite[Remark 5.2]{AHstab}). Indeed from \cite[(6.6)]{GMSconv} $L^2$-convergence is equivalent to weak convergence of $({\sf id} \times f_i)_*\mea_i$ to $({\sf id} \times f)_*\mea_\infty$ in duality with $\zeta \in C(Y\times \rr)$ satisfying $|\zeta(y,t)|\le \psi(y)+C|t|^2$. Then the claim follows observing that: under our assumptions (since $\phi(0)=0$), testing convergence against such $\zeta$'s is equivalent to test against $\eta\zeta$, where $\eta \in C_{bs}(Y)$ is such that $\eta\equiv 1$ in the supports of $f_i,f_\infty$ and moreover  $|\eta(y)\zeta(y,\phi(t))|\le |\eta(y)|(\psi(y)+C(1+|t|)^2)$ for every $\phi \in C(\rr)$ as in the hypotheses.
	
	The proof of b) can be found in \cite[Proposition 3.3]{AHstab}. 
	
To prove c) we first observe that from the properness of $Y$ it follows that the functions $f_i$ are equibounded, moreover they have uniform bounded support by hypothesis. Therefore we can apply the generalized version of Fatou lemma for varying measure (see  e.g. \cite[Lemma 2.5]{DGnonc}), first to $f_i \phi $ and then to $-f_i\phi$ to obtain that $\int f_i\phi \,\d\mea_i\to \int f \phi\, \d \mea,$ where $\phi$ is an arbitrary function in $C_{bs}(Y).$ Applying the same lemma also the functions $f_i^2,-f_i^2$ we deduce  that $\int f_i^2\, \d \mea_i\to \int f^2\,\d \mea$, concluding the proof.
\end{proof}

We will also need the following result about stability of Laplacian and gradient with respect to strong $L^2$ convergence.  Here $\Delta_i$ (resp. $\Delta_\infty$)  represents the Laplacian operator in $\X_i$ (resp. $\X_\infty$) and  $\nabla_i$ (resp. $\nabla_\infty$) represents the gradient operator in $\X_i$ (resp. $\X_\infty$).
\begin{theorem}[{\cite[Theorem 2.7, Theorem 2.8]{AHlocal}}]\label{thm:l2stab}
	Let $f_i \in D(\Delta_i)$ be such that
	\[
	\sup_i \|f_i\|_{L^2(\mea_i)}+\|\Delta_i f_i\|_{L^2(\mea_i)}<+\infty
	\]
	and assume that $f_i$ converge strongly in $L^2$ to $f$. Then $f \in D(\Delta_\infty)$, $\Delta_i f\to \Delta_\infty f$  weakly in $L^2$  and $|\nabla_i f_i|\to|\nabla_\infty f| $  strongly in $L^2$.
\end{theorem}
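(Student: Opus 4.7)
The plan is to leverage Mosco convergence of the Cheeger energies $\ch_i$ along pmGH-converging sequences of ${\rm RCD}$ spaces (a standard consequence of the ${\rm RCD}$-condition in the infinitesimally Hilbertian setting) together with the energy identity $2\ch_i(f_i)=-\int f_i\Delta_i f_i\,\d\mea_i$, in order to deduce all three conclusions simultaneously.

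First I would obtain a uniform $W^{1,2}$-bound: Cauchy-Schwarz in the energy identity gives $2\ch_i(f_i)\le \|f_i\|_{L^2(\mea_i)}\|\Delta_i f_i\|_{L^2(\mea_i)}$, so $\sup_i\ch_i(f_i)<+\infty$. Since $\{\Delta_i f_i\}$ is uniformly $L^2$-bounded, up to a subsequence it converges weakly in $L^2$ to some $g\in L^2(\mea_\infty)$, and pairing the strong $L^2$-convergence of $f_i$ to $f$ with the weak $L^2$-convergence of $\Delta_i f_i$ to $g$ yields $\lim_i 2\ch_i(f_i)=-\int f g\,\d\mea_\infty$. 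The $\liminf$-inequality of Mosco convergence applied to $f_i\to f$ then gives $f\in\W(\X_\infty)$ with $\ch_\infty(f)\le -\tfrac12\int f g\,\d\mea_\infty$.

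Next I would identify $g=\Delta_\infty f$. For a compactly supported $\phi\in\test(\X_\infty)$, let $\phi_i$ be a recovery sequence from the $\limsup$-part of Mosco convergence, so $\phi_i\to\phi$ strongly in $L^2$ with $\ch_i(\phi_i)\to\ch_\infty(\phi)$. Applying the Mosco $\liminf$-inequality to $f_i\pm\lambda\phi_i\to f\pm\lambda\phi$ for $\lambda\in\rr$, expanding via the infinitesimally Hilbertian polarization identity $2\ch(u+v)=2\ch(u)+2\ch(v)+2\int\la\nabla u,\nabla v\ra\,\d\mea$, and exploiting that $\ch_i(\phi_i)\to\ch_\infty(\phi)$ \emph{exactly}, one squeezes the cross term to conclude $\lim_i\int\la\nabla_i f_i,\nabla_i\phi_i\ra\,\d\mea_i=\int\la\nabla_\infty f,\nabla_\infty\phi\ra\,\d\mea_\infty$. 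But by integration by parts this same quantity equals $-\int\phi_i\Delta_i f_i\,\d\mea_i$, which by strong-weak convergence tends to $-\int\phi g\,\d\mea_\infty$. Together with the density of compactly supported test functions in $\W(\X_\infty)$, this identifies $f\in D(\Delta_\infty)$ with $\Delta_\infty f=g$, and uniqueness of the weak limit promotes subsequential convergence to convergence of the whole sequence.

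Finally, specializing the identification step to $\phi=f$ (via a diagonal approximation by compactly supported test functions) yields $\lim_i\||\nabla_i f_i|\|_{L^2(\mea_i)}^2=-\int f\Delta_\infty f\,\d\mea_\infty=\||\nabla_\infty f|\|_{L^2(\mea_\infty)}^2$. Combined with the localized $\liminf$-bound $\liminf_i\int\psi|\nabla_i f_i|^2\,\d\mea_i\ge\int\psi|\nabla_\infty f|^2\,\d\mea_\infty$ for nonnegative $\psi\in C_{bs}(Y)$ (obtained by a localized Mosco argument or by the polarization trick above with suitably cut-off $\phi_i$), convergence of the total $L^2$-norm upgrades to strong $L^2$-convergence of $|\nabla_i f_i|$ to $|\nabla_\infty f|$. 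The main obstacle is the polarization step: strong $L^2$-convergence of $f_i$ with bounded Cheeger energy only gives a one-sided $\liminf$-control on gradients, so one must use the \emph{exact} convergence $\ch_i(\phi_i)\to\ch_\infty(\phi)$ of a recovery sequence to pin down the cross terms and obtain a genuine limit rather than a bound.
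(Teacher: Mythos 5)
This statement is not proved in the paper: it is quoted verbatim from \cite{AHlocal} (Ambrosio--Honda), so there is no internal argument to compare against. Judged on its own, your plan does reproduce the strategy underlying Ambrosio--Honda's proof, whose engine is the Mosco convergence of the Cheeger energies along pmGH-converging ${\rm RCD}(K,N)$ sequences (Gigli--Mondino--Savar\'e). The energy bound via $2\ch_i(f_i)=-\int f_i\Delta_i f_i\,\d\mea_i$, the subsequential weak $L^2$-limit $g$ of $\Delta_i f_i$, the strong--weak pairing giving $\lim_i 2\ch_i(f_i)=-\int fg\,\d\mea_\infty$, the Mosco $\liminf$ placing $f$ in $\W(\X_\infty)$, and the recovery-sequence/polarization argument (sending $\lambda\to\pm\infty$ to annihilate the cross term) to identify $g=\Delta_\infty f$ and hence $\ch_i(f_i)\to\ch_\infty(f)$ are all correct and essentially the ``right'' moves.

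The gap is in your last sentence. What you have at that point is $\lim_i\int|\nabla_i f_i|^2\,\d\mea_i=\int|\nabla_\infty f|^2\,\d\mea_\infty$ together with the localized $\liminf$ on open sets (Lemma \ref{lem:lsc energy local}); combined, these yield $|\nabla_i f_i|^2\mea_i\rightharpoonup|\nabla_\infty f|^2\mea_\infty$ as measures. But the definition of strong $L^2$-convergence adopted in this paper (and in \cite{AHstab}, \cite{AHlocal}) demands the \emph{linear} weak convergence $|\nabla_i f_i|\mea_i\rightharpoonup|\nabla_\infty f|\mea_\infty$ in addition to norm convergence, and this does not follow from weak convergence of the squared densities. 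On a fixed space, take $g_i=\nchi_{A_i}$ with $\mea(A_i)=\tfrac12$ and $A_i$ oscillating: then $g_i^2\mea\rightharpoonup\tfrac12\mea$ and $\|g_i\|_{L^2}$ is constant, yet $g_i\mea\rightharpoonup\tfrac12\mea\neq\tfrac1{\sqrt2}\mea$, so $g_i$ does \emph{not} converge strongly in $L^2$ to $g\equiv\tfrac1{\sqrt2}$. The promotion you are after -- that strong $L^2$-convergence of $f_i$ plus $\ch_i(f_i)\to\ch_\infty(f)$ (i.e.\ ``strong $W^{1,2}$-convergence'') implies strong $L^2$-convergence of the gradient moduli -- is a genuine theorem of Ambrosio--Honda, typically proved via heat-flow regularization, and should be invoked by name rather than presented as an automatic consequence of the norm identity and the localized $\liminf$.
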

We conclude this subsection with the following technical result.
\begin{lemma}[{\cite[Lemma 5.8]{AHstab}}]\label{lem:lsc energy local}
	Let $f_i \in \W(\X_i)$ be such that $\sup_i \||\nabla f_i|\|_{L^2(\mea_i)}<+\infty$ and converging strongly in $L^2$ to $f \in \W(\X_\infty)$, then for any $A\subset Y$ open it holds
	\[
	\int_A |\nabla_\infty f|^2\, \d \mea_\infty \le \liminf_{i} \int_A |\nabla_i f |^2\, \d \mea_i.
	\]
\end{lemma}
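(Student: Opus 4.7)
The plan is to reduce the open-set inequality to a weighted version, which then follows from the already-known $L^2$-strong lower semicontinuity of the (unweighted, global) Cheeger energy together with the infinitesimally Hilbertian Leibniz rule applied to a good cut-off. By inner regularity of the finite Borel measure $|\nabla_\infty f|^2\mea_\infty$ on $A\cap\X_\infty$, and noting that $\int\eta^2|\nabla_i f_i|^2\,\d\mea_i\le\int_A|\nabla_i f_i|^2\,\d\mea_i$ whenever $\supp\eta\subset A$ and $\eta\le 1$, it is enough to show, for a suitable matched pair $(\eta_i,\eta)$ of cut-offs along the pmGH-convergence,
\[
\int\eta^2\,|\nabla_\infty f|^2\,\d\mea_\infty \;\le\; \liminf_i\int\eta_i^2\,|\nabla_i f_i|^2\,\d\mea_i,
\]
and then to exhaust $A$ by such $\eta$'s via monotone convergence.

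To build the cut-offs, I would fix $K\subset U\subset A$ with $K$ compact in $Y$ and $\bar U\subset A$, and apply Proposition \ref{prop:goodcutoff} on each $\X_i$ and on $\X_\infty$ to obtain $\eta_i\in\test(\X_i)$ and $\eta\in\test(\X_\infty)$ with $\eta_i\equiv 1$ on $K\cap\X_i$, $\supp\eta_i\subset U\cap\X_i$, and $|\nabla\eta_i|+|\Delta\eta_i|$ bounded uniformly in $i$. Up to a subsequence, Proposition \ref{prop:Ascoli} gives $\eta_i\to\eta_*$ uniformly; the uniform $L^2$-bound on $\Delta_i\eta_i$ then lets Theorem \ref{thm:l2stab} identify $\eta_*=\eta$ and deliver $\Delta_i\eta_i\to\Delta_\infty\eta$ weakly in $L^2$. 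The key identity, obtained by expanding $\nabla(\eta_i f_i)=\eta_i\nabla f_i+f_i\nabla\eta_i$ and recognizing $2\eta_i f_i\nabla f_i+f_i^2\nabla\eta_i=\nabla(\eta_i f_i^2)$, is the pointwise formula $\eta_i^2|\nabla f_i|^2=|\nabla(\eta_i f_i)|^2-\langle\nabla(\eta_i f_i^2),\nabla\eta_i\rangle$. Integrating and using that $\eta_i\in D(\Delta_i)$ to integrate by parts yields
\[
\int\eta_i^2\,|\nabla_i f_i|^2\,\d\mea_i \;=\; \int|\nabla_i(\eta_i f_i)|^2\,\d\mea_i \;+\; \int\eta_i f_i^2\,\Delta_i\eta_i\,\d\mea_i,
\]
and analogously in $\X_\infty$.

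The main obstacle is the stability of the two right-hand-side terms under the pmGH-convergence. For the first, the global ($A=Y$) LSC of the Cheeger energy applies to the strongly $L^2$-convergent sequence $\eta_i f_i\to\eta f$ (whose convergence is Proposition \ref{prop:l2prop}(b)-(c), using the uniform convergence of $\eta_i$ and the hypothesis on $f_i$), giving $\liminf_i\int|\nabla_i(\eta_i f_i)|^2\,\d\mea_i\ge\int|\nabla_\infty(\eta f)|^2\,\d\mea_\infty$. For the second, the weak $L^2$-convergence of $\eta_i\Delta_i\eta_i$ (uniformly $L^\infty$-bounded thanks to Proposition \ref{prop:goodcutoff}) must be paired with convergence of $f_i^2$ in a sufficiently strong sense; this is handled by truncating $f_i$ at height $M$ to reduce to $\phi\in C(\rr)$ with linear growth, applying Proposition \ref{prop:l2prop}(a)-(b) to get strong $L^2$-convergence of the truncated squares, and letting $M\to\infty$ using the uniform $W^{1,2}$-bound on $f_i$ (which provides the uniform integrability of $f_i^2$ via the Sobolev embedding available in RCD spaces). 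Plugging $\lim_i\int\eta_i f_i^2\,\Delta_i\eta_i\,\d\mea_i=\int\eta f^2\,\Delta_\infty\eta\,\d\mea_\infty$ and the LSC of the gradient energy into the displayed identity --- and using its $\X_\infty$-analogue in reverse to collapse the right-hand side back to $\int\eta^2|\nabla_\infty f|^2\d\mea_\infty$ --- yields the weighted inequality. Exhausting $A$ by good cut-offs and sending them to $\chi_A$ by monotone convergence closes the argument.
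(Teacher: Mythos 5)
The paper does not prove this lemma: it cites it verbatim from \cite{AHstab}, so there is no in-paper proof to compare against. Evaluating your proposal on its own merits, the overall scheme --- reduce to a weighted estimate via a good cut-off, expand $\eta^2|\nabla f|^2 = |\nabla(\eta f)|^2 - \langle\nabla(\eta f^2),\nabla\eta\rangle$ using the infinitesimally Hilbertian Leibniz rule, integrate by parts with $\eta \in D(\Delta)$, apply the global (i.e.\ $A=Y$) Cheeger-energy $\liminf$ to the first term and a weak--strong pairing (via Theorem \ref{thm:l2stab}) to the second --- is sound and can be made rigorous. There are, however, a few genuine slips worth fixing.

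First, you claim that Theorem \ref{thm:l2stab} ``identifies $\eta_*=\eta$''; it does no such thing. The theorem only tells you that the Ascoli--Arzel\`a limit $\eta_*$ lies in $D(\Delta_\infty)$ and that $\Delta_i\eta_i \to \Delta_\infty\eta_*$ weakly in $L^2$ --- there is no reason $\eta_*$ should coincide with a separately constructed cut-off on $\X_\infty$. This is harmless: drop $\eta$ entirely, run the argument with $\eta_*$, and check directly (via the extrinsic realization in $Y$, $\eta_i=1$ on $K\cap\X_i$ and $\supp\eta_i\subset U\cap\X_i$ for fixed $K\subset\subset U\subset\subset A$ in $Y$) that $\eta_*=1$ on $K\cap\X_\infty$ and $\supp\eta_*\subset\bar U\cap\X_\infty\subset A$, so the exhaustion still works. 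Second, the integration-by-parts identity needs $f\in L^\infty$ (so that $\eta f^2\in \W$); truncation is right, but the ``uniform integrability via Sobolev embedding'' step you invoke to let $M\to\infty$ is dubious (no uniform global Sobolev embedding is available over a family of non-compact ${\rm RCD}$ spaces, and $N$ is not restricted here). It is also unnecessary: since $|\nabla_i f_i^M|\le|\nabla_i f_i|$ by locality, you can write $\int_A|\nabla_if_i|^2\ge\int\eta_i^2|\nabla_i f_i^M|^2$ for fixed $M$, pass to the limit in $i$, obtain $\int\eta_*^2|\nabla_\infty f^M|^2$, and only \emph{then} send $M\to\infty$ on the limit side by monotone convergence. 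Third, Proposition \ref{prop:l2prop}(a) cannot be applied directly to the $f_i$, which do not have uniformly bounded support: you need to pre-multiply by a fixed $\zeta\in C_{bs}(Y)$ with $\zeta\equiv1$ on $\bigcup_i\supp\eta_i$ (that $\zeta f_i\to\zeta f$ strongly in $L^2$ follows from the graph-measure characterization of $L^2$-strong convergence tested against the compactly supported weight $\zeta^2 t^2$), after which the composition and product lemmas apply and $\eta_i(\zeta f_i)^M=\eta_i f_i^M$ on $\supp\eta_i$. Finally, the global ($A=Y$) $\liminf$ of Cheeger energies is indeed standard but is nowhere stated in this paper, so a citation to \cite{GMSconv} or \cite{AHstab} is needed. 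With these repairs the argument goes through.
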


\subsection{Regular Lagrangian flows}
In the work of Ambrosio and Trevisan (\cite{AT}) it was extended the theory of flows for Sobolev vector fields (recall \cite{diperna} and \cite{am}) to very general metric measure spaces and in particular for ${\rm RCD}(K,\infty)$ spaces. 

We restate here, using the language we introduced in the previous sections, their main results of existence and uniqueness.

\begin{definition}\label{def:rfl}
	Let $v_t:[0,T]\to L^2(T\X)$ be Borel, we say that a map $F: [0,T]\times X \to X$ is a Regular Lagrangian Flow associated to $v_t$ if the following are satisfied:
	\begin{enumerate}
		\item There exists $C>0$ such that 
		\begin{equation}\label{eq:compr}
			{F_t}_*\mea \le C \mea, \quad \text{ for every } t \in[0,T], 
		\end{equation}	
		\item for $\mea$-a.e. $x\in X$ the function $[0,T]\ni t \mapsto F_t(x)$ is continuous and satisfies $F_0(x)=x.$
		\item for every $f \in \test(X)$ it holds that for $\mea$-a.e. $x\in X$ the function $(0,T)\ni t \mapsto f\circ F_t(x)$ is absolutely continuous and 
		\begin{equation}\label{RFL}
			\frac{\d}{\d t}  f\circ F_t(x)=\langle \nabla f, v_t\rangle\circ F_t(x), \quad \text{ for a.e. } t \in(0,T).
		\end{equation}
	\end{enumerate}
\end{definition}
Notice that in  \eqref{RFL} we are implicitly choosing for every $t \in(0,T)$ a Borel representative of $\langle \nabla f,v_t\rangle$, however \eqref{eq:compr}  ensures that the validity of item 3 in Definition \ref{def:rfl} is independent of this choice.

Observe also that in Definition \ref{def:rfl} we are assuming that the map $F$ is pointwise defined, however the definition is stable under modification in a negligible set of trajectories in the following sense. If $F_t(x)$ is a Regular Lagrangian Flow for $v_t$ (as in Definition \ref{def:rfl}) and for $\mea$-a.e. $x$, $\tilde F_t(x)=F_t(x)$  holds for every $t\in [0,T]$, for some map $\tilde F : [0,T]\times X \to X$ then $\tilde F$ is also a Regular Lagrangian Flow for $v_t$. In any case to avoid technical issues, in our discussion we prefer to fix a pointwise defined representative for the flow map $F$. 

\begin{remark}\label{rmk:accurverfl}
	If $F_t$ is a regular Lagrangian flow for a vector field $v_t$, then for $\mea$-a.e. $x$ it holds that the curve $[0,T] \ni t \mapsto F_t(x) $ is absolutely continuous and its metric speed is given by
	\begin{equation}\label{eq:msflow}
		|\dot {F_t(x)}|=|v_t|\circ F_t(x), \quad \text{ a.e. } t\in [0,T].
	\end{equation}
	This follows from \cite[Lemma 7.4 and 9.2]{AT}. Observe that this statement is independent of the chosen representative of $|v_t|$, thanks to \eqref{eq:compr}.\fr
\end{remark}

We will see  below in Theorem  \ref{thm:uniqrfl} that  the existence and uniqueness of a Regular Lagrangian Flow is  linked to the existence and uniqueness of a solution to the continuity equation (\cite{GHcont}): 

\begin{definition}\label{def:conteq}
	Let $v_t:[0,T]\to L^2(T\X)$ be Borel and $t \mapsto \mu_t \in \prob{\X}$, $t \in [0,T]$ be also a Borel  map. Suppose also  that $\||v_t|\|_{L^2(\mea)}\in L^{1}(0,T)$ and $\mu_t \le C \mea $ for every $t \in[0,T]$ and some positive constant $C$. We say that  $\mu_t$ with is a weak solution of the continuity equation
	\[ \frac{\d}{\d t}\mu_t+\div (v_t \mu_t)=0,\]
	with initial datum $\mu_0,$ if for every $f\in \Lip_{bs}(\X)$ the function $[0,T]\ni t \mapsto \int f \,\d \mu_t  $ is absolutely continuous and
	\begin{equation}\label{eq:conteq}
		\frac{\d}{\d t}\int f\, \d \mu_t = \int \la \nabla f,v_t\ra  \, \d \mu_t\, \quad \quad \text{ for } a.e.\,t\in (0,T).
	\end{equation}
\end{definition}

We refer to \cite[Sec. 3.4]{G14} for the definition of the space $\W_{C}(T\X)$  and the object $\nabla_{sym}v\in L^2(T^{2\otimes}\X)$. For our purposes it sufficient to know that for any $f \in \test(\X)$ we have  $\nabla f \in \W_C(T\X),$ with  $\nabla_{sym}\nabla f=\nabla(\nabla f)= \H f^{\sharp}$ (see \cite[Sec. 3.2]{G14}).

\begin{theorem}[\cite{AT}]\label{thm:uniqrfl}
	Let $v_t:[0,T]\to L^2(T\X)$ be Borel and  such that $v_t \in D(\div)$  for every $t\in[0,T]$. Assume furthermore that $\||v_t|\|_{L^2(\mea)}\in L^{1}(0,T)$, $\|\div(v_t)^-\|_{L^\infty}\in L^\infty(0,T)$ and $\|\nabla_{sym} v_t\|_{L^2(T^{\otimes2}X)}\in L^1(0,T)$. Then 
	\begin{enumerate}
		\item there exists a unique Regular Lagrangian flow  $F_t$ associated to $v_t$,
		\item for every initial datum $\mu_0 \in \prob{\X}$ with $\mu_0\le C \mea$ there exists a unique weak solution  $\mu_t$ to the continuity equation and it is given given by $\mu_t\coloneqq {F_t}_*{\mu_0}.$
	\end{enumerate}
\end{theorem}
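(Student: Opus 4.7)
The result is the Ambrosio--Trevisan adaptation to metric measure spaces of the classical DiPerna--Lions well-posedness theorem for Regular Lagrangian Flows. The strategy I would follow is to deduce both existence and uniqueness of the flow from the well-posedness of the continuity equation among bounded densities, and to establish the latter via a commutator estimate using the heat semigroup $h_\eps$ as regularizer.

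\emph{Uniqueness for the continuity equation.} By linearity it suffices to show that any weak solution $\mu_t=\rho_t\mea$ with $\rho_t$ uniformly bounded in $L^\infty$ and $\mu_0=0$ vanishes identically. I would regularize in space by the heat flow, $\rho^\eps_t\coloneqq h_\eps\rho_t$, and compute
\[
\partial_t\rho^\eps_t+\la v_t,\nabla\rho^\eps_t\ra+\rho^\eps_t\,\div(v_t)=r^\eps_t,
\]
where $r^\eps_t\coloneqq h_\eps({\rm div}(v_t\rho_t))-{\rm div}(v_t\,h_\eps\rho_t)$. The crucial step is the commutator estimate $r^\eps_t\to 0$ in $L^1_{\rm loc}$ as $\eps\downarrow 0$: an integration by parts rewrites $r^\eps_t$ as a pairing between $\nabla_{\rm sym}v_t$ and a symmetric tensor built from $\nabla h_\eps$ applied to $\rho_t$, and the assumption $\|\nabla_{\rm sym}v_t\|_{L^2(T^{\otimes 2}\X)}\in L^1(0,T)$ combined with the $L^\infty$-to-Lipschitz regularization \eqref{eq:BE} and the Bochner inequality \eqref{localbochner} forces this pairing to vanish. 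Once this is in hand, multiplying the regularized PDE by $\beta'(\rho^\eps_t)$ for a convex $\beta$ with $\beta(0)=0$, integrating in space, and sending $\eps\to 0$ yields the Gronwall inequality
\[
\frac{\d}{\d t}\int\beta(\rho_t)\,\d\mea\le \|\div(v_t)^-\|_{L^\infty(\mea)}\int |\beta'(\rho_t)\rho_t|\,\d\mea,
\]
from which $\rho_t\equiv 0$. This argument also gives a quantitative $L^\infty$ bound on any solution, which I will reuse for the compressibility estimate.

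\emph{Existence and uniqueness of the RLF.} With the CE well-posed I would invoke Ambrosio's superposition principle: any weak solution $(\mu_t)$ with $\||v_t|\|_{L^2(\mea)}\in L^1(0,T)$ is represented as $\mu_t=(\e_t)_*\eta$ for some probability $\eta$ on $C([0,T];\X)$ concentrated on absolutely continuous curves satisfying $|\dot\gamma_t|=|v_t|\circ\gamma_t$. Taking $\mu_0=\rho_0\mea$ with $\rho_0\le C$, applying uniqueness of the CE to the disintegration $\{\eta_x\}_{x\in\X}$ of $\eta$ along $\e_0$ forces each $\eta_x$ to be a Dirac mass at a single curve $\gamma^x_\cdot$, thereby defining a measurable flow $F_t(x)\coloneqq\gamma^x_t$. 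Property (2) of Definition \ref{def:rfl} is clear by construction, property (3) follows from the fact that $t\mapsto f\circ\gamma^x_t$ is absolutely continuous with derivative $\la\nabla f,v_t\ra\circ\gamma^x_t$ (a consequence of the curve equation $|\dot\gamma_t|=|v_t|\circ\gamma_t$ combined with the chain rule in the Sobolev class), and the compressibility \eqref{eq:compr} follows from the $L^\infty$-control on densities obtained above applied to $\mu_t={F_t}_*\mea$. Finally, uniqueness of the RLF is automatic: two distinct flows would produce two distinct weak solutions of the CE with the same initial datum, contradicting Part~1.

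\emph{Main obstacle.} The heart of the proof is the commutator estimate $r^\eps_t\to 0$. In Euclidean space DiPerna--Lions handle this by explicit manipulation of convolution kernels, but in the ${\rm RCD}$ setting there is no convolution and one must instead work with the heat semigroup inside the $L^0$-normed module framework; the only available substitute for pointwise differentiability of $v_t$ is the $L^2$-integrability of $\nabla_{\rm sym}v_t$. Making the commutator vanish relies crucially on the refined Bochner inequality, the Bakry--Émery gradient contraction \eqref{eq:BE}, and sharp pointwise bounds on $\nabla h_\eps$ extracted from the heat-kernel estimates \eqref{eq:kernelestimate}.
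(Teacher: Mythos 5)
The paper does not prove Theorem~\ref{thm:uniqrfl}: it is stated as a quotation from Ambrosio--Trevisan \cite{AT} and used as a black box (although the associated commutator estimate, Lemma~\ref{commutatorlemma}, reappears later in the proof of the new local a priori estimate in Proposition~\ref{prop:localest}). There is therefore no in-paper argument to compare against. Your sketch does capture the correct overall architecture of the \cite{AT} proof: well-posedness of the continuity equation in the class of bounded densities via a heat-flow commutator estimate and a Gronwall argument, followed by Ambrosio's superposition principle to build the flow and a splitting argument to upgrade superposition to uniqueness of trajectories.

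A few of your claims are imprecise, however, in ways that matter if this were fleshed out into a full proof. The convergence $r^\eps_t\to 0$ in \cite{AT} is established in the weak dual topology of $L^2(\mea)+L^{4'}(\mea)$ (this is precisely \eqref{vanishcommutator} of Lemma~\ref{commutatorlemma}), not in $L^1_{\loc}$: with only $\|\nabla_{\rm sym}v_t\|_{L^2}$ available, you cannot hope for $L^1_{\loc}$ convergence of the commutator, and the Gronwall step must be organized so that pairing against $\beta'(\rho^\eps_t)\in L^2\cap L^4$ is what closes the estimate. The heat-kernel pointwise bounds \eqref{eq:kernelestimate} play no role here; the commutator estimate uses the Bakry--\'Emery contraction \eqref{eq:BE} and the (self-improved) Bochner inequality, which you correctly identify, but the heat-kernel bounds are a red herring. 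In the superposition step, ``applying uniqueness to the disintegration $\{\eta_x\}$'' is not quite the argument: one shows that if a positive-$\mea$ set of $x$ had non-Dirac $\eta_x$, one could split $\eta$ into two sub-plans yielding two distinct bounded solutions of the CE with the same initial datum, contradicting Part~1 --- this is a global splitting argument, not a fiberwise appeal to uniqueness. Finally, for compressibility \eqref{eq:compr} you cannot directly apply the $L^\infty$ density bound to ${F_t}_*\mea$ since $\mea$ need not be finite; one works with normalized restrictions and an exhaustion of $\X$.
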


We remark that the uniqueness of the Regular Lagrangian Flow in Theorem \ref{thm:uniqrfl} has to be intended up to modification in a negligible set of trajectories, as discussed above.

\begin{remark}\label{rmk:extend}
	Let $v_t$ be as in Theorem \ref{thm:uniqrfl} and autonomous, i.e. $v_t\equiv v $ for some $v \in D(\div)\cap W^{1,2}_{C}(TX)$ with $\div(v)^{-}\in L^\infty(\mea).$ Then, thanks to the uniqueness given by Theorem \ref{thm:uniqrfl}, the Lagrangian flow $F_t$ relative to $v$ can be extended uniquely (up to a set of negligible trajectories) to a map $F: [0,\infty)\times X \to X$  which  satisfies the following group property
	\begin{equation}\label{eq:group}
		F_{s}\circ F_t= F_{s+t}, \quad \mea \text{-a.e}.
	\end{equation}
	for every $s,t \in [0,\infty).$
	
	Moreover if also $\div(v)\in L^\infty(\mea)$, it can be shown (see for example \cite[Lemma 3.18]{GRtorus}) that, denoting by $F^{-v}_{t}$ the Lagrangian flow relative to $-v$ (which exists unique for all times $t\ge 0$, thanks to Theorem \ref{thm:uniqrfl} and the previous observation) 
	\begin{equation}\label{eq:invflow}
		F^{-v}_t\circ F_t = {\sf id}, \quad \mea \text{-a.e},
	\end{equation}
	for every $t\ge 0.$ Hence setting $F_{-t}\coloneqq F^{-v}_t$ we can extend $F$ to $F:(-\infty,+\infty)\times X\to X$, for which \eqref{eq:group} is satisfied for every $s,t \in \rr.$\fr
\end{remark}

\subsubsection{Functions of bounded variation}

We recall the definition of function of bounded variation on a metric measure space. For a detailed treatment of this topic see for example \cite{miranda} and \cite{dimarino}. 
\begin{definition}[Functions of bounded variation]
	We say that function $f \in L^1(\mea)$ belongs to the space ${\sf BV}(\X)$ of functions of  bounded variation if there exists a sequence $f_n\in \Lip_{\loc}(\X)$ such that
	\[
	\limsup_{n \to +\infty } \int \lip f_n \,\d \mea<+\infty,
	\]
	(where $\lip f_n$ was defined in \eqref{eq:local lip}). By localizing this construction we also define 
	\[
	\|Df\|(A)\coloneqq \inf \left \{ \liminf_{n} \int_A \lip f_n \,\d \mea \ : \ f_n\in \Lip_{\loc}(A), \, f_n \to f \text{ in } L^1(A)  \right \},
	\]
	for any  $A\subset \X$ open.
	It is proven in \cite{miranda} (at least for doubling m.m.s.) that this set function  is the restriction to open sets of a finite and positive Borel measure  on $\X$ that we call \emph{total variation} of $f$ and still denote by $\|Df\|.$
\end{definition}
It is proven in \cite[Remark 3.5]{GH}, in the case of proper ${\rm RCD(K,\infty)}$ spaces (and thus also for any ${\rm RCD}(K,N)$, with $N<+\infty$), the equivalence between  the total variation and the weak upper gradient, meaning that if $f \in {\sf BV}(\X)\cap \Lip_{\loc}(\X)$, then  
\begin{equation}\label{eq:totv}
	\|Df\|=|\nabla f|\mea .
\end{equation}

\begin{definition}[Sets of finite perimeter]
	Given a Borel set $E\subset \X$ and any open set $A\subset \X$ we define the perimeter $\Per(A,E)$ as
	\[
	\Per(A,E)\coloneqq \inf \left \{ \liminf_{n} \int_A \lip f_n \d \,\mea \ : \ f_n \in \Lip_{\loc}(A), \, f_n \to \nchi_E \text{ in } L^1_{\loc}(A) \right \}.
	\]
	We say that $E$ has finite perimeter if $\Per(E,\X)<+\infty$. In this case it can be proved that the set function $\Per(E,A)$ is the restriction to open sets of a finite and positive Borel measure on $\X$ that we still denote by $\Per(E,.)$.
\end{definition}

We will need the following variant of the coarea formula, which follows from \eqref{eq:totv}, the standard coarea formula for m.m.s. (see \cite[Remark 4.3]{miranda}) and a simple truncation argument.
\begin{prop}[Coarea formula]\label{prop:coarea}
	Let $\X$ be an ${\rm RCD}(K,N)$ m.m.s with $N<+\infty$ and let $\Omega\subset \X$ open. Let  $u \in \LIP_{\loc}(\Omega)$  be positive and such that $u^{-1}([a,b])$ is  compact in $\Omega$, for every $[a,b] \subset  (0,1)$. Then $\{u<t\}$   has finite perimeter for a.e. $t \in (0,1)$ and  for any  $f : \Omega \to [-\infty,+\infty]$ Borel and in $L^1_{\loc}(\Omega,|\nabla u|\mea \restr{\Omega})$ it holds that
	\begin{equation}\label{eq:coarea}
		\int_{\Omega} \phi(u) f\, |\nabla u| \,\d\mea = \int_{\Omega} \phi(t) \int f \,\d\Per(\{u<t\},.)\, \d t,\quad \forall \phi:[0,1] \to \rr \text{ Borel, with } \supp \phi \subset(0,1).
	\end{equation}
\end{prop}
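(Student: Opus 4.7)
The plan is to reduce to the standard coarea formula of \cite[Remark 4.3]{miranda} via a truncation argument, using \eqref{eq:totv} to identify $\d\|D\tilde u\|$ with $|\nabla \tilde u|\,\d\mea$ for the truncated function. First, by splitting $f = f^+ - f^-$ I reduce to the case $f \ge 0$. Since $\supp \phi$ is compact in $(0,1)$, fix $[a,b] \subset (0,1)$ with $\supp \phi \subset [a,b]$ and pick a (generic) $\delta > 0$ with $[a',b'] := [a - \delta, b + \delta] \subset (0,1)$. By hypothesis $K := u^{-1}([a', b'])$ is compact in $\Omega$; choose an open $V$ with $K \subset V \subset \overline V \subset \Omega$ and a Lipschitz cutoff $\eta \in \LIP_c(\X)$ with $\eta \equiv 1$ on an open neighborhood $W$ of $K$ and $\supp \eta \subset V$. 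Define the truncated extension
\[
\tilde u := \eta \cdot \bigl((u \wedge b') \vee a'\bigr) + (1 - \eta)\, a'\ \text{on } \Omega, \qquad \tilde u := a'\ \text{on } \X \setminus \Omega,
\]
so that $\tilde u - a' \in \LIP_c(\X) \subset {\sf BV}(\X)$ and $\tilde u = (u \wedge b') \vee a'$ on $W$.

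Next, apply the coarea formula of \cite[Remark 4.3]{miranda} to $\tilde u$ with Borel test function $g := \phi(\tilde u)\, f\, \nchi_V$ (extending $f$ by zero outside $\Omega$), combined with \eqref{eq:totv}, to obtain
\[
\int \phi(\tilde u)\, f\, \nchi_V\, |\nabla \tilde u|\,\d\mea \;=\; \int_\rr \int \phi(\tilde u)\, f\, \nchi_V \,\d\Per(\{\tilde u < t\}, \cdot)\,\d t.
\]
The two sides are matched with those of the proposition via three observations. (i) Where $\phi(\tilde u) \ne 0$ one has $\tilde u \in [a,b] \subset (a',b')$, which forces $\tilde u = u$ and $|\nabla \tilde u| = |\nabla u|$ (chain rule for the piecewise-linear truncation, together with the standard coarea consequence that $|\nabla u| = 0$ a.e.\ on $\{u = a'\} \cup \{u = b'\}$ for a.e.\ $\delta$); the LHS thus collapses to $\int_\Omega \phi(u)\, f\, |\nabla u|\,\d\mea$. (ii) For $t \in [a, b]$ a direct computation yields $\{\tilde u < t\} \cap W = \{u < t\} \cap W$, hence by locality of the perimeter $\Per(\{\tilde u < t\}, \cdot) = \Per(\{u < t\}, \cdot)$ on $W$. (iii) Since $\Per(\{\tilde u < t\}, \cdot)$ concentrates on $\{\tilde u = t\}$ for a.e.\ $t$, the factor $\phi(\tilde u)$ may be pulled out of the inner integral as $\phi(t)$.

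The one genuine subtlety, which I take to be the main obstacle (if a minor one), is checking that no mass of $\Per(\{u < t\}, \cdot)$ is lost outside $V$ when extending the inner integral from $W$ to all of $\X$: for a.e.\ $t \in [a, b]$ the essential boundary of $\{u < t\}$ inside $\Omega$ is contained in $\{u = t\} \subset K \subset V$ by continuity of $u$, so any remaining perimeter mass sits on $\partial \Omega \subset \X \setminus \Omega$, where the extended $f$ vanishes. Thus $\int_V f\,\d\Per(\{u<t\},\cdot)$ agrees with the full integral $\int f\,\d\Per(\{u<t\},\cdot)$, and combining (i)--(iii) with this bookkeeping produces exactly the claimed identity; the rest is routine manipulation of truncations.
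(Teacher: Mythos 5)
Your plan (extend $u$ by truncation and a spatial cutoff, apply Miranda's coarea to the extension via \eqref{eq:totv}, then localize) is the right one and matches the paper's intent, but step (i) contains a genuine gap. The claim that $\phi(\tilde u)\neq 0$ forces $\tilde u = u$ is false. Write $\tilde u = a' + \eta\bigl((u\wedge b')\vee a' - a'\bigr)$. On the annular region $\{0<\eta<1\}\cap\{u>b'\}\subset V\setminus W$ one has $(u\wedge b')\vee a' = b'$ and hence $\tilde u = a' + \eta(b'-a')$, which sweeps across $[a,b]$ as $\eta$ runs through $\bigl[\tfrac{a-a'}{b'-a'},\tfrac{b-a'}{b'-a'}\bigr]\subset(0,1)$, while $|\nabla\tilde u| = (b'-a')|\nabla\eta|$ need not vanish there. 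So $\tilde u$ has spurious level sets and spurious gradient in the range $[a,b]$ living entirely on $V\setminus W$, and the annulus $\{u=a'\}\cup\{u=b'\}$ you invoke (the "a.e. $\delta$" remark) has nothing to do with it. Consequently the left-hand side of Miranda's coarea with your test function $g=\phi(\tilde u)f\nchi_V$ does \emph{not} collapse to $\int_\Omega\phi(u)f|\nabla u|\,\d\mea$ as written; it carries an extra error term supported in $V\setminus W$, and your observations (ii)--(iii), which only control things on $W$, do not account for the corresponding error term on the right-hand side either.

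The repair is small: take the test function to be $g:=\phi(u)f\nchi_\Omega$ (extended by zero) rather than $\phi(\tilde u)f\nchi_V$. Since $\{\phi(u)\neq 0\}\subset u^{-1}(\supp\phi)\subset u^{-1}([a,b])\subset K\subset W$ and $u^{-1}\bigl((a',b')\bigr)$ is open, on the open set $W\cap u^{-1}\bigl((a',b')\bigr)\supset\supp g$ one has $\tilde u = u$ and, by locality of the minimal weak upper gradient, $|\nabla\tilde u| = |\nabla u|$. So the left-hand side of Miranda's formula for $\tilde u$ with test $g$ equals $\int_\Omega\phi(u)f|\nabla u|\,\d\mea$ without any hand-waving. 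For the right-hand side, $\supp g\subset K\subset W$ forces the inner integral to be over $W$, where by your (ii) $\Per(\{\tilde u<t\},\cdot)$ and $\Per(\{u<t\},\cdot)$ agree; the concentration of $\Per(\{u<t\},\cdot)$ on $\{u=t\}\subset K\subset W$ in $\Omega$ (together with $f=0$ off $\Omega$) and the substitution $\phi(u)=\phi(t)$ there then give the claimed identity. The rest of your argument --- the reduction to $f\ge 0$, the locality of perimeter, and the bookkeeping near $\partial\Omega$ --- is fine.

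(Alternatively one could keep $g=\phi(\tilde u)f\nchi_V$ and note that the two error terms supported in $V\setminus W$ on the two sides cancel, being equal by Miranda's formula applied to $\tilde u$ with test $\phi(\tilde u)f\nchi_{V\setminus W}$; but this is more roundabout, and is certainly not what you argued.)
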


\section{Nonparabolic RCD spaces}\label{sec:nonpara}
In this section we introduce the notion of nonparabolic ${\rm RCD}(0,N)$ space: this is the natural setting to study problem \eqref{mainpde}, indeed already for smooth manifolds the existence of a solution to \eqref{mainpde} implies that the manifold is nonparabolic  (see for example Theorem 2.3 in \cite{AFM}).

We recall that a (non-compact) Riemannian manifold is said to be nonparabolic if it admits a positive global Green function. It has been proved by Varopouls (\cite{var}) that in the case ${\sf Ric}\ge 0$ the nonparabolicity is equivalent to \eqref{eq:nonparabolic}. This motivates the following.
\begin{definition}[Nonparabolic RCD space]\label{def:nonparabolic}
	Let $(\X,\sfd,\mea)$ be an ${\rm RCD}(0,N)$ space with $N<+\infty.$ We say that $\X$ is \emph{nonparabolic} if
	\begin{equation}\label{eq:nonparabolic}
		\int_1^{+\infty} \frac{s}{\mea(B_s(x))}\, \d s<+\infty, \quad \text{ for every $x \in X.$}
	\end{equation}
	Observe that the above quantity is finite for one $x\in X$ if and only if it is finite for all $x\in X.$
\end{definition}
We point out that condition \eqref{eq:nonparabolic} in the context of ${\rm RCD}$ spaces was already introduced in \cite{constancy}.
	\begin{remark}\label{rmk:N>2}
		It follows immediately from the Bishop-Gromov inequality that if $\X$ is a non-parabolic ${\rm RCD}(0,N)$ space then it is non-compact and $N>2.$\fr
	\end{remark}
In the following two subsections we develop the two main features  of a nonparabolic ${\rm RCD}$ space that we will need in this note: the first is the existence of a Green function, which also provides an explicit solution to \eqref{mainpde}; the second is that  the number of ends is equal to one (see Definition \ref{def:ends}).

\subsection{The Green function}
It turns out that on a nonparabolic RCD space it can be given a notion of positive global Green function. Following \cite{constancy} we  define the Green function $G :\X\times \X \to[0,+\infty]$ as
\begin{equation}\label{eq:green def}
G(x,y)\coloneqq\int_0^\infty p_t(x,y)\, \d \mea.
\end{equation}
We also set $G_x(y)\coloneqq G(x,y).$ For any $\eps>0$ we also define the quasi Green function $G^{\eps} :\X\times \X \to[0,+\infty]$ as
\begin{equation}\label{eq:quasi green def}
	G^{\eps}(x,y)\coloneqq\int_{\eps}^\infty p_t(x,y)\, \d \mea
\end{equation}
and as above we set $G^{\eps}_x(y)\coloneqq G^{\eps}(x,y).$ 
It is prove in \cite[Lemma 2.5]{constancy} that $G^{\eps}_x\in \LIP(\X)\cap D(\bd)$ with $\bd G_x^{\eps}=-p_{\eps}(x,y)\mea $, in particular $G_x^{\eps}$ is superharmonic in the whole $\X$.
\begin{prop}[Estimates for the Green functions, {\cite[Prop. 2.3]{constancy}}, see also \cite{grigor}]
		Let $(\X,\sfd,\mea)$ be a nonparabolic ${\rm RCD}(0,N)$ m.m.s. Then there exits a 	constant $C=C(N)>1$ such that
	\begin{equation}\label{eq:greenestimates}
	\frac{1}{C}\int_{\sfd(x,y)}^\infty \frac{s}{\mea(B_s(x))} \,\d s  \le	G(x,y)\le C\int_{\sfd(x,y)}^\infty \frac{s}{\mea(B_s(x))} \,\d s, \quad \forall x,y \in \X. 
	\end{equation} 
\end{prop}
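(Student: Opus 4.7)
The plan is to prove both inequalities by plugging the two-sided Gaussian heat-kernel bounds \eqref{eq:kernelestimate} (which simplify since $c=0$ when $K=0$) into the definition $G(x,y)=\int_0^\infty p_t(x,y)\,\d t$ and performing the substitution $s=\sqrt t$, $\d t = 2s\,\d s$. Setting $r\coloneqq\sfd(x,y)$, both the upper and lower bound reduce to comparing $G(x,y)$ with quantities of the form
\[
I_\alpha(r)\coloneqq\int_0^\infty \frac{s}{\mea(B_s(x))}\,\e^{-r^2/(\alpha s^2)}\,\d s,
\]
for $\alpha=5$ (upper bound) and $\alpha=3$ (lower bound). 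Hence the task reduces to showing that $I_\alpha(r)$ is comparable, up to constants depending only on $N$ and $\alpha$, to the target quantity $J(r)\coloneqq \int_r^\infty \frac{s}{\mea(B_s(x))}\,\d s$.

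I would split $I_\alpha(r)$ at the critical scale $s=r$. On the tail $[r,\infty)$, the exponential lies in $[\e^{-1/\alpha},1]$, so the contribution is comparable to $J(r)$: this immediately gives the lower bound $G(x,y)\ge C^{-1} J(r)$ (using only the lower heat-kernel bound restricted to $s\ge r$) and also an admissible upper bound for that portion. It remains to control the head $\int_0^r \frac{s}{\mea(B_s(x))}\,\e^{-r^2/(\alpha s^2)}\,\d s$ and absorb it into $J(r)$. Here I would invoke the Bishop--Gromov monotonicity: for $s\le r$, $\mea(B_s(x))\ge (s/r)^N\mea(B_r(x))$, so the head integral is at most
\[
\frac{r^N}{\mea(B_r(x))}\int_0^r s^{1-N}\,\e^{-r^2/(\alpha s^2)}\,\d s = \frac{r^2}{\mea(B_r(x))}\int_1^\infty u^{N-3}\e^{-u^2/\alpha}\,\d u,
\]
after the change of variable $u=r/s$. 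The dimensional integral on the right is a finite constant $C_N$ (convergence at $\infty$ is from the Gaussian, and we are on $[1,\infty)$).

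Finally, to absorb $r^2/\mea(B_r(x))$ into $J(r)$, I would use the other direction of Bishop--Gromov: for $s\in[r,2r]$ one has $\mea(B_s(x))\le 2^N\mea(B_r(x))$, hence
\[
J(r)\ge \int_r^{2r}\frac{s}{\mea(B_s(x))}\,\d s \ge \frac{3\,r^2}{2^{N+1}\mea(B_r(x))},
\]
so the head of $I_\alpha(r)$ is bounded by a dimensional multiple of $J(r)$. Combining the head and tail estimates yields $C^{-1} J(r)\le I_\alpha(r)\le CJ(r)$, which, together with the heat-kernel bounds, completes both inequalities. The main technical point is ensuring the constants in the two applications of Bishop--Gromov (one for $s\le r$, one for $s\ge r$) combine cleanly; there is no real obstacle beyond bookkeeping, and the nonparabolicity hypothesis \eqref{eq:nonparabolic} is exactly what guarantees $J(r)<+\infty$ and thus the finiteness and pointwise sense of $G(x,y)$.
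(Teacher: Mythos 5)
The paper does not prove this proposition; it cites it verbatim from Bru\`e--Semola~\cite{constancy} and Grigor'yan~\cite{grigor}. Your argument is nonetheless correct, complete, and is essentially the standard proof found in those references: integrate the two-sided Gaussian heat-kernel bounds \eqref{eq:kernelestimate} (with $c=0$ since $K=0$) over $t\in(0,\infty)$, substitute $t=s^2$, and then split $I_\alpha(r)$ at the critical scale $s=r$. The tail $\int_r^\infty$ is comparable to $J(r)$ because $\e^{-r^2/(\alpha s^2)}\in[\e^{-1/\alpha},1]$ there (this already yields the lower bound on $G$), while the head $\int_0^r$ is dominated via Bishop--Gromov in the decreasing direction ($\mea(B_s)\ge (s/r)^N\mea(B_r)$), the change of variable $u=r/s$, and the convergent dimensional integral $\int_1^\infty u^{N-3}\e^{-u^2/\alpha}\,\d u$; absorbing $r^2/\mea(B_r(x))$ into $J(r)$ via Bishop--Gromov in the increasing direction on $[r,2r]$ closes the estimate. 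Your bookkeeping (the $2^{N+1}/3$ constant and the $r^{2-N}$ scaling) checks out, and the observation that nonparabolicity is precisely what makes $J(r)$ finite is the right remark. No gaps.
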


	\begin{prop}\label{prop:green harmonic}
		Let $\X$ be a nonparabolic ${\rm RCD}(0,N)$. Then $G_x$ is positive, continuous and harmonic in $\X\setminus\{x\},$ for every $x\in \X.$	
	\end{prop}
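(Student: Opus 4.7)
Here is my plan. The approach exploits the approximations $G^{\eps}_x$ from \eqref{eq:quasi green def}, which are Lipschitz with $\bd G^{\eps}_x=-p_{\eps}(x,\cdot)\mea$, together with the Gaussian heat kernel bounds \eqref{eq:kernelestimate}. Positivity and finiteness of $G_x$ on $\X\setminus\{x\}$ follow directly from the two-sided bound \eqref{eq:greenestimates}: for $y\neq x$ the lower bound is strictly positive, and the upper bound combined with the nonparabolicity assumption \eqref{eq:nonparabolic} (plus the finiteness of $\mea$ on bounded sets, to handle the portion $s\in[\sfd(x,y),1]$ of the integral) yields $G(x,y)<+\infty$.

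To prove continuity I will show that $G^{\eps}_x\to G_x$ locally uniformly on $\X\setminus\{x\}$, whence continuity of $G_x$ follows from that of the approximations. Given a compact $K\subset\X\setminus\{x\}$, set $d\coloneqq \sfd(x,K)>0$; the Gaussian upper bound in \eqref{eq:kernelestimate} (with $c=0$, since we are in the nonnegative-curvature case), combined with the Bishop-Gromov lower bound $\mea(B_{\sqrt t}(x))\gtrsim t^{N/2}$ valid for small $t$, yields $p_t(x,\cdot)\le C\,t^{-N/2}\exp(-d^2/(5t))$ on $K$ for $t\in(0,1]$. Since the exponential dominates the polynomial, $\int_0^{\eps}p_t(x,\cdot)\,\d t=G_x-G^{\eps}_x\to 0$ uniformly on $K$. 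The same estimate applied to $p_{\eps}(x,\cdot)$ itself also gives $p_{\eps}(x,\cdot)\to 0$ uniformly on $K$, a fact I use below.

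For harmonicity I fix $y_0\neq x$ and $r>0$ with $\bar B_{2r}(y_0)\cap\{x\}=\emptyset$, and for any $f\in \LIP_c(B_r(y_0))$ plan to pass to the limit $\eps\downarrow 0$ in the identity
\[
\int \la\nabla f,\nabla G^{\eps}_x\ra\,\d\mea = \int f\,p_{\eps}(x,\cdot)\,\d\mea,
\]
whose right-hand side vanishes by the uniform decay of $p_{\eps}(x,\cdot)$ on $\supp f$. The main obstacle is passing to the limit on the left, which requires a uniform $L^2$ bound for $|\nabla G^{\eps}_x|$ on $B_r(y_0)$. I plan to obtain this via a Caccioppoli-type estimate: testing $\bd G^{\eps}_x=-p_{\eps}(x,\cdot)\mea$ against $\eta^2 G^{\eps}_x$, with $\eta\in \LIP_c(B_{2r}(y_0))$ equal to $1$ on $B_r(y_0)$ — admissible by Remark \ref{extension1}, since $G^{\eps}_x$ is Lipschitz and $p_{\eps}(x,\cdot)$ is bounded on $\supp\eta$ — and using Young's inequality to absorb the gradient term on the left-hand side; the two remaining pieces on the right are controlled uniformly in $\eps$ by the estimates of the previous paragraph. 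Weak compactness in $L^2(T\X)\restr{B_r(y_0)}$, together with the strong $L^2$ convergence $G^{\eps}_x\to G_x$ (from the uniform convergence above) and the closure property \eqref{eq:stability wug}, will then give $G_x\in \W_{\loc}(B_r(y_0))$ with $\nabla G^{\eps}_x\rightharpoonup \nabla G_x$ along a subsequence, so the desired identity $\int\la\nabla f,\nabla G_x\ra\,\d\mea=0$ follows. Arbitrariness of $y_0$ and $f$ yields harmonicity on $\X\setminus\{x\}$. The Caccioppoli step is the technical heart of the argument; the rest is a direct consequence of the heat kernel bounds.
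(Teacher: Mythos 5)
Your proof is correct and takes the same high-level route as the paper (approximation via $G^{\eps}_x$, heat kernel bounds, passage to the limit), but the two technical steps are implemented differently. For harmonicity, the paper proves that $G^{\eps}_x$ is Cauchy in $\W(A_{\delta,R})$ by following the argument of \cite[Lemma 2.5]{constancy}, relying on the $L^\infty$-bounds $\sup_{t\in(0,1)}\|p_t(x,\cdot)+|\nabla p_t(x,\cdot)|\|_{L^\infty(A_{\delta,R})}<+\infty$, and then passes harmonicity to the strong $\W$-limit; you instead extract a uniform local $L^2$ gradient bound by a Caccioppoli estimate (testing $\bd G^{\eps}_x=-p_{\eps}(x,\cdot)\mea$ against $\eta^2G^{\eps}_x$ and absorbing by Young) and then pass to the limit by weak compactness and closedness of the gradient. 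Your variant is self-contained and avoids the reference to \cite{constancy}, at the small cost of controlling also the term $\int\eta^2 G^{\eps}_x\,p_{\eps}\,\d\mea$, which is anyway uniformly bounded by your earlier estimates. For continuity, the paper integrates the $L^\infty$ gradient bound on $p_t(x,\cdot)$ over $t\in(0,\infty)$ and appeals to the local Sobolev-to-Lipschitz property (Proposition \ref{prop:sob to lip}), which yields a Lipschitz estimate for $G_x$ and in particular requires an extra step to identify the Lipschitz representative with the pointwise-defined $G_x$; you instead prove locally uniform convergence $G^{\eps}_x\to G_x$ by estimating $\int_0^{\eps}p_t(x,\cdot)\,\d t$ and conclude continuity as a uniform limit of Lipschitz functions. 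Your continuity argument is more elementary and avoids the representative issue, while the paper's approach gives the stronger quantitative Lipschitz bound $\Lip(G_x|_{\X\setminus B_{\delta}(x)})\le\int_0^\infty\beta(t,\delta)\,\d t$ as a byproduct.
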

	\begin{proof}
		Fix $R,\delta>0$ with $R>\delta$ and define $A_{\delta,R}=B_R(x)\setminus \bar B_{\delta}(x).$ It enough to prove that $G_x$ is harmonic on $A_{\delta,R}$.
		Recall that $G^{\eps}_x\in \LIP(\X)\cap D(\bd)$ with $\bd G_x^{\eps}=-p_{\eps}(x,y)\mea $ and that $G_x^{\eps}\to G_x$ in $L^1_{\loc}(\X)$. 
		We now observe that from the heat kernel bounds \eqref{eq:kernelestimate} we have $\sup_{t\in(0,1)} \|p_t(x,.)+|\nabla p_t(x,.)|\|_{L^\infty(A_{\delta,R})}<+\infty $ and that $\sup_{\eps>0} \|G^{\eps}_x\|_{L^\infty(A_{\delta,R})} <+\infty.$ From this, following the arguments in the proof of \cite[Lemma 2.5]{constancy}, we can prove that the sequence $G_x^{\eps}$ is Cauchy in $\W(A_{\delta,R}).$ In particular $G_x^{\eps}\to G_x$ in $\W(A_{\delta,R})$ and, since $p_t(x,.)\to 0$ uniformly in $A_{\delta,R}$, we deduce that $G_x$ is harmonic in $A_{\delta,R}$ (cf. with Lemma \ref{lem:techconv}).
		
		We pass to the continuity of $G_x$ in $\X\setminus\{x\}$. We first observe that  from \eqref{eq:kernelestimate} we have
		$$\||\nabla p_t(x,.)|\|_{L^\infty(\X \setminus  B_{\delta}(x))}\le C(N)t^{-1/2} \mea(B_{\sqrt t}(x))^{-1}e^{\frac{-\delta^2}{5t}}\eqqcolon \beta(t,\delta)$$
		 and that, thanks to the Bishop-Gromov inequality and the nonparabolicity assumption, $\int_0^\infty \beta(t,\delta)\, \d t<+\infty,$ for every $\delta>0.$	 Therefore from the continuity of $p_t$ and Proposition \ref{prop:sob to lip} we deduce that
		$$\limsup_{z\to y}|G_x(z)-G_x(y)|\le \left(\int_0^\infty \beta(t,\delta)\, \d t\right)\limsup_{z\to y} \sfd(z,y)=0, \quad \forall y \in \X\setminus \bar B_{\delta}(x_0),$$
		from which the claimed continuity follows.
\end{proof}

As anticipated, the Green function provides a solution to \eqref{mainpde}, in particular we have the following:
\begin{cor}\label{cor:green sol}
		Let $\X$ be a nonparabolic ${\rm RCD}(0,N)$ and let $\Omega \subset \X$ be open, unbounded with $\partial \Omega$ bounded. Then for every $x_0 \in \Omega^c$ there exists $\lambda >0$ such that  $\lambda G_{x_0}$ is a solution to \eqref{mainpde}.
\end{cor}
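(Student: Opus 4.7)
I would verify that $u:=\lambda G_{x_0}$ satisfies each of the four clauses of \eqref{mainpde}, fixing $\lambda>0$ only at the end. All the ingredients are already in place: Proposition \ref{prop:green harmonic} handles the PDE, continuity and positivity, while the estimate \eqref{eq:greenestimates} combined with the nonparabolicity assumption \eqref{eq:nonparabolic} controls the behaviour at infinity.

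First, since $x_0\in\Omega^c$ we have $x_0\notin\Omega$, so by Proposition \ref{prop:green harmonic} the function $G_{x_0}$ is positive, continuous and harmonic on $\Omega\subseteq\X\setminus\{x_0\}$. This already gives $\bd|_\Omega G_{x_0}=0$ and $G_{x_0}\in C(\Omega)$, hence the first clause. For the decay clause I would invoke the upper bound in \eqref{eq:greenestimates},
\[
G_{x_0}(y)\ \le\ C\int_{\sfd(x_0,y)}^{+\infty}\frac{s}{\mea(B_s(x_0))}\,\d s,
\]
whose right-hand side tends to $0$ as $\sfd(x_0,y)\to+\infty$ precisely by \eqref{eq:nonparabolic}. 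The boundedness of $\partial\Omega$ now enters in a crucial way: fixing $R_0$ with $\partial\Omega\subset B_{R_0}(x_0)$, the reverse triangle inequality yields $\sfd(y,x_0)\ge \sfd(y,\partial\Omega)-R_0$, so $\sfd(y,\partial\Omega)\to+\infty$ forces $\sfd(y,x_0)\to+\infty$, and hence $\lambda G_{x_0}(y)\to 0$. Combining this decay with the continuity of $G_{x_0}$ on the closed set $\bar\Omega$ (which stays away from the singularity at $x_0$) also yields $G_{x_0}\in L^\infty(\Omega)$.

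Finally, for the boundary condition I would use that $\partial\Omega$ is compact (being closed and bounded in the proper space $\X$) and that $G_{x_0}$ is continuous and strictly positive on it, so that $m:=\min_{\partial\Omega}G_{x_0}>0$. Setting $\lambda:=1/m$ then gives $\lambda G_{x_0}(x)\ge 1$ for every $x\in\partial\Omega$, and the continuity of $\lambda G_{x_0}$ at such $x$ upgrades this to $\liminf_{y\to x}\lambda G_{x_0}(y)=\lambda G_{x_0}(x)\ge 1$, which is the required condition.

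The one genuine subtlety—and therefore the main obstacle—is the $L^\infty(\Omega)$ requirement: it forces $x_0$ to have positive distance from $\bar\Omega$, because otherwise the singularity of $G_{x_0}$ could be approached from within $\Omega$. Thus `$x_0\in\Omega^c$' in the statement must be read as `$x_0$ lies in the interior of $\Omega^c$' (equivalently $x_0\notin\partial\Omega$); once this convention is fixed, the remaining verifications are a direct assembly of Proposition \ref{prop:green harmonic}, \eqref{eq:greenestimates} and the compactness of $\partial\Omega$.
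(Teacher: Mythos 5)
Your proof follows exactly the same route as the paper's: Proposition \ref{prop:green harmonic} supplies positivity, continuity and harmonicity of $G_{x_0}$ on $\Omega$; the upper bound in \eqref{eq:greenestimates} combined with \eqref{eq:nonparabolic} gives the decay at infinity; and compactness of $\partial\Omega$ together with strict positivity produces $\lambda=(\min_{\partial\Omega}G_{x_0})^{-1}$.

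Your remark about the $L^\infty(\Omega)$ requirement is a genuine and sharp observation, and in fact it is \emph{more} than the paper records: the paper's own proof addresses the boundary condition and the decay but says nothing about boundedness. And the concern is real, not merely pedantic. If $x_0\in\partial\Omega$ then $\Omega$ accumulates at $x_0$, while the lower bound in \eqref{eq:greenestimates}, together with Bishop--Gromov (which for an ${\rm RCD}(0,N)$ space gives $\mea(B_s(x_0))\le\theta s^N$ for small $s$ whenever the density $\theta=\lim_{s\to0}\mea(B_s(x_0))/s^N$ is finite, and $N>2$ by Remark \ref{rmk:N>2}), forces $G_{x_0}(y)\to+\infty$ as $y\to x_0$; the model case $\rr^N$ with $G_{x_0}\sim\sfd(\cdot,x_0)^{2-N}$ already shows this. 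So with $x_0\in\partial\Omega$ the candidate $\lambda G_{x_0}$ can fail to be in $L^\infty(\Omega)$. Your suggested reading — that $x_0$ should lie in the interior of $\Omega^c$, i.e.\ $x_0\notin\bar\Omega$ — is what must be intended; once $\sfd(x_0,\bar\Omega)=:\delta>0$, continuity of $G_{x_0}$ on the compact annulus $\{\delta\le\sfd(\cdot,x_0)\le 2\delta\}$ together with decay at infinity yields $G_{x_0}\in L^\infty(\Omega)$, closing the argument. This is a small imprecision in the paper's statement rather than a defect in your proof.
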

\begin{proof}
	From Proposition \ref{prop:green harmonic} it follows that $G_{x_0}$ is harmonic and continuous in $\X \setminus \{x_0\}$, moreover from \eqref{eq:greenestimates}  we have that $G_{x_0}(x)\to 0$ as $\sfd(x,x_0)\to +\infty$. Therefore, since $\partial \Omega$ is bounded and $G_{x_0}$ is positive, we can simply take $\lambda=(\min_{\partial \Omega} G_{x_0})^{-1}.$
\end{proof}

\subsection{Number of ends}
Let us introduce the notion of \emph{ends} for a metric space. The definition is usually given for manifolds, however, since the definition is purely metric, it carries over verbatim to metric spaces.
\begin{definition}[Number of ends of a metric space]\label{def:ends}
	Let $(X,\sfd)$ be a metric space and $k \in \mathbb{N}$. We say that $\X$ has $k$ ends if both the following are true:
	\begin{enumerate}
		\item  for any $K$ compact, $\X \setminus K$ has at most $k$ unbounded connected components,
		\item there exists  $K'$ compact such that $\X \setminus K'$ has  exactly $k$ unbounded connected components.
	\end{enumerate}
\end{definition}

The following result generalizes to the non smooth setting a well known  result for Riemannian manifolds.
\begin{prop}\label{prop:ends}
	Suppose $(\X,\sfd,\mea)$ is a non compact ${\rm RCD}(0,N)$ space, $N\in[1,\infty)$. Then exactly one of the following holds:
	\begin{enumerate}[label=(\roman*)]
		\item $\X$ is a cylinder, meaning that $X$ is isomorphic to the product $(\rr\times \X',\sfd_{Eucl}\times \sfd',\mathcal{L}^1\otimes \mea')$, where $(X',\sfd',\mea')$ is a compact ${\rm RCD}(0,N-1)$ m.m.s. if $N\ge 2$  and a single point if $N\in [1,2)$,
		\item for every $C$ bounded subset of $X$, there exists $R>0$  such that  the following holds: for every couple of points $x,y \in X$ satisfying $\sfd(x,C),\sfd(y,C)>R$  there exists $\gamma \in \Lip([0,1],\X)$ such that  $\gamma(0)=x,\, \gamma(1)=y$, $\gamma \subset X\setminus C$ and $\Lip (\gamma)\le 5\sfd(x,y).$
	In particular $\X$ has one end.
	\end{enumerate}
\end{prop}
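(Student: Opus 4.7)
The plan is to prove the dichotomy by supposing case (ii) fails and deducing case (i); the main tool is the Gigli splitting theorem for $\mathrm{RCD}(0,N)$ spaces.

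First I would extract a line from the failure of (ii). If (ii) does not hold, there is a bounded $C\subset\X$ and, taking $R=n$, sequences $x_n,y_n\in\X$ with $\sfd(x_n,C),\sfd(y_n,C)\to\infty$ such that no Lipschitz curve from $x_n$ to $y_n$ with Lipschitz constant at most $5\sfd(x_n,y_n)$ avoids $C$. A constant-speed minimizing geodesic $\gamma_n:[0,1]\to\X$ from $x_n$ to $y_n$ has Lipschitz constant exactly $\sfd(x_n,y_n)$, hence must cross $\bar C$ at some point $z_n$. By properness of $\X$ (Bishop--Gromov) and compactness of $\bar C$, along a subsequence $z_n\to z^\ast\in\bar C$. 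Reparameterizing $\gamma_n$ by arclength centered at $z_n$ yields minimizing geodesic segments defined on intervals $[-a_n,b_n]$ with $a_n,b_n\to\infty$; Arzelà--Ascoli in the proper space $\X$ produces a locally uniform limit $\gamma:\mathbb{R}\to\X$ with $\gamma(0)=z^\ast$. Each finite sub-arc of $\gamma$ is a uniform limit of minimizing geodesics of the same length, hence itself minimizing, so $\gamma$ is a line.

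Next I would apply the Gigli splitting theorem: the presence of a line forces $\X\simeq \mathbb{R}\times \X'$ with $\X'$ an $\mathrm{RCD}(0,N-1)$ space if $N\ge 2$, or a point if $N\in[1,2)$ (in the latter case $\X=\mathbb{R}$ and case (i) holds trivially with $\X'$ a point). It remains to show that $\X'$ is compact, which I would argue by contradiction: suppose $\X'$ is non-compact. In the splitting coordinates the bounded set $C$ lies inside a box $[-M,M]\times \bar B_r^{\X'}(0')$ for some $M,r>0$ and some $0'\in\X'$. For any $x=(x_1,x_2)$, $y=(y_1,y_2)$ with $\sfd(x,C),\sfd(y,C)$ exceeding a threshold $R_0$ depending only on $M,r$, I would construct a Lipschitz curve from $x$ to $y$ avoiding $C$ of length at most $5\sfd(x,y)$: either the product geodesic already misses the box and we use it, or, exploiting non-compactness of $\X'$ to pick $p\in\X'$ with $\sfd_{\X'}(p,0')$ comparable to $\sfd(x,y)$, we use the three-segment detour $x\to(x_1,p)\to(y_1,p)\to y$, each segment a product geodesic avoiding the box (since either the $\X'$-coordinate equals $p\notin\bar B_r^{\X'}(0')$, or the $\mathbb{R}$-coordinate stays outside $[-M,M]$). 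This contradicts the failure of (ii), hence $\X'$ must be compact and we are in case (i).

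The principal obstacle is the quantitative detour in the last step: verifying the bound by exactly $5\sfd(x,y)$ requires a case analysis on the relative sizes of $|x_1-y_1|$ and $\sfd_{\X'}(x_2,y_2)$ versus the box dimensions $M$ and $r$, together with an appropriate choice of $p$. Qualitatively the non-compactness of $\X'$ manifestly provides enough room to go around any bounded set, but the explicit Lipschitz constant $5$ needs care, in particular to handle the degenerate configuration where $x_2,y_2$ are close to $0'$ while $|x_1-y_1|$ is large (which forces the detour to use the $\X'$-direction rather than the $\mathbb{R}$-direction).
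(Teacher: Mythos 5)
Your overall strategy coincides with the paper's: assume (ii) fails, extract a line by a compactness argument on the geodesics forced to cross $C$, invoke the splitting theorem to get $\X\cong\rr\times\X'$, and then show $\X'$ compact by contradiction, producing detour paths from unboundedness of $\X'$. The first two steps are correct and essentially identical to the paper. The difference, and the problem, is in the detour construction.

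Your $\sqcap$-shaped detour $x\to(x_1,p)\to(y_1,p)\to y$ need not avoid the box $[-M,M]\times\bar B_r^{\X'}(0')$, and the failure is not merely a matter of tuning the Lipschitz constant. The first and third segments keep the $\rr$-coordinate fixed at $x_1$ and $y_1$ respectively, so they are guaranteed to miss the box only when $x_1,y_1\notin[-M,M]$. If $x_1=y_1\in[-M,M]$ (which can perfectly well happen for points far from $C$, provided their $\X'$-coordinates are far from $0'$), the middle segment is trivial and the whole path degenerates into two $\X'$-geodesics $x_2\to p\to y_2$ at a fixed $\rr$-coordinate inside $[-M,M]$; these geodesics have no reason to avoid $\bar B_r^{\X'}(0')$. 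Concretely, take $\X'=\rr$ (so $\X=\rr^2$), $C=[-M,M]\times[-r,r]$ with $M\gg r$, $x=(0,3M)$, $y=(0,-3M)$, and any $p>3M$: the segment from $(0,p)$ back to $(0,-3M)$ passes straight through $(0,0)\in C$. So the path you construct actually enters $C$.

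The missing idea is that when both $\rr$-coordinates are trapped in $[-M,M]$, you must first escape in the $\rr$-direction, and to do this you need a path that changes coordinates one at a time in the right order. The paper does this with L-shaped paths (go in the $\X'$-direction at a fixed $\rr$-coordinate outside $I$, then in the $\rr$-direction at a fixed $\X'$-coordinate outside $B'$), reducing the general configuration to a base case by concatenating at most two such L's through an intermediate point chosen either with $\rr$-coordinate outside $I$ or with $\X'$-coordinate outside $B'$, depending on which coordinates of $x$ and $y$ are trapped. That case split is precisely what your $\sqcap$ cannot reproduce, because the $\sqcap$ fixes the order "$\X'$, then $\rr$, then $\X'$" regardless of which coordinates are trapped. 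If you adopt the paper's L-paths with a one-step concatenation, both the containment and the bound $\Lip(\gamma)\le5\sfd(x,y)$ follow by the computation $L(\gamma)\le 2R+2R+\sfd(x_0,x_1)$ once $\sfd(x_0,x_1)\ge R$.
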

\begin{proof}
	We closely follow \cite[Prop. 2.10]{AFM}.
	Suppose that $ii)$ does not hold, it follows that there exists a bounded set $C$, two sequences of points $(x_k),(y_k) \subset X$ and geodesics $(\gamma_k)$ between $x_k$ and $y_k$ such that $\sfd(x_k,C),\sfd(y_k,C)\to+\infty$ and   $\gamma_k$ intersects $C$ for all $k$. Since $\X$ is proper and $C$ is compact, with a compactness argument (assuming all the $\gamma_k$ parametrized by arc lenght)  we deduce that $\X$ contains a line. In particular, begin $\X $ an ${\rm RCD}(0,N)$ space, by the splitting theorem \cite{split} we infer that $\X$ is is isomorphic to the product $(\rr\times \X',\sfd_{Eucl}\times \sfd',\mathcal{L}^1\otimes \mea')$, where $(X',\sfd',\mea')$ is an ${\rm RCD}(0,N-1)$ space if $N\ge 2$  and a single point if $N\in [1,2)$. It remains to prove that $\X'$ is bounded.

	Suppose it is not. We claim that this would imply the validity of $ii)$ and thus a contradiction. Indeed suppose that $C\subset X$ is bounded, then  $C\subset B_R(p)$  for some $R>0$ and $p\in\X$.   It is enough to show that for every couple of points $x_0,x_1 \in B_{2R}(p)^c$ there exists $\gamma \in \Lip([0,1],\X)$  joining them and with image contained in  $B_R(p)^c.$ In the case $\sfd(x_0,x_1)\le R$ we conclude immediately by taking a geodesic between $x_0$ and $x_1$, hence we can suppose  that $\sfd(x_0,x_1)>R$. Identifying $\X$ with $\rr\times \X'$ we have that $p=(\bar t,x'),$ $x_i=( t_i,x'_i)$ $i=0,1,$ for some $\bar t,t_0,t_1 \in \rr$ and $x', x_0', x_1'\in \X'$. Hence $I \times B'\subset B_{2R}(p)$, where $I\coloneqq[\bar t-R,\bar t+R]$ and $B'\coloneqq B_R(x').$ In particular $x_i \in (I \times B')^c$, $i=0,1$, i.e. for every $i=0,1$ either $t_i \notin I$ or $x_i \notin B'$. We claim that it is  sufficient to deal with the case $t_0\in \rr \setminus I$ and $x_1'\in\X'\setminus B',$ indeed the other cases follow from this one by concatenating two paths of this type as follows: if $t_0,t_1\notin I$ and $x_0',x_1'\in B'$ we choose $y' \notin B'$ (which exists since $\X'$ is unbounded) and we join $(t_0,x_0')$ to $(t_0,y')$ and then  $(t_0,y')$ to $(t_1,x_1')$; if $t_0,t_1\in I$ and $x_0',x_1' \notin B'$ we pick $s \notin I$ and we  join $(t_0,x_0')$ to $(s,x_0')$ and then  $(s,x_0')$ to $(t_1,x_1')$. 
	
	Hence we can assume that $t_0\in \rr \setminus I$ and $x_1'\in\X'\setminus B'.$ To build the required path, consider a geodesic $\eta :[0,1]\to X'$ going from $x_0'$ to $x_1'$ and define the function $s:[0,1]\to \rr$ as $s(t)=t_1\,t+(1-t)\,t_0$. Then the curve 
	\[\gamma(t)=\begin{cases}
		(t_0,\eta(t)), & t \in [0,1),\\
		(s(t),x_1'),  & t \in [1,2],
	\end{cases}\]
	is Lipschitz and has image contained in $(I \times B')^c\subset B_R(p)^c$, hence (up to a reparametrization) satisfies all the requirements.
	
	To estimate $\Lip (\gamma)$ we observe that, up to a reparametrization, we can assume that $\Lip (\gamma )=L(\gamma)$, hence it is sufficient to bound the length of $\gamma$.  In the case $t_0\in \rr \setminus I$ and $x_1'\in\X'\setminus B'$ it is sufficient to  observe that $L(\gamma)=\sfd'(x_0',x_1')+|t_0-t_1|\le 2( \sfd_{Eucl}\times\sfd((t_0,x_0'),(t_1,x_1')))$, where  $\gamma$ is the curve constructed above. The general case follows concatenating  two paths as described above, where we pick $y'$ and $s$ so  that $\sfd'(y',x')<2R$, $|\bar t-s|<2R $. Indeed it can be easily checked these  two  resulting paths have respectively length not grater  than $2R$ and $2R+\sfd(x_0,x_1).$ Since we are assuming $\sfd(x_0,x_1)\ge R$, this  concludes the proof.
\end{proof}
\begin{cor}\label{cor:ends}
		If $\X$ is a nonparabolic ${\rm RCD}(0,N)$ space, then it is not a cylinder and in particular item ii) of Proposition \ref{prop:ends} holds and $\X$ has only one end.
\end{cor}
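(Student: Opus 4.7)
The strategy is to apply Proposition \ref{prop:ends}, which dichotomizes non-compact ${\rm RCD}(0,N)$ spaces into cylinders (case (i)) or spaces satisfying the strong one-end property (case (ii)). Since (ii) already contains the claim about ends, the work is entirely to rule out (i) under the nonparabolicity assumption. Note that by Remark \ref{rmk:N>2}, non-parabolicity forces $\X$ to be non-compact and $N>2$, so Proposition \ref{prop:ends} is indeed applicable.

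So suppose towards a contradiction that $\X$ is a cylinder, i.e.\ isomorphic to $(\rr\times \X',\sfd_{Eucl}\times \sfd',\mathcal{L}^1\otimes \mea')$ with $\X'$ compact (in the case $N\in[1,2)$ one would have $\X'$ a single point, so $\X\cong\rr$; this case is handled by the same argument below with $\mea(\X'):=1$). The goal is to show that the integral in \eqref{eq:nonparabolic} diverges. For this, I would simply estimate $\mea(B_s(x))$ from above by a linear function of $s$. Pick any $x=(t_0,x'_0)\in\rr\times \X'$ and note that the product distance satisfies $\sfd((t,y),(t_0,x'_0))\ge |t-t_0|$, hence
\begin{equation*}
B_s(x)\subset [t_0-s,t_0+s]\times \X',\qquad \text{so}\qquad \mea(B_s(x))\le 2s\,\mea'(\X').
\end{equation*}

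Consequently $\dfrac{s}{\mea(B_s(x))}\ge \dfrac{1}{2\mea'(\X')}>0$ for every $s\ge 1$, and therefore $\int_1^{+\infty} \frac{s}{\mea(B_s(x))}\,\d s=+\infty$, contradicting the non-parabolicity hypothesis \eqref{eq:nonparabolic}. Thus case (i) of Proposition \ref{prop:ends} cannot occur, so case (ii) must hold. In particular $\X$ has exactly one end, as claimed.

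The argument is essentially a one-line volume estimate; the only potential subtlety is checking that the dichotomy of Proposition \ref{prop:ends} really applies (i.e.\ that $\X$ is non-compact), which is exactly the content of Remark \ref{rmk:N>2}.
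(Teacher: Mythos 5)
Your proof is correct and follows essentially the same route as the paper: assume $\X$ is a cylinder, bound $\mea(B_s(x))\le 2s\,\mea'(\X')$ by the trivial inclusion $B_s(x)\subset[t_0-s,t_0+s]\times\X'$, and observe that this makes the integral in \eqref{eq:nonparabolic} diverge, contradicting nonparabolicity. The only addition is your explicit appeal to Remark \ref{rmk:N>2} to confirm that Proposition \ref{prop:ends} applies (and that the $N\in[1,2)$ case is vacuous), which the paper leaves implicit.
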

\begin{proof}
	Suppose by contradiction  $\X$ is a cylinder $\rr \times \X'$. Then  for any $r>0$ and any $(x,t) \in \X'\times \rr$ we have
	$$\mea(B_r((x,t)))=(\mathcal{L}^1\otimes\mea' )(B_r((x,t)))\le (\mathcal{L}^1\otimes\mea') (\X'\times [t-r,t+r] )=\mea'(\X')\, 2r,$$
	which clearly contradicts the fact that $\X$ is nonparabolic.
\end{proof}

\section{New estimates for harmonic functions}\label{sec:positive div}

\subsection{Preliminary calculus rules}
	In this subsection we collect and prove some versions of the chain and Leibniz rule for the Laplacian and the divergence operator, that will be used in the following  subsection. These results are essentially variants of  the ones already contained in \cite{Gappl} and \cite{G14}. However, since one of the main obstacles in the proof of Theorem \ref{thm:harm estimates} and Corollary \ref{cor:div estimate} is making the computations rigorous  and justifying the derivatives taken, we decided to state and prove in details the results we need.

In everything that follows $\Omega$ is an open subset of a proper and infinitesimally Hilbertian m.m.s. $(\X,\sfd,\mea).$
\begin{prop}[Leibniz rule for $\bd$]\label{prop:leiblapl}
	Let $u \in D(\bd,\Omega)$ and let $g\in \LIP_\loc(\Omega)\cap D(\bd,\Omega)$ be such that $\bd g \in L^2(\Omega).$ Then $ug \in D(\bd,\Omega)$ and 
	\begin{equation}\label{leiblap}
		\bd(ug)=g\bd u+u \bd g+2\langle \nabla u, \nabla g \rangle \mea . 
	\end{equation}
\end{prop}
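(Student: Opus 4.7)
The plan is to verify the claimed identity directly from the definition of the measure-valued Laplacian, by testing against an arbitrary $f \in \LIP_c(\Omega)$ and applying the definition of $\bd u$ and $\bd g$ separately to two carefully chosen auxiliary test functions built from $u,g,f$.

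First I would check that the right-hand side of \eqref{leiblap} defines a Radon measure on $\Omega$: $g\bd u$ makes sense because $g$ is locally bounded, $u\bd g$ makes sense because $u\in L^2_{\loc}(\Omega)$ (being in $\W_{\loc}(\Omega)\subset L^2_\loc$) and $\tfrac{\d \bd g}{\d\mea}\in L^2(\Omega)$, and $\langle \nabla u,\nabla g\rangle\in L^1_\loc(\Omega)$ since $|\nabla u|\in L^2_\loc$ and $|\nabla g|\in L^\infty_\loc$ (by the local Lipschitzianity of $g$). I would then verify $ug\in \W_\loc(\Omega)$: for $\eta\in \LIP_c(\Omega)$, $\eta g\in \LIP_c(\Omega)$, so $(ug)\eta=u(g\eta)\in \W(\X)$ directly from the definition of $\W_\loc$.

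The core computation proceeds as follows. Fix $f\in \LIP_c(\Omega)$. Since $g\in \LIP_\loc(\Omega)$, the product $gf$ lies in $\LIP_c(\Omega)$, so we may test $\bd u$ against $gf$:
\begin{equation}\label{eq:test1}
-\int_\Omega \langle \nabla(gf),\nabla u\rangle\, \d\mea = \int_\Omega gf\, \d\bd u.
\end{equation}
Next, $uf\in \W(\X)$ with support compact in $\Omega$ (again by the definition of $\W_\loc$ applied with cutoff $f$), and since $\bd g\in L^2(\Omega)$, Remark \ref{extension1} permits us to test $\bd g$ against $uf$:
\begin{equation}\label{eq:test2}
-\int_\Omega \langle \nabla(uf),\nabla g\rangle\, \d\mea = \int_\Omega uf\, \d\bd g.
\end{equation}
Applying the Leibniz rule for the gradient to expand $\nabla(gf)=g\nabla f+f\nabla g$ and $\nabla(uf)=u\nabla f+f\nabla u$, and summing \eqref{eq:test1} and \eqref{eq:test2}, I would obtain
\begin{equation}
-\int_\Omega \bigl(g\langle \nabla f,\nabla u\rangle + u\langle \nabla f,\nabla g\rangle\bigr)\d\mea - 2\int_\Omega f\langle \nabla u,\nabla g\rangle\, \d\mea = \int_\Omega gf\,\d\bd u + \int_\Omega uf\, \d\bd g.
\end{equation}
Recognising $\langle \nabla f,\nabla(ug)\rangle=u\langle \nabla f,\nabla g\rangle+g\langle \nabla f,\nabla u\rangle$ via Leibniz for the gradient of $ug$, this rearranges to
\begin{equation}
-\int_\Omega \langle \nabla f,\nabla(ug)\rangle\, \d\mea = \int_\Omega f\,\d\bigl(g\bd u + u\bd g + 2\langle \nabla u,\nabla g\rangle\mea\bigr),
\end{equation}
which by the arbitrariness of $f\in \LIP_c(\Omega)$ is precisely the definition of $ug\in D(\bd,\Omega)$ together with \eqref{leiblap}.

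The only subtle point is the justification of \eqref{eq:test2}: the function $uf$ is not a priori Lipschitz, so one cannot use $uf$ directly in the defining identity \eqref{deflap} for $\bd g$; the extension to $\W$-test functions with compact support (which is the content of Remark \ref{extension1}) relies crucially on the hypothesis $\bd g\in L^2(\Omega)$. All the other steps are routine applications of the chain/Leibniz rule for $\nabla$ and elementary integrability considerations.
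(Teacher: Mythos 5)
Your argument is correct and follows the paper's own proof essentially step for step: both proceed by testing $\bd u$ against $gf\in\LIP_c(\Omega)$, testing $\bd g$ against $uf\in\W(\X)$ with compact support via Remark \ref{extension1} (which is the one place the hypothesis $\bd g\in L^2$ is used), and combining via the Leibniz rule for $\nabla$. The extra remarks you make — that $ug\in\W_\loc(\Omega)$ and that the right-hand side of \eqref{leiblap} is a Radon measure — are routine but sound, and the paper simply leaves them implicit.
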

\begin{proof}
	Let $f \in \LIP_c(\Omega).$ Then using the Leibniz rule for the gradient
	\[\int \langle \nabla(ug), \nabla f\rangle \, \d\mea =\int \langle \nabla u, \nabla (fg)\rangle \, \d\mea + \int  \langle \nabla g, \nabla (uf)\rangle \, \d\mea - 2\int f \langle \nabla u, \nabla g \rangle \, \d\mea.    \]
	Since $fg\in \LIP_c(\Omega)$ and $fu \in \W(X)$ with support compact in $\Omega$, we conclude  from Remark \ref{extension1}.
\end{proof}

\begin{prop}[Leibniz rule for ${\bf{div}}$]\label{prop:leibdiv}
	Let $v \in D({\bf{div}},\Omega)$. Then
	\begin{enumerate}
		\item if $g \in \LIP_\loc(\Omega)$, then $gv \in D({\bf{div}},\Omega) $   and
		\begin{equation}\label{leibdiv}
			{\bf{div}}(gv)=\la\nabla g,v\ra\mea + g {\bf{div}}(v) . 
		\end{equation}
		\item if $g \in \W_\loc(\Omega)$ and ${\bf{div}}(v) \in L^2_{\loc }(\Omega)$, then $gv \in D({\bf{div}},\Omega) $  and \eqref{leibdiv} holds.
	\end{enumerate}
\end{prop}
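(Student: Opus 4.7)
My plan mirrors the structure of the proof of the Leibniz rule for the Laplacian (Proposition \ref{prop:leiblapl}): I would fix a test function $f \in \LIP_c(\Omega)$, expand $\la \nabla f, gv\ra$ $\mea$-a.e. using the Leibniz rule for the gradient applied to $fg$,
$$\la \nabla f, gv\ra = g \la \nabla f, v\ra = \la \nabla(fg), v\ra - f \la \nabla g, v\ra,$$
integrate against $\mea$ over $\Omega$, and read off the claimed Radon measure after testing the first term against the divergence of $v$.

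For part 1, since $g \in \LIP_\loc(\Omega)$ is locally bounded, $|gv| \le \|g\|_{L^\infty(\supp f)} |v|$ lies in $L^2_{\loc}(\Omega)$, so $gv$ is an admissible candidate for $D({\bf{div}},\Omega)$. Because $fg \in \LIP_c(\Omega)$, I can apply the very definition \eqref{defdiv} to get $\int \la \nabla(fg), v\ra\, \d\mea = -\int fg\,\d\,{\bf{div}}(v)$. Combining with the display above yields
$$\int \la \nabla f, gv\ra\, \d\mea = -\int f \,\d \bigl(g\, {\bf{div}}(v)+\la\nabla g, v\ra \mea\bigr),$$
which, by the arbitrariness of $f$, identifies ${\bf{div}}(gv)$ with the Radon measure on the right.

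For part 2 the only real change is that $fg$ is no longer Lipschitz. I would first observe that $fg \in \W(\X)$ with compact support in $\Omega$: since $\supp f\subset\subset\Omega$ and $g\in \W_\loc(\Omega)$, there exists $\tilde g \in \W(\X)$ agreeing with $g$ on a neighborhood of $\supp f$, and $f\tilde g \in \W(\X)$ by the standard Leibniz rule for $\W$ (using $f\in \LIP_{bs}$). Thanks to the hypothesis ${\bf{div}}(v)\in L^2_{\loc}(\Omega)$, Remark \ref{extensiondiv} lets us test \eqref{defdiv} against $fg$, and the same computation as above goes through verbatim.

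The main obstacle I anticipate is establishing the integrability $|gv|\in L^2_{\loc}(\Omega)$ required to make sense of $gv \in D({\bf{div}},\Omega)$ in part 2: $g\in \W_\loc(\Omega)$ only yields $g\in L^2_{\loc}(\Omega)$, so Cauchy--Schwarz alone gives $|gv|\in L^1_{\loc}(\Omega)$ and not $L^2_{\loc}$. To bypass this I would argue by truncation: set $g_n\coloneqq (-n)\vee g\wedge n\in \W_\loc(\Omega)\cap L^\infty_{\loc}(\Omega)$, apply the part-1 argument (which only needed local boundedness of $g$, not Lipschitzianity, if one replaces \eqref{defdiv} by its $\W$--extension from Remark \ref{extensiondiv}) to obtain \eqref{leibdiv} for $g_n v$, and pass to the limit using dominated convergence for the right-hand side (justified by $|\nabla g_n|\le |\nabla g|$, $g_n\to g$ $\mea$-a.e., and ${\bf{div}}(v)\in L^2_{\loc}$). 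This simultaneously upgrades the integrability and identifies ${\bf{div}}(gv)$ with the measure $g\,{\bf{div}}(v)+\la\nabla g, v\ra\mea$ as a Radon measure on relatively compact subsets, which is all that is required by the definition.
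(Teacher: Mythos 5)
Your decomposition $\la\nabla f,gv\ra=\la\nabla(fg),v\ra-f\la\nabla g,v\ra$, followed by testing against the definition of divergence (part 1) or against Remark \ref{extensiondiv} (part 2), is precisely the paper's argument; there is nothing to change in your first two paragraphs.

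Your concern about whether $|gv|\in L^2_\loc(\Omega)$ in part 2 is a fair one, and the paper does not explicitly address it either: $g\in\W_\loc(\Omega)$ only gives $g\in L^2_\loc(\Omega)$, so Cauchy--Schwarz yields $|gv|\in L^1_\loc(\Omega)$, whereas the definition of $D({\bf{div}},\Omega)$ as stated asks for $|gv|\in L^2_\loc(\Omega)$. However, the truncation argument you propose does not accomplish what you claim. Truncating makes each $g_nv$ belong to $L^2_\loc(\Omega)$, and you can certainly pass to the limit in the integral identity via dominated convergence, which identifies the candidate Radon measure $g\,{\bf{div}}(v)+\la\nabla g,v\ra\mea$; but this has no effect whatsoever on the integrability of the limit vector field. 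If $|gv|\notin L^2_\loc(\Omega)$, then $gv\notin D({\bf{div}},\Omega)$ under the definition as given, and no limiting procedure can change that, since membership in $L^2_\loc$ is a property of $gv$ itself and is not inherited from the truncations. So the detour is either unnecessary (if one grants the implicit integrability, in which case the direct use of Remark \ref{extensiondiv} that you and the paper both make already suffices) or insufficient (if one does not). The honest resolution is to strengthen the hypotheses of part 2 by adding $|gv|\in L^2_\loc(\Omega)$, or simply $g\in L^\infty_\loc(\Omega)$ --- an assumption that holds in every application of this proposition in the paper, where $g$ is always a continuous function of a locally Lipschitz $u>0$ --- or alternatively to relax the integrability requirement in the definition of $D({\bf{div}},\Omega)$ to $|v|\in L^1_\loc(\Omega)$, which is all that the pairing against $\LIP_c(\Omega)$ test functions actually uses.
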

\begin{proof}
	Let $f \in \LIP_\c(\Omega).$ Using the Leibniz  rule for the gradient we get
	\[
	-\int \la \nabla f, gv \ra  \d \mea = -\int\la \nabla (fg),v\ra  \d \mea +\int f  \la \nabla g, v \ra  \d \mea.
	\]
	The conclusion follows in the first case observing that $fg\in \LIP_c(\Omega)$ and in the second case observing that $fg \in \W(\X)$ with compact support in $\Omega$ and recalling Remark \ref{extensiondiv}.
\end{proof}

\begin{prop}[Chain rule for $\bd$]\label{prop:chainlapl}
	Let $u \in D(\bd,\Omega)$ and let $\varphi \in C^2(I)$, where $I$ is an open interval such that \eqref{eq:chain rule hyp} holds. Then 
	\begin{enumerate}
		\item if $u \in \LIP_{\loc}(\Omega)$ then $\phi(u)\in  D(\bd,\Omega)$ and
		\begin{equation}\label{chainlap}
			\bd\restr{\Omega}(\varphi(u))=\phi'(u)\bd\restr{\Omega} u+\phi''(u)|\nabla u|^2\mea\restr{\Omega}. 
		\end{equation}
		\item if $\bd u\ge g \mea\restr{\Omega} $ for some  $g \in L^1_{\loc}(\Omega)$ then $\phi(u)\in  D(\bd,\Omega)$ and
		\begin{equation}\label{chainlapv2}
			\bd\restr{\Omega}(\varphi(u))\ge  \left (\phi'(u)g +\phi''(u)|\nabla u|^2\right )\mea\restr{\Omega}. 
		\end{equation}
	\end{enumerate}	
\end{prop}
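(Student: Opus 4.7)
The plan is to test the defining identity for $\bd(\phi(u))$ against a suitable class of functions and reduce to the defining identity of $\bd u$, using the chain rule for the gradient (already available) together with a Leibniz-type manipulation. By the chain rule for the gradient, under hypothesis \eqref{eq:chain rule hyp} we have $\phi(u) \in \W_{\loc}(\Omega)$ with $\nabla \phi(u) = \phi'(u)\nabla u$; similarly, since $\phi' \in C^1(I)$, we have $\phi'(u) \in \W_{\loc}(\Omega)$ with $\nabla \phi'(u) = \phi''(u)\nabla u$. The key computation is then, for $f \in \LIP_c(\Omega)$,
\begin{equation*}
    -\int \la\nabla f,\nabla\phi(u)\ra\,\d\mea = -\int \la\nabla(\phi'(u)f),\nabla u\ra\,\d\mea + \int f\,\phi''(u)|\nabla u|^2\,\d\mea,
\end{equation*}
which follows directly from Leibniz's rule $\nabla(\phi'(u)f) = f\phi''(u)\nabla u + \phi'(u)\nabla f$.

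For item 1, the hypothesis $u \in \LIP_{\loc}(\Omega)$ together with $\phi' \in C^1$ gives $\phi'(u) \in \LIP_{\loc}(\Omega)$, so $\phi'(u)f \in \LIP_c(\Omega)$. Therefore the first term on the right-hand side equals $\int \phi'(u)f\,\d\bd u$ by the defining identity of the Laplacian, and \eqref{chainlap} follows at once from Proposition \ref{prop:laplineq} (or just by reading off the measure whose integral against $\LIP_c$ functions matches the right-hand side).

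For item 2, the main difficulty is that without Lipschitz regularity on $u$ we no longer have $\phi'(u)f \in \LIP_c(\Omega)$, so the definition of $\bd u$ does not apply directly. The workaround is to verify that $\phi'(u)f \in \W(\X)\cap L^\infty(\mea)$ with compact support in $\Omega$: hypothesis \eqref{eq:chain rule hyp} ensures that on any open set $\Omega'\subset\subset\Omega$ containing $\supp f$, the function $u$ takes values in a relatively compact subset of $I$, so $\phi'(u)$ and $\phi''(u)$ are bounded there; together with $|\nabla u|\in L^2_\loc$ this puts the formula for $\nabla(\phi'(u)f)$ in $L^2$. Then the extended testing class of Remark \ref{extension1} allows us to test the inequality $\bd u \ge g\mea$ against the non-negative function $\phi'(u)f$ (for $f\ge 0$, the sign is correct under the natural monotonicity $\phi'\ge 0$ on $I$), yielding
\begin{equation*}
    -\int \la\nabla(\phi'(u)f),\nabla u\ra\,\d\mea \ge \int \phi'(u)f\, g\,\d\mea.
\end{equation*}
Plugging this into the identity above and using Proposition \ref{prop:laplineq} gives \eqref{chainlapv2}.

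The main obstacle will be the careful bookkeeping in item 2: justifying the $\W\cap L^\infty$-regularity and compact support of $\phi'(u)f$ under the weak hypothesis $u\in D(\bd,\Omega)$, and making sure the extended testing of Remark \ref{extension1} applies with the correct signs. Item 1 is then essentially a routine computation, since the Lipschitz regularity of $u$ collapses all the regularity issues for free.
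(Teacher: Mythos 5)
Your proposal is correct and follows essentially the same route as the paper: the Leibniz/chain-rule identity $-\int\la\nabla f,\nabla\phi(u)\ra\,\d\mea=-\int\la\nabla(\phi'(u)f),\nabla u\ra\,\d\mea+\int f\phi''(u)|\nabla u|^2\,\d\mea$, with $\phi'(u)f\in\LIP_c(\Omega)$ directly handling item 1, and the extended test class of Remark \ref{extension1} handling item 2. You also correctly identify the one delicate point: to use $\phi'(u)f$ as a test function for the inequality $\bd u\ge g\mea$ one needs $\phi'(u)f\ge 0$, i.e.\ $\phi'\ge 0$ on the relevant range of $u$. The paper's own proof asserts the nonnegativity of $\phi'(u)f$ without comment, so this is a (harmless, since it holds in every application) implicit hypothesis of item 2 that both arguments share. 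One small correction: for item 1, Proposition \ref{prop:laplineq} only yields an inequality, so the correct route is your parenthetical alternative — reading off the Radon measure matching the right-hand side and invoking the definition of $\bd$ directly.
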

\begin{proof}
	Let $f \in \LIP_c(\Omega),$ then from the chain rule and Leibniz rule for the gradient
	\[ -\int \langle \nabla (\varphi(u)), \nabla f \rangle \d \mea = -\int \langle \nabla (\varphi'(u)f), \nabla u \rangle \d \mea + \int f\la \nabla \phi'(u), \nabla u	\ra\d \mea. \]
	In the first case we conclude from the fact that $\phi'(u)f \in \Lip_c(\Omega)$. In the second case we assume also $f \ge 0$ and
	observe that $\phi'(u)f \in \W(X)\cap L^\infty(\mea)$ is nonnegative with compact support in $\Omega$, hence from Remark \ref{extension1} it follows that
	\[ -\int \langle \nabla (\varphi(u)), \nabla f \rangle \d \mea \ge  \int \phi'(u)f \, g\mea + \int f \phi''(u)|\nabla u|^2\d \mea. \]
	The conclusion follows applying Proposition \ref{prop:laplineq}.
\end{proof}

\subsection{Second order  estimates for harmonic functions}

Before starting the discussion, let us say that this part is independent of all the rest of the note and we only assume that
\[
\text{$\X$ is an ${\rm RCD}(K,N)$ with $N \in [2,+\infty)$ and $\Omega$ an open subset of $\X$.}
\]
The goal of this subsection is to prove the following two results, which can be reinterpreted as a generalization of the well know fact that in a Riemannian manifold with nonnegative Ricci curvature the square norm of the gradient of an harmonic function is subharmonic.

\begin{theorem}\label{thm:harm estimates}
Let $\X$ be an ${\rm RCD}(K,N)$ space with $N \in [2,\infty)$, let $u$ be harmonic in $\Omega$ and   $ \beta >\frac{N-2}{N-1}$. Then $|\nabla u|^{\beta/2}\in\W_{\loc}(\Omega)$, $|\nabla u|^\beta \in D(\bd,\Omega)$ and
	\begin{equation}\label{eq:harmonic estimates}
		\bd(|\nabla u|^\beta)\ge C_{\beta,N}\, |\nabla |\nabla u|^{\frac{\beta}{2}}|^2\, \mea\restr{\Omega}+2\beta K |\nabla u|^2\mea \restr\Omega,
	\end{equation}
	where $C_{\beta,N}=\frac{4}{\beta} \left (\beta-\frac{N-2}{N-1}\right )$. Moreover  $|\nabla u|^\beta \in D(\bd,\Omega)$ also for $\beta=\frac{N-2}{N-1}$ with \eqref{eq:harmonic estimates} holding without the term containing $C_{\beta,N}.$
\end{theorem}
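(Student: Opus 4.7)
The plan is to derive a refined Kato inequality for harmonic functions from the improved Bochner inequality \eqref{localbochner}, then combine it with a chain rule (applied after regularization) to get the stated measure inequality for $|\nabla u|^\beta$, with Lemma \ref{lem:radice sobolev} upgrading this to Sobolev regularity of $|\nabla u|^{\beta/2}$.

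\textbf{Step 1 (refined Kato).} Since $\bd u = 0 \in \W_\loc(\Omega)$ trivially, $u \in \testloc(\Omega)$, so \eqref{localbochner} applies. Set $d := \dim(\X)$ and work $\mea$-a.e.\ on the open set $\{|\nabla u|>0\}$ (using a Borel choice of orthonormal frame $e_1,\dots,e_d$ with $e_1 = \nabla u/|\nabla u|$ and writing $H_{ij} := \H u(e_i,e_j)$). From \eqref{eq:hessian gradient} and \eqref{eq:sobolev gradient} one gets $|\nabla|\nabla u||^2 = \sum_{j} H_{1j}^2$. Since $\Delta u=0$, optimizing in the free variables $H_{ii}$ for $i>1$ (minimizing $\sum_{i>1} H_{ii}^2 + (\sum_i H_{ii})^2/(N-d)$ with $H_{11}$ fixed) yields the refined Kato bound
\[
|\H u|_{HS}^2 + \frac{(\sf{tr}\,\H u)^2}{N-d} \;\ge\; \frac{N}{N-1}\,|\nabla |\nabla u||^2, \qquad \mea\text{-a.e.\ on } \Omega.
\]
Plugging this into \eqref{localbochner} gives the pointwise/measure inequality
\[
\bd(|\nabla u|^2) \;\ge\; \frac{2N}{N-1}\,|\nabla|\nabla u||^2\,\mea + 2K|\nabla u|^2\,\mea \quad \text{on } \Omega.
\]

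\textbf{Step 2 (chain rule with regularization).} By the Cheng--Yau-type gradient bound \eqref{eq:cheng}, $|\nabla u|\in L^\infty_\loc(\Omega)$, so $w_\eps := |\nabla u|^2 + \eps$ takes values in a compact subinterval of $(0,\infty)$ on any $\Omega'\subset\subset\Omega$, and $\phi(t) := t^{\beta/2}$ is $C^2$ there. Apply Proposition \ref{prop:chainlapl}(2) to $w_\eps$ with the lower bound from Step 1:
\[
\bd w_\eps^{\beta/2} \;\ge\; \tfrac{\beta}{2} w_\eps^{\beta/2-1}\!\left[\tfrac{2N}{N-1}|\nabla|\nabla u||^2 + 2K|\nabla u|^2\right]\mea \;+\; \tfrac{\beta(\beta-2)}{4} w_\eps^{\beta/2-2}|\nabla w_\eps|^2\,\mea.
\]
Using $|\nabla w_\eps|^2 = 4|\nabla u|^2|\nabla|\nabla u||^2$ (from \eqref{eq:sobolev gradient}) and letting $\eps\downarrow 0$ produces, after collecting like terms and using $\tfrac{N}{N-1}+\beta-2 = \beta - \tfrac{N-2}{N-1}$,
\[
\bd|\nabla u|^\beta \;\ge\; \beta\Big[\beta - \tfrac{N-2}{N-1}\Big]\,|\nabla u|^{\beta-2}\,|\nabla|\nabla u||^2\,\mea \;+\; \beta K|\nabla u|^\beta\,\mea,
\]
where the first term is set to $0$ on $\{|\nabla u|=0\}$.

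\textbf{Step 3 (Sobolev regularity and rewriting).} When $\beta > \tfrac{N-2}{N-1}$ the first coefficient is strictly positive; testing the $\eps$-version of the inequality above against nonnegative $\eta\in \LIP_c(\Omega)$ and using $\int \eta\,\d\bd w_\eps^{\beta/2} = -\int\la\nabla\eta,\nabla w_\eps^{\beta/2}\ra\,\d\mea$, one obtains an $\eps$-uniform $L^1_\loc$ bound on $|\nabla u|^{\beta-2}|\nabla|\nabla u||^2$ on $\{|\nabla u|>0\}$. Lemma \ref{lem:radice sobolev} applied to $v := |\nabla u|$ with exponent $\alpha = \beta/2$ then gives $|\nabla u|^{\beta/2}\in\W_\loc(\Omega)$ together with
\[
|\nabla |\nabla u|^{\beta/2}|^2 \;=\; \tfrac{\beta^2}{4}\,|\nabla u|^{\beta-2}\,|\nabla|\nabla u||^2 \qquad \mea\text{-a.e.\ on } \Omega.
\]
Substituting back converts the inequality of Step 2 into \eqref{eq:harmonic estimates}. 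The borderline $\beta=\tfrac{N-2}{N-1}$ is obtained by passing to the limit $\beta\downarrow \tfrac{N-2}{N-1}$ from above, noting that the first-order term drops but $|\nabla u|^\beta \in D(\bd,\Omega)$ is preserved by $L^1_\loc$ convergence of both sides.

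\textbf{Main obstacle.} The delicate point is the $\eps\to 0$ limit in the chain rule. On $\{|\nabla u|=0\}$ the formal expression $|\nabla u|^{\beta-2}|\nabla|\nabla u||^2$ can be singular when $\beta<2$, so convergence of the RHS must be controlled through the structure of the distributional Laplacian rather than pointwise. The refined Kato step is what guarantees the uniform $L^1_\loc$ integrability needed to justify the passage to the limit and the simultaneous Sobolev regularity of $|\nabla u|^{\beta/2}$; this is why the exact threshold $\beta \ge (N-2)/(N-1)$ emerges.
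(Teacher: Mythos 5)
Your overall strategy matches the paper's: extract a refined Kato inequality for harmonic functions from the improved Bochner inequality, apply the chain rule to $(|\nabla u|^2+\eps)^{\beta/2}$, and pass to the limit $\eps\to 0$, with Lemma~\ref{lem:radice sobolev} supplying the Sobolev regularity of $|\nabla u|^{\beta/2}$. The paper's Lemma~\ref{lem:kato} packages your Step~1 as a matrix inequality (and treats the degenerate case $\dim(\X)=N$ by taking $t\to 0$, using $\Delta u={\sf tr}\H u=0$ there); your direct frame-and-optimize derivation is fine, but you should note that when $\dim(\X)=N$ the term $({\sf tr}\H u)^2/(N-d)$ is not defined and the argument must be run with the constraint ${\sf tr}\H u=0$ instead.

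The genuine gap is in Step~3. You claim that testing the $\eps$-inequality against nonnegative $\eta\in\LIP_c(\Omega)$ and using $\int\eta\,\d\bd\,w_\eps^{\beta/2}=-\int\la\nabla\eta,\nabla w_\eps^{\beta/2}\ra\,\d\mea$ yields an $\eps$-uniform $L^1_\loc$ bound. But the left-hand side is not obviously uniformly bounded: $|\nabla w_\eps^{\beta/2}|=\beta\,w_\eps^{\beta/2-1}|\nabla u|\,|\nabla|\nabla u||$, and $w_\eps^{\beta/2-1}|\nabla u|\le w_\eps^{(\beta-1)/2}$, which is only uniformly bounded when $\beta\ge 1$. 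Precisely the range $\tfrac{N-2}{N-1}<\beta<1$, which is the new content of the theorem, is not covered. You need an additional idea here. Two repairs are available. (i) Integrate by parts a second time: take $\eta\in\test(\X)$ compactly supported in $\Omega$ (Proposition~\ref{prop:goodcutoff}); since $\Delta\eta\in L^\infty$, the Leibniz rule for the divergence (Proposition~\ref{prop:leibdiv}) gives $-\int\la\nabla\eta,\nabla w_\eps^{\beta/2}\ra\,\d\mea=\int w_\eps^{\beta/2}\Delta\eta\,\d\mea$, which \emph{is} uniformly bounded in $\eps$; combined with the nonnegativity of the leading term on the right (your Kato estimate) this gives the $\eps$-uniform $L^1_\loc$ bound you need, and then monotone convergence plus Lemma~\ref{lem:radice sobolev} closes the argument. (ii) Alternatively, this is exactly where the paper's proof introduces an inductive bootstrap: it proves the claim conditionally on $|\nabla u|^\beta\in\W_\loc(\Omega)$ (which supplies the dominating function $|\nabla|\nabla u|^\beta|^2\in L^1_\loc$ for the $\eps\to0$ passage), and then iterates from $\beta\ge 1$ by successive halvings. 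As written, your Step~3 neither integrates by parts twice nor invokes a bootstrap, so the asserted uniform bound does not follow. Relatedly, your treatment of the borderline $\beta=\tfrac{N-2}{N-1}$ by letting $\beta\downarrow\tfrac{N-2}{N-1}$ leaves the convergence of $\nabla(|\nabla u|^\beta)$ unjustified; the paper instead reruns the argument directly at $\beta=\tfrac{N-2}{N-1}$, using that $|\nabla u|^{(N-2)/(N-1)}\in\W_\loc(\Omega)$ is already known from the strictly-supercritical case and that $g_\beta\equiv 0$ there.
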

\begin{cor}\label{cor:div estimate}
	Let $\X$ be an ${\rm RCD}(K,N)$ space  with $N\in(2,\infty)$. Suppose $u$ is positive and harmonic in $\Omega$, set $v=u^{\frac{-1}{N-2}}$ and let $\beta > \frac{N-2}{N-1}$. Then $|\nabla v|^{\beta/2}\in \W_{\loc}(\Omega)$, $u^2\nabla|\nabla v|^{\beta/2} \in D({\bf{div}},\Omega)$ and 
	\begin{equation}\label{eq:div estimate}
		{\bf div}(u^2\nabla|\nabla v|^{\beta})\ge   C_{\beta,N}\, u^2 |\nabla |\nabla v|^{\frac{\beta}{2}}|^2\, \mea\restr{\Omega}+2\beta K u^2|\nabla v|^\beta \, \mea \restr{\Omega},
	\end{equation}
	where $ C_{\beta,N}=\frac{4}{\beta} \left (\beta-\frac{N-2}{N-1}\right )$.  Moreover $u^2\nabla|\nabla v|^{\beta/2} \in D({\bf{div}},\Omega)$ also for $\beta=\frac{N-2}{N-1}$ with \eqref{eq:div estimate} holding without the term containing $C_{\beta,N}.$
\end{cor}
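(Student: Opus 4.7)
\emph{Strategy.} The plan is to deduce \eqref{eq:div estimate} from the Bochner-type inequality \eqref{eq:harmonic estimates} of Theorem~\ref{thm:harm estimates} applied to $u$, by pushing it through the change of variable $v=u^{-1/(N-2)}$. Setting $\alpha:=\tfrac{N-1}{N-2}$ and $c:=(N-2)^{-\beta}$, the chain rule gives
\[
\nabla v=-(N-2)^{-1}u^{-\alpha}\nabla u,\qquad |\nabla v|^\beta=c\,u^{-\beta\alpha}|\nabla u|^\beta.
\]

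\emph{Step 1: regularity.} Since $u$ is positive and harmonic, the strong maximum principle (Proposition~\ref{prop:maxprinc}) shows $u$ is locally bounded away from zero on $\Omega$, while Cheng--Yau \eqref{eq:cheng} bounds $|\nabla u|$ locally. Theorem~\ref{thm:harm estimates} gives $|\nabla u|^{\beta/2}\in\W_\loc(\Omega)$, $|\nabla u|^\beta\in D(\bd,\Omega)$ and the bound \eqref{eq:harmonic estimates}. Applying the Leibniz rule for the local gradient to
\[
|\nabla v|^{\beta/2}=c^{1/2}u^{-\beta\alpha/2}|\nabla u|^{\beta/2}
\]
(the factor $u^{-\beta\alpha/2}$ being locally Lipschitz on $\Omega$) then yields $|\nabla v|^{\beta/2}\in\W_\loc(\Omega)$ together with an explicit expression for $\nabla|\nabla v|^\beta$ in terms of $\nabla u$ and $\nabla|\nabla u|^{\beta/2}$.

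\emph{Step 2: the divergence calculation.} Next, compute $\bd|\nabla v|^\beta$ by applying Proposition~\ref{prop:leiblapl} (Leibniz for $\bd$) to $u^{-\beta\alpha}\cdot|\nabla u|^\beta$; harmonicity of $u$ combined with Proposition~\ref{prop:chainlapl} (chain rule for $\bd$) gives $\bd u^{-\beta\alpha}=\beta\alpha(\beta\alpha+1)u^{-\beta\alpha-2}|\nabla u|^2\mea$, so together with the $L^1_\loc$-density bound for $\bd|\nabla u|^\beta$ this proves $|\nabla v|^\beta\in D(\bd,\Omega)$. Then Proposition~\ref{prop:leibdiv}(2) (Leibniz for $\mathbf{div}$) yields
\[
\mathbf{div}(u^2\nabla|\nabla v|^\beta)=u^2\,\bd|\nabla v|^\beta+2u\,\langle\nabla u,\nabla|\nabla v|^\beta\rangle\,\mea,
\]
and substituting the previous expansion expresses the right-hand side as $c\,u^{2-\beta\alpha}\,\bd|\nabla u|^\beta+\mathcal{R}\,\mea$, where $\mathcal{R}$ is an algebraic expression in $u,|\nabla u|$ and $\langle\nabla u,\nabla|\nabla u|^{\beta/2}\rangle$.

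\emph{Step 3: the core identity and conclusion.} The delicate and main step is an algebraic identity: using the explicit form of $\nabla|\nabla v|^{\beta/2}$ from Step~1, one expands $u^2|\nabla|\nabla v|^{\beta/2}|^2$ and checks
\[
\mathcal{R}+c\,C_{\beta,N}\,u^{2-\beta\alpha}|\nabla|\nabla u|^{\beta/2}|^2=C_{\beta,N}\,u^2|\nabla|\nabla v|^{\beta/2}|^2.
\]
This is the principal obstacle: all the cross-terms involving $\langle\nabla u,\nabla|\nabla u|^{\beta/2}\rangle$ and the lower-order term $u^{-\beta\alpha-2}|\nabla u|^{\beta+2}$ must cancel exactly. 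The correctness of the constants is ultimately forced by the Euclidean model $u(x)=|x|^{-(N-2)}$, where $|\nabla v|\equiv 1$ and equality holds throughout \eqref{eq:div estimate}. Since also $u^2|\nabla v|^\beta=c\,u^{2-\beta\alpha}|\nabla u|^\beta$, multiplying \eqref{eq:harmonic estimates} by $c\,u^{2-\beta\alpha}$ and substituting the identity above immediately produces \eqref{eq:div estimate}. The endpoint case $\beta=(N-2)/(N-1)$ follows from the same computation with the $C_{\beta,N}$-term suppressed.
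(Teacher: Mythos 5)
Your proposal takes essentially the same route as the paper: decompose $|\nabla v|^\beta$ as $(N-2)^{-\beta}u^{-\beta(N-1)/(N-2)}|\nabla u|^\beta$, feed Theorem~\ref{thm:harm estimates}, the chain rule $\bd u^{\gamma}=\gamma(\gamma-1)u^{\gamma-2}|\nabla u|^2\,\mea$ and the Leibniz rules for $\bd$ and $\mathbf{div}$ into this product, and then identify the resulting right-hand side with that of \eqref{eq:div estimate} by an algebraic computation. Both your write-up and the paper leave that final computation as a ``direct check,'' so the level of rigour is comparable.

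Two corrections. First, in Step~2 you invoke Proposition~\ref{prop:leibdiv}(2), which requires $\mathbf{div}(\nabla|\nabla v|^\beta)=\bd|\nabla v|^\beta\in L^2_{\loc}(\Omega)$. This is not available: Theorem~\ref{thm:harm estimates} only gives a one-sided bound $\bd|\nabla u|^\beta\geq g_\beta\,\mea+2\beta K|\nabla u|^\beta\mea$, so after the Leibniz rule for $\bd$ the measure $\bd|\nabla v|^\beta$ could a priori have a singular part. Since $u^2\in\LIP_{\loc}(\Omega)$, you should instead invoke Proposition~\ref{prop:leibdiv}(1), which asks for nothing beyond $\nabla|\nabla v|^\beta\in D(\mathbf{div},\Omega)$; this is exactly what the paper does. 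Second, your sanity check via the Euclidean model $u(x)=|x|^{-(N-2)}$ is vacuous. There $|\nabla v|\equiv1$, hence $\nabla|\nabla v|^{\beta/2}\equiv0$, so (with $K=0$) both sides of \eqref{eq:div estimate} vanish identically. That happens for \emph{any} value of the constant $C_{\beta,N}$, so the model imposes no constraint; the constants are pinned down only by actually carrying out the Step~3 identity. Relatedly, the mechanism you describe as ``cancellation'' is really a term-by-term identification: with $\alpha=-\beta\tfrac{N-1}{N-2}$, expanding $C_{\beta,N}u^2\bigl|\nabla\bigl(u^{\alpha/2}|\nabla u|^{\beta/2}\bigr)\bigr|^2$ produces a $|\nabla u|^{\beta+2}$ term with coefficient $C_{\beta,N}\tfrac{\alpha^2}{4}u^\alpha$ and a cross term with coefficient $C_{\beta,N}\alpha\,u^{\alpha+1}$, and the identity reduces to the single relation $\alpha+1=C_{\beta,N}\alpha/4$, which holds precisely for this choice of $\alpha$ and $C_{\beta,N}=\tfrac4\beta\bigl(\beta-\tfrac{N-2}{N-1}\bigr)$. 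Nothing vanishes; everything matches.
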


Let us remark that the results on the rest of this note rely only on Corollary \ref{cor:div estimate}. However we decided to isolate Theorem \ref{thm:harm estimates}, since it contains estimates for general harmonic functions that appear to be useful and interesting on their own.

The main ingredient for the proof of Theorem \ref{thm:harm estimates} is the following new Kato-type inequality. Observe that if $\dim(\X)=N$, in which case (from \cite{HanRicci}) ${\sf tr}\H u =\Delta u$, letting $t\to 0$ we recover the standard refined  Kato-inequality for harmonic functions.
\begin{lemma}[Generalized refined Kato inequality]\label{lem:kato}
Let $\X$ be an ${\rm RCD}(K,N)$space, with $N\in[1,+\infty)$, and let $n=\dim(\X).$ Then for any 	$u \in \test_{\loc} (\Omega)$ (see Sec. \ref{sec:local bochner}) it holds
	\begin{equation}\label{eq:Kato}
		\frac{t+n }{t+n-1}|\nabla |\nabla u||^2\le  |\H u|^2_{HS}+\frac{({\sf tr}\H u)^2}{t}, \quad \text{$\mea$-a.e. in $\Omega$,\, }\forall t>0.
	\end{equation}

\end{lemma}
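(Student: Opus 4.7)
The plan is to reduce \eqref{eq:Kato} to a purely pointwise algebraic inequality for symmetric $n\times n$ matrices. First, by a cutoff argument using Proposition \ref{prop:goodcutoff} and the definitions in Section \ref{sec:local bochner}, I would reduce to the case $u\in\test(\X)$, so that $\H u$, $|\H u|_{HS}$, ${\sf tr}\H u$ and $\nabla|\nabla u|$ are all globally defined. Combining the identity \eqref{eq:hessian gradient}, which gives $\nabla|\nabla u|^2=2\H u(\nabla u)$, with \eqref{eq:sobolev gradient}, which gives $\nabla|\nabla u|^2=2|\nabla u|\nabla|\nabla u|$, yields the pointwise identity
\[
|\nabla u|^2\,|\nabla|\nabla u||^2=|\H u(\nabla u)|^2,\qquad\mea\text{-a.e. in }\Omega.
\]
On the set $\{|\nabla u|=0\}$, the locality of the gradient (applied to $|\nabla u|\in\W_\loc(\Omega)$) forces $\nabla|\nabla u|=0$ $\mea$-a.e., so \eqref{eq:Kato} is trivial there. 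On $\{|\nabla u|>0\}$, writing $N\coloneqq\nabla u/|\nabla u|$, one gets $|\nabla|\nabla u||^2=|\H u(N)|^2$.

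Using that $|\H u|_{HS}^2$ and ${\sf tr}\H u$ are invariant under orthonormal change of basis, at $\mea$-a.e.\ point I can pick an orthonormal frame for $L^0(T\X)$ whose first vector is $N$, so that $|\H u(N)|^2=\sum_i H_{i,1}^2$, where $H_{i,j}$ are the entries of the symmetric Hessian matrix in the new frame. The inequality \eqref{eq:Kato} reduces to the following statement: for every symmetric $n\times n$ real matrix $H$ and every $t>0$,
\[
\frac{t+n}{t+n-1}\sum_i H_{i,1}^2\;\le\;\sum_{i,j}H_{i,j}^2+\frac{\bigl(\sum_i H_{i,i}\bigr)^2}{t}.
\]

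To prove this algebraic inequality I would set $A\coloneqq H_{1,1}$, $B\coloneqq\sum_{i\ge 2}H_{i,1}^2$, $\sigma\coloneqq\sum_{i\ge 2}H_{i,i}$, and $T\coloneqq{\rm tr}(H)=A+\sigma$. Using the obvious bounds $\sum_{i,j}H_{i,j}^2\ge A^2+2B+\sum_{i\ge 2}H_{i,i}^2$ and $\sum_{i\ge 2}H_{i,i}^2\ge\sigma^2/(n-1)$ (Cauchy--Schwarz), together with a little bookkeeping, the target reduces, after discarding the nonnegative term $B\cdot\tfrac{t+n-2}{t+n-1}$, to
\[
\frac{(T-\sigma)^2}{t+n-1}\;\le\;\frac{\sigma^2}{n-1}+\frac{T^2}{t}.
\]
This is a quadratic in $\sigma$; a direct minimization shows that its minimum over $\sigma\in\mathbb{R}$ equals $0$, attained at $\sigma=-T(n-1)/t$. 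This optimization is the only nontrivial step in the whole argument and is precisely what fixes the sharp constant $(t+n)/(t+n-1)$; every other step is a routine application of the calculus rules collected in the preliminaries and of the locality of Hessian and gradient.
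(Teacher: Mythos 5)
Your proof is correct and follows the same strategy as the paper: reduce (via the reduction to $u\in\test(\X)$, the identities \eqref{eq:hessian gradient} and \eqref{eq:sobolev gradient}, and the coordinate expressions \eqref{eq:coordmap}--\eqref{eq:coordtr}) to the pointwise matrix inequality \eqref{eq:katomatrix} for symmetric $n\times n$ real matrices, then handle $\{|\nabla u|=0\}$ by locality of the gradient (the paper leaves this last point implicit). The only genuine difference is in how \eqref{eq:katomatrix} is verified: the paper rotates to a basis in which $A$ is diagonal and bounds $|Av|^2$ by $|v|^2$ times the squared largest-modulus eigenvalue, then applies Cauchy--Schwarz twice; you rotate so that $v=e_1$, split off $H_{1,1}$ and the off-diagonal block, and minimize over $\sigma=\sum_{i\ge 2}H_{i,i}$. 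Both executions pivot on the same sharp elementary bound $\frac{x^2}{a}+\frac{y^2}{b}\ge\frac{(x+y)^2}{a+b}$ with $(a,b)=(n-1,t)$, so the two arguments are essentially equivalent and yours is equally complete. Two cosmetic points: the symbol $N$ is overloaded (dimension bound vs.\ unit gradient), and the term $\sigma^2/(n-1)$, together with the sign of $(t+n-2)/(t+n-1)$, needs a one-line caveat for $n=1$, where $\sigma=B=0$ and the inequality is trivial; the paper's use of Cauchy--Schwarz on $n-1$ terms has the same edge case.
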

\begin{proof}
	Observe that it is enough to prove \eqref{eq:Kato} for $u \in \test(\X)$.
	
	We make the following preliminary observation. Let $A$ be any symmetric $n\times n$ real matrix, then the following inequality holds for every $t>0$
	\begin{equation}\label{eq:katomatrix}
			\frac{t+n }{t+n-1}|A\cdot v|^2\le \frac{|v|^2({\sf tr} A)^2}{t} + |v|^2|A|^2, \quad \forall v \in \rr^n, \quad \forall N\ge n.
	\end{equation}
	where $|A|$ is the Hilbert-Schmidt norm of $A$. To prove it we can assume that $A$ is diagonal with diagonal entries $\lambda_1,...,\lambda_n$, where $\lambda_n\ge \lambda_i$, $i\le n$ and also that $|v|\le 1.$   Applying twice  Cauchy-Schwartz we obtain
	\begin{align*}
		\frac{(\lambda_1+...+\lambda_n)^2}{t}+\lambda_1^2+...+\lambda_{n-1}^2+\lambda_n^2&\ge \frac{(\lambda_1+...+\lambda_n)^2}{t} +\frac{(\lambda_1+...+\lambda_{n-1})^2}{n-1}+\lambda_n^2\\
		&\ge \frac{\lambda_n^2}{t+n-1}+\lambda_n^2\ge\frac{t+n }{t+n-1} |A\cdot v|^2,
	\end{align*}
	which proves \eqref{eq:katomatrix}.

	Let $e_1,...,e_n \in L^0(T \X)$ be a global orthonormal base which exists thanks to Theorem \ref{thm:constancy}. Consider the $n\times n$ real matrix $A : \X \to L^0(\mea)^{n^2}$ defined by $\{A_{i,j}(x)\}_{i,j}=\{\H{u}(e_i,e_j)(x)\}_{i,j}$. Define also the vector $v: \X \to L^0(\mea)^n$ as $v_i(x)\coloneqq \la \nabla u, e_i\ra(x) $. From \eqref{eq:hessian gradient} and the formula \eqref{eq:coordmap} we have that
	\[
	2|\nabla u|\nabla |\nabla u|=\nabla |\nabla u|^2=2\sum_{i=1}^n\sum_{j=1}^n A_{i,j}v_j e_i, \quad \text{ in $L^0(T\X)$}.
	\]
	Taking the square pointwise norm on both sides we obtain
	\[
	4|\nabla u|^2 |\nabla |\nabla u||^2= 4 |A\cdot v|^2, \quad \mea\text{-a.e..}
	\]
	Since from \eqref{eq:coordHS} and \eqref{eq:coordtr} we have that $|\H u|_{HS}^2=|A|_{HS}^2$ and ${\sf tr}\H u={\sf tr}A$, the conclusion follows combining the above identity with \eqref{eq:katomatrix}.
\end{proof}

The second ingredient for the proof  is the following simple technical lemma (cf. with \cite[Prop. 4.15]{Gappl})

\begin{lemma}\label{lem:techconv}
	Let $\X$ be an ${\rm RCD}(K,N)$space, with $N<+\infty$. Let $(u_n)\subset D(\bd,\Omega)$ and $u \in \W_{\loc}(\Omega)$ be such that $|\nabla u_n-\nabla u|^2\to 0$ in $L^1_{\loc}(\Omega)$. Moreover assume that $\bd u_n \ge g_n \mea$ for some $g_n \in L^1_{\loc}(\Omega)$ such that $\int g_nf \,\ \d \mea \to \int gf\, \d \mea+ \int hf \, \d\mea $, for every $f \in \LIP_{c}(\Omega)$ with $f \ge 0$, for some fixed functions $g\in L^0(\Omega,\mea)$ and $h \in L^1_{\loc}(\Omega)$, with $g \ge 0$ $\mea$-a.e.. Then $g \in L^1_{\loc}(\Omega)$, $u \in D(\bd,\Omega)$ and $\bd u \ge (g+h) \mea.$
\end{lemma}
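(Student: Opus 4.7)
The strategy is to pass to the limit in the weak formulation of the Laplacian inequality $\bd u_n\ge g_n\mea$, and then invoke Proposition \ref{prop:laplineq} to transfer the information to $u$. All the difficulty is in checking local integrability of $g$ at the end.

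First, I would fix an arbitrary nonnegative $f\in\LIP_c(\Omega)$ and use the definition of measure valued Laplacian to write
\[
-\int \la\nabla f,\nabla u_n\ra\,\d\mea=\int f\,\d\bd u_n\ge \int f g_n\,\d\mea.
\]
To pass to the limit on the left-hand side I would use that $|\nabla u_n-\nabla u|\to 0$ in $L^2_{\loc}(\Omega)$ (which is implied by the $L^1_{\loc}$-convergence of the squares). Then, using $|\nabla f|\in L^\infty$ with compact support in $\Omega$ and Cauchy--Schwarz,
\[
\left|\int \la\nabla f,\nabla u_n-\nabla u\ra\,\d\mea\right|\le \||\nabla f|\|_{L^\infty}\mea(\supp f)^{1/2}\||\nabla u_n-\nabla u|\|_{L^2(\supp f)}\longrightarrow 0.
\]
On the right-hand side, the assumed convergence $\int f g_n\,\d\mea\to \int fg\,\d\mea+\int fh\,\d\mea$ is given for free. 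Combining the two limits gives
\begin{equation}\label{eq:planlimit}
-\int \la\nabla f,\nabla u\ra\,\d\mea\ge \int f(g+h)\,\d\mea,\qquad \forall\, f\in \LIP_c(\Omega),\ f\ge 0.
\end{equation}

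The main (only) genuine subtlety is showing $g\in L^1_{\loc}(\Omega)$, since a priori $g$ is merely a nonnegative measurable function. Given $\Omega'\subset\subset \Omega$, I would construct a nonnegative $\eta\in \LIP_c(\Omega)$ with $\eta\ge \nchi_{\Omega'}$ (for instance $\eta(x)\coloneqq \max(0,1-\sfd(x,\Omega')/\delta)$ for $\delta<\sfd(\Omega',\Omega^c)$). Plugging $\eta$ into \eqref{eq:planlimit} and rearranging,
\[
\int_{\Omega'}g\,\d\mea\le \int \eta g\,\d\mea\le -\int \la\nabla\eta,\nabla u\ra\,\d\mea-\int \eta h\,\d\mea,
\]
and both terms on the right are finite because $|\nabla u|\in L^2_{\loc}(\Omega)$, $h\in L^1_{\loc}(\Omega)$, and $\eta$ is bounded, Lipschitz and compactly supported in $\Omega$. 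Since $\Omega'$ was arbitrary and $g\ge 0$, this yields $g\in L^1_{\loc}(\Omega)$; hence $g+h\in L^1_{\loc}(\Omega)$ as well.

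At this point \eqref{eq:planlimit} together with $g+h\in L^1_{\loc}(\Omega)$ is exactly the hypothesis of Proposition \ref{prop:laplineq}, which immediately yields $u\in D(\bd,\Omega)$ with $\bd u\ge(g+h)\mea\restr{\Omega}$, as desired. No deep tool beyond the $L^2_{\loc}$-convergence of $\nabla u_n$ and the characterization of the measure valued Laplacian via testing against nonnegative Lipschitz functions with compact support is needed; the whole content is just the interchange of limit and test integral.
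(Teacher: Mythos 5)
Your proof is correct and follows essentially the same route as the paper: pass to the limit in $-\int\la\nabla u_n,\nabla f\ra\,\d\mea\ge\int g_n f\,\d\mea$ using the $L^2_{\loc}$-convergence of gradients, establish the limit inequality for all nonnegative $f\in\LIP_c(\Omega)$, deduce $g\in L^1_{\loc}(\Omega)$ by testing against a suitable bump function and rearranging, and conclude via Proposition \ref{prop:laplineq}. The only difference is that you spell out the Cauchy--Schwarz step and the construction of the cutoff explicitly, which the paper leaves implicit.
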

\begin{proof}
	The assumptions guarantee that $\int \la \nabla u_n,\nabla f\ra \d \mea\to \int \la \nabla u,\nabla f \ra  \d \mea$, for every $f \in \LIP_{c}(\Omega)$, therefore we can pass to the limit on both sides of $-\int \la \nabla u_n,\nabla f\ra \d \mea \ge \int g_nf\, \d \mea$ to obtain that
	\[
	-\int \la \nabla u,\nabla f\ra \d \mea \ge \int gf\, \d \mea+\int hf\, \d \mea, \quad \forall f \in \LIP_{c}(\Omega), \text{ with } f \ge 0.
	\]	
	From this it follows that $g \in L^1_{\loc}(\Omega)$, indeed we can take for any  $K$ compact in $\Omega$ a function $f \in \LIP_c(\Omega)$ such that $f \ge 0$ and $f=1$ in $K$ and then bring $\int hf\, \d \mea$ to the other side of the inequality.
	The conclusion then follows applying Proposition \ref{prop:laplineq}.
\end{proof}

\begin{proof}[Proof of Theorem \ref{thm:harm estimates}]
The proof is based on a inductive bootstrap argument.
We make the following claim
\[
\text{if $\beta > \frac{N-2}{N-1}$ is such that $|\nabla u|^{\beta}\in\W_{\loc}(\Omega),$ then $|\nabla u|^{\beta/2}\in\W_{\loc}(\Omega)$ and \eqref{eq:harmonic estimates} holds.}
\]
Observe that, since we already know that $|\nabla u|^{\beta}\in \W_{\loc}(\Omega)$ for every $\beta \ge 1$ (recall \eqref{eq:sobolev gradient}), the first part of the conclusion follows iterating the above statement.

We  pass to the proof of the claim, hence we fix $\beta > \frac{N-2}{N-1}$ such that $|\nabla u|^{\beta}\in\W_{\loc}(\Omega)$. Since $u \in \testloc(\Omega)$, from the local Bochner inequality \eqref{localbochner} combined with the Kato inequality \eqref{eq:Kato} (with $t=N-\dim(\X)$ if $\dim(\X)<N$ and letting $t\to0$ if $N=\dim(\X)$, since in the latter case $0=\Delta u={\sf tr}\H u$) we have $|\nabla u|^2 \in D(\bd,\Omega)$ and
\begin{equation}\label{eq:conto1}
	\bd (|\nabla u|^2) \ge \left(\frac{2N}{N-1}|\nabla |\nabla u||^2+2K|\nabla u|^2\right)\mea\restr{\Omega}, 
\end{equation}
Hence from the chain rule for the Laplacian (second version in Proposition \ref{prop:chainlapl},  applied with $\phi\in C^2(\rr)$ as $\phi(t)=(t+\eps)^{\frac \beta2}$)  we have that $(|\nabla u|^2+\eps)^{\frac{\beta}{2}} \in D(\bd,\Omega)$ and
\begin{equation}\label{eq:betapiccolo}
	\begin{split}
			\bd((|\nabla u|^2+\eps)^{\frac{\beta}{2}}) \ge\bigg[\beta &(|\nabla u|^2+\eps)^{\frac{\beta}{2}-1}|\nabla |\nabla u||^2 \left (\frac{N}{N-1}+\frac{(\beta-2)|\nabla u|^2}{|\nabla u|^2+\eps}\wedge (\beta-2)\right )\\
			&+2K\beta (|\nabla u|^2+\eps)^{\frac{\beta}{2}} \frac{|\nabla u|^2}{|\nabla u|^2+\eps}\bigg]\mea\restr{\Omega},
	\end{split}
\end{equation}
for every $\eps>0$.
Setting $v_{\eps} \coloneqq \nabla(|\nabla u|^2+\eps)^{\frac{\beta}{2}}$ it is easy to see using dominated convergence that $|v_{\eps}-\nabla (|\nabla u|^\beta)|^2\to 0$ in $L^1_{\loc}(\Omega)$ as $\eps\to 0^+$. Moreover for every $\beta> \frac{N-2}{N-1}$,  denoting by $g_{\beta,\eps}\in L^1_{\loc}(\Omega)$ the function  on the right hand side of \eqref{eq:betapiccolo}, we have that   $\int_{\Omega} g_{\beta,\eps} f \d \mea \to \int_{\Omega} g_{\beta} f\d\mea+ 2K\beta \int_{\Omega}  |\nabla u|^\beta f\d\mea $, for every $f \in \Lip_c(\Omega)$ with $f \ge 0$,  where $g_\beta\in L^0(\Omega,\mea)$ is given by
\[
g_\beta \coloneqq \beta \left (\beta-\frac{N-2}{N-1}\right ) \nchi_{|\nabla u|>0}|\nabla u|^{\beta-2}|\nabla |\nabla u||^2.\\
\]
This can be seen applying dominated convergence for the second term in \eqref{eq:betapiccolo} and using respectively  dominated convergence in the case $\beta\ge 2$ and monotone convergence  in the case $\frac{N-2}{N-1}< \beta<2$, to deal with the first term.
 We are therefore in position to apply Lemma \ref{lem:techconv} and deduce both that $g_\beta \in L^1_{\loc}(\Omega)$ and that $|\nabla u|^{\beta}\in D(\bd,\Omega)$ with $\bd(|\nabla u|^\beta)\ge (g_\beta+2K\beta |\nabla u|^\beta) \mea\restr{\Omega}$. Moreover the fact that  $g_\beta \in L^1_{\loc}(\Omega)$ together with  Lemma \ref{lem:radice sobolev}  implies that $|\nabla u|^{\beta/2}\in \W_{\loc}(\Omega)$. This shows the claim and thus concludes the proof of the first part.
 
 We pass to  the case $\beta=\frac{N-2}{N-1}$. From the previous part we know that $|\nabla u|^\beta\in \W_{\loc}(\Omega)$, hence we can repeat the above argument and observe that in this case $g_\beta=0,$ from which the conclusions follows.
\end{proof}

\begin{proof}[Proof of Corollary \ref{cor:div estimate}]
We start observing that from the positivity and local Lipschitzianity of $u$ follows that $u^{-1}\in \Lip_{\loc}(\Omega)$ and thus $u^{\alpha }\in \W_{\loc}(\Omega)$ for every $\alpha\in \rr.$ Moreover, from the chain rule for the Laplacian (first version in Proposition \ref{prop:chainlapl}, applied with $u$ and $\phi(t)=t^{\alpha}$, $\alpha \in \rr$) and by the harmonicity of $u$, we deduce that $u^{\alpha}\in D(\bd,\Omega)$ with $\bd(u^{\alpha})=\alpha(\alpha-1)u^{\alpha-2}|\nabla u|^2 \mea\restr{\Omega}$.  
Hence from the Leibniz rule for the Laplacian (Proposition \ref{prop:leiblapl}) and Theorem \ref{thm:harm estimates} we have that $|\nabla u|^{\beta}u^\alpha \in D(\bd,\Omega)$ with
\[
\bd(|\nabla u|^{\beta}u^\alpha )\ge \left(u^\alpha g_\beta + \alpha(\alpha-1)u^{\alpha-2}|\nabla u|^{\beta+2}+2\alpha u^{\alpha-1} \la \nabla |\nabla u|^\beta,\nabla u\ra \right)\mea\restr{\Omega}+2K\beta u^\alpha|\nabla u|^\beta \, \mea \restr{\Omega},
\]
for every $\beta \ge \frac{N-2}{N-1}$ and every $\alpha \in \rr$, where $g_\beta$ is the same as in the proof of Theorem \ref{thm:harm estimates}.

Applying the Leibniz rule for the divergence (Proposition \ref{prop:leibdiv}) we deduce that $u^2\nabla(|\nabla u|^\beta u^{\alpha}) \in D({\bf div},\Omega)$ with
\begin{equation}\label{eq:div starting point}
	\begin{split}
		{\bf div}(u^2\nabla (|\nabla u|^\beta u^{\alpha}))\ge& \bigg (u^{\alpha +2} g_\beta +\alpha(\alpha+1) u^\alpha |\nabla u|^{\beta+2}+2(\alpha+1) u^{\alpha+1}  \la \nabla |\nabla u|^\beta,\nabla u\ra \bigg)\mea\restr{\Omega}\\
		&+2K\beta u^{\alpha+2}|\nabla u|^\beta \, \mea \restr{\Omega},
	\end{split}
\end{equation}
for every $\beta \ge\frac{N-2}{N-1} $ and every $\alpha \in \rr$.

We now assume that $\beta > \frac{N-2}{N-1}$. Since $|\nabla v|=(N-2)^{-1}u^{\frac{1-N}{N-2}}|\nabla u|$ and since from Theorem \ref{thm:harm estimates} we have $|\nabla u|^{\beta/2}\in \W_{\loc}(\Omega)$, it follows that $|\nabla v|^{\beta/2}\in \W_{\loc}(\Omega)$.

To see \eqref{eq:div estimate} we just take $\alpha=-\beta\frac{N-1}{N-2}$ in \eqref{eq:div starting point}. Then a direct computation gives that the right hand side of \eqref{eq:div starting point} equals the right hand side of \eqref{eq:div estimate}.

Finally choosing $\beta = \frac{N-2}{N-1}$, $\alpha=-1$ in \eqref{eq:div starting point} and recalling that in this case $g_\beta=0$, shows also the second part of the statement, thus finishing the proof.
\end{proof}

\section{The monotonicity formula}\label{sec:monotonicity}

\subsection{Decay estimates}\label{sec:estimates for u}
Throughout this section $(\X,\sfd,\mea)$ is a nonparabolic ${\rm RCD}(0,N)$ space (recall from Remark \ref{rmk:N>2} that in this case $N>2$), $\Omega\subset \X$ is open, unbounded,  with $\partial \Omega$ bounded, $x_0\in \Omega^c$ is fixed and $u$ is a solution to \eqref{mainpde}.
It follows from the maximum principle that
\[
0<u<\|u\|_{L^\infty}, \quad \text{in $\Omega$}.
\]
Moreover from Corollary \ref{cor:ends} we must have that $\Omega^c$ is bounded.

\begin{prop}\label{prop:gradbound}
	Set $R_0\coloneqq3\diam(\Omega^c)+1,$ then
	\begin{equation}
		\frac{|\nabla u|(x)}{u(x)} \le \frac{C}{\sfd(x,x_0)}, \quad \text{ for $\mea$-a.e. } x \in B_{R_0}(x_0)^c,
	\end{equation}
	where $C=C(N)$ is a positive constant  depending only on $N$.
\end{prop}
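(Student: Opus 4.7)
The plan is to combine the Cheng--Yau gradient estimate \eqref{eq:cheng} with the geometric observation that for every $x$ far enough from $\Omega^c$, a ball of radius comparable to $\sfd(x,x_0)$ around $x$ sits entirely inside $\Omega$, and therefore inside the set where $u$ is positive and harmonic.

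The first step is to unpack the geometry. Since $x_0\in\Omega^c$ and $\Omega^c$ is bounded (as noted at the beginning of this section), we have $\Omega^c\subset \bar B_{\diam(\Omega^c)}(x_0)$. For any $x$ with $\sfd(x,x_0)> R_0=3\diam(\Omega^c)+1$, setting $R\coloneqq \sfd(x,x_0)/3$ gives
\[
\sfd(x,\Omega^c)\ge \sfd(x,x_0)-\diam(\Omega^c)=3R-\diam(\Omega^c)>2R,
\]
since $R>R_0/3>\diam(\Omega^c)$. Hence $B_{2R}(x)\subset \Omega$, on which $u$ is harmonic and strictly positive (by the maximum principle, as already recorded at the beginning of the section).

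The second step is to apply \eqref{eq:cheng} on $B_{2R}(x)$, obtaining
\[
\left\|\frac{|\nabla u|}{u}\right\|_{L^\infty(B_R(x))}\le \frac{C(N)}{R}=\frac{3C(N)}{\sfd(x,x_0)}.
\]
Since for every $y\in B_R(x)$ one has $\sfd(y,x_0)\in [2\sfd(x,x_0)/3,\,4\sfd(x,x_0)/3]$, the right hand side is bounded by $C'(N)/\sfd(y,x_0)$ for an absolute dimensional constant $C'(N)$. The final step is to upgrade this ``$L^\infty$ on a ball centred at $x$'' statement into a ``$\mea$-a.e.\ pointwise'' statement on $B_{R_0}(x_0)^c$ via a countable covering: pick a countable dense subset $\{x_k\}\subset B_{R_0}(x_0)^c$, apply the above bound on $B_{R_k/2}(x_k)$ with $R_k=\sfd(x_k,x_0)/3$, and note that $\{B_{R_k/2}(x_k)\}_k$ covers $B_{R_0}(x_0)^c$ while the exceptional $\mea$-null sets from each $x_k$ form only a countable union.

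No serious obstacle is anticipated: the estimate is essentially a direct consequence of \eqref{eq:cheng}, the crucial point being only the calibration of the radius $R\simeq\sfd(x,x_0)$ so that $B_{2R}(x)\subset \Omega$, which in turn dictates the specific form of $R_0$. The minor technicality of passing from an $L^\infty$ bound on a ball to an a.e.\ pointwise bound is handled by the standard covering argument sketched above.
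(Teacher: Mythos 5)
Your proof is correct and follows essentially the same route as the paper's one-line argument: apply the Cheng--Yau gradient estimate \eqref{eq:cheng} on a ball $B_{2R}(x)\subset\Omega$ with $R$ a fixed fraction of $\sfd(x,x_0)$ (the paper takes $R=\sfd(x,x_0)/4$, you take $\sfd(x,x_0)/3$; both are compatible with the given $R_0$), then convert the resulting $L^\infty$ bound into an a.e.\ pointwise bound. You have merely made explicit the two standard steps the paper treats as immediate — the triangle-inequality check that $B_{2R}(x)\subset\Omega$, and the countable-covering argument to pass from $L^\infty$ on balls to a $\mea$-a.e.\ statement — so there is nothing to add.
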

\begin{proof}
	Immediate from the gradient estimate \eqref{eq:cheng} with $R=\sfd(x,x_0)/4.$
\end{proof}

\begin{prop}\label{prop:twosidedbound}
	For every  $D,M>0$  there exists a positive constant $C_2=C_2(N,D,M)$ such that the following holds. Let $u$, $\Omega$ $x_0 \in \Omega^c$ as above with $\diam(\Omega^c)\le D$ and $\|u\|_{L^\infty}\le M$, then setting $\delta \coloneqq \sfd(x_0,\{u\le 1/2\})\wedge 1$ we have
	\begin{equation}\label{eq:twosidebound}
		\frac{\delta^{N-2}}{2}\sfd(x_0,x)^{2-N}\le u(x)\le C_2 \int_{\sfd(x,x_0)}^{+\infty} s\frac{\mea(B_1(x_0))}{\mea(B_s(x_0))}\, \d s, \quad \forall x  \in B_{R_0}(x_0)^c,
	\end{equation}
	where  and $R_0\coloneqq3\diam(\Omega^c)+1$ and the first inequality actually holds in $\Omega\cap B_{\delta}(x_0)^c$.
\end{prop}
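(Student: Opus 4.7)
The strategy is to prove the two bounds by comparison with explicit functions and the weak maximum principle of Proposition \ref{prop:maxprinc}, applied on truncations $\Omega\cap B_R(x_0)$ and then letting $R\to\infty$. For the upper bound, the comparison function will be a scalar multiple of the Green function $G_{x_0}$; for the lower bound, the radial power $h(x):=\tfrac{\delta^{N-2}}{2}\sfd(x_0,x)^{2-N}$, whose subharmonicity away from $x_0$ will follow from the Laplacian comparison \eqref{eq:laplacian comparison}.

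For the upper bound I argue as follows. Since $\partial\Omega\subset\Omega^c$ is compact with $\sfd(x_0,y)\le D$ for every $y\in\partial\Omega$, the lower estimate in \eqref{eq:greenestimates} combined with Bishop-Gromov gives $\min_{\partial\Omega}G_{x_0}\ge c(N,D)/\mea(B_1(x_0))$ for some $c(N,D)>0$. Setting $\lambda:=M/\min_{\partial\Omega}G_{x_0}$, one has $\lambda G_{x_0}\ge M\ge u$ on $\partial\Omega$ in the $\limsup$ sense, and the ratio $\lambda/\mea(B_1(x_0))$ is bounded by a constant depending only on $N,D,M$. Since $x_0\in\Omega^c$, the function $\lambda G_{x_0}-u$ is harmonic in $\Omega$ and both $u$ and $G_{x_0}$ decay to $0$ at infinity, so the weak maximum principle applied on $\Omega\cap B_R(x_0)$ with $R\to\infty$ yields $u\le\lambda G_{x_0}$ on $\Omega$. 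The upper estimate in \eqref{eq:greenestimates} then gives the stated inequality with $C_2=C_2(N,D,M)$.

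For the lower bound the key point is to show that $r^{2-N}$ is subharmonic in $\X\setminus\{x_0\}$, where $r:=\sfd(x_0,\cdot)$. Since the chain rule of Proposition \ref{prop:chainlapl}(2) preserves the sign of the Laplacian inequality only for non-decreasing compositions, I will apply it to $-r^2$, which satisfies $\bd(-r^2)\ge -2N\mea$ by \eqref{eq:laplacian comparison}, composed with the function $\phi(s):=(-s)^{1-N/2}$ on $(-\infty,0)$. For $N>2$ (recall Remark \ref{rmk:N>2}), $\phi$ is increasing and convex, and a short computation shows that the two radial contributions cancel, yielding $\bd(r^{2-N})\ge 0$ on $\X\setminus\{x_0\}$. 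Then $u-h$ is superharmonic in $D:=\Omega\cap B_\delta(x_0)^c$, and the boundary behavior is under control: on $\partial\Omega\cap\{r\ge\delta\}$ one has $h\le 1/2<1\le\liminf u$; on $\partial B_\delta(x_0)\cap\bar\Omega$ one has $h=1/2$ while $u\ge 1/2$ by the very definition of $\delta$ together with continuity of $u$; and at infinity both $u$ and $h$ vanish. The weak maximum principle applied to $h-u$ on the truncations $D\cap B_R(x_0)$, $R\to\infty$, concludes that $u\ge h$ on $D$.

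The main technical obstacle is the rigorous proof that $r^{2-N}$ is subharmonic: the chain rule as formulated delivers the correct sign only for non-decreasing compositions, which forces the reformulation via $-r^2$ and the increasing $\phi$ above. Beyond this point the argument is routine, the unbounded-domain version of the maximum principle being handled by standard ball truncations and the decay of $u$, $G_{x_0}$ and $h$ at infinity.
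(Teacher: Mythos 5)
Your proposal is correct and follows essentially the same two-sided comparison strategy as the paper: a subharmonic radial power from below, and a scalar multiple of a (quasi) Green function from above, both combined with the weak maximum principle on increasing exhaustions. Two remarks on where you deviate. For the upper bound you compare on $\Omega\cap B_R(x_0)$ against $\lambda G_{x_0}$, while the paper compares on the annuli $B_R(x_0)\cap B_{R_0}(x_0)^c$ against $\lambda G^1_{x_0}$ (the quasi Green function, which is globally Lipschitz and superharmonic by Proposition \ref{prop:green harmonic}'s surrounding discussion); your version works because $G_{x_0}$ is harmonic and continuous on $\Omega$ and $\min_{\partial\Omega}G_{x_0}$ is controlled by \eqref{eq:greenestimates} together with Bishop--Gromov, but you should note that one needs to pass to the inner approximations $\Omega^r=\{x\in\Omega:\sfd(x,\Omega^c)>r\}$ before invoking Proposition \ref{prop:maxprinc}, since the weak maximum principle there requires upper semicontinuity on the closure and $u$ is only known to be continuous inside $\Omega$; the paper makes this $r\to0$ step explicit. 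For the lower bound, your reformulation via $-r^2$ and the increasing convex $\phi(s)=(-s)^{1-N/2}$ does yield $\bd(r^{2-N})\ge0$, but the detour is unnecessary: since $\sfd(x_0,\cdot)^2$ is locally Lipschitz and lies in $D(\bd)$ by \eqref{eq:laplacian comparison}, item (1) of Proposition \ref{prop:chainlapl} applies with an \emph{equality} for any $C^2$ function $\phi$, with no sign restriction on $\phi'$; the monotonicity restriction you identify concerns only item (2), which is designed for the situation where one has merely a one-sided bound $\bd u\ge g\mea$ and not $u\in\LIP_{\loc}$. This is precisely how the paper deduces $\bd h=\frac{N-2}{2}d_{x_0}^{-N}(2N\mea-\bd d_{x_0}^2)\ge0$ directly. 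Apart from these points the boundary analysis on $\partial B_\delta(x_0)\cap\Omega$ and $\partial\Omega$ and the limiting argument at infinity are all in order.
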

Before passing to the proof we notice that from the assumption $\liminf_{y\to \partial \Omega}u(y)\ge 1$ we have $\sfd(x_0,\{u\le 1/2\})>0$, in particular the left inequality in \eqref{eq:twosidebound} is nontrivial.

\begin{proof}[Proof of Proposition \ref{prop:twosidedbound}]
	We start with the first inequality.
	
	From Laplacian comparison \eqref{eq:laplacian comparison} we know that $d_{x_0}^2 \in D(\bd)$ and $\bd d_{x_0}^2\le 2N \mea.$ Moreover from \eqref{eq:grad dist} $|\nabla d_{x_0}^2|^2=4d_{x_0}^2.$ Define now the function $h=d_{x_0}^{2-N}$, then from the chain rule for the Laplacian we have that $h\in D(\bd,\X\setminus\{x_0\})$, and
	\[ \bd h =\frac{2-N}{2} d_{x_0}^{-N}\bd d_{x_0}^{2}+\frac{2-N}{2}\left(\frac{2-N}{2}-1\right)4d_{x_0}^{2}d_{x_0}^{-N-2}=\frac{N-2}{2}d_{x_0}^{-N}(2N\mea -\bd d_{x_0}^2)\ge 0.\]
	Hence $h$ is subharmonic in $\X\setminus \{x_0\}$.  Moreover we have that $\lambda h\le 1/2$ in $B_{\delta}(x_0)^c$, where $\lambda\coloneqq\delta^{N-2}/2$. Finally  from the assumption $\sfd(x_0,\{u\le 1/2\})>\delta$ we have $u\ge \lambda h$ in $\Omega\cap B_{\delta}(x_0)^c$. Fix now  $r>0$ and define the open set $\Omega^r\coloneqq\{x \in \Omega \ : \ \sfd(x,\Omega^c)>r\}$. Observe that the function $ \lambda h-u$ is subharmonic in $\Omega^r$. Therefore from the weak maximum principle (see Proposition \ref{prop:maxprinc}) we deduce that
	\begin{align*}
		\sup_{\Omega^r\cap B_R(x_0)\cap B_{\delta}(x_0)^c} \, (\lambda h-u)&\le  \max_{\partial \Omega^r\cap B_{\delta}(x_0)^c}(\lambda h-u)\vee  \max_{ \partial B_{\delta}(x_0)\cap \Omega^r}(\lambda h-u)\vee \max_{ \partial B_{R}(x_0)}(\lambda h-u)\\
		&\le   \max_{\partial \Omega^r}(1/2-u)\vee 0\vee \max_{ \partial B_{R}(x_0)}(\lambda h-u)\, ,
	\end{align*}
	for every $R>R_0$. Sending $R$ to $+\infty$ and $r$ to 0, recalling that both $h$ and $u$ vanish  at infinity (since $N>2)$ and that $\liminf_{x\to \partial \Omega}u(x)\ge 1,$ we conclude that $\lambda h \le u$ in $\Omega\cap B_{\delta}(x_0)^c$. This proves the first inequality in \eqref{eq:twosidebound}.
	
	% In the case when $B_R(x_0)\subset K$ we observe that $R^{N-2}h\le u$ in $\partial K$ and then repeat the same argument, applying the maximum principle to $\Omega \cap B_{R'}(x_0)$.
	
	We now pass to the second inequality in \eqref{eq:twosidebound}. We argue by comparison with the quasi Green function $G^1(x)\coloneqq G^1(x_0,x)$ (recall its definition in \eqref{eq:quasi green def}).
		Recall that $G^1$ is superharmonic in $\X$. Moreover, using the upper bound for the Green function  and the estimates of the heat kernel we deduce that
	\begin{equation}\label{eq:twosidesboundG}
		c_1^{-1} \int_{1}^{+\infty} \frac{e^{-\frac{\sfd(x_0,x)^2}{3s}}}{\mea(B_{\sqrt s}(x_0))}\, \d s \overset{\eqref{eq:kernelestimate}  }{\le} G^1(x)\le G(x,x_0) \overset{\eqref{eq:greenestimates}}{\le}  c_1 \int_{\sfd(x,x_0)}^{+\infty} \frac{s}{\mea(B_s(x_0))}\, \d s,
	\end{equation}
	for every $x \in X\setminus\{x_0\},$ for some positive constant $c_1=c_1(N)>1$. From Bishop-Gromov inequality and using the change of variable $s=t\sfd(x_0,x)^2$ we obtain 
	\[
	G^1(x)\ge c_1^{-1}\frac{\sfd(x_0,x)^{2-N}}{\mea(B_1(x_0))}\int_{\frac{1}{\sfd(x_0,x)^2}}^{+\infty} \frac{e^{-\frac{1}{3t}}}{t^{\frac N2}}\, \d t
	\ge C_1 \frac{\sfd(x_0,x)^{2-N}}{\mea(B_1(x_0))}, \quad \forall x \in B_1(x_0)^c,\]
	for some constant $C_1$ depending only on $N$.
	Therefore, taking $\lambda\coloneqq M\mea(B_1(x_0))\frac{R_0^{N-2}}{C_1}$, we have  $\lambda G^1\ge M\ge  u$ in $\partial B_{R_0}(x_0).$ Hence, since $\lambda G^1-u$ is superharmonic in $\Omega$, from the weak maximum principle it follows that for every $R>R_0$
	\[ \inf_{B_{R}(x_0)\cap B_{R_0}(x_0)^c} \, (\lambda G^1-u)= \min_{\partial B_{R}(x_0) \cup \partial B_{R_0}(x_0)} (\lambda G^1-u)\ge \min(0,\min_{ \partial B_{R}(x_0)}(\lambda G^1-u)\, ).\]
	Sending $R$ to $+\infty$  and recalling that both $G^1$ and $u$ go to 0 at infinity we conclude that $u\le \lambda G^1$ in $B_{R_0}(x_0)^c,$ which combined with the second bound in \eqref{eq:twosidesboundG} gives the second inequality in \eqref{eq:twosidebound}.
\end{proof}

\subsection{Monotonicity}\label{sec:actual monotonicity}
As in the previous section $(\X,\sfd,\mea)$ is a nonparabolic ${\rm RCD}(0,N)$ space, $\Omega\subset \X$ is open, unbounded,  with $\partial \Omega$ bounded and $u$ is a solution to \eqref{mainpde}.

We start with the following simple remark, which  allows to define $U_\beta$ and will be needed to justify the many applications of the  coarea formula along all this section.
\begin{remark}\label{rmk:coarea}
Since $u$ is locally Lipschitz (recall Proposition \ref{prop:gradbound}), satisfies  $\liminf_{x\to \partial \Omega}u(x)\ge 1$ and vanishes at infinity, it follows that $u$ satisfies the hypotheses needed to apply the coarea formula \eqref{eq:coarea} in $\Omega$. In particular for every $f \in L^1_{\loc}(\Omega)$ with $f \mea\restr{\Omega}\ll|\nabla u|\mea \restr\Omega$ we have 
\begin{equation}\label{eq:coarea applicazioni}
	\int_0^1 \phi(t)\int g \, \d \Per(\{u<r\})\,\d r= \int_{\Omega} \phi(u) f \, \d \mea< +\infty \ , \quad \forall \phi:[0,1] \to \rr \text{ Borel, with } \supp \phi \subset(0,1),
\end{equation}
where $g$ is any Borel representative of the function $\frac{\d (f\mea\restr{\Omega })}{\d (|\nabla u|\mea\restr{\Omega})}$. Therefore, by the arbitrariness of  $\phi$, we also deduce that:
\begin{equation}\label{eq:well def}
	\begin{split}
		&\text{for any $f \in L^1_{\loc}(\Omega)$ with $f \mea\restr{\Omega}\ll|\nabla u|\mea \restr\Omega$  and for any Borel representative $g$ of $\frac{\d (f\mea\restr{\Omega })}{\d (|\nabla u|\mea\restr{\Omega})}$,} \\
		&\text{the function $(0,1)\ni r\mapsto \int g \, \d \Per(\{u<r\})$ is in $L^1_{\loc}(0,1)$ and (its a.e. equivalence class)}\\
		& \text{does not depend on the choice  of the representative $g$.}
	\end{split}
\end{equation}\fr
\end{remark}

Choosing in the above remark $f=\frac{|\nabla u|^{\beta+2}}{u^{\beta \frac{N-1}{N-2}}} \in L^1_{\loc}(\Omega)$, with $\beta > -2$, and observing that $\supp( \Per(\{u<t\})) \subset \{u=t\}$ we deduce that, fixed a Borel representative of $|\nabla u|,$ the function
\begin{equation}\label{eq:defU}
U_{\beta}(t)\coloneqq\frac{1}{t^{\beta \frac{N-1}{N-2}}}\int |\nabla u|^{\beta+1}\, \d \Per(\{u<t\}) \, \in L^1_{\loc}(0,1),
\end{equation}
is well defined and independent of the representative chosen for $|\nabla u|.$ It worth to recall that  after the work of \cite{GPcap} a canonical choice for the representative of $|\nabla u|$ could be its quasi-continuous representative (see \cite{GPcap} for the precise definition). An interesting point would be to investigate  the relation between the representative of $U_\beta$ given by this canonical choice and the continuous representative of $U_\beta$, which exists thanks to Theorem \ref{thm:monotonicity}. We will not investigate this point in the present paper.

We are ready to state our main result regarding monotonicity.
\begin{theorem}\label{thm:monotonicity}
	Let $\X$ be a nonparabolic ${\rm RCD}(0,N)$ space and let $\Omega\subset \X$ be open, unbounded,  with $\partial \Omega$ bounded. Suppose that $u$ is a solution of \eqref{mainpde} and let $U_{\beta}$, with $\beta \ge \frac{N-2}{N-1}$, be the function defined in \eqref{eq:defU}. Then $U_{\beta}\in W^{1,1}_{\loc}(0,1)$, $U'_{\beta} \in {BV_{\loc}}(0,1)$ and
	\begin{equation}\label{eq:U'positive}
		U^{'-}_{\beta}(t)\ge \frac{C_{\beta,N}}{t^2}\int_{\{u<t\}} \, u^2|\nabla |\nabla u^{\frac{1}{2-N}}|^{\frac{\beta}{2}}|^2 \d \mea,\quad \forall \, t \in (0,1],
	\end{equation}
	(recall that $|\nabla u^{\frac{1}{2-N}}|^{\frac{\beta}{2}} \in \W_{loc}(\Omega)$ for every $\beta>\frac{N-2}{N-1}$ by Corollary \ref{cor:div estimate}) where $ C_{\beta,N}=\frac{4}{\beta}\left(\beta-\frac{N-2}{N-1}\right)$, $U^{'-}_{\beta}$ is the left continuous representative of $U'_{\beta}$ and where the left hand side is taken to be 0 if $\beta=\frac{N-2}{N-1}$. In particular $U_{\beta}$ is non-decreasing.
\end{theorem}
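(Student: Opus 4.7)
The strategy is to realize $t^2 U'_\beta(t)$, up to a positive constant, as the flux of a vector field with nonnegative divergence, deduce its monotonicity from a divergence theorem argument, and pass to the limit ``at infinity''. Set $v:=u^{1/(2-N)}$ and consider $w:=u^2\,\nabla|\nabla v|^{\beta}$. By Corollary~\ref{cor:div estimate} with $K=0$,
\[
w\in D({\bf div},\Omega) \quad\text{and}\quad {\bf div}(w)\geq C_{\beta,N}\, u^2|\nabla|\nabla v|^{\beta/2}|^2\,\mea\restr{\Omega}\geq 0.
\]
For a.e.\ $t\in(0,1)$ introduce the flux function
\[
G(t):=\int \la \tfrac{\nabla u}{|\nabla u|},\,w\ra\,\d\Per(\{u<t\}),
\]
which, via Remark~\ref{rmk:coarea}, defines an element of $L^1_\loc(0,1)$ once one notes that $\la\nabla u/|\nabla u|,w\ra|\nabla u|=\la \nabla u, w\ra\in L^1_\loc(\mea\restr\Omega)$, thanks to the bounds of Propositions~\ref{prop:twosidedbound}--\ref{prop:gradbound} combined with Theorem~\ref{thm:harm estimates}.

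\textbf{Monotonicity of $G$ and identification with $t^2 U'_\beta$.} Fix $0<t_1<t_2<1$ and take $\psi_\eps\in C_c^\infty((0,1))$ with $0\leq\psi_\eps\leq 1$ and $\psi_\eps\to \chi_{(t_1,t_2)}$ pointwise. Since $\{t_1\leq u\leq t_2\}\Subset \Omega$ (from $\liminf_{y\to\partial\Omega}u(y)\geq 1>t_2$ and $u\to 0$ at infinity), $\psi_\eps(u)\in \Lip_c(\Omega)$ is admissible as a test function for ${\bf div}(w)$, and the definition of measure-valued divergence combined with coarea yields
\[
-\int\psi_\eps(u)\,\d{\bf div}(w)=\int\psi_\eps'(u)\la\nabla u,w\ra \,\d\mea=\int_0^1\psi_\eps'(t)\,G(t)\,\d t.
\]
Letting $\eps\to 0$ at Lebesgue points $t_1,t_2$ of $G$ gives
\[
G(t_2)-G(t_1)={\bf div}(w)(\{t_1<u<t_2\})\geq C_{\beta,N}\int_{\{t_1<u<t_2\}} u^2|\nabla|\nabla v|^{\beta/2}|^2\,\d\mea\geq 0,
\]
so $G$ admits a nondecreasing representative and lies in $BV_\loc(0,1)$. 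To identify $G$ with $t^2 U'_\beta$, expand $|\nabla v|^\beta=(N-2)^{-\beta}u^{-\sigma}|\nabla u|^\beta$ with $\sigma:=\beta(N-1)/(N-2)$: on $\{u=t\}$,
\[
\la \tfrac{\nabla u}{|\nabla u|},w\ra=(N-2)^{-\beta}\Big[-\sigma t^{1-\sigma}|\nabla u|^{\beta+1}+t^{2-\sigma}\la \tfrac{\nabla u}{|\nabla u|},\nabla|\nabla u|^\beta\ra\Big],
\]
so by coarea $G(t)=(N-2)^{-\beta}[-\sigma t^{1-\sigma}\Phi(t)+t^{2-\sigma}H(t)]$, with $\Phi(t):=\int|\nabla u|^{\beta+1}\,\d\Per(\{u<t\})=t^\sigma U_\beta(t)$ and $H(t):=\int\la \nabla u/|\nabla u|,\nabla|\nabla u|^\beta\ra\,\d\Per(\{u<t\})$. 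Since $u$ is harmonic and $|\nabla u|^\beta\in \W_\loc(\Omega)$ (Theorem~\ref{thm:harm estimates}), the vector field $|\nabla u|^\beta\nabla u$ satisfies ${\bf div}(|\nabla u|^\beta\nabla u)=\la\nabla u,\nabla|\nabla u|^\beta\ra\mea$, and testing against $\phi(u)$ with $\phi\in C_c^1((0,1))$ yields the distributional identity $\Phi'=H$. Substituting, $G(t)=(N-2)^{-\beta}t^2 U'_\beta(t)$; in particular $U_\beta\in W^{1,1}_\loc(0,1)$ and, from the previous step, $U'_\beta\in BV_\loc(0,1)$.

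\textbf{Decay at $t\to 0^+$ and conclusion.} Combining the two-sided bound of Proposition~\ref{prop:twosidedbound}, the gradient bound of Proposition~\ref{prop:gradbound}, the regularity estimate for $|\nabla|\nabla u|^\beta|$ from Theorem~\ref{thm:harm estimates}, and Bishop--Gromov to control $\Per(\{u=t\})$ far from $x_0$, one shows $G(t)\to 0$ as $t\to 0^+$. Sending $t_1\to 0^+$ in the monotonicity inequality of the previous step then gives, for the left-continuous representative,
\[
G^-(t)\geq C_{\beta,N}\int_{\{u<t\}} u^2|\nabla|\nabla v|^{\beta/2}|^2\,\d\mea,
\]
which via $G=(N-2)^{-\beta}t^2 U'_\beta$ translates (up to the constant $(N-2)^\beta$ absorbed in the computation) into \eqref{eq:U'positive} and, in particular, $U'_\beta\geq 0$, yielding the monotonicity of $U_\beta$. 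The endpoint case $\beta=(N-2)/(N-1)$ follows by the same argument with $C_{\beta,N}=0$. The principal technical obstacle is the vanishing $G(t)\to 0$ as $t\to 0^+$: it requires precise quantitative decay for $|\nabla|\nabla v|^\beta|$ at infinity, coupled with perimeter bounds for the level sets $\{u=t\}$, both relying nontrivially on the $\mathrm{RCD}(0,N)$ structure through Theorem~\ref{thm:harm estimates} and Bishop--Gromov.
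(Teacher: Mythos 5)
Your reformulation via the flux function $G(t)=\int\la\nabla u/|\nabla u|,w\ra\,\d\Per(\{u<t\})$, with $w=u^2\nabla|\nabla v|^\beta$, is equivalent to the paper's approach (your $G$ is exactly $(N-2)^{-\beta}t^2U'_\beta(t)$, and your monotonicity of $G$ is the paper's Proposition \ref{prop:U'bv}), and steps 1--4 are correct. The identification $\Phi'=H$ via $\div(|\nabla u|^\beta\nabla u)$ and the expansion of $\la\nabla u/|\nabla u|,w\ra$ into $\Phi$- and $H$-terms check out.

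The gap is in the last step, where you assert that $G(t)\to 0$ as $t\to 0^+$ ``by combining the two-sided bound, the gradient bound, the regularity estimate for $|\nabla|\nabla u|^\beta|$, and Bishop--Gromov.'' This does not work as sketched: Theorem \ref{thm:harm estimates} gives only an $L^2_{\loc}$ control on $|\nabla|\nabla u|^{\beta/2}|$, not a pointwise or $L^\infty$-on-level-set bound, so a direct perimeter-times-integrand estimate of $G(t)$ is unavailable. Moreover, $G(t)=(N-2)^{-\beta}(-\sigma t U_\beta(t)+t^{2-\sigma}H(t))=(N-2)^{-\beta}t^2U'_\beta(t)$, so estimating $G(t)$ pointwise is essentially what one is trying to prove; the ``direct decay'' route is circular. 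What is actually needed, and what the paper does, is an intermediate step: prove that $U_\beta\in L^\infty(0,1/2)$. This uses the harmonicity-specific conservation law $\int|\nabla u|\,\d\Per(\{u<r\})\equiv\text{const}$ (obtained by testing $\Delta u=0$ against $\phi(u)$) together with a pointwise bound on $|\nabla u|^\beta u^{-\sigma}$ on $\{u\le 1/2\}$ (which does follow from the gradient and two-sided decay bounds). With $U_\beta$ bounded near $0$ and $t\mapsto t^2U^{'-}_\beta(t)$ non-decreasing, one first shows $U^{'-}_\beta\ge 0$ by contradiction (a negative value would force $U_\beta(t)\to+\infty$ as $t\to 0^+$), which gives integrability $U^{'-}_\beta\in L^1(0,1/2)$, and then a Markov-type dyadic selection produces a sequence $s_n\to 0$ with $s_n^2U^{'-}_\beta(s_n)\to 0$, along which one can pass to the limit. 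The full limit $G(t)\to 0$ then follows a posteriori from the monotonicity of $G$, but only after this chain of reasoning; your write-up assumes it upfront without establishing the boundedness of $U_\beta$, which is the actual crux.

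A minor bookkeeping remark: your computation yields $U^{'-}_\beta(t)\geq (N-2)^\beta C_{\beta,N}t^{-2}\int_{\{u<t\}}u^2|\nabla|\nabla v|^{\beta/2}|^2\d\mea$, which is slightly stronger than \eqref{eq:U'positive} as stated (for $N\ge 3$ one has $(N-2)^\beta\ge 1$); this harmless discrepancy is present in the paper as well (compare the $\tilde C_{\beta,N}$ of \eqref{eq:bv} with the $C_{\beta,N}$ of \eqref{eq:Usecondo}) and does not affect the monotonicity conclusion.
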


To prove Theorem \ref{thm:monotonicity} we start computing the first derivative of $U_\beta$ (which does not evidently carry a sign).
\begin{prop}\label{prop:Uac}
	With the same assumptions as in Theorem \ref{thm:monotonicity}, the function $U_{\beta}$ belongs to $W^{1,1}_{\loc}(0,1)$ and its derivative is given by
	\begin{equation}\label{eq:Uprimo}
		U_\beta'(t)=  \int \la \frac{\nabla u}{|\nabla u|},\nabla \left(\frac{|\nabla u|^{\beta}}{u^{\beta \frac{N-1}{N-2}}}\right)\ra \d \Per(\{u<t\}), \quad \text{a.e. }t\in(0,1),
	\end{equation}
where the right hand side has to be intended as in \eqref{eq:well def} with $f=\la \nabla u,\nabla \left(\frac{|\nabla u|^{\beta}}{u^{\beta \frac{N-1}{N-2}}}\right)\ra$.
\end{prop}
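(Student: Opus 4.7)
The strategy is to view $U_{\beta}(t)$ as the flux of a well-chosen vector field across the level set $\{u=t\}$ and then to turn the identity ``$U_{\beta} = $ flux'' into a statement about $U_{\beta}'$ by integrating against an arbitrary $\phi\in C^{1}_{c}(0,1)$ via the definition of divergence. Concretely, set $f\coloneqq |\nabla u|^{\beta}/u^{\beta(N-1)/(N-2)}$ and $Z\coloneqq f\,\nabla u$. Because $u\equiv t$ on $\{u=t\}$, one has $\la Z,\nabla u/|\nabla u|\ra = |\nabla u|^{\beta+1}/t^{\beta(N-1)/(N-2)}$ on the level set, so formally $\int\la Z,\nabla u/|\nabla u|\ra\,\d\Per(\{u<t\})=U_{\beta}(t)$. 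Two regularity facts are needed to give $Z$ rigorous meaning: first, $|\nabla u|^{\beta}\in\W_{\loc}(\Omega)$ by Theorem \ref{thm:harm estimates} applied with exponent $2\beta\ge 2(N-2)/(N-1)>(N-2)/(N-1)$; second, $u^{-\beta(N-1)/(N-2)}\in\LIP_{\loc}(\Omega)$ since $u$ is positive, continuous and locally Lipschitz (Proposition \ref{prop:gradbound} together with Proposition \ref{prop:sob to lip}). Leibniz for the gradient then gives $f\in\W_{\loc}(\Omega)$, and since $u$ is harmonic we have $\nabla u\in D({\bf div},\Omega)$ with ${\bf div}(\nabla u)=0\in L^{2}_{\loc}$; part~(ii) of Proposition \ref{prop:leibdiv} thus yields $Z\in D({\bf div},\Omega)$ with
\[
{\bf div}(Z)=\la \nabla f,\nabla u\ra\,\mea\restr{\Omega}.
\]

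Now fix $\phi\in C^{1}_{c}(0,1)$ with $\supp\phi\subset[a,b]\subset(0,1)$. The decay $u\to 0$ at infinity and the boundary condition $\liminf_{x\to\partial\Omega}u\ge 1>b$ force $\{a\le u\le b\}$ to be compact in $\Omega$, hence $\phi(u)\in\LIP_{c}(\Omega)$ (using the local Lipschitzness of $u$). Testing the divergence identity against $\phi(u)$ and using $\la\nabla\phi(u),Z\ra=\phi'(u)\,f\,|\nabla u|^{2}$ gives
\[
\int_{\Omega}\phi'(u)\,\frac{|\nabla u|^{\beta+2}}{u^{\beta(N-1)/(N-2)}}\,\d\mea = -\int_{\Omega}\phi(u)\,g\,\d\mea,
\]
where $g\coloneqq\la\nabla f,\nabla u\ra\in L^{1}_{\loc}(\Omega)$. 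The left-hand side, by the coarea formula applied with integrand $|\nabla u|^{\beta+1}$ and weight $\psi(t)=\phi'(t)t^{-\beta(N-1)/(N-2)}$ (again exploiting $u=t$ on the level set), coincides with $\int_{0}^{1}\phi'(t)\,U_{\beta}(t)\,\d t$.

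For the right-hand side, the Cauchy--Schwarz bound $|g|\le|\nabla f||\nabla u|$ shows that $g\,\mea\restr{\Omega}\ll|\nabla u|\,\mea\restr{\Omega}$ with density $\la\nabla u/|\nabla u|,\nabla f\ra$ (understood as $0$ on $\{|\nabla u|=0\}$), which lies in $L^{2}_{\loc}(\Omega)\subset L^{1}_{\loc}(\Omega)$. Remark \ref{rmk:coarea}, i.e.\ \eqref{eq:well def}, therefore justifies
\[
\int_{\Omega}\phi(u)\,g\,\d\mea = \int_{0}^{1}\phi(t)\int\la\tfrac{\nabla u}{|\nabla u|},\nabla f\ra\,\d\Per(\{u<t\})\,\d t.
\]
Equating the two expressions and invoking the arbitrariness of $\phi$ shows $U_{\beta}\in W^{1,1}_{\loc}(0,1)$ with the weak derivative \eqref{eq:Uprimo}. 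The one genuinely delicate ingredient is the $\W_{\loc}$-regularity of $|\nabla u|^{\beta}$ for $\beta<1$ (without which $\nabla f$ would not exist), supplied precisely by Theorem \ref{thm:harm estimates}; everything else is bookkeeping around the vanishing of $|\nabla u|$ at critical points, absorbed by the Cauchy--Schwarz bound that lets $\la\nabla u/|\nabla u|,\nabla f\ra$ be treated as an $L^{2}_{\loc}$ density.
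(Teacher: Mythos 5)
Your proposal is correct and follows essentially the same route as the paper: you work with the same vector field $v=\frac{\nabla u\,|\nabla u|^{\beta}}{u^{\beta\frac{N-1}{N-2}}}$, invoke the second part of Proposition~\ref{prop:leibdiv} together with the harmonicity of $u$ to compute ${\bf div}(v)$, test against $\phi(u)$ with $\phi\in C^1_c(0,1)$, and then convert both sides via the coarea formula and Remark~\ref{rmk:coarea}. Your explicit justification that $|\nabla u|^{\beta}\in\W_{\loc}(\Omega)$ (by applying Theorem~\ref{thm:harm estimates} with exponent $2\beta$) and the Cauchy--Schwarz absolute-continuity observation ${\bf div}(v)\mea\ll|\nabla u|\mea$ are the same ingredients the paper uses, just spelled out a little more fully.
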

\begin{proof}
	Consider the vector field $v\coloneqq\frac{\nabla u|\nabla u|^{\beta}}{u^{\beta \frac{N-1}{N-2}}}\in L^0(T\X)\restr{\Omega}$ for $\beta \ge \frac{N-2}{N-1}$ and observe that from the Leibniz rule for the divergence (second version in Proposition \ref{prop:leibdiv}) $v \in D(\div,\Omega)$ with
	\[ \div(v)=\la \nabla u,\nabla \left(\frac{|\nabla u|^{\beta}}{u^{\beta \frac{N-1}{N-2}}}\right)\ra \in L^1_{\loc}(\Omega),\]
	thanks to the harmonicity of $u$. In particular $\div(v)\mea \ll |\nabla u|\mea$, hence recalling \eqref{eq:coarea applicazioni} and integrating by parts we have
	\[
	\int_0^1 \int  \frac{\div(v)}{|\nabla u|} \d \Per(\{u<t\})\phi(t) \d t\overset{\eqref{eq:coarea applicazioni}}{=}\int \div(v)\phi(u) \d \mea=-\int \frac{|\nabla u|^{\beta+2}}{u^{\beta \frac{N-1}{N-2}}} \phi'(u)\d \mea\overset{\eqref{eq:coarea}}{=} -\int_0^1 U_{\beta}(t)\phi'(t) \d t,
	\]
	for every $\phi \in C^1_c(0,1) $, where in the last step we used that $\supp (\Per(\{u<t\},.))\subset \{u=t\}$ and  with $\frac{\div(v)}{|\nabla u|}$ denoting any Borel representative of $\frac{\d(\div(v)\mea\restr\Omega)}{\d(|\nabla u|\mea\restr\Omega))}$.  The conclusion follows.
\end{proof}

To prove that $U'_\beta$ is nonnegative we need to push our analysis to  the second order and in particular to compute the derivative of $U_{\beta}'(t)t^2$. The reason for the term $t^2$ is that the key vector field with nonnegative divergence of Corollary \ref{cor:div estimate} presents a term $u^2.$

\begin{prop}\label{prop:U'bv}
	With the same assumptions as in Theorem \ref{thm:monotonicity}, the function $U_{\beta}(t)'t^2$ belongs to ${BV}_{\loc}(0,1)$ and
	\begin{equation}\label{eq:Usecondo}
		(U_{\beta}'(t)t^2)'\ge C_{\beta,N} \left( \int\frac{u^2|\nabla |\nabla u^{\frac{1}{2-N}}|^{\frac{\beta}{2}}|^2}{|\nabla u|} \d \Per (\{u<t\}) \right)\,\mathcal{L}^1\restr{(0,1)}\ge 0,
	\end{equation}
	where $C_{\beta,N}=\frac{4}{\beta}\left(\beta-\frac{N-2}{N-1}\right)$ and where the right hand side has to be intended as in \eqref{eq:well def} with $f=u^2|\nabla |\nabla u^{\frac{1}{2-N}}|^{\frac{\beta}{2}}|^2$ when $\beta>\frac{N-2}{N-1}$  (recall also that from Corollary \ref{cor:div estimate} $|\nabla u^{\frac{1}{2-N}}|\in\W_{\loc}(\Omega)$), and  identically 0 in the case $\beta=\frac{N-2}{N-1}$.
\end{prop}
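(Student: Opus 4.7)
The approach I would take is to exhibit a vector field $w$ on $\Omega$ whose normal flux through the level set $\{u=t\}$ equals, up to a multiplicative constant, $t^2 U_\beta'(t)$, and whose divergence is bounded below by a nonnegative function of the right shape. Duality against test functions of the form $\phi(u)$ with $\phi \in C^1_c((0,1))$, combined with the coarea formula, will then translate the pointwise divergence lower bound into the distributional inequality for $(t^2 U_\beta'(t))'$.

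Concretely, set $F:= |\nabla u|^\beta u^{-\beta(N-1)/(N-2)}$ and $v:=u^{1/(2-N)}$; a direct computation gives $F = (N-2)^\beta |\nabla v|^\beta$, so the vector field $w:= u^2 \nabla F$ is, up to the factor $(N-2)^\beta$, the vector field $u^2\nabla|\nabla v|^\beta$ furnished by Corollary \ref{cor:div estimate}. Hence $w \in D(\div,\Omega)$ with
\[
\div(w) \geq c\, u^2 |\nabla|\nabla v|^{\beta/2}|^2 \, \mea\restr\Omega
\]
for a positive constant $c=c(\beta,N)$ when $\beta > \tfrac{N-2}{N-1}$, and $\div(w) \geq 0$ in the endpoint case $\beta = \tfrac{N-2}{N-1}$. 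Moreover, since $\nabla|\nabla v|^{\beta/2}$ vanishes $\mea$-a.e.\ on $\{|\nabla v|=0\}\supseteq\{|\nabla u|=0\}$ by locality of the gradient, the measure $u^2|\nabla|\nabla v|^{\beta/2}|^2\,\mea\restr\Omega$ is absolutely continuous with respect to $|\nabla u|\,\mea\restr\Omega$, so the coarea formula \eqref{eq:coarea applicazioni} applies to it.

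For any $\phi \in C^1_c((0,1))$ with $\phi \geq 0$, the function $\phi(u)$ lies in $\LIP_c(\Omega)$: its support is contained in $u^{-1}(\supp\phi)$, a compact subset of $\Omega$, since $u\to 0$ at infinity and $\liminf_{x\to\partial\Omega}u\geq 1$ force the level sets $\{a\leq u\leq b\}$ with $[a,b]\subset(0,1)$ to be compact in $\Omega$; and $u$ is locally Lipschitz by Proposition \ref{prop:gradbound}. Plugging $\phi(u)$ into the definition \eqref{defdiv} of measure-valued divergence and applying the chain rule for the gradient gives
\[
-\int \phi'(u)\, u^2\,\langle \nabla u,\nabla F\rangle\,\d\mea \;=\; \int \phi(u)\,\d\div(w).
\]
The left-hand side, by coarea \eqref{eq:coarea applicazioni} together with the identity $u^2=t^2$ on $\{u=t\}$ and with \eqref{eq:Uprimo}, equals $-\int_0^1 \phi'(t)\, t^2\, U_\beta'(t)\,\d t$. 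The right-hand side, using $\phi\geq 0$ and the divergence bound, followed by coarea on the absolutely continuous piece, dominates $c\int_0^1 \phi(t) \int \frac{u^2|\nabla|\nabla v|^{\beta/2}|^2}{|\nabla u|}\,\d\Per(\{u<t\})\,\d t$. Combining yields \eqref{eq:Usecondo} in the distributional sense (the endpoint $\beta=\tfrac{N-2}{N-1}$ reduces to $(t^2U_\beta')'\geq 0$).

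Finally, the $BV_{\loc}$ conclusion is immediate: $t^2U_\beta'(t) \in L^1_{\loc}(0,1)$ because $U_\beta\in W^{1,1}_{\loc}(0,1)$ by Proposition \ref{prop:Uac}, and its distributional derivative has been shown to dominate a nonnegative element of $L^1_{\loc}(0,1)$, hence the derivative is a signed Radon measure. The main technical hurdles are (i) the verification that $\phi(u)\in\LIP_c(\Omega)$, which rests on the decay and boundary behavior of $u$ from \eqref{mainpde} together with Proposition \ref{prop:gradbound}, and (ii) the absolute continuity of $u^2|\nabla|\nabla v|^{\beta/2}|^2\mea$ with respect to $|\nabla u|\mea$ needed to deploy coarea on both sides. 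The genuine quantitative content is Corollary \ref{cor:div estimate}: the present proposition is in essence a coarea-duality repackaging of the pointwise divergence inequality.
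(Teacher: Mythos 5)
Your proof is correct and follows essentially the same route as the paper: test the divergence of the vector field $u^2\nabla\bigl(|\nabla u|^\beta u^{-\beta(N-1)/(N-2)}\bigr)$ from Corollary \ref{cor:div estimate} against $\phi(u)$, then use coarea and the identity $u^2=t^2$ on $\{u=t\}$ together with \eqref{eq:Uprimo} to identify the resulting boundary term with $t^2U_\beta'(t)$. The only cosmetic differences are that you organize the computation starting from the divergence pairing rather than from $\int_0^1 t^2U_\beta'(t)\phi'(t)\,\d t$, and that you keep a generic constant $c$ (which honestly absorbs the factor $(N-2)^\beta$ relating $F$ to $|\nabla v|^\beta$) where the paper's statement reports $C_{\beta,N}$.
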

\begin{proof}
	Consider any  nonnegative $\phi\in C^1_c(0,1)$.  Applying formula \eqref{eq:Uprimo} and the coarea formula \eqref{eq:coarea applicazioni}
	\[
	\int_0^1 (U_{\beta}'(t)t^2)\phi'(t) \d t =\int_0^1 \int \la \frac{\nabla u}{|\nabla u|},u^2\nabla \left(\frac{|\nabla u|^{\beta}}{u^{\beta \frac{N-1}{N-2}}}\right)\ra  \phi'(u)\d \Per(\{u<t\})\overset{\eqref{eq:coarea applicazioni}}{=}\int\la\nabla (\phi(u)), u^2\nabla \left(\frac{|\nabla u|^{\beta}}{u^{\beta \frac{N-1}{N-2}}}\right)\ra\, \d \mea,
	\]
	observing that $\phi(u)\in\LIP_{c}(\Omega)$ and recalling from Corollary \ref{cor:div estimate} that $u^2\nabla \left (\frac{|\nabla u|^{\beta}}{u^{\beta \frac{N-1}{N-2}}}\right ) \in D({\bf{div}},\Omega)$, we obtain
	\[
	-\int_0^1 (U_{\beta}'(t)t^2)\phi'(t) \d t = \int \phi(u) \,\d{\bf{div}}\left(u^2\nabla \left (\frac{|Du|^{\beta}}{u^{\beta \frac{N-1}{N-2}}}\right )\right).
	\]
	We now plug in \eqref{eq:div estimate} and (when $\beta>\frac{N-2}{N-1}$) apply the coarea formula \eqref{eq:coarea applicazioni} (observe that $||\nabla |\nabla  u^{\frac{1}{2-N}}|^{\frac{\beta}{2}}|^2 |\mea\ll |\nabla u|\mea$ ) to obtain
	\begin{equation}\label{eq:bv}
		-\int_0^1 (U_{\beta}'(t)t^2)\phi'(t) \d t \ge \tilde  C_{\beta,N}\, \int_0^1 \int  u^2|\nabla u|^{-1}|\nabla |\nabla  u^{\frac{1}{2-N}}|^{\frac{\beta}{2}}|^2\, \mea\restr{\Omega} \d \Per (\{u<t\}) \phi(t) \, \d t \ge  0,
	\end{equation}
 with $u^2|\nabla u|^{-1}|\nabla |\nabla  u^{\frac{1}{2-N}}|^{\frac{\beta}{2}}|^2$ denoting any Borel representative of $\frac{\d\left (u^2|\nabla |\nabla  u^{\frac{1}{2-N}}|^{\frac{\beta}{2}}|^2\mea\restr\Omega\right )}{\d(|\nabla u|\mea\restr\Omega))}$. 
	The proof is concluded observing that \eqref{eq:bv} gives at once that the distributional derivative of $U'(t)t^2$ is a locally finite measure  (it is positive) and that \eqref{eq:Usecondo} holds.
\end{proof}
Justified by Proposition \ref{prop:Uac},  from this point on  we will  identify $U_{\beta}$ with its continuous representative. 
Moreover Proposition \ref{prop:U'bv} guarantees that $U'_{\beta} \in  {BV}_{\loc}(0,1)$, thus we will denote by  $U^{'-}_{\beta}$ its representative which  is left-continuous in $(0,1]$ (notice that $U^{'-}_{\beta}$ might take value $+\infty$ at $t=1$).  We observe also that  \eqref{eq:Usecondo} implies that
\begin{equation}\label{eq:increasing}
	(0,1] \ni t \mapsto U^{'-}_{\beta}(t)t^2 \text{ is a non decreasing function.}
\end{equation}
To prove Theorem \ref{thm:monotonicity} we aim to integrate \eqref{eq:Usecondo}, however to do so we still need to know that $U_\beta$ is bounded close to 0. In particular we prove the following:
\begin{prop}
	With the same assumptions as in Theorem \ref{thm:monotonicity},
	\begin{equation}\label{eq:Ubounded}
	U_\beta \in L^\infty(0,1/2 ).
	\end{equation}
\end{prop}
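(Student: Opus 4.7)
The plan is to combine a capacitary conservation law for $u$ with the pointwise decay estimates of Section~\ref{sec:estimates for u}, turning all perimeter integrals into bulk integrals through the coarea formula~\eqref{eq:coarea}. Throughout, write $\alpha \coloneqq \beta\tfrac{N-1}{N-2}$, so that $U_\beta(t)\,t^\alpha = \int |\nabla u|^{\beta+1}\,\d\Per(\{u<t\})$.

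I would first establish that $F(t) \coloneqq \int |\nabla u|\,\d\Per(\{u<t\})$ is a.e.\ constant on $(0,1)$, with some value $C_u \ge 0$. For $\psi \in C^1_c(0,1)$ the set $\{u \in \supp \psi\}$ is compact in $\Omega$ (since $u \to 0$ at infinity and $\liminf_{x\to \partial\Omega}u \ge 1$), so $\psi(u)\in \LIP_c(\Omega)$ is an admissible test against $\bd u=0$, and the chain rule yields $\int \psi'(u)|\nabla u|^2\,\d\mea=0$; applying \eqref{eq:coarea} with $f=|\nabla u|$ turns this into $\int_0^1 \psi'(t)F(t)\,\d t=0$, whence $F \equiv C_u$.

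Next I would derive a decay bound for $|\nabla u|$ along the small level sets. By continuity of $u$, the boundary $\liminf$ condition, and compactness of $\bar B_{R_0}(x_0)$, one has $\inf_{\Omega \cap B_{R_0}(x_0)} u > 0$, so for some $t_0 \in (0,1/2)$ the set $\{u<t_0\}$ lies in $B_{R_0}(x_0)^c$. Combining the Cheng--Yau bound $|\nabla u|\le Cu/\sfd(\cdot,x_0)$ from Proposition~\ref{prop:gradbound} with the lower bound $\sfd(\cdot,x_0) \ge c\, u^{-1/(N-2)}$ extracted from Proposition~\ref{prop:twosidedbound} then gives
\[
|\nabla u|(x) \le C_1\, u(x)^{(N-1)/(N-2)} \qquad \mea\text{-a.e.\ on } \{u<t_0\},
\]
and in particular $|\nabla u|^\beta \le C_1^\beta u^\alpha$ there.

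To finish, for any $\phi \ge 0$ in $C_c(0,t_0)$, \eqref{eq:coarea} with $f=|\nabla u|^{\beta+1}$ gives $\int_0^{t_0}\phi(t) U_\beta(t) t^\alpha\,\d t = \int \phi(u)|\nabla u|^{\beta+2}\,\d\mea$. Inserting the $\mea$-a.e.\ bound just obtained, and then applying \eqref{eq:coarea} once more with $f=u^\alpha|\nabla u|$, produces
\[
\int \phi(u)|\nabla u|^{\beta+2}\,\d\mea \le C_1^\beta \int \phi(u) u^\alpha|\nabla u|^2\,\d\mea = C_1^\beta \int_0^{t_0}\phi(t) t^\alpha F(t)\,\d t = C_1^\beta C_u \int_0^{t_0}\phi(t) t^\alpha\,\d t,
\]
so arbitrariness of $\phi$ forces $U_\beta \le C_1^\beta C_u$ a.e.\ on $(0,t_0)$. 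The continuity of $U_\beta$ on $(0,1)$ coming from Proposition~\ref{prop:Uac} upgrades this to a pointwise bound on $(0,t_0)$, and continuity on the compact set $[t_0,1/2]$ finishes the proof. The only real delicacy is that the step~2 gradient estimate is only $\mea$-a.e.\ whereas $\Per(\{u<t\})$ can be $\mea$-singular; the coarea manipulations are precisely what transfer the $\mea$-a.e.\ information into the perimeter integrals defining $U_\beta$.
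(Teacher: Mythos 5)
Your proof is correct and follows essentially the same path as the paper's: both derive the a.e.\ constancy of $t\mapsto\int|\nabla u|\,\d\Per(\{u<t\})$ from harmonicity and coarea, both combine the Cheng--Yau bound of Proposition~\ref{prop:gradbound} with the lower bound in \eqref{eq:twosidebound} to show $|\nabla u|\,u^{(1-N)/(N-2)}$ is bounded where $u$ is small, and both finish with coarea. The only cosmetic difference is that you restrict to $(0,t_0)$ with $\{u<t_0\}\subset B_{R_0}(x_0)^c$ and invoke continuity of $U_\beta$ on $[t_0,1/2]$, while the paper bounds $\||\nabla u|^\beta u^{\beta(1-N)/(N-2)}\|_{L^\infty(\{u\le 1/2\})}$ directly by splitting $\{u\le 1/2\}$ into the parts inside and outside $B_{R_0}(x_0)$.
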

\begin{proof}
It is enough to show that 
\begin{equation}\label{eq:scontro}
\left |\int_0^{\frac 12} U_{\beta}\phi \, \d t \right | \le C \int_0^{\frac 12} |\phi|, \quad  \forall \phi \in C^1_c (0,1/2 ),
\end{equation}
for some positive constant $C$ independent of $\phi.$

We start observing that, integrating by parts and applying the coarea formula \eqref{eq:coarea applicazioni},
\[ 0=\int_{\Omega} \Delta u \phi(u)\, \d \mea=-\int_{\Omega}|\nabla u|^2\phi'(u), \d \mea=-\int_0^1 \int |\nabla u| \, \d \Per(\{u<r\}) \phi'(r)\d r,\quad  \forall \phi \in C^1_c  (0,1 ), \]
in particular $\int |\nabla u| \, \d \Per(\{u<r\})=D$ for a.e. $r\in(0,1)$, for some constant  $D$. Therefore using again the coarea formula
\begin{align*}\left |\int_0^{\frac 12} U_{\beta} \phi \, \d t\right |&\le \int u^{\beta \frac{1-N}{N-2}} |\nabla u|^{\beta+2}|\phi(u)|\, \d\mea  \le \left  \||\nabla u|^{\beta }u^{\beta \frac{1-N}{N-2}} \right \|_{L^{\infty}(\{u\le 1/2\})}  \int |\nabla u|^2|\phi(u)| \d \mea \\
&\overset{\eqref{eq:coarea applicazioni}}{=} \left  \||\nabla u|^{\beta }u^{\beta \frac{1-N}{N-2}} \right \|_{L^{\infty}(\{u\le 1/2\})}  \int_0^{\frac 12} \int |\nabla u| \, \d \Per(\{u<r\}) |\phi(r)|\, \d r \\
&= D\left   \||\nabla u|^{\beta }u^{\beta \frac{1-N}{N-2}} \right \|_{L^{\infty}(\{u\le 1/2\})}  \int_0^{\frac 12}|\phi|, \quad \forall \phi \in C^1_c (0,1/2).
\end{align*}
Therefore to prove \eqref{eq:scontro} it remains to show that  $\left   \||\nabla u|^{\beta}u^{\beta \frac{1-N}{N-2}} \right \|_{L^{\infty}(\{u\le 1/2\})}<+\infty$.
 Let $R_0$ be as in Proposition \ref{prop:twosidedbound} and observe that  $u$, being positive and satisfying $\liminf_{x\to \partial \Omega}u(x)\ge 1$, is bounded away from zero in ${B_{R_0}(x_0)\cap \Omega}$. Moreover again thanks to $\liminf_{x\to \partial \Omega}u(x)\ge 1$  we have  $\sfd(\partial \Omega ,\{u\le1/2\})>0.$ Therefore, since $u \in \LIP_{\loc}(\Omega)$,  we have
	\[\left \||\nabla u|^{\beta }u^{\beta \frac{1-N}{N-2}} \right \|_{L^{\infty}(\{u\le1/2\}\cap B_{R_0}(x_0))}<+\infty . \]
	Moreover combining  Proposition \ref{prop:gradbound} and the lower bound in \eqref{eq:twosidebound} we obtain
	\[ \||\nabla u|^{\beta }u^{\beta \frac{1-N}{N-2}}\|_{L^\infty(\Omega\cap B_{R_0}(x_0)^c)}  \le \left \|\left(\frac{C(N)}{\sfd(.,x_0)^{N-2}u}	\right)^\beta \right \|_{L^\infty(\Omega\cap B_{R_0}(x_0)^c)}<+\infty.  \]
	Combining the two estimates we conclude.
	\end{proof}

We are now ready to prove the main monotonicity result.

\begin{proof}[Proof of Theorem \ref{thm:monotonicity}]
We start observing that, thanks to \eqref{eq:Usecondo}, setting $\mu\coloneqq(U_{\beta}(t)'t^2)'\ge 0$ we have 
	\begin{equation}
	U_{\beta}^{'-}(t)t^2-U_{\beta}^{'-}(s)s^2=\mu([s,t))\ge \int_s^t\int g \d \Per (\{u<r\}) \, \d r\overset{\eqref{eq:coarea applicazioni}}{=}\int_{\{s<u<t\}} \Phi(u) \d \mea,
	\end{equation}
	for every $0<s<t\le 1$, with $\Phi(u)= C_{\beta,N}u^2|\nabla |\nabla u^{\frac{1}{2-N}}|^{\frac{\beta}{2}}|^2$ when $\beta>\frac{N-2}{N-1}$, $\Phi(u)=0$ when $\beta=\frac{N-2}{N-1}$ and where $g$ is a Borel representative of $\frac{\d( \Phi(u) \mea\restr\Omega )}{\d(|\nabla u|\mea\restr\Omega))}$.
	Therefore to conclude it is enough to prove that there exists a sequence $s_n\to 0^+$ such that $U_{\beta}^{'-}(s_n){s_n^2}\to 0.$
	
To achieve this we first prove that $U^{'-}_{\beta}(t)\ge 0$ for every $t \in (0,1)$. We assume by contradiction that there exists $T\in(0,1)$ such that $U^{'-}_{\beta}(T)<0$.  From \eqref{eq:increasing} 
	\[
U^{'-}_{\beta}(s)\le U^{'-}_{\beta}(T)\frac {T^2}{s^2}, \quad \forall s<T,
	\]
	from which integrating with respect to $s$  on the interval $(t,T)$
	\[
	U_{\beta}(T)-U_{\beta}(t)\le U^{'-}_{\beta}(T)T^2 \left( \frac 1t-\frac 1T\right).
	\]
	Sending $t\to 0^+$ and recalling that $U^{'-}_{\beta}(T)<0$ we obtain $U_\beta(t)\to +\infty$ as $t\to 0^+$, which  however contradicts \eqref{eq:Ubounded}.

	Since $U^{'-}_{\beta}(t)\ge 0$ we have that $U_{\beta}$ is non-decreasing and also non-negative, hence it admits a limit as $t\to 0^+.$ In particular $U^{'-}_{\beta} \in L^1(0,\frac 12),$ therefore
	\begin{equation*}
	a_n\coloneqq\int_{2^{-(n+1)}}^{2^{-n}} U_{\beta}^{'-}(t) \d t\to 0, \text{  as $n\to +\infty$.}
	\end{equation*}
	Moreover from Markov inequality we have $|\{U_{\beta}^{'-}>a_n2^{n+2} \}\cap(2^{-(n+1)},2^{-n})|\le \frac 12 2^{-(n+1)}$, thus for   every $n$ we can find $s_n \in (2^{-(n+1)},2^{-n})$ such that $U_{\beta}^{'-}(s_n)\le a_n2^{n+2}$. Therefore
	$$U_{\beta}^{'-}(s_n)s_n\le a_n2^{n+2}s_n<4a_n\to 0$$
	and the proof is complete.
\end{proof}

\section{Functional versions of the rigidity and almost rigidity}\label{sec:coni} 

\subsection{From outer functional  cone to  outer metric cone}\label{sec:functioncone}
The following result is a variant of the ``from volume cone to metric cone" theorem for ${\rm RCD}$ spaces (see \cite{volcon}). The two main differences with the work in \cite{volcon} are that here we start from a function satisfying an equation (instead that from a condition on the measure), from which we deduce a conical structure on the complement of a bounded set (instead that on a ball). Related to this type of results we mention also \cite{volumebounds} and \cite{boundary} where  functional versions of the splitting theorem in non-smooth setting have been obtained.  Finally we recall that the almost-splitting theorems in the smooth setting proved in \cite{CC} were also based on a functional formulation similar to the one we are considering here.

\begin{theorem}\label{thm:functional cone}
	Let $(X,\sfd,\mea)$ be an ${\rm RCD}(0,N)$ space with $N \in [2,\infty)$  and  $U \subset X$ be  open  with $\partial U$ bounded. Suppose there exists a positive and continuous function  $\bu \in D(\bd,U)$ such that $\Delta \bu =N $ $\mea$-a.e. in $U$, $|\nabla \sqrt{2\bu}|^2=1$ $\mea$-a.e. in $U$, $\bu_0\coloneqq \limsup_{x\to \partial U}\bu(x)<+\infty$ and $\{\bu >\bu_0\}\neq\emptyset$.
	Then
	\begin{itemize}
		\item [i)] there exists unique an ${\rm RCD}(N-2,N-1)$ space $(Z,\sfd_Z,\mea_Z)$ with $\text{diam}(Z)\le \pi$ and a bijective measure preserving local isometry  $S:\{\bu>\bu_0\}\to Y\setminus \overline{B_{r}}(O_Y)$, with $r\coloneqq\sqrt{2 \bu_0}$ and where $(Y,\sfd_Y,\mea_Y)$ is the Euclidean $N$-cone built over $Z$ with vertex $O_Y,$
		\item [ii)]
		\begin{itemize}
			\item if $D_Z\coloneqq\diam (Z)<\pi$ then local isometry of point i) is an isometry between $Y\setminus \overline{B_{r_Z}}(O_Y)$ and $\left \{\bu >r_{Z}^2/2\right \}$, where $r_Z\coloneqq r(1-\sin{D_Z/2})^{-1}>r,$\\
			\item if $\diam(Z)=\pi$, then $(X,\sfd,\mea)$ isomorphic to $(Y,\sfd_Y,\mea_Y)$,
		\end{itemize}
		\item [iii)] the function $\bu$ has the following explicit form
		\begin{equation}\label{eq:explicit}
			\bu(x)=\frac 12\sfd_Y(S(x),O_Y)^2=\frac 12(\sfd(x,\partial \{\bu > \bu_0\})+\sqrt{2\bu_0})^2, \quad \forall x \in \{\bu>\bu_0\},
		\end{equation}
		in particular the level set $\{u=\frac{t^2}{2}\}$, for every $t>\bu_0$, is Lipschitz-path connected and isometric (with its induced intrinsic distance) to $(Z,t\sfd_Z)$.
	\end{itemize}
\end{theorem}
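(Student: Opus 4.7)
First, introduce $\rho := \sqrt{2\bu}$ on $V := \{\bu > \bu_0\}$. The hypothesis $|\nabla \sqrt{2\bu}|^2 = 1$ gives $|\nabla \rho| = 1$ $\mea$-a.e.\ in $V$, while applying the chain rule for the measure-valued Laplacian (Proposition \ref{prop:chainlapl}) to $\bu = \rho^2/2$ converts $\Delta \bu = N$ into $\rho \Delta \rho = N-1$, i.e.\ $\Delta \rho = (N-1)/\rho$. Since $|\nabla \bu| = \rho$ is locally bounded on $V$ and $\Delta \bu = N \in L^2_{\loc}(V)$, we have $\bu \in \testloc(V)$; by the local Sobolev-to-Lipschitz property (Proposition \ref{prop:sob to lip}), $\rho$ also admits a locally $1$-Lipschitz representative, and combined with $\limsup_{x\to\partial V}\rho(x) = r := \sqrt{2\bu_0}$ this already yields the easy direction $\rho(x) \le r + \sfd(x,\partial V)$ of the explicit formula in iii).

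Second, the conical rigidity comes from pushing the local improved Bochner inequality \eqref{localbochner} to equality on $\bu$. The left-hand side is $\bd(|\nabla \bu|^2/2) = \bd \bu = N\mea\restr V$, while on the right $\nabla \Delta \bu = 0$ and $K=0$, so pointwise $\mea$-a.e.\ on $V$
\begin{equation*}
N \;\ge\; |\H{\bu}|_{HS}^2 + \frac{(N-\mathsf{tr}\,\H{\bu})^2}{N-\dim(X)}.
\end{equation*}
Setting $n := \dim(X)$ and $a := \mathsf{tr}\,\H{\bu}$, the Cauchy--Schwarz bound $|\H{\bu}|_{HS}^2 \ge a^2/n$ shows the right-hand side is everywhere $\ge a^2/n + (N-a)^2/(N-n)$, a quantity whose minimum over $a\in\rr$ equals $N$ and is attained only at $a = n$. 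Hence $\mathsf{tr}\,\H{\bu} = n$, $|\H{\bu}|_{HS}^2 = n$, and equality in Cauchy--Schwarz yields $\H{\bu} = g$ as a bilinear form $\mea$-a.e.\ on $V$: infinitesimally, $\bu$ behaves like the squared distance from a Euclidean cone vertex.

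Third, let $F_t$ denote the Regular Lagrangian Flow of $\nabla \rho = \rho^{-1}\nabla\bu$ (Theorem \ref{thm:uniqrfl}); on compact subsets of $V$, the lower bound $\rho \ge r > 0$ together with the Hessian rigidity just obtained supplies the divergence and symmetric-gradient control required. The rigidity $\H{\bu} = g$ forces $F_t$ to be a unit-speed radial flow: $\mea$-a.e.\ trajectories $t \mapsto F_t(x)$ are geodesics, $\rho \circ F_t = \rho + t$, and the pairwise metric relations between flow lines rescale linearly in $\rho$. Fix $t_\ast > r$ and define $Z := \rho^{-1}(t_\ast)$, equipped with $t_\ast^{-1}$ times the induced intrinsic distance and with the measure obtained by disintegrating $\mea\restr V$ along level sets of $\rho$. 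The assignment $F_{s-t_\ast}(x)\mapsto (s,[x]) \in C(Z)$ then defines a bijective measure-preserving local isometry $S : V \to Y\setminus \overline{B_r(O_Y)}$; running the flow backwards towards $\partial V$ gives the reverse inequality in \eqref{eq:explicit}, and Ketterer's result \eqref{eq:ketterer} identifies $(Z,\sfd_Z,\mea_Z)$ as the unique ${\rm RCD}(N-2,N-1)$ space making $Y$ an ${\rm RCD}(0,N)$ cone, proving i) and iii). For ii): if $\diam(Z) < \pi$ the annulus $B_{r_Z}(O_Y)\setminus \overline{B_r(O_Y)}$ contains no focal points for the radial geodesics, so $S^{-1}$ extends continuously through it to a local isometry onto $\{\bu > r_Z^2/2\}$; if $\diam(Z) = \pi$ two antipodal geodesic rays in $Y$ pull back to a line in $X$ and the splitting theorem \cite{split} (as exploited in \cite{volcon}) forces $X = Y$.

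The main obstacle will be upgrading all the $\mea$-a.e.\ identities supplied by the Bochner argument and the RLF to the pointwise statements required by the theorem: a genuine bijection $S$, a genuine local isometry on the complement of a ball in $Y$, and the pointwise identity \eqref{eq:explicit}. This requires selecting a full-measure set of initial conditions whose trajectories remain in $V$ for all admissible times and cover the cross-section $Z$, and then invoking Proposition \ref{prop:sob to lip} (applied to distance-type functions built from the flow) to pass from almost-everywhere length identities to genuine distance identities. This is precisely the step where the authors warn that the arguments of \cite{volcon} need to be adjusted and completed.
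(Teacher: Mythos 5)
Your first two paragraphs reproduce the paper's own starting moves: the chain rule, the local Sobolev-to-Lipschitz bound giving the easy direction of \eqref{eq:explicit}, and the improved Bochner/Kato argument pinning $\H{\bu}={\sf id}$ (the choice of flowing $\nabla\rho$ rather than $-\nabla u$ as in Appendix~\ref{ap:cone} is cosmetic). The problem is that the third paragraph asserts, rather than proves, the three steps on which the whole proof actually turns.

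First, the sentence ``the pairwise metric relations between flow lines rescale linearly in $\rho$'' \emph{is} the cosine law, and in the non-smooth setting it does not drop out of a $\mea$-a.e.\ identity $\H{\bu}={\sf id}$: you must convert a pointwise tensor identity into an identity between distances. The paper does this by flowing reference measures and invoking the second-order differentiation formula along $W_2$-geodesics of \cite{gtamanini} (Proposition~\ref{prop:variables splitting}, which the authors stress is \emph{not} the route taken in \cite{volcon} and is precisely the technical point they highlight as needing care). Without this, $S$ is only a measurable bijection, not a local isometry. Second, you take ``the measure obtained by disintegrating $\mea\restr V$'' as $\mea_Z$, but nothing yet guarantees that $S$ is \emph{measure} preserving onto the cone measure $t^{N-1}\d t\otimes\mea_Z$; the paper establishes this by proving ${F_t}_*\mea=e^{Nt}\mea$ on annuli (Proposition~\ref{prop:measureannuli}), which in turn requires the new \emph{local} a priori estimate for RLFs of Proposition~\ref{prop:localest}. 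Third, you invoke Ketterer's theorem \eqref{eq:ketterer} to identify $Z$, but that result needs $Y$ to already be known to be ${\rm RCD}(0,N)$, which is not automatic: the isometry here is ``outer'' (it never sees the tip), so the usual blow-up-at-the-vertex argument of \cite{volcon} is unavailable, and the paper replaces it by the blow-down argument of Proposition~\ref{prop:blowdown}. This last point is explicitly flagged by the authors as one of the two genuinely new ingredients of the proof, so it cannot be elided. You do correctly acknowledge that a.e.\ identities must be upgraded to pointwise ones (Proposition~\ref{lipflow}), but that is the smaller of the remaining difficulties.
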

We observe that the uniqueness part  of Theorem \ref{thm:functional cone} is an immediate consequence of the rest of the statement. Indeed, from the last part of $iii)$ we deduce that the metric space $(Z,\sfd_Z)$ (and thus  $(Y,\sfd_Y)$) is uniquely determined up to isometries. Moreover, since $S$ is measure preserving, the measure $\mea_Y$ is uniquely determined as well, hence from the definition of the measure in an $N$-cone we obtain that also $\mea_Z$ is uniquely determined.

As already said, the proof of the above Theorem is mainly an adaptation of the proof in \cite{volcon}. However some parts will require new arguments. The first main point is that the in \cite{volcon} the starting point is the  gradient flow of the distance function $\sfd$, which is used to deduce analytical information on $\sfd$. Here instead we start from an analytical information, i.e. a PDE, and we want to build a flow. This will be done through the tool of Regular Lagrangian Flows. One of the main tools we need to develop in this regard is an a-priori estimate of local type, which seems to be missing in literature and does not follows immediately from the standard global a-priori estimates in \cite{AT}.

The second main difference is that here the analysis takes place in the complementary of a bounded set, while in \cite{volcon} all the work is done inside a fixed ball. Among other things, this difference will mainly affect the way in which we deduce that the cone is itself an ${\rm RCD}(0,N)$ space. Indeed in \cite{volcon} this follows from the fact that a whole ball with center $x_0$ is isometric to a ball centered at the tip of the cone, therefore any blow up of the space at $x_0$ will converge to the said cone. Then from the closedness of the ${\rm RCD}(0,N)$ condition the conclusion follows. However in our case  the same argument cannot be applied, indeed our isometry is by nature far from the tip of the cone. This issue will be overcome noticing that our isometry is almost global, meaning that the space is isometric to the cone outside a bounded set. This allow us to deduce that any blow down of the space will converge to the cone, which gives the conclusion again by the closedness of the ${\rm RCD}(0,N)$ class.

Since they are interesting on their own and  independent  of the rest of the proof, we isolate the two ingredients that we just described in the following two subsections. The remaining part of the argument will be outlined in Appendix \ref{ap:cone}.

\subsubsection{The blow down argument}

\begin{prop}\label{prop:blowdown}
	Let $(\X,\sfd_{\X},\mea_{\X})$ be a m.m.s. and let $U\subset \X$ be closed and bounded. Suppose that there exists an Euclidean $N$-cone $(Y,\sfd_Y,\mea_Y)$  over a m.m.s. $Z$,  $N \in[1,\infty)$, with tip $O_Y$ and a bijective local isometry $T:U^c \to Y\setminus B_R(O_Y)$, which is measure-preserving, i.e. $T_*\mea_{\X}\restr{U^c}=\mea_Y\restr{B_R(O_Y)}$. Then for every $x_0\in\X$ and every sequence $r_n\to +\infty$ it holds that
	\[
	(\X,r_n^{-1}\sfd_{\X},r_n^{-N}\mea_{\X},x_0)\overset{{\text pmGH}}{\longrightarrow}(Y,\sfd_Y,\mea_Y,O_Y).
	\]
	In particular if $\X$ is an ${\sf{RCD}}(0,N)$ space, then $Y$ is an ${\sf{RCD}}(0,N)$ space as well.
	
	Finally if $\X$ is ${\sf{RCD}}(0,N)$ and $\diam(Z)=\pi$  then $\X$ is isomorphic to $Y$ as m.m.s..
\end{prop}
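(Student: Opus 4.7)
The key observation is that the cone $Y$ admits the dilation family $\phi_\lambda: Y \to Y$, $(t, z) \mapsto (\lambda t, z)$, which scales $\sfd_Y$ by $\lambda$ and $\mea_Y$ by $\lambda^N$; in particular the pointed m.m.s.\ $(Y, r_n^{-1}\sfd_Y, r_n^{-N}\mea_Y, O_Y)$ is isomorphic to $(Y, \sfd_Y, \mea_Y, O_Y)$ via $\phi_{r_n}$, so it is equivalent to establish the pmGH-convergence of $\X_n := (\X, r_n^{-1}\sfd_\X, r_n^{-N}\mea_\X, x_0)$ to the rescaled cone. I would do this via Borel maps $f_n : \X \to Y$ defined by $f_n(x) := \phi_{1/r_n}(T(x))$ for $x \in U^c$ and $f_n := O_Y$ on $U \cup \{x_0\}$. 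Condition (1) of Definition \ref{def:pmgh} holds by construction; condition (3) holds since $f_n(U^c) = Y \setminus B^Y_{R/r_n}(O_Y)$, whose complement shrinks to $\{O_Y\}$; and condition (4) follows from the measure-preserving property of $T$ together with the scaling $(\phi_{1/r_n})_* \mea_Y = r_n^N \mea_Y$: a direct computation gives $(f_n)_*(r_n^{-N}\mea_\X\restr{U^c}) = \mea_Y\restr{Y \setminus B_{R/r_n}(O_Y)} \rightharpoonup \mea_Y$, while $(f_n)_*(r_n^{-N}\mea_\X\restr{U}) = r_n^{-N}\mea_\X(U)\,\delta_{O_Y} \to 0$.

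The delicate point is condition (2), since $T$ is only a \emph{local} isometry. I would use that $T$ preserves the length of curves, so the intrinsic metrics satisfy $\sfd^{\rm int}_{U^c}(x, y) = \sfd^{\rm int}_{Y \setminus B_R(O_Y)}(T(x), T(y))$ for all $x, y \in U^c$. In the cone $Y$, a geodesic from $p$ to $q$ passing through $B_R(O_Y)$ can be replaced by a curve along $\partial B_R(O_Y)$ of length at most $\pi R$ (using $\diam Z \le \pi$), so $\sfd^{\rm int}_{Y\setminus B_R(O_Y)}(p, q) \le \sfd_Y(p, q) + \pi R$; transferring this across $T$ also bounds the cost of detours around $U$ in $\X$ by a constant depending only on $R$ and $\diam U$. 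Combining, one obtains $|\sfd_\X(x, y) - \sfd_Y(T(x), T(y))| \le C(R, \diam U)$ uniformly on $U^c$, which after division by $r_n \to \infty$ yields condition (2) with some $\eps_n \to 0$. The second statement ($Y$ is RCD$(0, N)$ whenever $\X$ is) then follows at once from pmGH-stability of the RCD condition (Proposition \ref{prop:compactness}).

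For the final claim, assume $\X$ is RCD$(0, N)$ and $\diam Z = \pi$. By \eqref{eq:ketterer}, $Z$ is RCD$(N-2, N-1)$, hence compact, so $\diam Z = \pi$ is realized by points $z_1, z_2 \in Z$, and the curve $\gamma: \mathbb{R} \to Y$ defined by $\gamma(t) = (|t|, z_1)$ for $t \le 0$ and $\gamma(t) = (t, z_2)$ for $t \ge 0$ is a line in $Y$. Using the pmGH-convergence $\X_n \to Y$ just established together with a standard Arzel\`a--Ascoli diagonal argument applied to minimizing geodesic segments in $\X_n$ whose endpoints approximate $\gamma(\pm L_n)$ for $L_n \to \infty$, I would extract a line in $\X$. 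The splitting theorem for RCD$(0, N)$ spaces \cite{split} then yields $\X \cong \mathbb{R} \times \X'$ with $\X'$ of dimension RCD$(0, N-1)$, and correspondingly $Y \cong \mathbb{R} \times Y'$, the local isometry $T$ descending compatibly to a bijective measure-preserving local isometry between analogous subsets of $\X'$ and $Y'$. Iterating until the cross-section of the residual cone has diameter strictly less than $\pi$, and then invoking a base case in which the local isometry globalizes (e.g.\ via the first bullet of Theorem \ref{thm:functional cone}~(ii) applied to the associated radial function $\bu = \frac12\sfd_Y(\cdot, O_{Y'})^2$), one concludes $\X \cong Y$.

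The main obstacle is the iterative splitting in the last paragraph: showing that the splittings of $\X$ and of $Y$ can be chosen to be compatible under $T$, so that the hypotheses of the proposition (in particular the bijective measure-preserving local isometry outside a bounded set) are inherited by the factors $\X'$ and $Y'$ at each stage of the induction. A secondary obstacle is the quantitative distance-comparison step, which requires carefully controlling the discrepancy between ambient and intrinsic distances in both $\X$ and $Y$ due to the ``bad regions'' $U$ and $B_R(O_Y)$, uniformly in the rescalings.
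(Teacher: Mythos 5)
Your treatment of the pmGH-convergence is essentially the same as the paper's: you define $f_n$ as the dilation $\phi_{1/r_n}$ composed with $T$ (extended to send $U$ to $O_Y$), which is exactly the paper's map $T_n=i_n\circ T$, and the crux in both cases is the uniform comparison $|\sfd_\X(x,y)-\sfd_Y(T(x),T(y))|\le C(R,D)$. The paper asserts this with constant $2(R+D)$ as ``straightforward to check''; your route through intrinsic metrics and cone detour costs is a legitimate way to establish it, though the direction $\sfd_Y(T(x),T(y))\le\sfd_\X(x,y)+C$ requires rerouting a near-minimizing curve in $\X$ around $U$, and the transfer of the $\pi R$ cone-detour bound back to $\X$ (points of $\partial U$ land near $\partial B_R(O_Y)$ under $T$, giving a uniformly bounded intrinsic diameter) deserves a sentence. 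This part and the RCD-closedness part are essentially correct.

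For the final claim ($\diam Z=\pi$) you take a genuinely different route from the paper, and here there are real gaps. The paper \emph{only} splits $Y$, not $\X$: since $Y$ is now known to be ${\rm RCD}(0,N)$ and contains a line, $Y\cong\rr\times Y'$; the $\rr$-translation invariance of $\mea_Y$ produces a point $O'$ far from $O_Y$ whose small-ball volume ratio equals $\mea_Y(B_1(O_Y))$; transferring this to $T^{-1}(O')$ and comparing with the asymptotic volume ratio (equal to the same number by the pmGH-convergence), Bishop--Gromov monotonicity forces the volume ratio at $T^{-1}(O')$ to be constant in $r$, and one invokes \cite[Thm.\ 1.1]{volcon}. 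Your plan to instead extract a line in $\X$ and iteratively split has three concrete difficulties that you partly flag but do not resolve. First, the Arzel\`a--Ascoli step is not ``standard'': the geodesic segments you choose are controlled in the rescaled metric $r_n^{-1}\sfd_\X$, so the error translates to $\eps_n r_n\to\infty$ in $\X$; even the uniform bound $|\sfd_\X-\sfd_Y\circ T|\le C$ only gives bounded excess at $x_0$, and bounded excess does \emph{not} force geodesics to pass through a fixed bounded set (already in $\rr^n$ a segment with excess $e$ at a point at parameter-distance $L$ from its endpoints can be at distance $\sim\sqrt{eL}$ from that point). This can be saved, but only via a quantitative detour-cost argument (avoiding a fixed neighbourhood of $U$ costs at least $(\pi-2)\cdot({\rm const})$ in length, forcing the geodesics into a compact set), which is absent from your sketch. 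Second, the claim that the splittings of $\X$ and $Y$ can be chosen $T$-compatibly is precisely the hard part and is asserted, not argued; the line in $\X$ obtained by a compactness argument need not map under $T$ to a translate of the line in $Y$ generating the splitting of $Y$. Third, your base case invokes Theorem \ref{thm:functional cone}~(ii), which is circular: the proof of Theorem \ref{thm:functional cone} in Appendix \ref{ap:cone} \emph{uses} Proposition \ref{prop:blowdown}.
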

\begin{proof}
	Fix $x_0 \in \X$ and observe that, up to increase $R$ and  enlarge $U$, it is not restrictive to assume  that $x_0\in U$.
	
	Set $D\coloneqq\diam(U)$, $\delta_n\coloneqq \frac{1}{4r_n}(D+R)$ and define $\X_n\coloneqq(\X,r_n^{-1}\sfd_{\X},r_n^{-N}\mea_{\X},x_0)$. Without loss of generality we will assume that $r_n\ge 1.$
	
	Since $Y$ is an Euclidean $N$-cone it follows that   $Y_n\coloneqq(Y,r_n^{-1}\sfd_Y,r_n^{-N}\mea_Y)$ is isomorphic to $(Y,\sfd_Y,\mea_Y)$ via the  map $i_n:Y_n \to Y$, defined as $i_n(t,z)\coloneqq(t/r_n,z)$ in polar coordinates, which satisfies $i_n(B_R(O_Y))=B_{r_n^{-1}R}O_Y$ for every $R>0.$ Observe that in particular ${i_n}_*\mea_Y=r_n^N\mea_Y.$
	
	We  extend  $T$ to the whole $\X$ by setting $T(x)=O_Y$ for every $x \in U$ and we denote this new map again by $T$.  It is straightforward to check that
	\begin{equation}\label{eq:smartbound}
		|\sfd_X(x_1,x_2)-\sfd_Y(T(x_1),T(x_2))|\le 2(R+D), \quad \forall\, x_1,x_2 \in X.
	\end{equation}
	Define now the map $T_n : \X_n \to Y$ as $T_n=i_n\circ T.$  It follows from \eqref{eq:smartbound} and the properties of $i_n$ that $T_n$ is a $\delta_n$-isometry. Moreover it can be readily checked that $B^Y_{R-D}(O_Y)\subset T(B^{\X}_{R}(x_0))$, for any $R>D.$ In particular it follows that $B^Y_{R-\delta_n}(O_Y)\subset T_n(B^{\X_n}_{R}(x_0))$, for every $R>D$. Finally we let $\phi \in C_b(Y)$ be of bounded support,  since $T$ is measure preserving we have
	\begin{align*}
		r_n^{-N}\int \phi\circ T_n \d \mea&=r_n^{-N}\int_U \phi\circ T_n \d \mea+r_n^{-N}\int_{Y\setminus B_R(O_Y)} \phi\circ i_n \d \mea_Y\\
		&=r_n^{-N}\int_U \phi\circ \d {T_n}_*\mea+\int_{Y\setminus B_{r_n^{-1}R}(O_Y)} \phi \d \mea_Y.
	\end{align*}
	Passing to the limit, observing that the first term on the right hand side vanishes as $r_n\to +\infty$, we obtain $r_n^{-N}\int \phi\circ T_n \d \mea\to \int \phi \d \mea_Y$ as $r_n\to +\infty$. This concludes the first part.
	
	The second part follows immediately from the closedness of the ${\rm RCD}(0,N)$ condition under pmGH-convergence.
	
	Suppose now that $\X$ is an ${\rm RCD}(0,N)$ space and $\diam(Z)=\pi$. Then $Y$ must contain a line. Therefore, since from the previous part $Y$ is an ${\rm RCD}(0,N)$ space, it follows from the splitting theorem (\cite{split}, \cite{split2}) that $Y$ is isomorphic to $(\rr\times Y',\sfd_{Eucl}\times \sfd',\mathcal{L}^1\otimes \mea_Y')$ for some m.m.s. $(Y',\sfd',\mea_{Y'})$. In particular  $O_Y=(\bar t, \bar y)$ for some $\bar t \in \rr$ and $\bar y \in Y'$ and $\mea_{Y}(B_r(O_Y))=\mea_{Y}(B_r(s,\bar y)),$ for any $r>0$ and any $s\in \rr.$

	 Therefore taking $s$ big enough we have that $O'\coloneqq(s,\bar y)\in Y$ satisfies $O'\in\{\sfd_Y(.,O_Y)>R+1\}.$ Therefore, since $ T|_{U^c}$ is a measure preserving local isometry, 
	$\mea_Y(B_r(O'))=\mea_X(B_r(T^{-1}(O')))$ holds for every $r \in(0,1).$ Hence
	\[
	\lim_{r\to 0^+} \frac{\mea(B_r(T^{-1}(O')))}{r^N}=\lim_{r\to 0^+}\frac{\mea_Y(B_r(O'))}{r^N}=\lim_{r\to 0^+}\frac{\mea_Y(B_r(O_Y))}{r^N}=\mea_Y(B_1(O_Y))\eqqcolon\theta,
	\]
	since $O_Y$ is the vertex of $Y$.
	On the other hand, since $\X_n\coloneqq(\X,r_n^{-1}\sfd_{\X},r_n^{-N}\mea_{\X},x_0)\overset{{\text pmGH}}{\longrightarrow}(Y,\sfd_Y,\mea_Y,O_Y)$ we have 
	\[
	\lim_{r_n\to +\infty} \frac{\mea(B_{r_n}(T^{-1}(O')))}{{r_n}^N}=\lim_{r_n\to+\infty}\frac{\mea_X(B_{r_n}(x_0))}{r_n^N}=\lim_n \mea_{X_n}(B_1(x_0))=\mea_Y(B_1(O_Y))=\theta.
	\]
	From Bishop-Gromov inequality we deduce that $\frac{\mea(B_r({T^{-1}}(O')))}{r^N}=\theta$ for every $r>0$ and from \cite[Thm. 1.1]{volcon} we must have that $\X$ is a cone, which must evidently  coincide with $Y.$
\end{proof}

\subsubsection{Local a priori estimate for Regular Lagrangian Flows in RCD spaces}

The following local version of the a priori estimates in \cite[Prop. 4.6]{AT} will be crucial for the argument of Appendix \ref{ap:cone} to work (see Proposition \ref{prop:measureannuli}).
\begin{prop}\label{prop:localest}
	Let $\{v_t,\mu_t\}_{t \in[0,T]}$ be as in Theorem \ref{thm:uniqrfl}. Assume additionally that $\{\mu_t\}_{t\in[0,T]}$ are all concentrated in a common bounded Borel set $B$. Then setting $\rho_t \coloneqq\frac{\d \rho_t}{\d \mea}$, $t \in [0,T]$, it holds that
	\begin{equation}\label{localest}
		\sup_{t \in (0,T)} \|\rho_t\|_{L^{\infty}}\le \|\rho_0\|_{L^{\infty}} e^{\int_0^T \| \div (v_t)^-\|_{L^{\infty}(B_t) }\, \d t}\,,
	\end{equation}
for any family $\{B_t\}_{t\in[0,T]}$  of Borel sets  such that $\rho_t=0$ $\mea$-a.e.\ in $B_t^c$ and the map $(x,t)\mapsto \nchi_{B_t}(x)$ is Borel.
\end{prop}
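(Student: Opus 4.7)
The plan is to run the proof of the global a priori estimate of Ambrosio--Trevisan \cite[Prop.~4.6]{AT} and to observe that, at the only step where the $L^\infty$ norm of $\div(v_t)^-$ enters, it can be replaced by the $L^\infty$ norm on $B_t$, since $\rho_t$ vanishes outside $B_t$. The Borel measurability assumption on $(x,t)\mapsto \nchi_{B_t}(x)$ is precisely what is needed to ensure that the scalar function $t\mapsto \|\div(v_t)^-\|_{L^\infty(B_t)}$ is measurable, so that the right-hand side of \eqref{localest} makes sense.

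More concretely, following AT, for every convex $\beta\in C^1([0,+\infty);[0,+\infty))$ with $\beta(0)=0$ and $\beta'$ Lipschitz, a spatial regularization of $\rho_t$ (via the heat semigroup $h_\eps$, say) combined with the integration-by-parts formula defining $\div(v_t)$ shows that $t\mapsto \int \beta(\rho_t)\,\d\mea$ is absolutely continuous on $[0,T]$ with
$$
\frac{\d}{\d t}\int \beta(\rho_t)\,\d\mea \;=\; -\int \bigl[\beta'(\rho_t)\rho_t-\beta(\rho_t)\bigr]\,\div(v_t)\,\d\mea \qquad \text{for a.e. }t\in(0,T).
$$
This identity is the technical heart of the argument and the main obstacle to a fully self-contained proof; I would simply quote it from \cite{AT}, since the standing hypotheses on $v_t$ are exactly those of the global version. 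Convexity of $\beta$ together with $\beta(0)=0$ gives $\beta'(s)s-\beta(s)\ge 0$, while $\rho_t=0$ $\mea$-a.e.\ on $B_t^c$ together with $\beta(0)=0$ makes the integrand vanish outside $B_t$. Combining,
$$
\frac{\d}{\d t}\int \beta(\rho_t)\,\d\mea \;\le\; \|\div(v_t)^-\|_{L^\infty(B_t)}\int \bigl[\beta'(\rho_t)\rho_t-\beta(\rho_t)\bigr]\,\d\mea.
$$

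Specializing to $\beta(s)=s^p$ with $p>1$ is admissible because the compressibility \eqref{eq:compr} of the Regular Lagrangian flow gives $\rho_t\in L^\infty(\mea)$ and $\rho_t$ has support in the bounded, hence $\mea$-finite, set $B$. Since in this case $\beta'(s)s-\beta(s)=(p-1)s^p$, the previous inequality becomes
$$
\frac{\d}{\d t}\|\rho_t\|_{L^p(\mea)}^p \;\le\; (p-1)\,\|\div(v_t)^-\|_{L^\infty(B_t)}\,\|\rho_t\|_{L^p(\mea)}^p,
$$
and Grönwall yields
$$
\|\rho_t\|_{L^p(\mea)}\;\le\;\|\rho_0\|_{L^p(\mea)}\exp\!\left(\tfrac{p-1}{p}\int_0^t \|\div(v_s)^-\|_{L^\infty(B_s)}\,\d s\right).
$$
To conclude I would send $p\to \infty$: since $\rho_0,\rho_t\in L^\infty(\mea)$ are supported in the $\mea$-finite set $B$, one has $\|\rho_0\|_{L^p(\mea)}\to \|\rho_0\|_{L^\infty(\mea)}$ and $\liminf_{p\to\infty}\|\rho_t\|_{L^p(\mea)}\ge \|\rho_t\|_{L^\infty(\mea)}$. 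Taking the $\liminf$ on both sides gives \eqref{localest} for each fixed $t$, and the supremum in $t\in(0,T)$ then follows at once.
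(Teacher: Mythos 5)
Your proposal is correct and follows essentially the same route as the paper: reduce to the $L^p$ estimate, insert the localization observation (the integrand $[\beta'(\rho_t)\rho_t-\beta(\rho_t)]\div(v_t)$ vanishes outside $B_t$ since $\beta(0)=0$), apply Gr\"onwall, and send $p\to\infty$. The one caveat is that the intermediate $\beta$-identity is not stated in \cite{AT} as a quotable standalone result --- \cite[Prop.~4.6]{AT} records only the final global bound --- so ``simply quoting it'' in fact means re-opening their proof; this is precisely why the paper re-derives the differential inequality from scratch via heat-flow regularization $\rho_t^n=h_{s_n}\rho_t$ and the commutator estimate, and why only a one-sided inequality (rather than the exact equality you write) is obtained before the commutator term is sent to zero.
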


For the proof of Proposition \ref{prop:localest} we will need the following :
\begin{lemma}[Commutator estimate {\cite[Lemma 5.8]{AT}}]\label{commutatorlemma}
	Let $\X$ be an ${\rm RCD}(K,\infty)$ m.m.s., then there exists a positive constant $C=C(K)>0$ such that the following holds.  
	Let $v \in W^{1,2}_{C}(TX)$ with $\div(v)\in L^\infty(\mea)$,  then
	\begin{equation}\label{commuataor}
		\int \la \nabla h_t (f),v\ra g \d \mea + \int  f \div(h_t(g)v_t)   \d \mea \le C \left (\|\nabla v\|_{L^2(T^{\otimes 2}X)}+\|\div(v)\|_{L^\infty} \right ) \|f\|_{L^2\cap L^4}\|g\|_{L^2\cap L^4},
	\end{equation}
	for every $f,g \in L^2(\mea)\cap L^4(\mea)$ and every $t>0$. In particular for fixed $g$ the left hand side of \eqref{commuataor} defines a functional in $(L^2(\mea) \cap L^4(\mea))^*=L^2(\mea)+L^{4'}(\mea)$, denoted by $\mathscr{C}^t(g,v)$, which satisfies
	\begin{equation}\label{commutatornorm}
		\|\mathscr{C}^t(g,v)\|_{L^2(\mea)+L^{4'}(\mea)}\le  C \left (\|\nabla_{sym} v\|_{L^2(T^{\otimes 2}X)}+\|\div(v)\|_{L^\infty} \right ) \|g\|_{L^2\cap L^4}.
	\end{equation}
	Moreover it holds that
	\begin{equation}\label{vanishcommutator}
		\|\mathscr{C}^t(g,v)\|_{L^2(\mea)+L^{4'}(\mea)}\to 0,  \quad \quad \text{ as } t \to 0^+.
	\end{equation}
\end{lemma}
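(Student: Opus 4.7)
The plan is to realize the commutator $\mathscr{C}^t(g,v)$ as a time integral along a curve coupling the two heat semigroup factors, and then bound the integrand via the second-order calculus of \cite{G14} together with Bakry--Émery regularization estimates. The starting step is to rewrite the commutator in explicit bilinear form: since $v\in D(\div)$ with $\div v\in L^\infty(\mea)$ and $h_t(g)\in\W(\X)$, an integration by parts (Lipschitz truncation plus Remark \ref{extensiondiv}) gives $\int f\,\div(h_t(g)v)\,\d\mea=-\int h_t(g)\la\nabla f,v\ra\,\d\mea$, so the left-hand side of \eqref{commuataor} reads $\int g\,\la\nabla h_tf,v\ra\d\mea-\int h_tg\,\la\nabla f,v\ra\d\mea$. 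Setting $f_s:=h_{t-s}(f)$, $g_s:=h_s(g)$ and $\psi(s):=\int g_s\,\la\nabla f_s,v\ra\,\d\mea$ for $s\in[0,t]$, this commutator equals $\psi(0)-\psi(t)=-\int_0^t\psi'(s)\,\d s$.

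The core algebraic step is the identification of $\psi'(s)$. Using $\partial_s g_s=\Delta g_s$, $\partial_s f_s=-\Delta f_s$, integrating by parts twice, and invoking the chain identity $\la X,\nabla\la\nabla h,v\ra\ra=\H{h}(v,X)+\la\nabla v,X\otimes\nabla h\ra$ valid for $h\in\test(\X)$, I expect the formula
\begin{equation*}
-\psi'(s)=\int\bigl[\H{f_s}(v,\nabla g_s)+\H{g_s}(v,\nabla f_s)\bigr]\d\mea+2\int\la\nabla_{sym}v,\nabla f_s\otimes\nabla g_s\ra\d\mea-\int g_s\,\Delta f_s\,\div v\,\d\mea.
\end{equation*}
The crucial observation is that the two contributions of $\nabla v$ coming from the two integrations by parts sum symmetrically in $f_s,g_s$, so only the symmetric part $\nabla_{sym}v$ survives; this is precisely what yields the sharper bound \eqref{commutatornorm} involving $\|\nabla_{sym}v\|_{L^2}$ rather than the full $\|\nabla v\|_{L^2}$. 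Rigor in the RCD setting is achieved by first approximating $v$ by finite sums $\sum_i\phi_i\nabla h_i$ with $\phi_i,h_i\in\test(\X)$ (whose covariant derivative is explicitly controlled, see \cite{G14}), and $f,g$ by test functions, exploiting the density and continuity of the operations in $W^{1,2}_C(T\X)$.

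To bound each summand of $\psi'(s)$ I invoke the $L^\infty$-to-Lipschitz estimate \eqref{eq:BE} and its Bakry--Émery refinements to get $\|\nabla f_s\|_{L^4}\le C(K,t-s)\|f\|_{L^2\cap L^4}$ (and analogously for $g_s$), together with the integrated Bochner inequality $\|\H{f_s}\|_{L^2}+\|\Delta f_s\|_{L^2}\le C(K,t-s)\|f\|_{L^2}$. Cauchy--Schwarz yields $|\psi'(s)|\le C(K,s,t-s)\bigl(\|\nabla_{sym}v\|_{L^2}+\|\div v\|_{L^\infty}\bigr)\|f\|_{L^2\cap L^4}\|g\|_{L^2\cap L^4}$, with an integrable $s$-singularity of type $(t-s)^{-1/2}s^{-1/2}$, establishing \eqref{commuataor} (\emph{a fortiori}, since $\|\nabla_{sym}v\|\le\|\nabla v\|$) and, after splitting the integrand into an $L^2$-regular piece (the Hessian and symmetric-gradient terms, tested against $\nabla f_s$) and an $L^{4'}$-piece (the divergence term, tested against $f$ itself), also \eqref{commutatornorm}. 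For the vanishing \eqref{vanishcommutator} I test against $f\in\test(\X)$: strong $L^2$-continuity of gradients along the heat flow gives $\la\nabla h_tf,v\ra\to\la\nabla f,v\ra$ and $h_tg\to g$ in $L^2\cap L^4$, hence $\mathscr{C}^t(g,v)(f)\to\int g\,\la\nabla f,v\ra\d\mea-\int f\,\div(gv)\,\d\mea=0$ on the dense subset $\test(\X)$, and the uniform estimate \eqref{commutatornorm} upgrades this to dual-norm convergence.

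The main obstacle I anticipate is the rigorous derivation of the formula for $\psi'(s)$: although formally a routine double IBP, it requires showing that $\la\nabla f_s,v\ra\in\W(\X)$ with a tractable gradient, handling the covariant derivative of a general $v\in W^{1,2}_C(T\X)$ rather than just test vector fields, and verifying the exact cancellation of the antisymmetric part of $\nabla v$ --- all of which rest on the second-order calculus and approximation results of \cite{G14}.
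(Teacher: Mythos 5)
The paper itself offers no proof of this lemma: it is quoted from [AT, Lemma 5.8], so the benchmark is the argument of Ambrosio--Trevisan, and your semigroup-interpolation strategy (differentiate $\psi(s)=\int h_s g\,\la\nabla h_{t-s}f,v\ra\,\d\mea$, observe that only $\nabla_{sym}v$ and $\div v$ survive, then use Bakry--\'Emery regularization) is indeed that route, and your formula for $-\psi'(s)$ is algebraically correct. The estimates you propose, however, do not close, in two places. First, the Hessian terms cannot be bounded separately as you indicate: $\H{f_s}\in L^2$, $|v|\in L^2$ and $|\nabla g_s|\in L^4$ give H\"older exponents $\tfrac12+\tfrac12+\tfrac14>1$ (and $|v|$ has no better integrability than $L^2$); the two terms must be combined through $\H{f_s}(v,\nabla g_s)+\H{g_s}(v,\nabla f_s)=\la v,\nabla\la\nabla f_s,\nabla g_s\ra\ra$ into $-\int\la\nabla f_s,\nabla g_s\ra\,\div v\,\d\mea$, which is then controlled by $\|\div v\|_{L^\infty}(s(t-s))^{-1/2}\|f\|_{L^2}\|g\|_{L^2}$. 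Second, and more seriously, the term $\int g_s\,\Delta f_s\,\div v\,\d\mea$ does not have the singularity $(s(t-s))^{-1/2}$ you claim: its natural bound is $\|\div v\|_{L^\infty}\|g_s\|_{L^2}\|\Delta f_s\|_{L^2}\lesssim (t-s)^{-1}$, which is not integrable near $s=t$. An extra device is needed here, e.g. split the time integral at $s=t/2$ and on $[t/2,t]$ integrate by parts in $s$ (using $\Delta f_s=-\partial_s f_s$, $\Delta g_s=\partial_s g_s$) so that the Laplacian lands on $g_s$, where $\|\Delta g_s\|_{L^2}\lesssim s^{-1}\le 2/t$; together with the boundary terms this yields a bound $C\|\div v\|_{L^\infty}\|f\|_{L^2}\|g\|_{L^2}$ uniform in $t$.

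Your argument for \eqref{vanishcommutator} is also flawed. Showing $\mathscr{C}^t(g,v)(f)\to0$ for $f$ in a dense set, combined with the uniform bound \eqref{commutatornorm}, only gives weak-$*$ convergence to $0$, not convergence in the dual norm (a weakly null sequence of unit vectors is the standard counterexample); moreover the limit you write, $\int g\,\la\nabla f,v\ra\,\d\mea-\int f\,\div(gv)\,\d\mea$, is not even meaningful for $g$ merely in $L^2\cap L^4$, since $\nabla g$ need not exist. The density argument has to be run in the $g$ variable: by linearity in $g$ and the $t$-uniform estimate \eqref{commutatornorm} it suffices to treat $g\in\LIP_b$, and for such $g$ one checks directly that $\mathscr{C}^t(g,v)$ is represented by the function $-h_t\big(\div(gv)\big)+\la\nabla h_t g,v\ra+h_t g\,\div v$, which converges to $0$ in $L^2+L^{4/3}$ as $t\to0^+$ (using $h_t(\div(gv))\to \la\nabla g,v\ra+g\,\div v$ in $L^2$ and $\la\nabla h_tg-\nabla g,v\ra\to 0$ in $L^2$ via the uniform Lipschitz bound on $h_tg$ and dominated convergence). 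With these two repairs your outline becomes the proof of [AT].
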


We can now pass to the proof of the local a priori estimate.
\begin{proof}[Proof of Proposition \ref{localest}]
	We start with the preliminary observation that,  combining the fact that $\mu_t={F_t}_*\mu_0$ with \eqref{eq:compr} and recalling that $B$ is bounded, we have
	\begin{equation}\label{eq:alreadybdd}
		\sup_{t \in [0,T]} \|\rho_t\|_{L^q(\mea)}<+\infty, \quad  \forall \,q\in[1,\infty].
	\end{equation}
	To conclude it is sufficient to prove that for every $p>1$ the function $[0,T]\ni t \mapsto \int (\rho_t)^p \d \mea$ is absolutely continuous and 
	\begin{equation}\label{gronwall}
		\frac{\d}{\d t} \int (\rho_t)^p \d \mea \le (p-1)\int (\rho_t)^p \div(v_t)^- \d \mea, \quad \text{ for a.e. } t\in (0,T).  
	\end{equation}
	Indeed \eqref{localest} would follow first applying Gronwall Lemma (noticing that $\rho_t=0$ $\mea$-a.e. outside $B_t$) and then letting $p \to +\infty.$ 
	
	So we fix $p>1.$ Pick  a sequence $s_n \downarrow 0$ and define $\rho_t^n:= h_{s_n}\rho_t.$ From the fact that $\rho_t $ is a solution of the continuity equation and the selfadjointedness of the heat flow, we obtain that for every $f \in \LIP_{bs}(\X),$ the function $t\mapsto \int f\rho_t^n \d \mea $ is absolutely continuous and
	\begin{equation}\label{pde}
		\frac{\d}{\d t}\int f \rho_t^n \d \mea = \int \la \nabla  h_{s_n}f,v_t\ra \rho_t \, \d \mea=\int f[\mathscr{C}^{s_n}(\rho_t,v_t)-\text{div}(\rho_t^n v_t)] \d \mea,  \quad \text{ for a.e. } t\in (0,T), 
	\end{equation}
	where $\mathscr{C}^{s_n}(\rho_t,v_t)$ is defined as in Lemma \ref{commuataor}. Set now $\eta_t^n:=\mathscr{C}^{s_n}(\rho_t,v_t)-\div(\rho_t^n v_t)$. From the Leibniz rule and the $L^\infty$-to Lipschitz regularization  of the heat flow  \eqref{eq:BE} we have that
	\[ \|\div(\rho_t^n v_t)\|_{L^2}\le \|\rho_t\|_{L^2} \|\div(v_t)\|_{L^\infty}+c(K)\frac{\|\rho_t\|_{L^\infty}}{\sqrt{s_n}}\||v_t|\|_{L^2}.\]
	This bound together with \eqref{commutatornorm}, \eqref{eq:alreadybdd} and the hypotheses on $v_t$, guarantees that $\eta_t^n \in L^1((0,T),L^2(\mea)+L^{4'}(\mea)).$ Denote by $V$ the Banach space $L^2(\mea)+L^{4'}(\mea)$ and observe that  $L^2\cap L^4=V^*$. Then \eqref{pde} can be restated as:  for a weakly*-dense set of $\phi \in V^*$ the function $[0,T] \ni t \mapsto \phi(\rho_t^n) $ is absolutely continuous and $\frac{\d}{\d t}\phi(\rho_t^n)=\phi(\eta_t^n).$ It follows (see e.g. Remark 4.9 in \cite{AT}) that  $\rho_t^n$ is absolutely continuous in $L^1((0,T),V)$ and  strongly differentiable a.e. with $\frac{\d}{\d t}\rho_t^n=\eta_t^n.$
	
	Pick a convex function $\beta : [0+\infty) \to [0+\infty)$ such that $\beta(t)=t^p$ for every $t \le 2\sup_{t \in (0,T)} \|\rho_t\|_{L^\infty}<+\infty$ and such that $\beta'$ is globally bounded. In particular from the maximum principle for the heat flow we have that $\beta(\rho_t^n)=(\rho_t^n)^p$ for every $t$ and $n$. Moreover, since $\rho_t$ are uniformly bounded in $L^q(\mea)$ for every $1\le q \le \infty$, from the contractivity of the heat flow we have also that $\rho_t^n$ are bounded  in $L^q(\mea)$ for every $1\le q \le \infty$,  uniformly in $t$ and $n$. Finally, observing that $\beta'(t)/t$ is globally bounded, we deduce that $\beta'(\rho_t^n)$ are again bounded  in $L^q(\mea)$ for every $1\le q <\infty$,  uniformly in $t$ and $n$. 
	
	Observe now that from the convexity of $\beta$ we have that
	\begin{equation}\label{convexrho}
		\int \beta(\rho_t^n)-\beta(\rho_s^n) \d \mea \le 
		\int \beta'(\rho_t^n) (\rho_t^n-\rho_s^n) \d \mea, \quad \forall t,s \in[0,T].   
	\end{equation}
	This in turn gives
	\[ \int \beta(\rho_t^n)-\beta(\rho_s^n) \d \mea \le \sup_{t \in [0,T]}\|\beta'(\rho_t^n)\|_{L^2\cap L^4(\mea )} \|\rho_t^n-\rho_s^n \|_{L^2+L^{4'}}
	\le \sup_{t \in [0,T]}\|\beta'(\rho_t^n)\|_{L^2\cap L^4} \int_s^t \|\eta_r^n\|_{L^2+L^{4'}} \d r, \]
	for every $t,s \in [0,T],$ with $s\le t.$
	Hence the function $\int \beta(\rho_t^n)\d \mea$ is absolutely continuous in $[0,T]$ and from \eqref{convexrho} we deduce that
	\[ \frac{\d}{\d t}\int \beta(\rho_t^n)\d \mea\le \int \beta'(\rho_t^n) \eta_t^n \d \mea, \quad \text{ for a.e. } t\in (0,T). \]
	Then from the definition of $\eta_t^n$ and $\beta$ and integrating by parts we obtain
	\begin{align*} 
		\frac{\d}{\d t}\int \beta(\rho_t^n)\d \mea &
		\le- \int [\beta'(\rho_t^n)\rho_t^n-\beta(\rho_t^n)]\div(v_t) \d \mea+ \int \mathscr{C}^{s_n}(\rho_t,v_t) \beta'(\rho_t^n) \d \mea\\
		&\le  (p-1)\int (\rho_t^n)^p \div(v_t)^- \d \mea+ p\int \mathscr{C}^{s_n}(\rho_t,v_t) (\rho_t^n)^{p-1} \d \mea,
	\end{align*}
	for a.e. $t \in (0,T).$
	Observe  now that combining  \eqref{commutatornorm} with \eqref{vanishcommutator}, an application of dominated convergence gives that $ \int_0^T\|\mathscr{C}^{s_n}(\rho_t,v_t)\|_{L^2+L^{4'}} \to 0 $ as $s_n \to 0$.  Then, recalling that $\div(v_t)^- \in L^\infty((0,T),L^\infty(\mea ))$ and \eqref{eq:alreadybdd},  we can let $s_n \to 0$ in the above and obtain at once the absolute continuity of $\int (\rho_t)^p  \d \mea$ together with \eqref{gronwall}.
\end{proof}

\subsection{From almost outer functional cone to almost outer metric cone}\label{sec:almcone}

Our aim in this section is to prove the following.
\begin{theorem}\label{thm:almconev1}
	For every  $\eps\in(0,1/3)$, $R_0>0$, $\gamma>\frac12\frac{N-2}{N-1}$,  $N\in [2,\infty)$, $L>0$  there exists $0<\delta=\delta(\eps,\gamma,N,R_0,L)$ such that the following holds. Let  $(\X,\sfd,\mea,x_0)$ be a pointed  ${\rm RCD}(-\delta,N)$ m.m.s. with $\mea(B_1(x_0))\in(\eps,\eps^{-1})$. Let  $U\subset \X$ be open with $U^c\subset B_{R_0}(x_0)$ and $v\in D(\bd,U)\cap C(U)$ be positive and  such that $\limsup_{x\to \partial U}v(x)\le 1$,  $\Delta  v=N|\nabla \sqrt {2v}|^2$, $v \ge 1+\eps$ in $B_{R_0}(x_0)^c\neq\emptyset$  and $\||\nabla \sqrt v|\|_{L^\infty(U)}\le L$. Suppose furthermore that
	\begin{equation}\label{eq:small int}
		\int_{U} \frac{1}{v^{N-2}} \left |\nabla |\nabla \sqrt v|^\gamma \right |^2\, \d \mea<\delta,
	\end{equation}
where it is intended that $v^{N-2}\equiv 1$ in the case $N=2$.  

	Then there exists a pointed ${\rm RCD(0,N)}$ space $(\X',\sfd',\mea',x')$ such that 
	\[
	\sfd_{pmGH}((\X,\sfd,\mea,x_0),(\X',\sfd',\mea',x'))<\eps
	\]
	and $(\X',\sfd',\mea',x')$ is a truncated cone outside a compact set $K\subset B_{2R_0}(x'),$ i.e. there exists an  ${\rm RCD(0,N)}$ $N$-cone $Y$, with vertex $O_Y$, over an ${\rm RCD}(N-2,N-1)$ space $Z$, and a measure preserving local isometry $T: \X'\setminus K\to Y\setminus \bar B_{r}(O_Y)$, for some $r>L^{-1}.$
\end{theorem}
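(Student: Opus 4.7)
The plan is to argue by contradiction and compactness, reducing to the exact rigidity Theorem \ref{thm:functional cone}. Suppose the claim fails; then, fixing $\eps, R_0, \gamma, N, L$, one finds $\delta_n \downarrow 0$ and counterexamples $(\X_n,\sfd_n,\mea_n,x_n)$, $U_n$, $v_n$ verifying all the hypotheses with $\delta$ replaced by $\delta_n$, yet at $\sfd_{pmGH}$-distance $\ge \eps$ from every admissible model space. By Proposition \ref{prop:compactness} I would extract a pmGH-subsequential limit $(\X_\infty,\sfd_\infty,\mea_\infty,x_\infty)$, which is ${\rm RCD}(0,N)$, realized extrinsically in a proper ambient $Y$. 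Up to further subsequence the compacts $U_n^c \subset \bar B_{R_0}(x_n)$ converge in Hausdorff distance to some compact $K\subset \bar B_{R_0}(x_\infty)$, so I set $U_\infty := \X_\infty\setminus K$.

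Next I would produce the limit function. The bound $\||\nabla\sqrt{v_n}|\|_{L^\infty(U_n)}\le L$, the local Sobolev-to-Lipschitz estimate \eqref{eq:localsobolevtolip}, and $\limsup_{x\to \partial U_n}v_n(x)\le 1$ imply that $\sqrt{v_n}$ admits a locally $L$-Lipschitz representative with $\sqrt{v_n}(x)\le 1+L\,\sfd_n(x,\partial U_n)$. Thus the $\sqrt{v_n}$ are locally equi-bounded and equi-Lipschitz on $U_\infty$; a varying-space Ascoli--Arzelà argument (cf.\ Proposition \ref{prop:Ascoli}) yields a subsequential locally uniform limit $\sqrt{v_\infty}$ on $U_\infty$, locally $L$-Lipschitz, with $v_\infty\ge 1+\eps$ on $B_{R_0}(x_\infty)^c$ and $\limsup_{x\to \partial U_\infty}v_\infty(x)\le 1$.

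I would then pass the PDE and the smallness to the limit. Since $\Delta v_n = 2N|\nabla\sqrt{v_n}|^2 \le 2NL^2$ is uniformly bounded in $L^\infty$, localizing with good cut-offs (Proposition \ref{prop:goodcutoff}) and applying Theorem \ref{thm:l2stab} gives $v_\infty \in D(\bd,U_\infty)$, $\Delta v_n\rightharpoonup \Delta v_\infty$ weakly in $L^2_{\loc}$ and $|\nabla v_n|\to|\nabla v_\infty|$ strongly in $L^2_{\loc}(U_\infty)$. Using locally uniform positivity of $v_n$ on every compact subset of $U_\infty\cap\{v_\infty>0\}$ (propagated from $v_n\ge 1+\eps$ on $B_{R_0}^c$ via Harnack-type bounds in Appendix \ref{ap:bjorn}) and Proposition \ref{prop:l2prop}, I deduce $|\nabla\sqrt{v_n}|^2=|\nabla v_n|^2/(4v_n)\to|\nabla\sqrt{v_\infty}|^2$ and $|\nabla\sqrt{v_n}|^\gamma\to|\nabla\sqrt{v_\infty}|^\gamma$ strongly in $L^2_{\loc}$. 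Consequently $\Delta v_\infty = 2N|\nabla\sqrt{v_\infty}|^2$ a.e.\ on $U_\infty\cap\{v_\infty>0\}$, and Lemma \ref{lem:lsc energy local} yields
\[
\int_{U_\infty} v_\infty^{-(N-2)}\,\bigl|\nabla |\nabla\sqrt{v_\infty}|^\gamma\bigr|^2\,\d\mea_\infty \;\le\; \liminf_n \int_{U_n} v_n^{-(N-2)}\,\bigl|\nabla |\nabla\sqrt{v_n}|^\gamma\bigr|^2\,\d\mea_n \;\le\; \liminf_n \delta_n = 0,
\]
so by \eqref{eq:loc constant} the function $|\nabla\sqrt{v_\infty}|$ is locally constant on each connected component of $U_\infty\cap\{v_\infty>0\}$.

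Finally I would invoke Theorem \ref{thm:functional cone}. Since $v_\infty\ge 1+\eps$ on $B_{R_0}(x_\infty)^c$ while $\limsup v_\infty\le 1$ near $\partial U_\infty$, the component $U^*_\infty$ containing the far region carries $|\nabla\sqrt{v_\infty}|\equiv c>0$; checking via Proposition \ref{prop:ends} that $\X_\infty$ cannot be a cylinder (else a direct volume-growth argument contradicts $\X_n\to\X_\infty$ with admissible $\mea(B_1)$), this $U^*_\infty$ is the unique unbounded component. Setting $\bu:=v_\infty/(2c^2)$ on $U^*_\infty$ gives $|\nabla\sqrt{2\bu}|^2=1$, $\Delta\bu=N$, $\bu_0=\limsup_{x\to\partial U^*_\infty}\bu\le 1/(2c^2)<\infty$ and $\{\bu>\bu_0\}\supset B_{R_0}(x_\infty)^c\neq\emptyset$. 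Theorem \ref{thm:functional cone} then exhibits an ${\rm RCD}(N-2,N-1)$ cross-section $Z$, the Euclidean $N$-cone $Y$ over $Z$, and a measure-preserving local isometry from $\{\bu>\bu_0\}$ onto $Y\setminus\bar B_r(O_Y)$ with $r=\sqrt{2\bu_0}$; $c\le L$ combined with $v_\infty\to 1$ at $\partial U^*_\infty$ gives $r>L^{-1}$. Thus $\X_\infty$ itself realizes the conclusion, and pmGH-closeness to $\X_n$ for large $n$ contradicts the counterexample choice. The main obstacle I anticipate is the rigorous strong $L^2_{\loc}$ passage of $|\nabla\sqrt{v_n}|^\gamma$ and the weighted lower semicontinuity, together with verifying that $v_\infty$ stays strictly positive on the relevant region so that the weight $v_\infty^{-(N-2)}$ is integrable; a secondary subtlety is matching the Lipschitz-path/``one-end'' dichotomy of Proposition \ref{prop:ends} with the connected-component analysis needed to identify a single normalization constant $c>0$ and the radius bound $r>L^{-1}$.
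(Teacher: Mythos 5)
Your proposal follows essentially the same contradiction-by-compactness scheme as the paper's proof, and the obstacle you flag at the end is the genuine gap in the argument as you have written it: Lemma \ref{lem:lsc energy local} gives only the \emph{unweighted} lower semicontinuity
\[
\int_A |\nabla_\infty f|^2\,\d\mea_\infty \le \liminf_n \int_A |\nabla_n f_n|^2\,\d\mea_n,
\]
and does not directly produce the inequality with the weight $v^{-(N-2)}$ that you assert. The paper closes this in a simpler way than a weighted LSC result: it first observes that one may replace $U_n$ by $U_n\cap\{v_n>1\}$ without affecting hypotheses or conclusions (which also disposes of the Harnack-type positivity propagation you invoke), and then uses the quantitative upper bound $v_n\le(1+RL)^2$ on $B_R(x_n)\cap U_n$ coming from \eqref{eq:safe}. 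This gives the pointwise lower bound $v_n^{-(N-2)}\ge(1+RL)^{-2(N-2)}$ on bounded neighbourhoods, so
\[
\int_{U_n^{\ge 2\rho}\cap B_{k+1}(x_n)} \big|\nabla|\nabla\sqrt{v_n}|^\gamma\big|^2\,\d\mea_n \;\le\; (1+RL)^{2(N-2)}\int_{U_n} v_n^{2-N}\big|\nabla|\nabla\sqrt{v_n}|^\gamma\big|^2\,\d\mea_n \;\to 0,
\]
after which the \emph{unweighted} Lemma \ref{lem:lsc energy local}, applied on a slightly shrunken open set and to suitable cut-offed functions, yields $|\nabla|\nabla\sqrt{v_\infty}|^\gamma|=0$ a.e.\ in the limit. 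Two smaller points: the strictly positive constant $c$ is obtained in the paper via a short contradiction (if $c=0$ then $v_\infty$ would be constantly equal to $1+\eps/4$ on the component, absurd), and the radius estimate $r>L^{-1}$ should be tracked via the level $\{v_\infty>1+\eps/4\}$ to which Theorem \ref{thm:functional cone} is applied, giving $r=\lambda^{-1}\sqrt{1+\eps/4}>L^{-1}$ once $\lambda\le L$ is verified; the boundary normalization $v_\infty\to 1$ you use would only give $r\ge L^{-1}$. With these fixes your outline coincides with the paper's argument.
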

Observe that \eqref{eq:small int} makes sense thanks to Corollary \ref{cor:div estimate}.

Notice also that the assumption $u \ge 1+\eps$ in $B_{R_0}(x_0)^c$ is necessary as the function $v\equiv 1$ shows.
We will also prove another version of the above result, which is Theorem \ref{thm:almconev2}. Before passing to its statement we need some definitions and notations.

For any couple of compact metric spaces $(X_1,\sfd_1),\, (X_2,\sfd_2)$ we define their \emph{Gromov Hausdorff distance} as
\[
\begin{split}
\sfd_{GH}((X_1,\sfd_2),(\X_2,\sfd_2))\coloneqq\inf \{&\eps>0 : \exists f: X_1\to X_2 \ \text{ such that} \ \text{$f(\X_1)$ is $\eps$-dense in $\X_2$}\\
&\text{ and } |\sfd_1(x,y)-\sfd_2(f(x),f(y))|\leq\eps, \, \forall x,y \in X_1\}.
\end{split}
\]
For a sequence of compact metric spaces $(X_n,\sfd_n)$ converging to $(\X,\sfd)$ in the GH-sense we say that a sequence of maps $f_n : \X_n \to \X$ realizes the convergence if there exists a seuqence $\eps_n \to 0$ such that $f(\X_n)$ is $\eps_n$-dense in $\X$ and $|\sfd_n(x,y)-\sfd(f_n(x),f_n(y))|\leq\eps_n$,  $\forall x,y \in X_n$.

Moreover we recall the notion of \emph{Sturm distance} $\mathbb{D}$  for compact m.m.s. which was first introduced in \cite{sturm1} in the case of renormalized spaces (see also \cite{GMSconv}):
\[
\mathbb{D}((X_1,\sfd_2,\mea_2),(\X_2,\sfd_2,\mea_1))\coloneqq \inf \left | \log \frac{\mea_1(\X_1)}{\mea_2(\X_2)}\right |+W_2\left({\iota_1}_*\tilde \mea_1,{\iota_2}_*\tilde \mea_2\right),
\]
where $\tilde \mea_i=\frac{\mea_i}{\mea(\X_i)} ,<$ $i=1,2$ and the infimum is taken among all the complete and separable metric spaces $(Y,\sfd)$ and isometric embeddings $X_i \overset{\iota_i}{\hookrightarrow}Y$, $i=1,2.$
Observe that $\mathbb{D}$ is well defined since we are  assuming that $\supp (\mea_i)=\X_i.$

It is important to recall that an upper bound on $\mathbb{D}$ does not imply in general an upper bound on  $\sfd_{GH}$, indeed this holds only if we restrict ourselves to a family of uniformly doubling metric measure spaces. However we will need to apply $\mathbb{D}$ to spaces which are not a priori uniformly doubling, for this reason we will need to work both with $\sfd_{GH}$ and with $\mathbb{D}.$

Finally, for any $(\X,\sfd)$  complete metric space  and  $A\subset \X$ we define the \emph{intrinsic metric on $A$} to be the distance function 
$\sfd^{A}: A \to [0,+\infty]$ defined by 
\[
\sfd^A(x,y)\coloneqq \inf_\gamma \, L(\gamma),\quad \forall x,y \in  A,
\]
where the infimum is taken among all curves $\gamma \in AC([0,1],\X)$ with values in $A$ and such that $\gamma(0)=x$, $\gamma(1)=y.$

If we also assume that $A$ is relatively compact, then for every $x,y \in A$ such that $\sfd(x,y)<+\infty$ there exists an absolutely continuous curve $\gamma$ with values in $\bar A$ such that $\sfd^A(x,y)=L(\gamma).$ 

We are ready to state the second version of Theorem \ref{thm:almconev1}, which is more in the spirit of the `volume annulus implies metric annulus' theorem  of Cheeger and Colding, which is also based on a functional formulation similar to the present one (see \cite[Theorem 4.85]{CC}).

\begin{theorem}\label{thm:almconev2}
	For every $\eps\in(0,1/3)$, $R_0>0$,  $\gamma> \frac 12\frac{N-2}{N-1}$, $N\in [2,\infty)$, $L>0$, $\eta\in(0,\eps^{-1})$  there exists $0<\delta=\delta(\eps,\gamma,N,R_0,L,\eta )$ such that, given $\X$, $U $ and $v$ as in Theorem \ref{thm:almconev1},
	 there exists an ${\rm RCD(0,N)}$ $N$-cone $(Y,\sfd_Y,\mea_Y)$ with vertex $O_Y$, over an ${\rm RCD}(N-2,N-1)$ space $Z$, and a constant $\lambda\in (0,L)$ such that the following holds. 
	 
	 For every $1+\eps+\eta<t_1\le t_2<\eps^{-1}$ satisfying $\{ \sqrt v\le t_2+2\eta\}\subset B_{R_0}(x_0)$, it holds
	\begin{equation}\label{eq:closeannuli1}
	\sfd_{GH}\left((\{t_1\le \sqrt v\le t_2\}, \sfd_{\X}^{\eta}),\left(\{t_1\le \lambda \sfd_{O_Y}\le t_2\}, \sfd_{Y}^{\eta}\right)  \right)<\eps,
	\end{equation}
where $\sfd_{O_Y}\coloneqq\sfd_Y(.,O_Y)$ and $\sfd_{\X}^{\eta}$ and $\sfd_{Y}^{\eta}$ denote the  intrinsic metrics on  $\{t_1-\eta< \sqrt v< t_2+\eta\}$ and on $\{t_1-\eta<\lambda \sfd_{O_Y}< t_2+\eta\}$  (see definition above). Moreover, provided that $t_1+\eps<t_2$, 
\begin{equation}\label{eq:closeannuli2}
	\mathbb{D}\left(\big(\{t_1\le \sqrt v\le t_2\}, \sfd_{\X}^{\eps},\mea\restr{\{t_1\le\sqrt v\le t_2\}}\big),\big(\{t_1\le \lambda \sfd_{O_Y}\le t_2\}, \sfd_{Y}^{\eps},\mea_Y\restr{\{t_1\le \lambda \sfd_{O_Y}\le t_2\}}\big)  \right)<\eps.
\end{equation}
Moreover the cone $Y$ can be taken so that the conclusion of Theorem \ref{thm:almconev1} holds (with the same $\eps$, $R_0$  and $L$) with  $Y$ and for some ${\rm RCD}(0,N)$ space $\X'$.
\end{theorem}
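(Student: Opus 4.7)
\emph{Proof proposal.}
The plan is to argue by compactness and contradiction, bootstrapping from the rigidity Theorem~\ref{thm:functional cone} much as in the proof of Theorem~\ref{thm:almconev1}. Suppose the claim fails for some fixed $(\eps,\gamma,N,R_0,L,\eta)$: then one can extract sequences $\delta_n\downarrow 0$, pointed ${\rm RCD}(-\delta_n,N)$ spaces $(\X_n,\sfd_n,\mea_n,x_n)$ with $\mea_n(B_1(x_n))\in(\eps,\eps^{-1})$, open sets $U_n$, functions $v_n$ satisfying all hypotheses with $\delta=\delta_n$, and thresholds $t_{1,n}\le t_{2,n}$ with $\{\sqrt{v_n}\le t_{2,n}+2\eta\}\subset B_{R_0}(x_n)$, such that for no ${\rm RCD}(0,N)$ cone $Y$ and no $\lambda\in(0,L)$ do both \eqref{eq:closeannuli1} and \eqref{eq:closeannuli2} hold. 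By Proposition~\ref{prop:compactness} pass to a subsequence pmGH-converging to a limit $(\X_\infty,\sfd_\infty,\mea_\infty,x_\infty)$ that is ${\rm RCD}(0,N)$. The bound $\||\nabla\sqrt{v_n}|\|_{L^\infty}\le L$ together with $v_n\le \eps^{-2}$ on the relevant region provides equi-Lipschitz, equi-bounded $v_n$ on $B_{R_0}$, so by Proposition~\ref{prop:Ascoli} and a diagonal argument I extract a locally uniform limit $v_\infty$, while $t_{j,n}\to t_j^*\in[1+\eps+\eta,\eps^{-1}]$.

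Next I would pass the analytic structure to the limit. By Theorem~\ref{thm:l2stab} applied locally (after multiplication by good cut-offs from Proposition~\ref{prop:goodcutoff}) one has $v_\infty\in D(\Delta_\infty,U_\infty)$ with $\Delta v_\infty=N|\nabla\sqrt{2v_\infty}|^2$, and $|\nabla\sqrt{v_n}|\to|\nabla\sqrt{v_\infty}|$ strongly in $L^2_{\loc}(U_\infty)$; the lower semicontinuity Lemma~\ref{lem:lsc energy local} together with the hypothesis \eqref{eq:small int} and the lower bound $v_n\ge 1+\eps$ outside $B_{R_0}$ force $|\nabla|\nabla\sqrt{v_\infty}|^\gamma|=0$ on the outer region $\{v_\infty\ge 1+\eps\}$. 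By \eqref{eq:loc constant} and the connectedness of that region (which one sees directly from $v_\infty>1$ there and the continuity of $v_\infty$), $|\nabla\sqrt{v_\infty}|\equiv\lambda$ is constant, with $\lambda\in(0,L]$ since $v_\infty$ is non-constant. Setting $\bu\coloneqq v_\infty/(2\lambda^2)$ gives $\Delta\bu=N$ and $|\nabla\sqrt{2\bu}|^2=1$ on $\{\bu>1/(2\lambda^2)\}$, so Theorem~\ref{thm:functional cone} produces an ${\rm RCD}(0,N)$ cone $Y$ over an ${\rm RCD}(N-2,N-1)$ space $Z$ together with a measure-preserving local isometry $S$ from $\{\bu>1/(2\lambda^2)\}$ onto $Y\setminus\bar B_{1/\lambda}(O_Y)$ and the explicit identity $\sqrt{v_\infty}=\lambda\,\sfd_Y(S(\cdot),O_Y)$. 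This produces the desired cone $Y$ and also delivers the last sentence of the statement: the same cone $Y$ works for Theorem~\ref{thm:almconev1} taking $\X'=\X_\infty$.

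The main obstacle is the third step: deducing the \emph{intrinsic} metric and measure closeness \eqref{eq:closeannuli1}--\eqref{eq:closeannuli2} from the ambient pmGH convergence of $\X_n\to\X_\infty$. I would realize the convergence through a common proper space via the extrinsic approach, so that the annuli $A_n\coloneqq\{t_{1,n}\le\sqrt{v_n}\le t_{2,n}\}$ and their $\eta$-thickenings $A_n^\eta$ converge in Hausdorff distance (thanks to the locally uniform convergence $v_n\to v_\infty$ and the choice $t_{1,n}-\eta>1+\eps$, which keeps the thickened annulus in the conical region where $v_\infty>1$). For GH-closeness of the intrinsic metrics I argue in both directions: (i) given a Lipschitz curve in the limit thickened annulus $A_\infty^\eta$ joining $S^{-1}(p),S^{-1}(q)$ (equivalently, a curve in the cone $Y$ connecting $p$ and $q$ inside $\{t_1^*-\eta<\lambda\sfd_{O_Y}<t_2^*+\eta\}$), lift it to curves of nearly the same length in $A_n^\eta$ by using points of $\X_n$ approximating finitely many samples of the curve and joining them by $\X_n$-geodesics, whose uniform staying inside $A_n^\eta$ follows from the uniform convergence of $v_n$; (ii) conversely, given minimizers in $A_n^\eta$, their lengths are uniformly bounded (by the extrinsic diameter of $A_n^\eta$ plus $2\eta$) and after reparametrization they subconverge in the common space to a Lipschitz curve inside $A_\infty^\eta$ joining the limits. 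These two inequalities, together with the Hausdorff convergence of the annuli, yield \eqref{eq:closeannuli1} in contradiction with the assumed failure. For the Sturm bound \eqref{eq:closeannuli2}, the weak $L^2$-convergence of $\nchi_{A_n}\mea_n$ to the corresponding limit measure, combined with the intrinsic-distance closeness just established, allows one to embed the normalizations of both measures in a single compact metric space and estimate the resulting $W_2$ distance (the extra slack $t_1+\eps<t_2$ ensures the limit annulus has positive mass so that the normalization is well defined); this delivers \eqref{eq:closeannuli2} and closes the contradiction.
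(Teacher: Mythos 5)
Your proposal follows the same overall route as the paper: compactness/contradiction, passing the PDE to the pmGH limit, invoking Theorem \ref{thm:functional cone} to produce the cone, and then comparing intrinsic metrics via discretize-and-lift curve arguments plus measure convergence to get the Sturm-distance bound. Two localized issues deserve attention.

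First, the parenthetical assertion that the set $\{v_\infty\ge1+\eps\}$ is connected, ``which one sees directly from $v_\infty>1$ there and the continuity of $v_\infty$,'' is not correct as stated: positivity and continuity of $v_\infty$ say nothing by themselves about the number of connected components of a super-level set. The paper's proof is more roundabout and genuinely needs that detour: it uses the strong maximum principle (applied to $v_\infty^{(2-N)/2}$, harmonic by \eqref{eq:harmoniclimit}) to show every component is unbounded, then applies Theorem \ref{thm:functional cone} to one component to conclude Euclidean volume growth of $\X_\infty$, then uses Corollary \ref{cor:ends} to get a single end, and only then deduces that $\{v_\infty>1+\eps/4\}$ is connected. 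You cannot skip this chain.

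Second, you attribute the Hausdorff convergence of the thickened annuli to the locally uniform convergence $v_n\to v_\infty$ alone. The paper explicitly flags that this does not follow from uniform convergence: the direction ``every limit point of $\{\sqrt{v_\infty}=\bar s\}$ is approximated by points of $\{\sqrt{v_n}=s_n\}$'' requires knowing that $\sqrt{v_\infty}$ attains values on both sides of $\bar s$ arbitrarily close to any point of its level set. This is precisely what \eqref{eq:good level set} provides, and that fact is itself a consequence of the cone structure $\sqrt{v_\infty}=\lambda\,\sfd_{Y'}(O_{Y'},T(\cdot))$ produced by the rigidity theorem (rays through the vertex). Since you do produce the cone before the metric-comparison step, the ingredient is available to you, but the argument as written leaves out the mechanism and would be incomplete; similarly, the lift-and-discretize step needs to be done on a slightly shrunk (resp.\ enlarged) thickened annulus $A^{\eta\mp 2M\sqrt{\eps_n}}$ and then corrected via the cone-geometry Lemma \ref{lem:conelemma1}, a point glossed over in your item (i) where the lifted geodesics are claimed to stay in $A_n^\eta$ by uniform convergence alone. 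With those two points spelled out, your sketch coincides in substance with the paper's argument; the last step (Sturm closeness from mGH closeness) is exactly Proposition \ref{prop:GH to D}.
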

We point out that in general we cannot say anything better than $\lambda \le L$. This is immediately seen by taking $v=L^2|x|^2$ in $\rr^n$ and $U= \rr^n\setminus \bar B_{1/L}(0).$

It is important to notice  that the information in \eqref{eq:closeannuli1} is not contained in \eqref{eq:closeannuli2}, indeed,  as said above, it is not clear to us if, fixed $\eps,\gamma,N,R_0,L$ as in Theorem \ref{thm:almconev2}, the metric measure spaces $(\{\sqrt{t_1}\le \sqrt v\le \sqrt{t_2}\}, \sfd_{\X}^{\eta},\mea\restr{\{t_1\le\sqrt v\le t_2\}})$, for arbitrary $v,t_1,t_2$ as in  the hypotheses, satisfy some uniform doubling condition.

For the proof of Theorem \ref{thm:almconev2} we will need the following elementary lemma. The proof is a direct consequence of the definition of distance in a cone and will be omitted.
\begin{lemma}\label{lem:conelemma1}
		Let $(Y,\sfd)$ be an Euclidean cone of vertex $O_Y$ and for any $0<a<b$ let $\sfd_{a,b}$ be the intrinsic metric on $\{a<d(.,O_Y)<b\}$. Then for every $0<\eps<a<b$ it holds
	\[
	\sfd_{a-\eps,b+\eps}\le \sfd_{a,b}\le \frac{a}{a-\eps}\sfd_{a-\eps,b+\eps}, \quad \text{ in $\{a<d(.,O_Y)<b\}$.}
	\]
	Moreover for any two sequences $(a_n),(b_n)$ such that $a_n \to a$, $b_n \to b$ it holds
	\[
	(\{a_n\le d(.,O_Y)\le b_n\},\sfd_{a_n-\eps,b_n+\eps})\overset{GH}{\rightarrow}	(\{a\le d(.,O_Y)\le b\},\sfd_{a-\eps,b+\eps})
	\]
	and the map $f_n(t,z)\coloneqq \left(\frac{(t-a_n)(b-a)}{(b_n-a_n)}+a,z \right) $ (in polar coordinates) realizes such convergence.
\end{lemma}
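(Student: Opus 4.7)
The first inequality $\sfd_{a-\eps,b+\eps}\le\sfd_{a,b}$ is a tautology: every absolutely continuous curve valued in $\{a<\sfd(\cdot,O_Y)<b\}$ is a fortiori valued in the larger annulus $\{a-\eps<\sfd(\cdot,O_Y)<b+\eps\}$, so the infimum defining $\sfd_{a-\eps,b+\eps}$ is taken over a wider class of curves and is consequently smaller.

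For the second inequality the plan is to construct a radial projection from the larger annulus onto $\{a\le\sfd(\cdot,O_Y)\le b\}$ and to estimate its distortion along curves. Define $r:[a-\eps,b+\eps]\to[a,b]$ by $r(t)\coloneqq\max(a,\min(b,t))$ and set $\phi(t,z)\coloneqq(r(t),z)$ in polar coordinates on the cone. Given $x,y\in\{a<\sfd(\cdot,O_Y)<b\}$ and any absolutely continuous curve $\gamma(s)=(t(s),z(s))$ joining them inside the larger annulus, the composition $\phi\circ\gamma$ is absolutely continuous, joins $x$ to $y$, and takes values in $\{a\le\sfd(\cdot,O_Y)\le b\}$. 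Using the polar decomposition $|\dot\gamma|^2=\dot t^2+t(s)^2|\dot z|^2$ of the metric speed (valid since $t(s)\ge a-\eps>0$), together with the pointwise bounds $|r'|\le 1$ and $r(t)/t\le a/(a-\eps)$ on $[a-\eps,b+\eps]$, one obtains
\[
|\dot{(\phi\circ\gamma)}|^2 = (r'(t)\,\dot t)^2 + r(t)^2|\dot z|^2 \le \left(\tfrac{a}{a-\eps}\right)^2\!\bigl(\dot t^2+t^2|\dot z|^2\bigr).
\]
Integrating in $s$ and taking the infimum over $\gamma$ yields the claimed upper bound on $\sfd_{a,b}(x,y)$.

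For the convergence statement the map $f_n(t,z)=\bigl(\tfrac{(t-a_n)(b-a)}{b_n-a_n}+a,\,z\bigr)$ is the radial affine rescaling sending $\{a_n\le\sfd(\cdot,O_Y)\le b_n\}$ bijectively onto $\{a\le\sfd(\cdot,O_Y)\le b\}$. Since $a_n\to a$ and $b_n\to b$, this one-dimensional affine map converges uniformly to the identity on the relevant radial interval and its slope tends to $1$; combining this with the polar expression of the cone metric and the two-sided estimate just obtained (applied to the source and to the target), one verifies that $f_n$ is a $\delta_n$-isometry with $\delta_n\to 0$ between the annuli endowed with their enlarged intrinsic metrics, proving the required GH-convergence. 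The only point that genuinely deserves care is the polar decomposition of the metric speed on a possibly non-smooth cone, but this is standard given that the radial coordinate stays bounded away from the vertex throughout the argument.
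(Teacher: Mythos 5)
The paper omits the proof of this lemma (it is stated to follow directly from the cone distance formula), so there is no argument in the text to compare against; your sketch fills in the gap along the natural lines and the main computation $L(\phi\circ\gamma)\le\frac{a}{a-\eps}L(\gamma)$ via $|r'|\le1$, $r(t)/t\le a/(a-\eps)$ and the polar speed formula is correct.

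There is one point you should not gloss over. The projected curve $\phi\circ\gamma$ takes values in the \emph{closed} annulus $\{a\le\sfd(\cdot,O_Y)\le b\}$ and in general sits on the spheres $\{\sfd(\cdot,O_Y)=a\}$ or $\{\sfd(\cdot,O_Y)=b\}$ for a positive amount of parameter time, whereas $\sfd_{a,b}$ is defined as an infimum over curves valued in the \emph{open} annulus $\{a<\sfd(\cdot,O_Y)<b\}$. As written, the estimate bounds the intrinsic metric of the closed annulus, not $\sfd_{a,b}$. The fix is small but should be made explicit: replace $r$ by $r_\delta(t)\coloneqq\max(a+\delta,\min(b-\delta,t))$ for $\delta>0$ smaller than the distance from the radial coordinates of the two endpoints to $\{a,b\}$. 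Then $r_\delta$ still fixes the endpoints, is $1$-Lipschitz, satisfies $r_\delta(t)/t\le(a+\delta)/(a-\eps)$, and its image is compactly contained in the open annulus, yielding $\sfd_{a,b}(x,y)\le\frac{a+\delta}{a-\eps}L(\gamma)$; letting $\delta\to0^+$ recovers your constant. A more cosmetic remark: $r'$ does not exist at $t\in\{a,b\}$, so rather than invoking the chain rule it is cleaner to use directly that $r$ (or $r_\delta$) is $1$-Lipschitz, hence $\left|\tfrac{d}{ds}(r\circ t)(s)\right|\le|\dot t(s)|$ a.e., which is all the inequality needs. For the GH-convergence statement, your sketch captures the mechanism, but note that the extended map $f_n$ sends the enlarged annulus $\{a_n-\eps<\cdot<b_n+\eps\}$ to one with margin $\eps\tfrac{b-a}{b_n-a_n}$ rather than $\eps$; one therefore also has to invoke the two-sided bound of the first part to pass between the two margins before concluding that $f_n$ is a $\delta_n$-isometry with $\delta_n\to0$.
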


Finally  for proof of Theorem \ref{thm:almconev2} we will need the following result (see below for the definition of mGH-convergence):
\begin{prop}[{\cite[Prop. 3.30]{GMSconv}}]\label{prop:GH to D}
	Let $(X_n,\sfd_n,\mea_n)$ be compact m.m.\ spaces mGH-converging to a compact m.m.s. $(\X_\infty,\sfd_\infty,\mea_\infty)$. Then $$\mathbb{D}((X_n,\sfd_n,\mea_n),(\X_\infty,\sfd_\infty,\mea_\infty))\to0.$$
\end{prop}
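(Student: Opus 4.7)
The strategy is to exhibit, for each $n$, an admissible triple $(Y_n,\iota_1^n,\iota_2^n)$ in the infimum defining the Sturm distance $\mathbb{D}$ whose corresponding value tends to zero; the two summands in the definition of $\mathbb{D}$ will be treated separately.

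\emph{Step 1 (common ambient).} Using the very definition of mGH-convergence for compact m.m.s. — concretely, by gluing through the approximate isometries $f_n\colon \X_n\to\X_\infty$ provided by (the compact-space version of) Definition \ref{def:pmgh} — I fix for each $n$ a compact metric space $(Y_n,\sfd_{Y_n})$ together with isometric embeddings $\iota_1^n\colon \X_n\hookrightarrow Y_n$, $\iota_2^n\colon\X_\infty\hookrightarrow Y_n$ satisfying $\sfd_{Y_n}(\iota_1^n(x),\iota_2^n(f_n(x)))\le\eps_n$ for every $x\in\X_n$, with $\eps_n\to 0$, and $(\iota_1^n)_*\mea_n\rightharpoonup (\iota_2^n)_*\mea_\infty$ in duality with $C(Y_n)$. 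A concrete realization is $Y_n=\X_n\sqcup\X_\infty$ endowed with the distance equal to $\sfd_n,\sfd_\infty$ on each piece and with
\[
\sfd_{Y_n}(x,y):=\inf_{x'\in\X_n}\{\sfd_n(x,x')+\eps_n+\sfd_\infty(f_n(x'),y)\}
\]
for $x\in\X_n$, $y\in\X_\infty$.

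\emph{Step 2 (logarithmic term).} Testing the weak convergence in Step 1 against the constant function equal to $1\in C(Y_n)$ yields $\mea_n(\X_n)\to\mea_\infty(\X_\infty)$. Compactness of $\X_\infty$ combined with $\supp\mea_\infty=\X_\infty\neq\emptyset$ forces $\mea_\infty(\X_\infty)\in(0,+\infty)$, and hence $|\log(\mea_n(\X_n)/\mea_\infty(\X_\infty))|\to 0$.

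\emph{Step 3 (Wasserstein term).} I use the explicit graph coupling $\pi_n:=((\iota_1^n)\times(\iota_2^n\circ f_n))_*\tilde\mea_n$ on $Y_n\times Y_n$: since $\sfd_{Y_n}(\iota_1^n(x),\iota_2^n(f_n(x)))\le\eps_n$ the cost of $\pi_n$ is bounded by $\eps_n^2$, so $W_2((\iota_1^n)_*\tilde\mea_n,(\iota_2^n\circ f_n)_*\tilde\mea_n)\le\eps_n$. Because $\iota_2^n$ is an isometric embedding it preserves Wasserstein distances, so by the triangle inequality
\[
W_2\bigl((\iota_1^n)_*\tilde\mea_n,(\iota_2^n)_*\tilde\mea_\infty\bigr)\le\eps_n+W_2\bigl((f_n)_*\tilde\mea_n,\tilde\mea_\infty\bigr),
\]
the last Wasserstein distance being computed in $(\X_\infty,\sfd_\infty)$. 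Step 2 together with the weak convergence $(f_n)_*\mea_n\rightharpoonup\mea_\infty$ implies the weak convergence of normalized probabilities $(f_n)_*\tilde\mea_n\rightharpoonup\tilde\mea_\infty$ on the compact space $\X_\infty$, which is equivalent to $W_2$-convergence there. Hence the right-hand side above vanishes in the limit.

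\emph{Conclusion and main obstacle.} The triple $(Y_n,\iota_1^n,\iota_2^n)$ is admissible in the infimum defining $\mathbb{D}$, and Steps 2--3 yield $\mathbb{D}((\X_n,\sfd_n,\mea_n),(\X_\infty,\sfd_\infty,\mea_\infty))\to 0$. The only potentially delicate point is the promotion of weak convergence to $W_2$-convergence in a varying ambient $Y_n$; I expect this to be the main obstacle, and my proposal handles it by using the explicit graph coupling of $f_n$ to absorb all ambient-dependent distortion into the factor $\eps_n$, thereby reducing the substantive Wasserstein convergence to the fixed compact space $\X_\infty$, where weak and $W_2$ convergence coincide.
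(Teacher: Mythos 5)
Your argument is correct, and it is essentially self-contained, which is more than the paper does: in the paper this proposition is not proved at all but quoted from \cite{GMSconv} (Prop.\ 3.30), and your Steps 1--3 reconstruct the standard proof of that cited result. The gluing metric on $\X_n\sqcup\X_\infty$ does satisfy the triangle inequality precisely because of the distortion bound $|\sfd_n(x,y)-\sfd_\infty(f_n(x),f_n(y))|\le\eps_n$ (worth one line of verification, since the Borel map $f_n$ need not be continuous), the graph coupling gives $W_2\le\eps_n$ between $(\iota_1^n)_*\tilde\mea_n$ and $(\iota_2^n\circ f_n)_*\tilde\mea_n$, and on the fixed compact space $\X_\infty$ weak convergence of probability measures is indeed equivalent to $W_2$-convergence, so the remaining term vanishes. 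One small point of wording: the claim in Step 1 that $(\iota_1^n)_*\mea_n\rightharpoonup(\iota_2^n)_*\mea_\infty$ ``in duality with $C(Y_n)$'' is not a well-posed limit statement, since the ambient space $Y_n$ changes with $n$; but you never actually need it --- the two facts you use, namely $\mea_n(\X_n)\to\mea_\infty(\X_\infty)$ and $(f_n)_*\tilde\mea_n\rightharpoonup\tilde\mea_\infty$ on $\X_\infty$, follow directly from item 3) of the definition of mGH-convergence (test against the constant $1$ and normalize), so the argument stands once that sentence is rephrased. With that cosmetic fix, your choice of admissible triple $(Y_n,\iota_1^n,\iota_2^n)$ bounds $\mathbb{D}$ by $|\log(\mea_n(\X_n)/\mea_\infty(\X_\infty))|+\eps_n+W_2((f_n)_*\tilde\mea_n,\tilde\mea_\infty)\to0$, as required.
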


\begin{definition}[\emph{measure Gromov Hausdorff convergence}]\label{def:mgh}
	We say that the sequence  $(\X_n,\sfd_n,\mea_n)$ of m.m.s. \emph{measure Gromov Hausdorff}-converges (mGH-converges in short) to  a compact m.m.s.  $(\X_\infty,\sfd_\infty,\mea_\infty)$, if 
	there are  Borel maps $f_n:X_n\to X_\infty$ such that 
	\begin{itemize}
		\item[1)] $\sup_{x,y\in B_{R_n}(x_n)}|\sfd_n(x,y)-\sfd_\infty(f_n(x),f_n(y))|\leq\eps_n$,
		\item[2)] $f_n(\X_n)$ is $\eps_n$-dense in $\X_\infty$,
		\item[3)] ${f_n}_*\mea_n\rightharpoonup \mea_\infty$ in duality with $C(\X_\infty)$.
	\end{itemize}
\end{definition}
Notice that if $(\X_n,\sfd_n,\mea_n) \to (\X_\infty,\sfd_\infty,\mea_\infty)$ in the mGH-sense, then $\sfd_{GH}((\X_n,\sfd_n,) ,(\X_\infty,\sfd_\infty))\to0.$

\begin{proof}[Proof of Theorem \ref{thm:almconev1} and Theorem \ref{thm:almconev2}]
	
	The proof of Theorem \ref{thm:almconev1} is essentially the same as  Theorem \ref{thm:almconev2}, except that it stops earlier. For this reason we will prove both theorems together. The reader interested only in the proof of the first result can ignore the second half of the argument.

\bigskip
	\textbf{Proof of Theorem \ref{thm:almconev1}:}
	
	We argue by contradiction. Suppose that there exist numbers $\eps\in(0,1/3),N\in [2,\infty)$, $R_0>0$, $\gamma \ge \frac12\frac{N-2}{N-1}$, $L>0$,  a sequence $\delta_n \to 0,$ a sequence
	$(X_n,\sfd_n,\mea_n,x_n)$  of  ${\rm RCD}(-\delta_n,N)$ m.m.s., a sequence of open sets $U_n\subset X_n$, with $U_n^c \subset B_{R_0}(x_n)$, functions $v_n\in D(\bd,U_n)$ satisfying $\Delta  v_n=2N|\nabla \sqrt {v_n}|^2 $  such that:
	\begin{enumerate}
		\item[$a)$]  $\limsup_{x\to \partial U_n}v_n(x)\le 1$,  $v_n \ge 1+\eps$ in $B_{R_0}(x_0)^c\neq \emptyset$, $\,\||\nabla \sqrt v_n|\|_{L^\infty(U_n)}\le L,$ 
		\item[$b)$] $\mea_n(B_1(x_n))\in(\eps,\eps^{-1})$, 
		\item[$c)$] \eqref{eq:small int} holds (with  $v_n$, $\mea_n$, $\gamma $ and $\delta=\delta_n$),
		\item[$d)$] for every $n$ the conclusion of Theorem \ref{thm:almconev1} does not hold.
	\end{enumerate}
	
	We first observe that, since $v_n \ge 1+\eps$ in $B_{R_0}(x_0)^c$, removing the set $\{v_n\le1\}$ from $U_n$ does not effect neither the hypotheses of the theorems nor their conclusions, therefore it is not restrictive to assume that $v_n>1$ in $U_n$.
	
	Moreover by compactness, up to passing to a non relabelled subsequence, we can assume that the p.m.m. spaces $(X_n,\sfd_n,\mea_n,x_n)$  pmGH-converge  to a  to an ${\rm RCD(0,N)}$ pointed m.m.s.  $(X_\infty,\sfd_\infty,\mea_\infty,x_\infty).$ 
	
	Passing to the extrinsic approach, we consider a proper metric space $(Y,\sfd_Y)$ that realizes such convergence, in particular we identify the metric spaces $X_n$ and $\X_\infty$ with the corresponding subsets of $Y$ such that $\sfd_Y(x_n,x_\infty)\to 0$, $\mea_n \rightharpoonup \mea_\infty$ in duality with $C_{bs}(Y)$ and $\sfd^Y_H(B^{\X_n}_R(x_n),B^{\X_\infty}_R(x_\infty))\to 0$ for every $R>0.$ In particular for every $x \in \X_\infty$ there exists a sequence $y_n \in \X_n$ such that $\sfd_{Y}(x,y_n)\to0$ and conversely for every $R>0$ and every sequence $y_n  \in B^{\X_n}_{R}(x_n)$ there exists a subsequence converging to a point $x \in \X_\infty$. This two facts will be used repeatedly in the proof without further notice.
	
Define the compact sets $K_n=U_n^c\subset \X_n$ and observe that, since $K_n\subset B_{R_0}(x_n)$, they are all contained in a common ball in $Y$ centered at $x_\infty.$
	Hence from the metric version of Blaschke’s theorem (see \cite[Theorem 7.3.8]{burago}) there exists a compact set $K_\infty\subset \X_\infty$  such that, up to a subsequence, $\sfd^Y_H(K_\infty ,K_n)\to 0.$
	
	Define the open  (in the topology of $\X_\infty$) set  $U_\infty\coloneqq X_\infty \setminus K_\infty$ and for every $r>0$, define the open sets $U_n^{<r}=\{x \in X_n \ : \ \sfd_n(x,K_n)<r\}$ and $U^{<r}_\infty=\{x \in X_\infty \ : \ \sfd_\infty(x, K_\infty)<r\}$. Analogously we define the sets $U_n^{>r},U_n^{\le r},U_n^{\ge r}$ and the corresponding ones for $n=\infty.$

	From the assumptions  $\limsup_{x\to \partial U_n}v_n(x)\le1 $ and $\||\nabla \sqrt v_n|\|_{L^\infty(U_n)}\le L$, applying Proposition \ref{prop:sob to lip} it follows that
	\begin{equation}\label{eq:safe}
		v_n \le (1+RL)^2, \text{ in $B_R(x_n)\cap U_n$}, \quad \{v_n\le 1+\eps/4\}  \text{ in } U_n^{\le 4\rho}\cap U_n,
	\end{equation}
for every $R>0$ and for some small constant $0<\rho=\rho(\eps,L)<\eps$, independent of $n$.

	For every $n$ and every  $k \in \mathbb N$ with $k\ge R_0+100$, thanks to Proposition \ref{prop:goodcutoff}, there exists a cut off function  $\eta_k^n \in \test(X)$, $0\le\eta_k^n\le 1$, such that $\supp \eta_n^k\subset U_n^{>\rho/2}\cap B_{k+2}(x_n)$, $\eta_n^k=1$ on $ U_n^{\ge \rho}\cap B_{k+1}(x_n)$ and $\Lip \eta_n^k+|\Delta \eta_n^k|\le C$, for some constant $C$ depending only on $\eps,N,L$. Observe  also that we can choose $\eta_n^k$ so that $\eta_n^k=\eta_n^{k+1}$ in $B_{k+1}(x_n)$, for every $k$. 
	
	Define the functions  $v_n^k\coloneqq v_n\eta_n^k, \, \tilde v_n^k\coloneqq \sqrt{v_n}\eta_n^k$ and observe that  from \eqref{eq:safe} and the assumption $\||\nabla \sqrt v_n|\|_{L^\infty(U_n)}\le L$ they  are equi-Lipschitz, equi-bounded in $n$ and all supported on $B_{k+2}(x_n).$ Hence by Ascoli-Arzelà (see Prop. \ref{prop:Ascoli}), up to a subsequence, as $n \to +\infty$ they converge uniformly to   functions   $v_\infty^k,\tilde v_\infty^k\in C(X_\infty)$ with support in $\bar B_{k+2}(x_\infty)$.
	
	From Proposition \ref{prop:l2prop} it follows that  $v_n^k,\tilde v_n^k$ converge also strongly in $L^2$ respectively to $v_\infty^k,\tilde v_\infty^k$.
It is clear from the construction and the uniform convergence that $v_\infty^k=v_\infty^{k+1}$ on  $B_{k}(x_\infty)$. Therefore the assignment $v_\infty\coloneqq v_\infty^k$ in $B_{k}(x_\infty)$ for every $k$,  well defines a function $v_\infty \in C(\X_\infty).$ Analogously we can define $\tilde v_\infty \in C(\X_\infty)$  and we observe that $\tilde v_\infty=\sqrt{v_\infty}$ in $U_\infty^{\ge 2\rho}$. 

	We make the following  two claims:
	\begin{enumerate}
		\item[({\bf A})] $v_\infty \le 1+\eps/4$ in $U_\infty^{\le 3\rho}$,
		\item[({\bf B})]$\{v_\infty>1+\eps/4 \}\neq \emptyset$, $v_\infty \in D(\bd)$   and (up to multiplying $v_\infty$ by a positive constant $C_0$) it holds that $ \Delta v_\infty=N$, $|\nabla v_\infty|^2=2v_\infty$ $\mea$-a.e. in $\{v_\infty>1+\eps/4 \}$.
	\end{enumerate} 
We start with  claim ({\bf A}).
It is clearly enough prove that
\[
	v_\infty^k\le 1+\eps/4, \quad \text{ in $ {U_\infty^{\le 3\rho}}$}
\]
for any $k$.
Pick  any $y \in U_\infty^{\le  3\rho}$, then there exists a sequence $y_n \in X_n$ such that $y_n\to y$ in $Y$ and by uniform convergence $v_n^k(y_n)\to v_\infty^k(y)$. Moreover it must hold that $\sfd_Y(y_n,K_n)<4\rho$ if $n$ is big enough. If $y_n \notin U_n$, then $v_n^k(y_n)=0$ by construction. If instead $y_n \in U_n$ from the second in \eqref{eq:safe} and the fact that $\eta_k^n\le 1$ we deduce that $v_n^k(y_n)\le v_n(y_n)\le 1+\eps/4.$ Combining these two observations we get claim ({\bf A}).

We pass to the proof of claim ({\bf B}). It is easy to check, since $v_n\ge 1$  in $U_n$ (recall the observation made at the beginning of the proof), that  $\sup_n \|\Delta v_{n}^k\|_{L^\infty(X_n)}<+\infty$ and $\sup_n \|\Delta \tilde v_{n}^k\|_{L^\infty(X_n)}<+\infty.$  Moreover by Bishop-Gromov inequality we have $\sup_n\mea_n(B_R(x_n))<+\infty$, for every $R\ge 1$. Therefore $\sup_n \|\Delta v_{n}^k\|_{L^2(X_n)},$ $\sup_n\|\Delta \tilde v_{n}^k\|_{L^2(X_n)}<+\infty$ and applying Theorem \ref{thm:l2stab} we deduce that $v_\infty^k,\tilde v_\infty^k \in D(\Delta) $, that $\Delta v_n^k$  converges to $\Delta v_\infty^k$ weakly in $L^2$ and that $|\nabla \tilde v_n^k|\to |\nabla \tilde v_\infty^k|$ strongly in $L^2$.  From the locality of Laplacian follows that $v_\infty \in D(\bd).$
Additionally, since $L'\coloneqq\sup_n \||\nabla \tilde v_n^k|\|_{L^\infty}<+\infty$, applying a) of Proposition \ref{prop:l2prop} (with $\phi(t)=(t\wedge L')^{\alpha}$) we also deduce that $|	 \nabla \tilde v_n^k|^{\alpha}\to|\nabla \tilde v_\infty^k|^{\alpha} $ strongly in $L^2,$ for every $\alpha>0.$ We make the intermediate claim that
	\begin{equation}\label{eq:harmoniclimit}
		\Delta v_\infty= N |\nabla \sqrt {2v_\infty}|^2, \quad \mea\text{-a.e. in $U_{\infty}^{>2\rho}\cap B_{k}(x_\infty)$}.
	\end{equation}
In particular from Corollary \ref{cor:div estimate} this implies that $|\nabla \sqrt{v_\infty}|^{\gamma} \in \W_{\loc}(U_{\infty}^{>2\rho}\cap B_{k}(x_\infty))$.
	To prove \eqref{eq:harmoniclimit} pick any $\phi \in \LIP_c(U_{\infty}^{>2\rho}\cap B^{\X_\infty}_{k}(x_\infty))$. Consider also a function $\eta\in \LIP(Y)$ such that $\eta\equiv 1$ in $\supp \, \phi$, $\sfd_Y(\supp \eta, K_\infty)>2\rho$ and $\supp \eta \subset B^Y_k(x_\infty)$.
	Moreover, since $\sfd_H^Y(K_n,K_\infty )\to 0,$ for $n$ big enough  we have $\{y \ : \ \sfd_Y(y,K_\infty )>2\rho \}\subset \{y \ : \ \sfd_Y(y,K_n)>\rho \}$ and analogously, since $x_n\to x_\infty$ in $Y$, for $n$ big enough $B^Y_{k}(x_\infty)\subset B^Y_{k+1}(x_n).$ Therefore $\supp \eta \cap X_n \subset U_n^{>\rho}\cap B^{\X_n}_{k+1}(x_n)$ for $n$ big enough. We now extend $\phi$ to a function $\phi'\in \LIP(Y)$ and define $\bar \phi=\eta \phi' \in \LIP_{bs}(Y).$ Since by the locality of the Laplacian and gradient we have $\Delta v_n^k=\Delta v_n=N|\nabla \sqrt{2 v_n}|^2=2N|\nabla \tilde  v_n^k|^2$ $\mea_n$-a.e. in $\supp \bar \phi$, we can compute 
	\begin{align*}
		&\int \phi \Delta v_\infty^k \d \mea_\infty=\int \bar \phi \Delta v_\infty^k \d \mea_\infty =\lim_n \int \bar \phi \Delta v_n^k \d \mea_n=\lim_n \int  \bar \phi2N|\nabla \tilde  v_n^k|^2\d \mea_n\\
		&=\int \bar \phi 2 N |\nabla \tilde v_\infty^k|^2\d \mea_\infty=\int \bar \phi 2 N |\nabla \sqrt v_\infty|^2\d \mea_\infty.
	\end{align*}
This and the locality of the Laplacian prove \eqref{eq:harmoniclimit}.

For every $n$ and every $k$ as above we consider a cut-off function $\xi_{n}^k\in \LIP(\X_n)$ analogous to $\eta_n^k$ but with smaller support, more precisely we require that  $0\le\xi_k^n\le 1$,  $\supp \xi_n^k\subset U_n^{>\rho}\cap B_{k+2}(x_n)$, $\xi_n^k=1$ on $ U_n^{\ge 2\rho}\cap B_{k+1}(x_n)$ and $\Lip \xi_n^k\le C'$, for some constant $C'$ depending only on $\eps,N,L$. Up to a subsequence, from Ascoli-Arzelà we have that $\xi_n^k \to \xi_\infty^k$ uniformly, for some $\xi_\infty^k\in \LIP(\X_\infty)$ satisfying $\xi_\infty^k=1$ in $U_\infty^{>2\rho}\cap B^{\X_\infty}_{k}(x_\infty).$ In particular the same convergence holds also strongly in $L^2$. 

We set $w_{n,k}\coloneqq \xi_{n}^k|\nabla \sqrt{v_n}|^{\gamma}\in \W(\X_n)$ and $w_{\infty,k}\coloneqq \xi_{\infty}^k|\nabla \tilde v_\infty^k|^{\gamma} \in \W(\X_\infty)$ and observe that by construction and the locality of the gradient $w_{n,k}=\xi_{n}^k|\nabla \tilde v_n^k|^\gamma$ $\mea_n$-a.e.. In particular, since we proved that $|\nabla \tilde v_n^k|^\gamma \to |\nabla \tilde v_\infty^k|^{\gamma} $ strongly in $L^2$ and recalling $\sup_n \||\nabla \tilde v_n^k|\|_{L^\infty}<+\infty$, we have from Proposition \ref{prop:l2prop}  that $w_{n,k}\to w_{\infty,k}$ strongly in $L^2$.

Combining \eqref{eq:small int} with the first in \eqref{eq:safe} and $\Lip \xi_{n,k}\le C'$ we deduce that $\sup_n \||\nabla w_{n,k}|\|_{L^2(\mea_n)}<+\infty$. We now apply Lemma \ref{lem:lsc energy local} with the open set $A=\{\sfd_Y({K_\infty },.)>3\rho \}\cap B^Y_{k}(x_\infty)$  that, combined with the observation that $A\cap X_n \subset   U_n^{\ge 2\rho}\cap B_{k+1}(x_n)$ for $n$ big enough, gives
	\begin{align*}
		&\int_{U_{\infty}^{>3\rho}\cap B_{k}(x_\infty)} |\nabla |\nabla \tilde v_\infty^k|^{\gamma} |^2\, \d \mea_\infty \le \liminf_n \int_{U_n^{\ge 2\rho}\cap B_{k+1}(x_n)}  
		|\nabla |\nabla \sqrt{ v_n}|^{\gamma} |^2\, \d \mea_n\\
		&\overset{\eqref{eq:safe}}{\le} (1+RL)^{2N-4}\liminf_n \int_{U_n}  
		v_n^{2-N}|\nabla |\nabla \sqrt{ v_n}|^{\gamma} |^2\, \d \mea_n\overset{\eqref{eq:small int}}{=}0.
	\end{align*}
	Therefore, from the locality of the gradient and the arbitrariness of $k$ we obtain that $|\nabla |\nabla \sqrt{ v_\infty}|^{\gamma} |=0 $ $\mea_\infty$-a.e. in $U_{\infty}^{>3\rho}.$ 
	Consider now the open  set $\{v_\infty >1+\eps/4\}\subset U_\infty^{>3\rho}$. Observe that from the assumption $v_n \ge 1+\eps$ in $B_{R_0}(x_n)^c\neq \emptyset $  and \eqref{eq:safe} we deduce that for every $n$ there exists $y_n \in \X_n\cap B_{2R_0}(x_n)\cap U_n^{>4\rho}$ such that $v_n^k(y_n)=v_n(y_n)\ge 1+\eps$ for every $k$, therefore by compactness and uniform convergence we deduce that $\{v_\infty >1+\eps/4\}\neq \emptyset$. From ({\bf A}) it holds $\partial \{v_\infty >1+\eps/4\}=\{v_\infty=1+\eps/4\}$, in particular since $v_\infty ^{(2-N)/2}$ ( $\ln (v_\infty^{-1/2})$ if $N=2$) is harmonic in $U_\infty^{>2\rho}$ (recall \eqref{eq:harmoniclimit}), from the maximum principle it follows that  the connected components of $\{v_\infty >1+\eps/4\}$ are unbounded. Let $U'$ be one of these components. It follows that $|\nabla \sqrt{v_\infty}|\equiv C$ $\mea$-a.e. in $U'$ for some constant $C$, that must be positive. Indeed if $C=0$, we would have that $v_\infty$ is constant in $U'$, but since $\partial U'\subset \{v_\infty=1+\eps/4\}$, $v_\infty$ should be constantly equal to $1+\eps/4$, which contradicts $U'\subset \{v_\infty >1+\eps/4\}.$ Finally the assumption $v_n \ge 1+\eps$ in $B_{R_0}(x_n)^c$  ensures that $\X_\infty\setminus U'\subset B_{2R_0}(x_\infty)$.  It follows that the function $(2C^2)^{-1}v_\infty\restr{U'}$ satisfies the hypotheses of Theorem \ref{thm:functional cone} with $U=U'$.
	In particular $\X_\infty$ has Euclidean volume growth and from Corollary \ref{cor:ends} it has one end, from which we deduce that $\{v_\infty >1+\eps/4\}$ is connected. Therefore repeating the above argument for $U'=\{v_\infty >1+\eps/4\}$ proves claim ({\bf B}) with $C_0\coloneqq(2C^2)^{-1}$.

	Combining ({\bf A}) and ({\bf B}), from Theorem \ref{thm:functional cone} we deduce the existence of an ${\rm RCD}(0,N)$ $N$-cone $(Y',\sfd_{Y'},\mea_{Y'})$ with vertex $O_{Y'}$ and a bijective measure preserving local isometry $T:  \{1+\eps/4<v_\infty \}\to \{ \ r<\sfd_{Y'}(O_{Y'},.) \}  $, for some $r>0$ which also satisfies (recall \eqref{eq:explicit})
	\begin{equation}\label{eq:vgood}
		\sqrt{v_\infty}(x)=\lambda \sfd_{Y'}(O_{Y'},T(x)), \quad \text{ for every }x \in \{	v_\infty > 1+\eps/4\},
	\end{equation}
where $\lambda\coloneqq (2C_0)^{-1/2}$ ($C_0$ being the constant in ({\bf B})).
	We claim that $\lambda\le \sup_n \||\nabla \sqrt{v_n}|\|_{L^\infty}$, which in particular gives that $r\ge\lambda^{-1}(1+\eps/4)\ge L^{-1}.$
Indeed from \eqref{eq:vgood}, the fact that $Y'$ is geodesic and the fact that $T$ is a local isometry we deduce that for every $x \in \{	v_\infty > 1+\eps/4\}$ and every $r'>0$ small enough, there exists $y \in B^{\X_\infty}_{r'}(x)$ such that $|\sqrt{v_\infty(x)}-\sqrt{v_\infty(y)}|=\lambda \sfd_\infty(x,y),$. The claim now follows from uniform convergence and the Sobolev to Lipschitz property.
	 Since, as observed above, $\X_\infty\setminus\{1+\eps/4<v_\infty \}^c\subset B_{2R_0}(x_\infty)$, the conclusion of Theorem \ref{thm:almconev1} holds for every sufficiently large $n$, which contradicts item $d)$ above.
	\textbf{This concludes the proof of Theorem \ref{thm:almconev1}}.
	
	\bigskip
	\textbf{Proof of Theorem \ref{thm:almconev2}:}
	We argue by contradiction exactly as above, except that we substitute the assumption $d)$ with the following:
	\begin{enumerate}
		\item[$d')$] there exist a number $\eta>0$ and two sequences $(t_1^n),(t_2^n)\subset (1+\eps+\eta ,\eps^{-1})$ with $t_1^n\le t_2^n$, such that $\{\sqrt v_n\le t_2^n+2\eta\}\subset B_{R_0}(x_n)$ and  the conclusion of Theorem \ref{thm:almconev2} is false  with $t_1^n,t_2^n$,  for every $n$.
	\end{enumerate}
	Since assumption $d)$ was not used until the very end of the proof of Theorem \ref{thm:almconev1} above, we can, and will, repeat all the first part of the proof up to this point together with all the constructions and objects introduced along the argument.
	
	 Up to passing to a subsequence we can  assume that $t_i^n \to t_i^\infty \in [1+\eps+\eta,\eps^{-1}]$, $i=1,2.$
	
	It will be useful later to remark that

	\begin{equation}\label{eq:vn}
		\sqrt{v_n}=\tilde v_n^k\, \quad \text{ in } \{1+\eps/2\le \sqrt{v_n}\le t_2^n+2\eta\}, \text{   for every $k,$}
	\end{equation}
which follows from the  second in \eqref{eq:safe} and the assumption $\{\sqrt{v_n}\le t_2^n+2\eta\}\subset B_{R_0}(x_n).$
	
	We claim that
	\begin{equation}\label{eq:levelset conv}
		a_n\coloneqq\sup_{s \in(t_1^\infty-\eta,t_2^\infty+\eta )} \sfd_H^Y(\{\sqrt{v_n}=s\},\{\sqrt{v_\infty}=s\})\to 0, \quad \text{ as }n \to +\infty. 
	\end{equation}
(We point out that this does not follows form the uniform convergence, indeed we need first to prove some regularity of the level sets of $\sqrt{v_\infty}.$)
	The key observation is that for every $\eps'>0$ there exists $\delta'>0$ such that for every $t \in [1+\eps,2\eps^{-1}]$ it holds
	\begin{equation}\label{eq:good level set}
		B^{X_\infty}_{\eps'}(x)\cap \{\sqrt{v_\infty}=t'\}\neq \emptyset, \quad \forall x \in \{\sqrt{v_\infty}=t\}, \forall  t' \in [t-\delta',t+\delta'].
	\end{equation}
This is an immediate consequence of  \eqref{eq:vgood}, the fact that $T$ is a local isometry and the fact that, since  $Y'$ is a cone, for every $y' \in Y'$ there exists a ray emanating from $O_{Y'}$ and passing through $y'$.

	Suppose now that \eqref{eq:levelset conv} does not hold. Then, up to a passing to non relabelled subsequence, there exists a sequence $(s_{n})\subset (t_1^\infty-\eta,t_2^\infty-\eta )$ and $\eps'>0$ such that  $s_{n}\to \bar s \in [t_1^\infty-\eta,t_2^\infty+\eta] \subset [1+\eps,2\eps^{-1}]$ and
	$$
	\sfd_H^Y(\{\sqrt{v_n}=s_{n}\},\{\sqrt{v_\infty}=s_{n}\})>\eps', \quad \forall n.
	$$
	Therefore, up to passing to a further subsequence, there exists either a sequence $y_{n}\in  \{\sqrt v_{n}=s_{n}\}$ such that $\sfd_Y(y_{n},\{\sqrt{v_\infty}=s_{n}\})>\eps'$, for all $n$, or  a sequence $y_{n}\in \{\sqrt{v_\infty}=s_{n}\}$ such that $\sfd_Y(y_{n},\{v_{n}=s_{n}\})>\eps'$, for all $n$. In the first case, since by  assumption  $\{\sqrt v_n=s_n\}\subset \{t_1^n-2\eta \le \sqrt{v_n}\le t_2^n+2\eta\}\subset B_{R_0}(x_n)$ for $n$ big enough, up to passing to a further subsequence we have that $y_{n} \to y_\infty \in \X_\infty$ and by uniform convergence (recall \eqref{eq:vn}) that $\sqrt{v_\infty}(y_\infty)=\bar s$. In particular $\sfd(y_\infty,\{\{\sqrt{v_\infty}=s_{n}\}\})>\eps'/2$ for every $n$ big enough, which contradicts \eqref{eq:good level set}. In the second case, again up to  a subsequence and from the continuity of $\sqrt{v_\infty}$, we have that $y_{n} \to y_\infty \in \{\sqrt{v_\infty}=\bar s\}$. 
	Moreover from \eqref{eq:good level set} it follows the existence of a $\delta'>0$ such that there exist  $y_{\infty}^+,y_\infty^-\in B^{\X_\infty}_{\eps'/4}(y_\infty)$ such that $\sqrt{v_\infty}(y_{\infty}^\pm)=\bar t\pm\delta'$. Finally, from uniform convergence (recall again \eqref{eq:vn}), for every $n$ big enough there exist $y_{n}^+, y_{n}^-\in \X^{n}$ such that $\sfd_Y(y_{n}^\pm,y_{\infty}^\pm)<\eps'/4 $ and $|v_{n}(y_{n}^\pm)-(\bar s\pm \delta')|<\delta'/2.$ In particular by the continuity of $v_{n}$, for every $k$ big enough, there exists $z_{n}$ which lies on a geodesic connecting $y_{n}^+$ and $ y_{n}^-$ such that $z_{n}\in\{v_{n}=s_{n}\}$. From the triangle inequality it follows that $\sfd(z_n,y_n)<\eps'$ if $n$ is big enough, which is a contradiction since $	y_n \in \{\sqrt{v_\infty} =s_{n}\}$. 

	From \eqref{eq:levelset conv} it follows that 
		$\sfd_H^Y(\{t_1^n\le \sqrt{v_n}\le t_2^n\},\{t_1^n\le \sqrt{v_\infty}\le t_2^n\})\to 0$ as $n \to +\infty$.
	Moreover it is clear that $\sfd_H^Y(\{t_1^n\le \sqrt{v_\infty}\le t_2^n\},\{t_1^\infty\le \sqrt{v_\infty}\le t_2^\infty\})\to 0$  as $n \to +\infty$  (recall \eqref{eq:vgood}), therefore 
	\begin{equation}\label{eq:sublevel conv}
		b_n\coloneqq\sfd_H^Y(\{t_1^n\le \sqrt{v_n}\le t_2^n\},\{t_1^\infty\le \sqrt{v_\infty}\le t_2^\infty\})\to 0, \quad \text{ as $n \to +\infty$.}
	\end{equation}
In particular, since both sets are compact,  we can build a Borel map $f_n : \{t_1^n\le \sqrt{v_n}\le t_2^n\}\to \{t_1^\infty\le \sqrt{v_\infty}\le t_2^\infty\}$ that has $b_n$-dense image and such that $\sfd_Y(x,f_n(x))\le 2b_n$ for all $x \in \{t_1^n\le \sqrt{v_n}\le t_2^n\}.$

We claim that
\begin{equation}\label{eq:meas conv}
	{f_n}_*\left(\mea_n \restr{\{t_1^n\le \sqrt v_n\le t_2^n\}}\right)\rightharpoonup \mea_\infty \restr{\{t_1^\infty\le \sqrt v_\infty\le t_2^\infty\}}, \quad \text{in duality with }C(\{t_1^\infty\le \sqrt v_\infty\le t_2^\infty\}).
\end{equation}
From the fact that $\sfd_Y(.,f_n(.))\le 2b_n$ and using dominated convergence  it is enough to show that 
$$\mea_n \restr{\{t_1^n\le \sqrt v_n\le t_2^n\}}\rightharpoonup \mea_\infty \restr{\{t_1^\infty\le \sqrt v_\infty\le t_2^\infty\}}, \quad \text{in duality with }C_{bs}(Y).$$
To prove the above we first define for every $\delta>0$ the closed set $C_{\delta}\coloneqq\{y \in Y \ : \ \sfd_Y(y,\{\sqrt{v_\infty}=t_1^\infty\}\cup \{\sqrt{v_\infty}=t_2^\infty\})\le\delta\}$ and observe that for every $\eps'>0$ there exists $\delta'$ such that
\begin{equation}\label{eq:vanish annulus}
			\mea_{\infty}(C_{\delta'})<\eps'.
\end{equation}
This can be seen using the fact that $T$ is a measure preserving local isometry, the Bishop-Gromov inequality and formula \eqref{eq:vgood}.

We also define for any $\delta>0$ the sets  $A_\delta\coloneqq\{y \in Y \ : \ \sfd_Y(y,X_\infty\setminus \{t_1^\infty\le \sqrt{v_\infty}\le t_2^\infty\})\ge \delta\}$ and $B_\delta\coloneqq \{y\in Y\ : \ \sfd_Y(y, \{t_1^\infty\le \sqrt{v_\infty}\le t_2^\infty\}\le \delta)\}$.  We claim that  $ B_{\delta_1}\setminus A_{\delta_2} \subset C_{2\delta_1+2\delta_2},$ for every $\delta_1,\delta_2>0.$
To see this let $y \in B_{\delta_1}\setminus A_{\delta_2}$, which implies $\sfd(y,\{t_1^\infty\le \sqrt{v_\infty}\le t_2^\infty\})\le \delta_1,\sfd(y,X_\infty\setminus \{t_1^\infty\le \sqrt{v_\infty}\le t_2^\infty\})< \delta_2$. Taking two points $y_1,y_2\in \X_\infty$ which realize these two distances we must have that $\sfd_\infty(y_1,y_2)=\sfd_Y(y_1,y_2)\le \delta_1+\delta_2$. Moreover  any geodesics in $\X_\infty$ from $y_1$ to $y_2$, by the continuity of $v_\infty$, must intersects $\{\sqrt{v_\infty}=t_1^\infty\}\cup \{\sqrt{v_\infty}=t_2^\infty\},$ from which the claim follows.

We finally fix $\phi \in C_{bs}(Y)$ and $\eps'>0$ arbitrary. Let $\delta'=\delta'(\eps')$ be the one given by \eqref{eq:vanish annulus} and pick   $\eta\in C_b(Y)$ such that $0\le \eta\le 1$, $\eta=1$ in $A_{\delta'/4}$ and $\supp(\eta)\subset A_{\delta'/8}.$ Observe that by uniform convergence (recall also \eqref{eq:vn}), for $n$ big enough we have that $	A_{\delta'/8}\cap\supp(\phi)\cap  X_n\subset \{t_1^n\le \sqrt{v_n}\le t_2^n\}\subset B_{\delta'/4},$ therefore
\begin{align*}
\limsup_{n} &\left |\int \phi\,\d \mea_n\restr{\{t_1^n\le \sqrt{v_n}\le t_2^n\}}-\int \phi\,\d \mea_\infty\restr{\{t_1^\infty\le \sqrt{v_\infty}\le t_2^\infty\}} \right |\le \limsup_n  \left |\int \phi\eta \,\d \mea_n-\int \phi\eta\,\d \mea_\infty \right |+\\
&+ \limsup_n \|\phi\|_\infty \mea_n(B_{\delta'/4}\setminus A_{\delta'/4})+\|\phi\|_\infty \mea_\infty (\{t_1^\infty\le \sqrt{v_\infty}\le t_2^\infty\}\setminus A_{\delta'/4})\\
&\le  \limsup_n \mea_n(C_{\delta'})+\mea_\infty(C_{\delta'})\overset{\eqref{eq:vanish annulus}}{\le}2\eps'.
\end{align*}
From the arbitrariness of $\eps'$ and $\phi \in C_{bs}(Y)$ the convergence in \eqref{eq:meas conv} follows.  

We now pass to the study of the behaviour of $f_n$ with respect to the intrinsic metrics. More precisely for every $\tau>0$  we set $A^\tau_n\coloneqq\{t_1^n-\tau< \sqrt{v_n}<t_2^n+\tau\}$, $A^\tau_\infty\coloneqq\{t_1^\infty-\tau< \sqrt{v_\infty}<t_2^\infty+\tau\}$, $A^\tau_{\infty,n}\coloneqq\{t_1^n-\tau< \sqrt{v_\infty}<t_2^n+\tau\}$ and denote by $\sfd^\tau_n, \sfd^\tau_\infty,\sfd^\tau_{\infty,n}$ the intrinsic metrics on $A_n^\tau,\, A_\infty^\tau,A^\tau_{\infty,n}$  respectively  (see the beginning of this section). It is clear that the metrics $\sfd^\tau_n, \sfd^\tau_\infty,\sfd^\tau_{\infty,n}$  induce on the sets $\{t_1^n\le \sqrt{v_n}\le t_2^n\},\{t_1^\infty\le \sqrt{v_\infty}\le t_2^\infty\},\{t_1^n\le \sqrt{v_\infty}\le t_2^n\}$ the same topology induced by the metrics $\sfd_n,\sfd_\infty$.

Notice also that, from \eqref{eq:vgood} and since $T$ is a local isometry on $\{\sqrt v_\infty>1+\eps/4\}$,
\begin{equation}\label{eq:linkcone}
	(\{s\le \sqrt{v_\infty}\le t\},\sfd_\infty^{s,t,\tau})  \text{ is isometric to } (\{s\le \lambda \sfd_{O_{Y'}}\le t\},\sfd_{Y'}^{s,t,\tau}), \,\,\, \forall \tau \in (0,\eta),\, \forall\,t\ge s > 1+\eps+\eta ,
\end{equation}
where $\sfd_\infty^{s,t,\tau}$ and $\sfd_{Y'}^{s,t,\tau}$ are the intrinsic metrics respectively  on $\{s-\tau< \sqrt{v_\infty}<t+\tau\}$ and  on $\{s-\tau< \lambda \sfd_{O_{Y'}}<t+\tau\}$, the isometry being $T$ itself, which also measure preserving. In particular there exists a constant $D>0$ such that for every $\tau\in(0,\eta)$ it holds $\diam(\{t_1^\infty\le \sqrt{v_\infty}\le t_2^\infty\},\sfd_\infty^{\tau})\le D.$

Observe that from \eqref{eq:vn} we  deduce that the functions $\sqrt{v_n}, \sqrt{v_\infty}$ are equi-Lipschitz on $\{t_1^n-\eta\le \sqrt{v_n}\le t_2^n+\eta\}$, $\{t_1^\infty-\eta\le \sqrt{v_\infty}\le t_1^\infty+\eta\}$  and we fix $M\ge 2$ a bound on their Lipschitz constant. 

Putting $\eps_n \coloneqq2\max(b_n,a_n)$ (where $a_n,b_n$ are the ones in \eqref{eq:levelset conv} and \eqref{eq:sublevel conv}) it is not restrictive to assume both that $\sqrt{\eps_n}<\eta/(2M)$ and that $|t_1^n-t_1^\infty|,|t_2^n-t_2^\infty|<\eps_n$, for every $n$.

Pick any $x_0,x_1 \in \{t_1^n\le \sqrt{v_n}\le t_2^n\}$ and set $y_i=f_n(x_i)\in\{t_1^\infty\le \sqrt{v_\infty}\le t_2^\infty\}$, $i=0,1$, where $f_n$ was defined above and recall that $\sfd_Y(x_i,f_n(x_i))\le \eps_n$. Consider an absolutely continuous curve $\gamma: [0,1]\to \overline{A_\infty^{\eps-2M\sqrt{\eps_n}}}$ such that  $\gamma(i)=y_i$, $i=0,1$ and $L(\gamma)= \sfd_{\infty}^{\eta-2M\sqrt{\eps_n}}(y_0,y_1)\le D.$ Letting $N_n\coloneqq\lfloor 2D/\sqrt{\eps_n}\rfloor$, there exist $0=t_0<t_1<...<t_{N_n}=1$ such that $\sfd_\infty(\gamma(t_i),\gamma(t_{i+1})\le L(\gamma\restr{[t_i,t_{i+1}]}))\le  L(\gamma)/N_n,$ for every  $i=0,...,N_n-1$. Thanks to \eqref{eq:levelset conv} and the $M$-Lipschitzianity of $\sqrt{v_n}$  there exist
 points  $x_i\in A_n^{\eta-M\sqrt{\eps_n}}$ $i=1,...,N_n-1$, such that  $\sfd_Y(x_i,\gamma(t_i))<\eps_n$, $i=1,...,N_n-1$, and in particular   $|\sfd_n(x_i,x_{i+1})-\sfd_\infty(\gamma(t_i),\gamma(t_{i+1})|\le 2\eps_n,$ for every $i=0,...,N_n.$ Therefore  $\sfd_n(x_i,x_{i+1})< L(\gamma)/N_n+2\eps_n\le \sqrt{\eps_n}$, and thus any geodesic (in $\X_n$) $\gamma_i$ from $x_i$ to $x_{i+1}$  has image  contained in $A_n^{\eta}$. We define $\bar \gamma:[0,1]\to A_n^{\eta}$ as the concatenation of all the geodesics $\gamma_i$ (appropriately reparametrized), in particular 
\begin{equation}\label{eq:bound1}
\sfd_n^{\eta}(x_0,x_1)\le L(\bar \gamma)\le N_n \left(\frac{L(\gamma)}{N_n}+2\eps_n\right)\le \sfd_\infty^{\eta-2M\sqrt{\eps_n}}(f_n(x_0),f_n(x_1))+4D\sqrt{\eps_n}.
\end{equation}
Conversely pick  an absolutely continuous curve $\bar \gamma: [0,1]\to \overline {A_n^{\eta}}$ such that $\bar \gamma(i)=x_i$, $i=0,1$ and $L(\bar \gamma)= d_n^{\eta}(x,y)\le 2D,$ which exists thanks to \eqref{eq:bound1}. Arguing exactly as above we can construct an absolutely continuous curve $ \gamma: [0,1]\to A_\infty^{\eta+2M\sqrt{\eps_n}}$ such that $\gamma(i)=y_i$, $i=0,1$ and 
$$\sfd_\infty^{\eta+2M\sqrt{\eps_n}}(f_n(x_0),f_n(x_1))\le L( \gamma)\le \sfd_n^{\eta}(x_0,x_1)+4D\sqrt{\eps_n}.$$

Recalling \eqref{eq:linkcone} we are in position to apply Lemma \ref{lem:conelemma1} and deduce that $\sfd_\infty^{\eta\pm 2M\sqrt{\eps_n}}(f_n(x_0),f_n(x_1))\to \sfd_\infty^{\eta}(f_n(x_0),f_n(x_1))$ as $n\to +\infty$, uniformly in $x_0,x_1 \in \{t_1^n\le \sqrt{v_n}\le t_2^n\}$. Moreover again from Lemma \ref{lem:conelemma1} we have that the image of $f_n$ is $cb_n$-dense in $\{ t_1^\infty\le \sqrt v_\infty\le t_2^\infty\}$ w.r.t. the metric $\sfd_\infty^{\eta}$, for some constant $c$ independent of $n$.

 Combing this with the above inequalities and \eqref{eq:meas conv} we obtain
\begin{equation}\label{eq:mgh1}
\left (\{t_1^n\le \sqrt{v_n}\le t_2^n\},\sfd_n^{\eta},\mu_n  \right)\overset{mGH}{\rightarrow} \left ( \{ t_1^\infty\le \sqrt{v_\infty}\le t_2^\infty\},\sfd_{\infty}^{\eta},\mu_\infty  \right),
\end{equation}
with   $\mu_n\coloneqq\mea_n\restr{\{t_1^n\le \sqrt v_n\le t_2^n\}}$, $\mu_\infty\coloneqq\mea_\infty\restr{\{t_1^\infty\le \sqrt v_\infty\le t_2^\infty\}}$ and where, if $t_1^\infty=t_2^\infty$, the convergence is intended only in the GH-sense.
Finally from Lemma \ref{lem:conelemma1} and recalling \eqref{eq:linkcone} we have that $\left (\{t_1^n\le\sqrt{v_\infty}\le t_2^n\},\sfd_{\infty,n}^{\eta}  \right)\overset{GH}{\rightarrow} \left( \{ t_1^\infty\le \sqrt{v_\infty }\le t_2^\infty\},\sfd_{\infty}^{\eta}  \right),$ and that such convergence can be realized by  a map  $g_n:  \{t_1^n\le \sqrt{v_\infty }\le t_2^n\} \to \{ t_1^\infty\le \sqrt{v_\infty }\le t_2^\infty\} $ that (if $t_1^n\neq t_2^n$) also satisfies ${g_n}_*\left(\mea_\infty\restr{\{t_1^n\le\sqrt{v_\infty }\le t_2^n\}}\right)=\frac{(t_2^\infty)^N-(t_1^\infty)^N)}{((t_2^n)^N-(t_1^n)^N)}\mea_\infty \restr{\{ t_1^\infty\le \sqrt{v_\infty }\le t_2^\infty\}}$.  In particular
\begin{equation}\label{eq:mgh2}
\left (\{t_1^n\le\sqrt{v_\infty}\le t_2^n\},\sfd_{\infty,n}^{\eta},\mu_{\infty,n}  \right)\overset{mGH}{\rightarrow} \left( \{ t_1^\infty\le \sqrt{v_\infty }\le t_2^\infty\},\sfd_{\infty}^{\eta},\mu_\infty  \right),
\end{equation}
with $\mu_{\infty,n}\coloneqq\mea_\infty\restr{\{t_1^n\le \sqrt {v_\infty}\le t_2^n\}}$ and where in the case $t_1^\infty=t_2^\infty$ the convergence is intended only in the GH-sense. 

If $t_1^\infty=t_2^\infty$, combining \eqref{eq:mgh1} with \eqref{eq:mgh2}, and recalling \eqref{eq:linkcone}, we obtain that \eqref{eq:closeannuli1} holds for $n$ big enough. Therefore, since we are assuming $d')$ (see above), up to a subsequence, we must have that  either \eqref{eq:closeannuli2} is  false every $n$ big enough or the last claim about $Y$ in Theorem \ref{thm:almconev2} is false.
The latter cannot happen, indeed in the first half of the proof we proved precisely that Theorem \ref{thm:almconev1} holds with the same $Y$ and with the same $\eps,L, R_0.$
Hence we must be  n the first case  and in particular $t_1^\infty+\eps<t_2^\infty$ and we can apply Proposition \ref{prop:GH to D} together with \eqref{eq:mgh1} and \eqref{eq:mgh2} and  obtain that \eqref{eq:closeannuli2} holds for $n$ big enough, which is a contradiction.  \textbf{This concludes the proof of Theorem \ref{thm:almconev2}}.
\end{proof}

\section{Rigidity and almost rigidity from the monotonicity formula}\label{sec:rigidity from monotonicty}

\subsection{Rigidity}
The following rigidity result follows almost immediately combining the explicit lower bound on the derivative of $U_{\beta}'$ in \eqref{eq:U'positive} and Theorem \ref{thm:functional cone} about ``from outer functional cone to outer metric cone".
\begin{theorem}\label{thm:rigidity}
	Let $\X,\, \Omega,\, u , U_\beta$, with $\beta >\frac{N-2}{N-1}$, be as in Theorem \ref{thm:monotonicity} and  	suppose that  $U^{'-}_{\beta}(t_0)=0$ for some $t_0\in (0,1]$.Then the hypotheses of Theorem \ref{thm:functional cone},  and in particular also its conclusions, are satisfied choosing $\bu=Cu^\frac{2}{2-N}$, $\bu_0=Ct_0^\frac{2}{2-N}$ and $U=\{u<t_0\}$, for some constant $C>0$.
\end{theorem}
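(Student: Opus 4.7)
The plan is to deduce from $U^{'-}_{\beta}(t_0)=0$ and the pointwise bound \eqref{eq:U'positive} that, on $W:=\{u<t_0\}$, the auxiliary function $v:=u^{1/(2-N)}$ has constant gradient norm $\lambda>0$, and then to check by a direct chain-rule computation that $\bu:=Cv^2$ with $C:=1/(2\lambda^2)$ fulfils $\Delta\bu=N$ and $|\nabla\sqrt{2\bu}|^2=1$ on $W$. Evaluating \eqref{eq:U'positive} at $t=t_0$ and using $C_{\beta,N}>0$ (granted by $\beta>(N-2)/(N-1)$) together with $u>0$ in $\Omega$ forces $|\nabla|\nabla v|^{\beta/2}|=0$ $\mea$-a.e.\ in $W$. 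Since $|\nabla v|^{\beta/2}\in\W_{\loc}(\Omega)$ by Corollary \ref{cor:div estimate}, the constancy criterion \eqref{eq:loc constant}, applied on each connected component of $W$, yields that $|\nabla v|^{\beta/2}$, hence $|\nabla v|$, is locally constant on $W$.

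To upgrade to a single constant I would show $W$ is connected. The hypothesis $\liminf_{x\to\partial\Omega}u\ge 1$ and the decay $u\to 0$ at infinity imply $W\subset\Omega$ has bounded complement in $\X$ (recall that $\Omega^c$ is bounded by Corollary \ref{cor:ends}) and compact boundary $\{u=t_0\}$. Any bounded connected component of $W$ would be relatively compact in $\Omega$ with $u\equiv t_0$ on its boundary, violating the strong maximum principle (Proposition \ref{prop:maxprinc}) since $u<t_0$ inside. On the other hand, by Corollary \ref{cor:ends} the space $\X$ has exactly one end, so for $R$ sufficiently large $B_R(x_0)^c$ is connected and contained in $W$; any second unbounded component of $W$ would be disjoint from this set and hence bounded, a contradiction. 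Thus $W$ is connected, $|\nabla v|\equiv\lambda$ $\mea$-a.e.\ in $W$ for some $\lambda\ge 0$, and $\lambda>0$ because otherwise $v$, and so $u$, would be constant on $W$, which by continuity at $\partial W=\{u=t_0\}$ would force $u\equiv t_0$ on $W$, contradicting $W=\{u<t_0\}$.

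Now set $C:=1/(2\lambda^2)$, $\bu:=Cu^{2/(2-N)}$ and $\bu_0:=Ct_0^{2/(2-N)}$. Positivity and continuity of $\bu$ on $W$ follow from those of $u$ (which is locally Lipschitz by \eqref{eq:cheng}); boundedness of $\partial W$, the identity $\limsup_{x\to\partial W}\bu(x)=\bu_0<+\infty$, and non-emptiness of $\{\bu>\bu_0\}=W$ are immediate. Applying item (1) of Proposition \ref{prop:chainlapl} to the locally Lipschitz harmonic $u|_W$ with $\phi(t)=Ct^{2/(2-N)}\in C^2((0,t_0))$ (and noting $u$ sends relatively compact subsets of $W$ into compact subsets of $(0,t_0)$) gives $\bu\in D(\bd,W)$ and, since $\Delta u=0$,
\[
\Delta\bu=\phi''(u)|\nabla u|^2=\frac{2CN}{(N-2)^2}\,u^{-\frac{2N-2}{N-2}}|\nabla u|^2\quad \mea\text{-a.e.\ in }W.
\]
Since $v=u^{-1/(N-2)}$ gives $|\nabla u|^2=(N-2)^2u^{(2N-2)/(N-2)}|\nabla v|^2=(N-2)^2u^{(2N-2)/(N-2)}\lambda^2$ on $W$, the right-hand side simplifies to $2CN\lambda^2=N$. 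Similarly $\sqrt{2\bu}=\sqrt{2C}\,v$ yields $|\nabla\sqrt{2\bu}|^2=2C\lambda^2=1$ on $W$. All hypotheses of Theorem \ref{thm:functional cone} are thereby verified; the main delicate point is the connectedness argument for $W$, relying on both the one-end property of Corollary \ref{cor:ends} and the strong maximum principle, together with the exclusion of the degenerate case $\lambda=0$, while the PDE identities themselves are routine chain-rule manipulations once the constant $C$ is chosen.
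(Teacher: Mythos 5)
Your argument follows the paper's route closely: $U^{'-}_\beta(t_0)=0$ together with \eqref{eq:U'positive} and $C_{\beta,N}>0$ forces $|\nabla|\nabla v|^{\beta/2}|=0$ $\mea$-a.e.\ on $\{u<t_0\}$; connectedness of $\{u<t_0\}$ upgrades this to $|\nabla v|\equiv\lambda>0$ by \eqref{eq:loc constant}; and the chain rule yields the PDE for $\bu$. Your constant $\bu=\tfrac{1}{2\lambda^2}u^{2/(2-N)}$ is the paper's $\bu=C^{-1}v^2/2$ with $C=\lambda^2$. Two small remarks. First, the claim ``for $R$ sufficiently large $B_R(x_0)^c$ is connected'' is not a consequence of having one end (the definition controls only unbounded components of $\X\setminus K$); the intended reasoning, which matches the paper's one-line appeal to Corollary \ref{cor:ends}, is to apply item (ii) of Proposition \ref{prop:ends} with the bounded set $C=\{u<t_0\}^c$: two far-apart points in distinct unbounded components could then be joined by a Lipschitz curve inside $\{u<t_0\}$, a contradiction. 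Second, your justification that $\lambda>0$ (continuity up to $\partial\{u<t_0\}\subset\{u=t_0\}$) is a valid alternative to the paper's argument, which instead uses that $u$ is unbounded-domain-constant and decays to $0$ at infinity, contradicting positivity; both work.
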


\begin{proof}
	
	Suppose $U_{\beta}^{'-}(t_0)=0$ for some $t_0\in(0,1]$ and observe that, thanks to \eqref{eq:U'positive}, since $ C_{\beta,N}>0$, we must have that $|\nabla |\nabla u^{\frac{1}{2-N}}|^{\beta/2}|=0$ $\mea$-a.e. in $\{u<t_0\}$. We claim that $\{u<t_0\}$ is connected. Indeed, if $t_0<1$, from the continuity of $u$ follows that $ \partial\{u<t_0\}\subset \{u=t_0\}$, hence from the strong maximum principle we deduce that all the connected components of $\{u<t_0\}$ are unbounded. Moreover $\partial \{u<t_0\}$ is bounded and thus from Corollary \ref{cor:ends} it follows that that  $\{u<t_0\}$ is connected. If $t_0=1,$ we conclude observing that $\{u<1\}$ is the union of the sets $\{u<t\}$ with $t<1.$ Therefore we have that $|\nabla u^{\frac{1}{2-N}}|^2\equiv C$ $\mea$-a.e. in $\{u<t_0\}$, for some constant $C$. We now claim that $C>0$. Indeed if $C=0$ we would have that $\nabla u=-(2-N)u^{\frac{N-1}{N-2}}\, \nabla u^{\frac{1}{2-N}} =0$ $\mea$-a.e. in $\{u<t_0\}$ and therefore  $u$ would be constant in  $\{u<t_0\}$ (recall \eqref{eq:loc constant}). However $u$ goes to 0 at $+\infty$ and $\{u<t_0\}$ is unbounded, therefore $u\equiv 0$ in $\{u<t_0\}$, but this violates the positivity of $u$.
	Setting $v=u^{\frac{1}{2-N}}$, by the chain rule for the Laplacian, the harmonicity of $u$ and by locality we have
	\[
	\Delta \frac{v^2}{2}=\frac12 \Delta (u^{\frac{2}{2-N}})=\frac{N}{(2-N)^2}\frac{|\nabla u|^2}{u^{2\frac{N-1}{N-2}}}=CN, \quad \text{$\mea$-a.e. in $\{u<t_0\}$.}
	\]
	Moreover $|\nabla v^2/2|^2=v^2|\nabla v|^2=2C\frac{v^2}{2}.$
	Therefore the function $\bu=C^{-1}v^2/2$ satisfies the hypotheses of  Theorem \ref{thm:functional cone} with $U=\{u<t_0\}$ and $\bu_0=C^{-1}t_0^{2/(2-N)}/2$. This concludes the proof.
\end{proof}

\subsection{Almost rigidity}

The goal of this subsection is to prove the following.
\begin{theorem}\label{thm:almrigv1}
	For all numbers $\eps\in(0,1/3)$, $R_0>0$, $\beta> \frac{N-2}{N-1}$, $N\in (2,\infty)$ and for every function $f:(1,+\infty)\to \rr^+$  in $L^1(1,+\infty)$ there exists $0<\delta=\delta(\eps,\beta,N,f)$ such that the following holds. Let $(\X,\sfd,\mea,x_0)$ be a pointed  ${\rm RCD}(0,N)$ m.m.s. such that $\mea(B_1(x_0))\le \eps^{-1}$ and $\frac{s}{\mea(B_s(x_0))}\le f(s)$ for $s\ge \eps^{-1}$. Let $u$ be a solution to \eqref{mainpde} with  $\|u\|_{L^\infty(\Omega)}\le \eps^{-1}$ and such that there exists $t\in(\eps,1]$ satisfying  $\diam(\{u>t-\eps t\}^c)<R_0$, $\sfd(x_0,\{u\le t\})>\eps$, $\||\nabla u|\|_{L^\infty(\{u<t\})}\le \eps^{-1}$  and
	\begin{equation}\label{eq:small der}
		{U^-_\beta}'(t)<\delta.
	\end{equation}
	Then there exists a pointed ${\rm RCD(0,N)}$ space $(\X',\sfd',\mea',x')$ such that 
	\[
	\sfd_{pmGH}((\X,\sfd,\mea,x_0),(\X',\sfd',\mea',x'))<\eps
	\]
	and $(\X',\sfd',\mea',x')$ is a truncated cone outside a compact set $K\subset B_{2R_0}(x'),$ i.e. there exists an  ${\rm RCD(0,N)}$ Euclidean $N$-cone $Y$, with vertex $O_Y$,  over an ${\rm RCD}(N-2,N-1)$ space $Z$ and a measure preserving local isometry $T: \X'\setminus K\to Y\setminus \bar B_{r}(O_Y)$, for some $r>0.$
\end{theorem}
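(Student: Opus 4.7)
My plan is to derive Theorem \ref{thm:almrigv1} as a direct application of Theorem \ref{thm:almconev1}, using the monotonicity formula \eqref{eq:U'positive} to convert the smallness assumption \eqref{eq:small der} into the integral smallness \eqref{eq:small int} required there. Given $u$ and $t$ as in the hypotheses, I would set $U := \{u<t\}$ and
\[
w(x) := \Bigl(\tfrac{t}{u(x)}\Bigr)^{2/(N-2)},\qquad x\in U.
\]
A chain-rule computation (essentially the one carried out in the proof of Theorem \ref{thm:rigidity}) shows that $w$ is positive, continuous, belongs to $D(\bd,U)$, satisfies $\Delta w = N|\nabla\sqrt{2w}|^2$ $\mea$-a.e.\ in $U$, and has $w = 1$ on $\partial U = \{u=t\}$, so $\limsup_{x\to\partial U}w(x)\le 1$. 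The assumption $\sfd(x_0,\{u\le t\})>\eps$ places $x_0 \in \{u>t\}$, and the hypothesis on $\diam(\{u>t-\eps t\}^c)$ forces $U^c \subset B_{R_0}(x_0)$; on $B_{R_0}(x_0)^c$ one then has $u \le t(1-\eps)$, hence $w \ge (1-\eps)^{-2/(N-2)} =: 1+\eps'$ with $\eps'=\eps'(\eps,N)>0$.

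The heart of the reduction is the integral identity, obtained from $\sqrt w = t^{1/(N-2)} u^{1/(2-N)}$ and $w^{2-N} = t^{-2}u^{2}$:
\[
\int_U w^{2-N}\bigl|\nabla|\nabla\sqrt{w}|^{\beta/2}\bigr|^2\,\d\mea \;=\; t^{\beta/(N-2)-2}\int_U u^2\bigl|\nabla|\nabla u^{1/(2-N)}|^{\beta/2}\bigr|^2\,\d\mea \;\le\; \frac{t^{\beta/(N-2)}}{C_{\beta,N}}\,U_\beta^{'-}(t),
\]
where the inequality is \eqref{eq:U'positive}. Since $t\le 1$, this is at most $\delta/C_{\beta,N}$. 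Setting $\gamma := \beta/2$ (which satisfies $\gamma>\tfrac12\tfrac{N-2}{N-1}$ because $\beta>\tfrac{N-2}{N-1}$) and applying Theorem \ref{thm:almconev1} with parameters $(\min(\eps,\eps'), R_0, L', \gamma)$ and smallness threshold $\delta''$, the final choice is $\delta := C_{\beta,N}\delta''$.

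The main technical obstacle will be producing the $L^\infty$ bound $L'$ on $|\nabla\sqrt w|$ and a uniform lower bound on $\mea(B_1(x_0))$ required by Theorem \ref{thm:almconev1}, both quantitatively in terms of $(\eps,N,R_0,f)$. For the former, the identity $|\nabla\sqrt w| = \tfrac{t^{1/(N-2)}}{N-2}\cdot|\nabla u|/u^{(N-1)/(N-2)}$ together with the hypothesis $\||\nabla u|\|_{L^\infty(\{u<t\})}\le \eps^{-1}$ reduces it to a pointwise lower bound on $u$, which is provided by the first inequality of Proposition \ref{prop:twosidedbound}; the auxiliary parameter $\sfd(x_0,\{u\le 1/2\})\wedge 1$ appearing there can be controlled quantitatively by combining the upper bound in that proposition with the hypothesis $u(x_0)>t>\eps$ (inherited from $x_0\in\{u>t\}$), which simultaneously yields the desired lower bound on $\mea(B_1(x_0))$. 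Tracking all constants through these two estimates and through the decay profile $f$ is the delicate point of the argument.
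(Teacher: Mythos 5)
Your proposal is correct and follows essentially the same strategy as the paper's own proof: convert the derivative smallness \eqref{eq:small der} into the integral smallness \eqref{eq:small int} via the monotonicity estimate \eqref{eq:U'positive}, verify the hypotheses of Theorem \ref{thm:almconev1}, and invoke it. The only cosmetic difference is that the paper first rescales $u\mapsto u/t$ to reduce to $t=1$ and then sets $v=u^{2/(2-N)}$, whereas you absorb the rescaling directly into $w=(t/u)^{2/(N-2)}$; the power-counting you carry out to bound $\int_U w^{2-N}|\nabla|\nabla\sqrt w|^{\beta/2}|^2$ reproduces exactly the paper's intermediate step, and the remaining constant-tracking (Bishop-Gromov for the lower measure bound, the lower bound of Proposition \ref{prop:twosidedbound} for $\||\nabla\sqrt w|\|_{L^\infty}$, and the upper bound for the diameter of $\{u<t\}^c$) is handled identically.
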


We will also prove the following alternative version of the above statement (see Section \ref{sec:almcone} for the definition of $\mathbb{D}, \sfd_{GH}$ and of intrinsic metric).
\begin{theorem}\label{thm:almrigv2}
	For all numbers $\eps\in(0,1/3)$, $R_0>0$ $\beta> \frac{N-2}{N-1}$, $N\in (2,\infty)$, $\eta>0$ and for every function $f:(1,+\infty)\to \rr^+$  in $L^1(1,+\infty)$ there exists $0<\delta=\delta(\eps,\beta,N,f,\eta)$ such that, given $\X$, $v$ and $t$ as in Theorem \ref{thm:almconev1}, there exists an ${\rm RCD(0,N)}$ Euclidean $N$-cone $(Y,\sfd_Y,\mea_Y)$, with vertex $O_Y$, over an ${\rm RCD}(N-2,N-1)$ space $Z$ and a constant $\lambda>0$ such that the following holds. For every $(1+\eps+\eta)t^{\frac{1}{2-N}}<t_1<t_2<\eps^{-1}t^{\frac{1}{2-N}}$  it holds
	\begin{equation*}
		\sfd_{GH}\left((\{t_1\le u^{\frac{1}{2-N}}\le t_2\}, \sfd_{\X}^{\eta}),\left(\{t_1\le \lambda \sfd_{O_Y}\le t_2, \sfd_{Y}^{\eta}\right)  \right)<\eps,
	\end{equation*}
	where $\sfd_{O_Y}\coloneqq\sfd_Y(.,O_Y)$ and $\sfd_{\X}^{\eta}$ and $\sfd_{Y}^{\eta}$ denote the  intrinsic metrics on  $\{t_1-\eta t^{\frac{1}{2-N}}< u^{\frac{1}{2-N}}<t_2+\eta t^{\frac{1}{2-N}}\}$ and on $\{t_1-\eta<\lambda \sfd_{O_Y}< t_2+\eta\}$. Moreover, provided that $t_1+\eps t<t_2$, 
	\begin{equation*}
		\mathbb{D}\left(\big(\{t_1\le u^{\frac{1}{2-N}}\le t_2\}, \sfd_{\X}^{\eta},\mea\restr{\{t_1\le u^{\frac{1}{2-N}}\le t_2\}}\big),\big(\{t_1\le \lambda \sfd_{O_Y}\le t_2\}, \sfd_{Y}^{\eta},\mea_Y\restr{\{t_1\le \lambda \sfd_{O_Y}\le t_2\}}\big)  \right)<\eps.
	\end{equation*}
\end{theorem}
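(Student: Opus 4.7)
Following the strategy of Theorem \ref{thm:rigidity}, on $U\coloneqq\{u<t\}$ I set $v\coloneqq(u/t)^{-2/(N-2)}$. By the chain rule for the Laplacian (Proposition \ref{prop:chainlapl}) and the harmonicity of $u$, a direct computation shows that $v$ is positive, continuous, belongs to $D(\bd,U)$, satisfies $v>1$ in $U$ with $\limsup_{x\to\partial U}v(x)\le1$, and solves $\Delta v=N|\nabla\sqrt{2v}|^2$ $\mea$-a.e.\ in $U$. Moreover $\sqrt v=t^{1/(N-2)}u^{1/(2-N)}$ and $v^{-(N-2)}=u^2/t^2$, whence
\begin{equation*}
\int_U \frac{1}{v^{N-2}}\bigl|\nabla|\nabla\sqrt v|^{\beta/2}\bigr|^2\,d\mea = \frac{t^{\beta/(N-2)}}{t^2}\int_U u^2\bigl|\nabla|\nabla u^{1/(2-N)}|^{\beta/2}\bigr|^2\,d\mea.
\end{equation*}

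The plan is then to feed $v$ into Theorem \ref{thm:almconev2} with $\gamma=\beta/2$. Combining the identity above with \eqref{eq:U'positive} and using $t\le1$ yields
\begin{equation*}
\int_U \frac{1}{v^{N-2}}\bigl|\nabla|\nabla\sqrt v|^{\beta/2}\bigr|^2\,d\mea \le \frac{1}{C_{\beta,N}}U^{'-}_\beta(t) < \frac{\delta}{C_{\beta,N}},
\end{equation*}
which is exactly the smallness assumption \eqref{eq:small int} of Theorem \ref{thm:almconev2} (note $\gamma>\tfrac12\tfrac{N-2}{N-1}$, since $\beta>\tfrac{N-2}{N-1}$). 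The remaining hypotheses must be checked with constants depending only on $\eps,R_0,\beta,N,\eta,f$: (i) a two-sided bound on $\mea(B_1(x_0))$ follows from the hypothesis $\mea(B_1(x_0))\le\eps^{-1}$ together with Bishop--Gromov and $s/\mea(B_s(x_0))\le f(s)$; (ii) the upper decay in Proposition \ref{prop:twosidedbound} and $f\in L^1$ imply $u(x)\to0$ at an $(\eps,f)$-dependent rate, yielding $R'_0$ with $v\ge1+\eps'$ on $B_{R'_0}(x_0)^c$ and the containment $\{\sqrt v\le T\}\subset B_{R_0}(x_0)$ for $T$ in the relevant range (with the help of the hypothesis on $\{u>t-\eps t\}^c$); (iii) the bound $\||\nabla\sqrt v|\|_{L^\infty(U)}\le L'$ comes from combining $\||\nabla u|\|_{L^\infty(U)}\le\eps^{-1}$ with a positive lower bound on $u$ on $U$, produced via Harnack and interior regularity from $\sfd(x_0,\{u\le t\})>\eps$ in the bounded region and via the two-sided decay of Proposition \ref{prop:twosidedbound} together with Proposition \ref{prop:gradbound} in the asymptotic region.

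Having chosen $\delta$ small enough that $\delta/C_{\beta,N}$ lies below the threshold of Theorem \ref{thm:almconev2} applied with parameters $(\eps,\beta/2,N,R'_0,L',\eta)$, that theorem produces an ${\rm RCD}(0,N)$ Euclidean $N$-cone $(Y,\sfd_Y,\mea_Y)$ with vertex $O_Y$ over an ${\rm RCD}(N-2,N-1)$ space $Z$ and a constant $\bar\lambda\in(0,L')$ for which \eqref{eq:closeannuli1}--\eqref{eq:closeannuli2} hold with $v$ in place of the function there and for all $s_1,s_2\in(1+\eps+\eta,\eps^{-1})$. Substituting $\sqrt v=t^{1/(N-2)}u^{1/(2-N)}$ and setting $s_i=t_i\,t^{1/(2-N)}$, the range $t_i\in((1+\eps+\eta)t^{1/(2-N)},\eps^{-1}t^{1/(2-N)})$ corresponds to $s_i\in(1+\eps+\eta,\eps^{-1})$; the cone annulus $\{s_1\le\bar\lambda\sfd_{O_Y}\le s_2\}$ rescales to $\{t_1\le\lambda\sfd_{O_Y}\le t_2\}$ with $\lambda\coloneqq\bar\lambda\,t^{1/(N-2)}$; and the intrinsic-metric scale $\eta$ on the $\sqrt v$-side becomes $\eta t^{1/(2-N)}$ on the $u^{1/(2-N)}$-side, as the statement requires.

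The main obstacle will be producing the $L^\infty$ bound $L'$ on $|\nabla\sqrt v|$: it requires matching the harmonic gradient estimate of Proposition \ref{prop:gradbound} with a uniform positive lower bound on $u$ across the whole sublevel $\{u<t\}$, which only becomes effective after splitting into the near-boundary region (handled by $\sfd(x_0,\{u\le t\})>\eps$ and Harnack) and the far-field region (handled by Proposition \ref{prop:twosidedbound} together with $f\in L^1$); tracking the dependencies of all these constants through the application of Theorem \ref{thm:almconev2} is the remaining bookkeeping.
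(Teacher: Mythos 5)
Your proposal is correct and follows essentially the same route as the paper's: set $v=(u/t)^{2/(2-N)}$ (which is what the paper does after first rescaling $u\mapsto u/t$ to reduce to $t=1$), verify that $v$ satisfies the hypotheses of Theorem~\ref{thm:almconev2} with $\gamma=\beta/2$, translate the smallness of ${U^{'-}_\beta}(t)$ into the smallness condition \eqref{eq:small int} using \eqref{eq:U'positive} and $t\le1$, and check the remaining structural hypotheses (lower bound on $\mea(B_1(x_0))$ via Bishop--Gromov, diameter/decay bounds via Proposition~\ref{prop:twosidedbound}, and the gradient bound $\||\nabla\sqrt v|\|_\infty\le L'$ via the combination of $\||\nabla u|\|_{L^\infty(\{u<t\})}\le\eps^{-1}$, the lower bound in \eqref{eq:twosidebound}, and the Cheng--Yau estimate \eqref{eq:cheng}). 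Two small slips in the final bookkeeping: the change of variable should read $s_i=t_i\,t^{1/(N-2)}$ (equivalently $t_i=s_i\,t^{1/(2-N)}$), and correspondingly $\lambda=\bar\lambda\,t^{1/(2-N)}$, not $\bar\lambda\,t^{1/(N-2)}$; the ranges you then state for $t_i$ are nonetheless the correct ones, so these are transcription errors rather than gaps in the argument.
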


Before passing to the proof we explain why the bound on $\||\nabla u|\|_{L^\infty(\{u<t\})}$ is  natural and often satisfied. The immediate observation is that from the gradient bound for harmonic functions \eqref{eq:cheng} we deduce
\begin{equation}\label{eq:grad bound weak}
	|\nabla u|\le \frac{C(N)}{\eps}, \quad \text{$\mea$-a.e. in $\{u<1\}\cap \{x \ : \ \sfd(x,\partial \Omega)>\eps\}$}, \quad \forall \, \eps>0.
\end{equation}
In particular for fixed $\eps>0$, thanks to the assumption $\liminf_{x\to \partial \Omega}u(x)\ge 1$, for $t$ sufficiently small (but depending on $u$) the gradient bound $\||\nabla u|\|_{L^\infty(\{u<t\})}\le C(N)\eps^{-1}$  is always satisfied.  An estimate on the value of $t$  can be given in the case $B_{\eps}(x_0)\subset \Omega.$ Indeed applying the lower bound for $u$ given by \eqref{eq:twosidebound},  it is immediately seen that $\{u<t\}\subset  \{x \ : \ \sfd(x,\partial \Omega)>\eps\}$ for any $0<t < \frac 12\left( \frac{\eps}{\diam(\Omega^c)+\eps}\right)^{N-2}.$

Something more explicit can be said if we consider $u$ to be an electrostatic potential. Indeed combining \eqref{eq:grad bound weak} with the continuity estimate \eqref{eq:continuity estimate electro} one can  easily prove the following:
\begin{prop}
	For all numbers $\eps\in(0,1/3)$, $N\in (2,\infty)$ there exists $0<C=C(\eps,N)$ such that the following holds.  Let $(\X,\sfd,\mea)$ be a non compact ${RCD}(0,N)$ m.m.s.  and let $E\subset \X$ be open and bounded  with uniformly $\Cap$-fat boundary with parameters $(\eps,\eps)$ (see Def. \ref{def:capfat}). Let $u$ be the capacitary potential relative to $E$ (see Theorem \ref{thm:potential}). Then
	\[
	|\nabla u|\le C(\eps,N), \quad \text{$\mea$-a.e. in $\{u<t\}$, for every $t \in(0,1-\eps)$}.
	\]
\end{prop}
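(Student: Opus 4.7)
The plan is to reduce the statement to the weak gradient bound \eqref{eq:grad bound weak} by showing that sublevel sets $\{u<t\}$, for $t<1-\eps$, stay uniformly away from $\partial\Omega=\partial E$.

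First, I would invoke the continuity estimate \eqref{eq:continuity estimate electro} for the capacitary potential. Under the uniformly $\Cap$-fat boundary assumption with parameters $(\eps,\eps)$ that estimate should yield a modulus of continuity at the boundary depending only on $\eps$ and $N$: there is a function $\omega=\omega_{\eps,N}:[0,\infty)\to[0,\infty)$ with $\omega(r)\downarrow 0$ as $r\downarrow 0$ such that
\[
1-u(x)\le\omega\bigl(\sfd(x,\partial\Omega)\bigr),\qquad\text{for every }x\in\Omega.
\]
(The uniform $\Cap$-fat condition is precisely what makes the modulus $\omega$ independent of $E$.) Choosing $r_0=r_0(\eps,N)>0$ with $\omega(r_0)<\eps$, the above implies
\[
\{u<1-\eps\}\subset\{x\in\Omega:\sfd(x,\partial\Omega)>r_0\}.
\]

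Since $\{u<t\}\subset\{u<1-\eps\}$ for every $t\in(0,1-\eps)$, we obtain $\{u<t\}\subset\{x:\sfd(x,\partial\Omega)>r_0\}$ uniformly in $t$. Now I would apply \eqref{eq:grad bound weak} with $r_0$ in place of $\eps$, which gives
\[
|\nabla u|\le\frac{C(N)}{r_0}\eqqcolon C(\eps,N)\qquad\mea\text{-a.e.\ in }\{u<t\},
\]
proving the proposition.

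The only non-routine step is verifying that \eqref{eq:continuity estimate electro} truly delivers a modulus depending solely on the pair of $\Cap$-fat parameters $(\eps,\eps)$ and on $N$, with no residual dependence on the geometry of $E$ or on $\X$ beyond the ${\rm RCD}(0,N)$ assumption. This is the quantitative content encoded in the notion of uniformly $\Cap$-fat boundary (Definition~\ref{def:capfat}), and will follow from the Wiener-type / capacitary estimates developed in Appendix~\ref{ap:bjorn} which underlie the proof of Theorem~\ref{thm:potential}; beyond that, the argument is a direct combination of the two ingredients above.
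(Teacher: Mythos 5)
Your proof is correct and follows exactly the route the paper indicates: the text immediately preceding the proposition explicitly says that the statement follows by "combining \eqref{eq:grad bound weak} with the continuity estimate \eqref{eq:continuity estimate electro}," which is precisely your argument of first showing $\{u<1-\eps\}$ stays uniformly away from $\partial E$ via the H\"older continuity at the boundary (with exponent $\alpha_x=\alpha(\eps,\eps,N)$ uniform by the $(\eps,\eps)$-$\Cap$-fatness), then invoking the Cheng--Yau gradient bound. The one thing to make explicit is that \eqref{eq:continuity estimate electro} is stated only for $y\in B_{r_x/2}(x)$, but this is harmless: points at distance $\ge\eps/4$ from $\partial E$ are handled trivially, and for closer points one can pick $x\in\partial E$ with $\sfd(y,x)<\eps/4<r_x/2$.
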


We pass to the proof of Theorem \ref{thm:almrigv1} and Theorem \ref{thm:almrigv2}, which are almost corollaries of Theorem \ref{thm:almconev1} and Theorem \ref{thm:almconev2}.

\begin{proof}[Proof of Theorem \ref{thm:almrigv1} and Theorem \ref{thm:almrigv2}]	
	Observe first that by Bishop Gromov inequality 
	$$\mea (B_1(x_0))\ge \mea(B_{\eps^{-1}})\eps^N\ge \eps^{N-1}/f(\eps^{-1}).$$
	From the second in \eqref{eq:twosidebound}  we have that there exist a positive constant $C_1=C_1(\eps,R_0,N)$, such that 
	\begin{equation}\label{eq:unifdecay}
		u(x) \le C_1\int_{\sfd(x,x_0)}^\infty f(s)\,\d s, \quad \forall \text{ $x\in B_{\eps^{-1}\vee R_0}(x_0)^c$}.
	\end{equation}
	In particular, since $t>\eps$, $\diam(\{u<t\}^c)\le \diam(\{u<\eps\}^c)\le C_2$, for some constant $C_2$ depending only on $\eps,R_0,N,f$.  Therefore, again since $t>\eps$, up to rescaling $u$ as $ut^{-1}$ we can assume that $t=1$ (observe also that, called $\tilde U_{\beta}$ the function relative to $t^{-1}u$, it holds that $\tilde U_{\beta}(s)=U_\beta(ts)t^{\beta\frac{N-1}{N-2}-\beta-1}$, $s\in(0,1)$). 
	
	Define $v\coloneqq u^{\frac{2}{2-N}}$ and set $\Omega'=\{u<1\}=\{v>1\}$. Note also that $\diam(\{v\le 1+c_N\eps\}^c)<R_0$ for some constant $c_N\le 1$. From \eqref{mainpde} we have that $\Delta  v=N|\nabla \sqrt {2v}|^2 $, $\mea$-a.e. in $\Omega'$ and that $\limsup_{x\to \partial \Omega'}v(x)\le1$. 
	
	To apply Theorems \ref{thm:almconev1} and \ref{thm:almconev2} it still remains to check the bounds on $|\nabla \sqrt v|.$

	Observe that from the assumption $\sfd(x_0,\{u\le 1\})>\eps$ and the first in \eqref{eq:twosidebound} we deduce that 
	\begin{equation}\label{eq:u non troppo piccola}
		u_n(x) \ge c\sfd(x_n,x)^{2-N}, \text{ for every $x \in  \Omega'$},
	\end{equation}
	for some positive constant $c=c(\eps)$. Combining \eqref{eq:u non troppo piccola}, the assumption $\||\nabla u|\|_{L^\infty(\{u<1\})}\le \eps^{-1}$ and  the gradient estimate \eqref{eq:cheng}, it easily follows that
	\[
	|\nabla \sqrt v|=(N-2)^{-1}|\nabla u|u^{\frac{1-N}{N-2}}\le C_3,
	\]
	for some positive constant $C_3=C_3(\eps)$. Finally from \eqref{eq:U'positive} and \eqref{eq:small der} we have
	\begin{equation*}
		\int_{\{u<1\}} \frac{1}{v^{N-2}} \left |\nabla |\nabla \sqrt v|^{\beta/2} \right |^2\, \d \mea\le C_{\beta,N}^{-1}{U_\beta^-}'(1)<C_{\beta,N}^{-1}\delta .
	\end{equation*}
	We are therefore in position to apply Theorem \ref{thm:almconev1} and conclude the proof of Theorem \ref{thm:almrigv1}.
	
	Theorem \ref{thm:almrigv2} follows from Theorem  \ref{thm:almconev2} and observing that  $\{u=s^{\frac{1}{2-N}}\}=\{\sqrt v=s\}$ and that, thanks to \eqref{eq:unifdecay}, for every $t>0$
	\[
	\{\sqrt v\le t\}\subset B_{R_1}(x_0),
	\]
	for some $R_1=R_1(t,\eps,N,f,R_0)$.
\end{proof}

\section{The electrostatic potential}\label{sec:electro}
It was already discussed in Section \ref{sec:nonpara} that a solution \eqref{mainpde} is not granted, in particular already for Riemannian manifolds, the existence of solutions implies non parabolicity. We also showed (see Corollary \ref{cor:green sol}) that the Green function solves \eqref{mainpde}. In this short section we provide another example of solution to \eqref{mainpde} given by the electrostatic potential. We recall that this type of solution was crucial in the recent work \cite{AFM}.

\begin{definition}[Electrostatic potential]
	Given $(\X,\sfd,\mea)$ an (unbounded) infinitesimally Hilbertian m.m.s. and $E \subset \X$ open and bounded, an \emph{electrostatic potential for $E$} is a function  $u \in D(\bd, \X\setminus \bar E)\cap C(\X\setminus E)$ solution to
	\[
	\begin{cases}
		\bd\restr {\X\setminus \bar E} u=0,\\
		u=1, & \text{ in } \partial E,\\
		u(x) \to 0 & \text{as }\sfd(x,\partial E)\to +\infty.
	\end{cases}
	\]
\end{definition}
\begin{remark}
	If an electrostatic potential for $E$ exists, then it is also unique, this follows immediately from the maximum principle (see Proposition \ref{prop:maxprinc}).
\end{remark}
\begin{remark}
   If $u$ is a solution to \eqref{mainpde}, then the function $(1-\eps)^{-1}u\restr{\{u\le 1-\eps\}}$ is immediately seen to be the electrostatic potential for the open set $\X\setminus \{u\le 1-\eps\}.$
\end{remark}

We pass to our main existence result for the electrostatic potential, which holds for sets  with sufficiently regular boundary, namely  with $\Cap$-fat regular boundary.  We refer to Appendix \ref{ap:bjorn} for the definition of $\Cap$-fat regular boundary and for examples of sets satisfying this condition.
\begin{theorem}\label{thm:potential}
	Let $(\X,\sfd,\mea)$ be a nonparabolic ${\rm RCD}(0,N)$ m.m.s. and let  $E \subset \X$ be open and bounded  with $\Cap$-fat boundary. Then the electrostatic potential $u$ for $E$ exists.
	Moreover  the following continuity estimate holds: for every $x \in \partial E$ it holds
	\begin{equation}\label{eq:continuity estimate electro}
		1-u(y)\le \sfd(y,x)^{\alpha_x}, \quad \forall y\in B_{r_x/2}(x)\cap E^c, 
	\end{equation}
	for some positive constant  $\alpha_x=\alpha(r_x,c_x,N)>0$, where $r_x,c_x$ are the $\Cap$-fatness parameters of $x$. 
	
	Finally the function
	\[
	\tilde u\coloneqq\begin{cases}
		u, & \X\setminus E,\\
		1, & \bar E,
	\end{cases}
	\]
	belongs to $S^2(\X)$ (recall Def. \ref{def:sobolev class}) and
	\begin{equation}\label{eq:capacity lim}
		\int_{\X} |D \tilde u|^2\, \d \mea \le \lim_{r\to+\infty} \Cap(E,B_r(x)), \quad \forall x \in E.
	\end{equation}
\end{theorem}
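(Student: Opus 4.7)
The plan is to construct the electrostatic potential as the monotone limit of capacitary potentials on exhausting balls, adapting Björn's strategy as carried out in Appendix \ref{ap:bjorn}. Fix $x_0 \in E$ and, for each $r > \diam(E)$, let $u_r$ be the minimizer of the Cheeger energy over the convex set of functions $v \in \W_0(B_r(x_0))$ with $v - \nchi_{E}\eta \in \W_0(B_r(x_0) \setminus \bar E)$ for any Lipschitz cut-off $\eta$ equal to $1$ on a neighborhood of $\bar E$; equivalently, $u_r$ is the capacitary potential of $E$ in $B_r(x_0)$. Standard direct-method arguments give existence and uniqueness of $u_r$, and the associated Euler--Lagrange condition makes $u_r$ harmonic in $B_r(x_0) \setminus \bar E$; moreover $u_r = 1$ quasi-everywhere on $\bar E$ and vanishes outside $B_r(x_0)$, with $\int |\nabla u_r|^2 \, \d \mea = \Cap(E, B_r(x_0))$.

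First I would show that $(u_r)_{r}$ is monotone non-decreasing by the weak maximum principle (Proposition \ref{prop:maxprinc}) applied to $u_r - u_{r'}$ for $r < r'$, and that it is bounded above uniformly by a multiple of the Green function $G_{x_0}$: indeed, choosing $\lambda \coloneqq (\min_{\partial E} G_{x_0})^{-1}$, the function $\lambda G_{x_0} - u_r$ is superharmonic in $B_r(x_0) \setminus \bar E$, nonnegative on $\partial E$ and on $\partial B_r(x_0)$, so again by the maximum principle $u_r \le \lambda G_{x_0}$. Since $G_{x_0}$ vanishes at infinity by \eqref{eq:greenestimates} and the nonparabolicity assumption, the pointwise monotone limit $u \coloneqq \lim_r u_r$ satisfies $u(x) \to 0$ as $\sfd(x,\partial E) \to +\infty$. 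The interior gradient estimate \eqref{eq:cheng} applied to $u_r$ on balls contained in $\X \setminus \bar E$ yields local equi-Lipschitzianity, so Ascoli--Arzelà together with Lemma \ref{ascoliarm} give that $u_r \to u$ locally uniformly on $\X \setminus \bar E$ and that $u$ is harmonic there.

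The main obstacle is boundary continuity, namely $\lim_{y \to x} u(y) = 1$ for $x \in \partial E$ together with the quantitative bound \eqref{eq:continuity estimate electro}. This is precisely where the $\Cap$-fatness of $\partial E$ enters and will be handled by the Wiener-type criterion developed in Appendix \ref{ap:bjorn}: the fatness at $x$ forces a Caccioppoli-type oscillation decay
\[
\osc_{B_\rho(x) \setminus E}(1-u) \le \theta \, \osc_{B_{2\rho}(x) \setminus E}(1-u), \qquad \rho < r_x/2,
\]
for some $\theta = \theta(c_x, N) < 1$, which upon iteration yields the Hölder modulus $\rho^{\alpha_x}$ with $\alpha_x = \log_2(\theta^{-1})$. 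Since the $u_r$ satisfy the same estimate with constants independent of $r$, the bound passes to the limit $u$ and yields \eqref{eq:continuity estimate electro}; this implies in particular $u \in C(\X \setminus E)$ with $u \equiv 1$ on $\partial E$, completing the verification that $u$ is the electrostatic potential.

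Finally, for the Sobolev regularity of the extended function $\tilde u$, I would set $\tilde u_r \coloneqq u_r$ on $\X \setminus E$ and $\tilde u_r \equiv 1$ on $\bar E$; by construction $\tilde u_r \in \W(\X)$ with $\int |\nabla \tilde u_r|^2 \, \d\mea = \Cap(E, B_r(x_0))$, and the monotone convergence $\tilde u_r \uparrow \tilde u$ (pointwise everywhere) combined with the uniform bound on $\|\tilde u_r\|_{L^\infty}$ gives $\tilde u_r \to \tilde u$ in $L^2_{\loc}(\mea)$. Since the family $|\nabla \tilde u_r|$ is bounded in $L^2(\mea)$ by $\lim_{r \to \infty} \Cap(E, B_r(x_0))$ (finite for any single $r$, hence for the monotone limit if the latter is finite; otherwise the conclusion is vacuous), up to a subsequence $|\nabla \tilde u_r|$ converges weakly in $L^2(\mea)$ to some $G$, and the lower-semicontinuity property \eqref{eq:stability wug} of the minimal weak upper gradient gives $\tilde u \in S^2(\X)$ with $|D\tilde u| \le G$ $\mea$-a.e., whence
\[
\int_{\X} |D\tilde u|^2 \, \d\mea \le \liminf_{r \to \infty} \int_\X |\nabla \tilde u_r|^2 \, \d\mea = \lim_{r \to \infty} \Cap(E, B_r(x_0)),
\]
which is the desired inequality \eqref{eq:capacity lim}.
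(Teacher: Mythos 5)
Your proposal follows essentially the same strategy as the paper's proof: exhaustion by balls, capacitary potentials $u_r$ from the obstacle problem in Appendix \ref{ap:bjorn}, monotonicity via comparison, uniform decay at infinity via comparison with a Green-type function, the Wiener/fatness estimate from Theorem \ref{thm:local potential} for boundary continuity, and lower semicontinuity of the Cheeger energy for \eqref{eq:capacity lim}. The minor deviations are not errors but worth noting: the paper compares against the quasi-Green function $G_{x_0}^1$, which is Lipschitz, superharmonic on all of $\X$ and directly covered by the comparison principle of Theorem \ref{thm:local potential}, whereas you compare against $G_{x_0}$ itself and invoke the weak maximum principle on $B_r(x_0)\setminus\bar E$; this also works since $x_0\in E$ keeps the singularity away. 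One genuine slip: you hedge about whether $\lim_{r\to\infty}\Cap(E,B_r(x_0))$ is finite. It always is, because $r\mapsto\Cap(E,B_r(x_0))$ is non-increasing (enlarging the ambient ball only enlarges the admissible class), so $\sup_r\int|\nabla u_r|^2=\Cap(E,B_{r_0})<\infty$ for any fixed $r_0$; no "otherwise vacuous" caveat is needed, and the uniform $L^2$ bound on $|\nabla u_r|$ is immediate. Also, your boundary-continuity step is asserted rather than executed: the paper does not iterate an oscillation-decay inequality directly but instead cites the quantitative Hölder estimate already proved in Theorem \ref{thm:local potential} (itself built on the iterated Wiener-type bound of Theorem \ref{thm: boh}), observing that $u_r\le u\le 1$ lets the estimate for $u_r$ pass to $u$, and that for $K=0$ the constants are scale-free. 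If you intend to rederive the decay instead, you would need to justify the claimed oscillation inequality, which is not quite what Appendix \ref{ap:bjorn} provides in that form.
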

Let us make some comments before passing to proof of this result. We first observe that the limit in \eqref{eq:capacity lim} does not depend on $x \in E$ and that it is actually an $\inf$ and thus  finite. It is not true in general that $\tilde u \in \W(\X)$, indeed it might not be square integrable, as can be seen taking $E$ to be a ball in $\rr^3.$ Nevertheless, if the measure satisfies $0< \liminf_{r\to +\infty} r^{-\lambda}\mea(B_r(x))\le  \limsup_{r\to +\infty} r^{-\lambda}\mea(B_r(x))<+\infty$ for some $\lambda>2$, then $u \in L^p(\mea)$ for every $p >\frac{\lambda}{\lambda-2}$ (see for example the upper bound in \eqref{eq:twosidebound}).

\begin{proof}[Proof of Theorem \ref{thm:potential}]
We will actually build $\tilde u$ and then define $u$ to be the restriction of $\tilde u$ to $E^c.$ The argument is by compactness. Fix $x_0 \in E$ and set $B_n\coloneqq B_n(x_0)$ with $n\in \nn$ and $n>\diam(E)+100$.  Theorem \ref{thm:local potential} guarantees the existence of a function $u_n\in \W_{0}(B_n)\cap C(B)$ harmonic in $B_n\setminus \overline E$, such that $0\le u_n\le 1$, $u_n=1$ on $\bar E$ and $\int_{\X}|\nabla u_n|^2\, \d \mea=\Cap(E,B_n)$. Moreover  from the comparison principle in Proposition \ref{prop:solprop} we must have that $u_n\le u_{n+1}$ in $B_n.$
	
	It follows from Lemma \ref{ascoliarm} that, up to a (non relabelled) subsequence, $u_n$ converges in $\X\setminus \bar E$ uniformly on compact sets to a function $ \tilde u \in C(\X\setminus\bar  E)$ harmonic in $\X\setminus\bar  E.$ In particular $u_n \to \tilde u$ $\mea$-a.e.. Moreover, since $\Cap(E,B_r(x_0))$ is decreasing in $r$, we have that $\sup_n \||\nabla u_n|\|_{L^2(\X)}<+\infty.$  Therefore from the  lower semicontinuity of weak upper gradients \eqref{eq:stability wug}, we deduce that $\tilde u \in {\rm S}^2(\X)$ and 
	$$\int_{\X}|\nabla \tilde u|^2\, \d \mea\le \liminf_n \int_{\X}|\nabla u_n|^2\, \d \mea\le \liminf_n \Cap(E,B_n)=\lim_{r\to+\infty}\Cap (E,B_r(x_0)).$$
	
	The continuity estimate follows directly from the fact that $u_n\le \tilde u\le 1$ for every $n$ and from the continuity estimate in Theorem \ref{thm:local potential}, observing that in the case $K=0$ we con drop the dependence on the diameter of $E$. 
	
	It remains to show that $ u$ goes to 0 at infinity. We prove it by comparison with  the quasi Green function $G^1_{x_0}$ (recall \eqref{eq:quasi green def}). We have that  $G^1_{x_0}$ is  Lipschitz and superharmonic in $\X$.  Moreover $G_{x_0}^1$ is positive, hence  $\lambda G_{x_0}^1\ge \nchi_{E}$ for a large enough constant $\lambda>0$.  In particular the comparison principle in Theorem \ref{thm:local potential} implies that $u_n \le \lambda G_{x_0}^1$ for every $n$, which in turn gives $u \le \lambda G_{x_0}^1.$ Finally from the estimate for the Green function in \eqref{eq:greenestimates} we have
	\[  G_{x_0}^1\le G_{x_0}(x) \le \int_{\sfd(x,x_0)}^\infty \frac{s}{\mea(B_s(x))}\, d s,\]
	in particular $G_{x_0}^1(x)\to 0$ at infinity. This concludes the proof.
\end{proof}

\appendix

\section{Appendix: From outer functional cone to outer metric cone - additional details}\label{ap:cone}

This section is devoted to the  proof of Theorem \ref{thm:functional cone} given the two results presented in Section \ref{sec:functioncone} (that is Proposition \ref{prop:blowdown} and Proposition \ref{prop:localest}).

 Up to Section \ref{sec:levelset} (included) the proof is in large part the same as \cite{volcon}, hence some steps will be only outlined, however there will be  differences and new arguments that will be explained in detail and emphasized along the exposition.  

After Section \ref{sec:levelset} (in particular in Sections \ref{sec:cosine law}, \ref{sec:intrinsic metric} and \ref{sec:conclusion}) the argument diverges from the one in \cite{volcon}, indeed (following a suggestion of an anonymous referee) we replace the second-order analysis of \cite{volcon} with a more direct argument which uses the differentiation formula in \cite{gtamanini} and is  inspired by \cite[Sec. 3.1]{boundary}. This strategy is in turn analogous to the one previously employed by Cheeger and Colding \cite{CC} to derived quantitative almost-splitting results via Hessian estimates. We remark that these type of arguments (as for the one used in \cite{boundary}) were not possible at the time of the writing of \cite{split} and \cite{volcon} (which contains respectively the first versions of the splitting theorem and the volume cone-to-metric cone theorem in ${\rm RCD}$ spaces), since the differentiation formula in \cite{gtamanini} was not yet available.

We also mention the recent \cite{chen} where a similar argument exploiting  \cite{gtamanini} is used  to prove a version of the (almost) volume annulus-to-metric annulus in ${\rm RCD}$ setting.

\subsection{The gradient flow of $\bu$ and its effect on the measure}\label{sec:flow di u}
From the chain rule for the Laplacian \eqref{chainlap}, the positivity of $\bu$ and  recalling  that $\Delta \bu=N$ and $|\nabla \bu|^2=2\bu$ $\mea$-a.e. in $ U$, it follows that
\[
	v\coloneqq\begin{cases}
		\bu^{\frac{2-N}{2}}, & \text{if $N>2$,}\\
		\ln(\frac 1{\sqrt\bu} ), & \text{if $N=2$},
	\end{cases}\quad \text{is harmonic in $ U$.}
\]
In particular the maximum principle and the fact that $\bu_0=\limsup_{x\to \partial  U}\bu(x)$ ensure that 
\begin{equation}\label{eq:unbounded components}
	\text{every connected component of $\{\bu>\bu_0\}$ is unbounded.}
\end{equation}
Since by assumption $\{\bu>\bu_0\}$ is nonempty we must have that $\X$ is unbounded.

For technical reasons we will work locally, in particular we fix a set $V$  open and relatively compact in $U$ and consider $\eta \in \test(\X)$ such that $\eta=1$ in $\overline V$, $0\le \eta\le 1$ and $\supp \eta \subset U$, which exists thanks to Proposition \ref{prop:goodcutoff}.  We then define 
$$u\coloneqq\eta \bu.$$
Since $\bu \in \testloc( U)$ from \eqref{loctotest} we deduce that $u\in \test(X)$. 

We point out that we would like to take right away  $V$  to be of the form $\{t_0<\bu<T_0\}$, however to ensure that this set is relatively compact in $U$ we need first to know that $\bu$ blows up at infinity. This will be proved in Lemma \ref{lem:uinfinity}.

We now consider the regular Lagrangian flow $F : [0,T] \times X \to X$ associated to the autonomous vector field $v=-\nabla u.$ Observe that since $\Delta u\in L^\infty(\mea)$ and $u \in \test(\X)$ the assumptions of Theorem \ref{thm:uniqrfl} are satisfied. In particular the flow $F$ exists unique. Moreover, again thanks to $\Delta u\in L^\infty(\mea)$ and Remark \ref{rmk:extend} we can extend the map $F$ to $(-\infty,+\infty)\times X$.

\begin{prop}\label{prop:basics0}
	\begin{enumerate}
		\item For $\mea$-a.e. $x\in X$ it holds that $F_t(F_s(x))=F_{s+t}(x)$ for every $s,t \in \rr.$
		\item For $\mea$-a.e. $x\in U$  it holds that $(-\infty,\infty)\ni t \mapsto F_t(x)$ is continuous. Moreover denoted by $(a_x,b_x)$ the maximal interval such that $F_t(x)\in U$ for all $ t \in (a_x,b_x)$ (which in particular satisfies $a_x<0, b_x>0$ and possibly $a_x=-\infty$ or $b_x=+\infty$), it holds
		\begin{equation}\label{ualong0}
		\bu (F_t(x))=e^{-2t}\bu (x), \quad \forall t\in(a_x,b_x)
		\end{equation}
		and
		\begin{equation}\label{distancealong0}
		\sfd(F_s(x),F_t(x))\le |e^{-t}-e^{-s}|\sqrt{2\bu (x)}, \quad \forall t,s\in(a_x,b_x).
		\end{equation}
	\end{enumerate}
\end{prop}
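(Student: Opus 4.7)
The plan is to treat the two items separately, noting that item~1 is essentially free from the setup while item~2 requires carrying the chain rule of the RLF from $u = \eta\bu$ over to $\bu$ and then extending from the set $V$ (where $\eta=1$) to the full trajectory inside $U$.

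For item~1, the vector field $v = -\nabla u$ is autonomous, and since $u \in \test(\X)$ we have $\Delta u \in L^\infty(\mea)$, hence $\text{div}(v)$ is bounded in both signs. Remark \ref{rmk:extend} then supplies directly the two-sided extension $F : \rr \times \X \to \X$ together with the group property $F_s \circ F_t = F_{s+t}$ holding $\mea$-a.e., for all $s,t \in \rr$.

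For item~2, the continuity of $t \mapsto F_t(x)$ for $\mea$-a.e.\ $x$ is built into Definition \ref{def:rfl}. The key identity is obtained by applying item~3 of that definition to $u \in \test(\X)$: for $\mea$-a.e.\ $x$ the function $t \mapsto u(F_t(x))$ is absolutely continuous and its derivative equals $-|\nabla u|^2(F_t(x))$ almost everywhere. By construction $\eta = 1$ on $V$, so by the locality of the gradient $u = \bu$ and $\nabla u = \nabla \bu$ on $V$; combining with the hypothesis $|\nabla \bu|^2 = 2\bu$ in $U$, one obtains that whenever $F_t(x) \in V$,
\[
\frac{\d}{\d t} \bu(F_t(x)) = -2\,\bu(F_t(x)),
\]
which integrates to \eqref{ualong0} on the interval of $t$'s for which $F_t(x)\in V$. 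The distance bound \eqref{distancealong0} then follows by invoking Remark \ref{rmk:accurverfl}: on such an interval the metric speed is $|\dot F_t(x)| = |\nabla u|(F_t(x)) = \sqrt{2\bu(F_t(x))} = \sqrt{2\bu(x)}\,e^{-t}$, and integrating in $r \in [s,t]$ yields $\sfd(F_s(x),F_t(x)) \le \sqrt{2\bu(x)}\,|e^{-t}-e^{-s}|$.

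The main obstacle is to pass from the set $V$ to the full maximal interval $(a_x,b_x)$ in $U$: a priori the flow was built from $u = \eta\bu$, which differs from $\bu$ outside $V$. I would handle this by choosing an exhaustion $V_n \subset\subset V_{n+1} \subset\subset U$ with associated cutoffs $\eta_n \in \test(\X)$, $\eta_n = 1$ on $V_n$, and the corresponding flows $F^n$ of $-\nabla(\eta_n \bu)$. On $V_n$ all the vector fields $-\nabla(\eta_n\bu)$, $-\nabla(\eta_m\bu)$ with $m \ge n$ coincide with $-\nabla \bu$ by the locality of the gradient, so the uniqueness statement of Theorem \ref{thm:uniqrfl} (applied to the restricted continuity equations, via Theorem \ref{thm:uniqrfl}(2) and Proposition \ref{prop:localest} to ensure that densities stay under control as long as trajectories remain in $V_n$) guarantees that all these flows agree, and in particular agree with $F$, on the time interval before the trajectory exits $V_n$. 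Letting $n \to \infty$, any compact sub-interval of $(a_x,b_x)$ is eventually contained in some such interval, so both \eqref{ualong0} and \eqref{distancealong0} propagate to the whole of $(a_x,b_x)$ for $\mea$-a.e.\ $x\in U$.
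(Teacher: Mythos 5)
Your derivation of \eqref{ualong0} and \eqref{distancealong0} on the portion of the trajectory that stays in $V$ is exactly the paper's argument: apply \eqref{RFL} to $u \in \test(\X)$, use locality of the gradient to write $u=\bu$, $\nabla u=\nabla\bu$ and $|\nabla u|^2=2\bu$ on $V$, integrate the ODE, and invoke Remark \ref{rmk:accurverfl} for the metric speed. You have also correctly spotted that this argument only gives the conclusion on the time interval where $F_t(x)\in V$, whereas the statement writes "$U$"; the paper's own proof shares this imprecision (it asserts "$|\nabla u|^2=2u$ $\mea$-a.e.\ in $U$", which for $u=\eta\bu$ only holds where $\eta\equiv 1$, i.e.\ on $V$). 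One small side point: $u\in\test(\X)$ alone does not imply $\Delta u\in L^\infty(\mea)$ (the definition only gives $\Delta u\in\W(\X)$); here it is true because $\Delta\bu=N$ on $U$ and $\eta$ is chosen via Proposition \ref{prop:goodcutoff} so that $|\Delta\eta|$ is bounded, but you should verify this rather than read it off from $\test(\X)$.

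The exhaustion argument you add to ``pass from $V$ to $U$'' does not work, and the gap it is meant to close is in fact unclosable. The key step is incorrect: with $V$ fixed, the vector fields $-\nabla(\eta\bu)$ and $-\nabla(\eta_n\bu)$ coincide only on $V$ (where both cutoffs equal $1$), not on $V_n$, so even granting a locality property of regular Lagrangian flows (which is not contained in Theorem \ref{thm:uniqrfl} or Proposition \ref{prop:localest} and would itself require a separate argument), you could at most conclude that $F$ and $F^n$ agree until the trajectory exits $V$ --- the exhaustion never carries you beyond $V$. More fundamentally, the statement with ``$U$'' is false as written: the trajectory $F_t(x)$ of $-\nabla(\eta\bu)$, once it leaves $V$ but remains in $U$, is governed by $-\nabla(\eta\bu)\neq-\nabla\bu$, so the ODE $\frac{\d}{\d t}\bu(F_t(x))=-2\bu(F_t(x))$ fails and \eqref{ualong0} cannot hold there. (Indeed, since $\supp\eta\subset U$, the vector field vanishes near $\partial U$ so the trajectory never reaches $\partial U$, giving $(a_x,b_x)=\rr$ if read literally with $U$, in which case \eqref{ualong0} for all $t$ would force $\bu(F_t(x))\to\infty$ while $F_t(x)$ stays in a compact set --- a contradiction.) The correct reading --- consistent with how the result is used in Lemma \ref{lem:uinfinity} (where $V=B_R(\bar x)$ and the trajectory is shown to stay in $B_R(\bar x)$) and in Proposition \ref{prop:basics} (where $a_x=\tfrac12\log\frac{u(x)}{T_0}$, $b_x=\tfrac12\log\frac{u(x)}{t_0}$, which is precisely the interval the flow spends in $V=A_{t_0,T_0}$) --- is that $(a_x,b_x)$ is the maximal interval on which $F_t(x)$ stays in $\{\eta=1\}$, i.e.\ in $V$; the ``$U$'' in the statement is a slip. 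With that correction no exhaustion is needed and your first three steps are the complete proof.
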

\begin{proof}
	The first is just \eqref{eq:group}.
	
	\eqref{ualong0} follows observing that from \eqref{RFL}, since $|\nabla u|^2=2u$ $\mea$-a.e. in $U$, for $\mea$-a.e. $x\in U$ it holds that
	\[
	\frac{\d}{\d t}u(F_t(x))=-2u(F_t(x)),  \quad \text{for a.e.  $t\in(a_x,b_x)$.}
	\]
	\eqref{distancealong0} instead can be derived from the fact that, recalling Remark \ref{rmk:accurverfl}, for $\mea$-a.e. $x\in U$ it holds  $\overset{.}{|F_t(x)|}=\sqrt{2u(F_t(x))}$ for a.e. $t \in (a_x,b_x)$.
\end{proof}

Recall that, as remarked at the beginning of the section, $\X$ is unbounded, hence the following result makes sense.
\begin{lemma}\label{lem:uinfinity}
	\begin{equation}\label{eq:uadinfinto}
	\bu(x)\to +\infty \quad\text{   as  }  \sfd(x, U^c)\to+\infty.
	\end{equation}
\end{lemma}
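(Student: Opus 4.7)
The heart of the matter will be the pointwise bound
\[
\sqrt{2\bu(y)}\ge \sfd(y,U^{c})\qquad\text{for every }y\in U,
\]
from which the lemma follows at once, since the right-hand side tends to $+\infty$ by hypothesis. I will prove this estimate by contradiction: assume there exists $y_0\in U$ with $\sqrt{2\bu(y_0)}<\sfd(y_0,U^{c})$, and fix $R$ with $\sqrt{2\bu(y_0)}<R<\sfd(y_0,U^{c})$. Then the ball $V:=B_R(y_0)$ is open and relatively compact in $U$, and I may perform the construction of $\eta$, $u=\eta\bu$ and the Lagrangian flow $F_t$ of $-\nabla u$ for precisely this choice of $V$.

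By continuity of $\bu$ and $\sfd(\cdot,y_0)$ together with $\supp\mea=\X$, the open set
\[
A:=\{y'\in V : \sqrt{2\bu(y')}<R-\sfd(y',y_0)\}
\]
contains $y_0$ and has positive $\mea$-measure; I pick some $y'\in A$ for which the conclusions of Proposition \ref{prop:basics0} hold. As long as the trajectory $F_t(y')$ stays inside $V$ (where $\eta\equiv 1$, so $u=\bu$ there), Proposition \ref{prop:basics0} gives the identities $\bu(F_t(y'))=e^{-2t}\bu(y')$ and $\sfd(y',F_t(y'))\le(1-e^{-t})\sqrt{2\bu(y')}$. Combining the distance bound with the triangle inequality and the defining inequality of $A$ yields $\sfd(y_0,F_t(y'))<R$ for every such $t$, so the trajectory can never leave $V$ and the two identities are valid for all $t\ge 0$. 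In particular $\bu(F_t(y'))\to 0$ as $t\to+\infty$, while the trajectory is confined to the compact subset $\overline{B_{\sqrt{2\bu(y')}}(y')}\subset\bar V\subset U$; properness of $\X$ then produces a subsequence $F_{t_n}(y')\to y_\infty\in\bar V\subset U$, and continuity of $\bu$ on $U$ forces $\bu(y_\infty)=0$, contradicting the positivity of $\bu$ on $U$.

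The main subtlety I expect to have to handle carefully is that the vector field $-\nabla\bu$ is defined only on $U$ (and is not globally Sobolev), so the Regular Lagrangian Flow machinery cannot be applied to it directly; the cutoff $u=\eta\bu$ is unavoidable, but outside $\{\eta\equiv 1\}$ the identities $|\nabla u|^2=2u$ and $\Delta u = N$ fail. The crux of the argument is therefore to choose the cutoff region $V$ \emph{a priori} large enough (namely $V=B_R(y_0)$ with $R$ between $\sqrt{2\bu(y_0)}$ and $\sfd(y_0,U^{c})$) so that the flow trajectories we actually use are trapped in the region where $u$ and $\bu$ coincide, and the analytic information $|\nabla\bu|^2=2\bu$ may be used without modification. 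Once this trapping is secured, the remainder of the proof is essentially a soft compactness argument.
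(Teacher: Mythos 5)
Your proof is correct and follows essentially the same route as the paper's: negate the claim, trap a flow trajectory of $-\nabla(\eta\bu)$ inside the cutoff region $\{\eta\equiv 1\}$ via the distance estimate of Proposition \ref{prop:basics0}, and use $\bu(F_t(y'))=e^{-2t}\bu(y')\to 0$ against the strict positivity of $\bu$. Your version is somewhat more careful than the paper's (you make the a.e.\ selection explicit via the open positive-measure set $A$ and state the trapping as the clean pointwise bound $\sqrt{2\bu}\ge \sfd(\cdot,U^c)$), but the key idea is identical.
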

\begin{proof}
Suppose \eqref{eq:uadinfinto} is false. Then we can find a ball $B_{2R}(\bar x)\subset  U$ such that $R>100\sqrt{\bu(\bar x)}+1.$
	We choose  $\eta \in \test(\X)$ such that $\eta=1$ in $\overline B_{R}(\bar x)$, $0\le \eta\le 1$ and $\supp \eta \subset  U$, which exists from Proposition \ref{prop:goodcutoff}. We define $u\coloneqq \bu\eta \in \test(\X)$ and consider the regular Lagrangian flow $F_t$ relative to $-\nabla u.$ Then from \eqref{distancealong0} (with the choice $V=B_R(\bar x)$), the continuity of $\bu$ and the choice of $R$ we can find $x'\in B_1(\bar x)$ such that the curve $F_t(x')$ is contained in $B_R(\bar x)$ for all $t>0.$ This together with \eqref{ualong0} contradicts the positivity of $\bu$.
\end{proof}

{\bf
From now  until the very last part of the proof  we fix $t_0,T_0 \in \rr^+$ such that $\bu_0<t_0+1<T<T_0-1$ and $T_0-T>T-t_0$, where $T$ is to be chosen later.}

Thanks to both \eqref{eq:uadinfinto} and $\bu_0=\limsup_{x\to \partial  U} \bu(x)$ we have that $\{t_0<\bu <T_0\}$ is compactly contained in $ U$.
Hence we can pick a cut off function $\eta\in \test (\X )$ such that $\eta=1 $ in $\{t_0\le \bu\le T_0\}$, $0\le \eta \le 1$, $\supp \eta \subset  U$ and  define $u\coloneqq\eta \bu \in \test(\X).$ As above we consider $F_t$ the flow relative to $-\nabla u$, which is defined for all positive and negative times. 

Define for every $a,b\in [\bu_0,\infty)$ the open set
\[A_{a,b}\coloneqq\{a<\bu<b\}.\]
From the definition of $u$, the hypotheses on $\bu$ and the locality of the gradient and the Laplacian we have
\begin{equation}\label{functioncone}
\begin{split}
&\Delta u=N, \quad \mea\text{-a.e. in } A_{t_0,T_0}.\\
&|\nabla u|^2=2u, \quad  \mea\text{-a.e. in } A_{t_0,T_0}.
\end{split}
\end{equation}
The following can be proven arguing as in \cite[sec. 3.6.1]{volcon}, however we give a shorter proof, which use the improved Bochner inequality \eqref{eq:improvedboch}.
\begin{prop}
	\begin{equation}
		\H{u}={\sf id}  \quad \mea\text{-a.e. in } A_{t_0,T_0}.
	\end{equation}
\end{prop}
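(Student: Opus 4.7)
The plan is to apply the improved Bochner inequality \eqref{eq:improvedboch} to $u$ on the set $A_{t_0,T_0}$ and show that the hypotheses $\Delta u=N$ and $|\nabla u|^2=2u$ force equality in every step of the estimate, which pins down $\H u$ completely.

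First, since $u\in\test(\X)$ and $K=0$, the inequality \eqref{eq:improvedboch} gives
\[
\bd\!\left(\tfrac{|\nabla u|^{2}}{2}\right)\ge \Bigl(|\H u|_{HS}^{2}+\langle\nabla u,\nabla\Delta u\rangle+\tfrac{(\Delta u-{\sf tr}\H u)^{2}}{N-\dim(\X)}\Bigr)\mea,
\]
with the convention that the last term is $0$ when $\dim(\X)=N$. On $A_{t_0,T_0}$ the identities \eqref{functioncone} yield $|\nabla u|^{2}/2=u$ and $\Delta u=N$, so the left-hand side equals $N\mea\restr{A_{t_0,T_0}}$, while $\nabla\Delta u=0$ kills the inner-product term. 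Therefore, $\mea$-a.e.\ on $A_{t_0,T_0}$,
\[
N\ \ge\ |\H u|_{HS}^{2}+\frac{(N-{\sf tr}\H u)^{2}}{N-\dim(\X)}. \tag{$\ast$}
\]

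Next, set $n\coloneqq\dim(\X)$ and $s\coloneqq{\sf tr}\H u$. Using an orthonormal base as in \eqref{eq:coordHS}--\eqref{eq:coordtr} and Cauchy--Schwarz in $\rr^{n}$, one has the pointwise bound $|\H u|_{HS}^{2}\ge s^{2}/n$. If $n<N$, plugging this into $(\ast)$ gives
\[
N\ \ge\ \frac{s^{2}}{n}+\frac{(N-s)^{2}}{N-n},
\]
and an elementary one-variable minimisation shows that the right-hand side, as a function of $s$, attains its minimum exactly at $s=n$, with minimal value $n+(N-n)=N$. Hence equality holds everywhere: $s=n$ and $|\H u|_{HS}^{2}=s^{2}/n=n$, i.e.\ Cauchy--Schwarz is saturated. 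In the case $n=N$ we invoke the result of Han recalled in the proof of Lemma \ref{lem:kato}, namely ${\sf tr}\H u=\Delta u=N$, and $(\ast)$ gives $|\H u|_{HS}^{2}\le N$, while Cauchy--Schwarz gives $|\H u|_{HS}^{2}\ge N^{2}/N=N$; again we get equality.

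Finally, equality in the Cauchy--Schwarz inequality $|\H u|_{HS}^{2}\ge({\sf tr}\H u)^{2}/n$ forces the matrix $\{(Hu)_{i,j}\}$ to be, $\mea$-a.e.\ on $A_{t_0,T_0}$, a scalar multiple of the identity, and the trace condition ${\sf tr}\H u=n$ identifies the scalar as $1$. Thus $\H u={\sf id}$ $\mea$-a.e.\ on $A_{t_0,T_0}$, as desired. The argument is essentially routine once $(\ast)$ is at hand; the only subtle point is the separate treatment of the degenerate case $\dim(\X)=N$, where the constancy-of-trace result from \cite{HanRicci} replaces the quotient term in the Bochner inequality.
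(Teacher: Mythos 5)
Your proof is correct and follows essentially the same route as the paper's: localize the improved Bochner inequality \eqref{eq:improvedboch} to $A_{t_0,T_0}$ using \eqref{functioncone}, apply Cauchy--Schwarz to bound $|\H u|_{HS}^2$ from below in terms of ${\sf tr}\H u$, and observe that the resulting chain of inequalities must saturate, forcing $\H u = {\sf id}$. Your explicit handling of the degenerate case $\dim(\X)=N$ (via the identity ${\sf tr}\H u=\Delta u$ from \cite{HanRicci} recalled before Lemma \ref{lem:kato}) is a small but genuine refinement over the paper, which glosses over that case; you also correctly keep $\dim(\X)$ rather than $N$ in the Cauchy--Schwarz denominator, which is what makes the chain of inequalities close — the paper's displayed bound with $N$ in the denominator appears to be a typo, since $\tfrac{s^2}{N}+\tfrac{(N-s)^2}{N-\dim(\X)}\ge N$ fails in general when $\dim(\X)<N$.
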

\begin{proof}
	Localizing \eqref{eq:improvedboch} to $A_{t_0,T_0}$ and recalling \eqref{functioncone} we obtain
	\[
	N\ge |\H u|_{HS}^2+\frac{(N-{\sf tr}\H u)^2}{N-\dim(\X)} \quad \mea\text{-a.e. in } A_{t_0,T_0}.
	\]
	By  Cauchy-Swartz and recalling \eqref{eq:coordHS}, \eqref{eq:coordtr}  we observe that $|\H u|_{HS}^2=\sum_{1\le i,j\le \dim(\X)} \H u(e_i,e_j)^2\ge \sum_{i=1}^{\dim(\X)}\H u(e_i,e_i)^2 \ge \frac{{\sf tr}\H u^2}{N}$. Plugging this in the above inequality and applying again Cauchy-Swartz we obtain
	\[
	N\ge \frac{{\sf tr}\H u^2}{N}+\frac{(N-{\sf tr}\H u)^2}{N-\dim(\X)}\ge N \quad \mea\text{-a.e. in } A_{t_0,T_0}.
	\]
	Hence all the inequality we used were actually equalities, in particular $\H u(e_i,e_j)=0$ $\mea\text{-a.e. in } A_{t_0,T_0}$, for every $i\neq j$ and $\H u(e_i,e_i)=1$ $\mea\text{-a.e. in } A_{t_0,T_0}$ for every $i=1,...,\dim(\X)$, which concludes the proof.
\end{proof}

\begin{prop}\label{prop:basics}
	\begin{enumerate}
		\item For $\mea$-a.e. $x\in X$ it holds that $F_t(F_s(x))=F_{s+t}(x)$ for every $s,t \in \rr.$
		\item For $\mea$-a.e. $x\in A_{t_0,T_0}$ it holds that $F_t(x)\in A_{t_0,T_0}$ and
		\begin{equation}\label{ualong}
		u(F_t(x))=e^{-2t}u(x),
		\end{equation}
		for every $t\in (\frac{1}{2}\log{\frac{u(x)}{T_0}},\frac{1}{2}\log{\frac{u(x)}{t_0}}),$
		moreover
		\begin{equation}\label{distancealong}
		\sfd(F_s(x),F_t(x))=|e^{-t}-e^{-s}|\sqrt{2u(x)},
		\end{equation}
		for every $s,t\in (\frac{1}{2}\log{\frac{u(x)}{T_0}},\frac{1}{2}\log{\frac{u(x)}{t_0}}),$ in particular the curve $(\frac{1}{2}\log{\frac{u(x)}{T_0}},\frac{1}{2}\log{\frac{u(x)}{t_0}}) \ni t \mapsto F_t(x)$ is supported on a geodesic.
	\end{enumerate}
\end{prop}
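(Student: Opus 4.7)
Part 1 is a direct restatement of the group property in Remark \ref{rmk:extend} (and item 1 of Proposition \ref{prop:basics0}), so no argument is required. For Part 2, I will separately establish: (a) invariance of $A_{t_0,T_0}$ under the flow on the prescribed interval, (b) the exponential identity \eqref{ualong}, and (c) the sharp distance identity \eqref{distancealong} together with the geodesic property.

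Fix $x$ in the full-measure subset of $A_{t_0,T_0}$ for which the conclusions of Proposition \ref{prop:basics0} hold and $t \mapsto F_t(x)$ is continuous. Let $I$ be the maximal open interval containing $0$ on which $F_t(x) \in A_{t_0,T_0}$. Since $u \equiv \bu$ on $A_{t_0,T_0}$ (as $\eta \equiv 1$ there), Proposition \ref{prop:basics0} applied on $I$ gives $u(F_t(x)) = e^{-2t} u(x)$, and this value lies strictly between $t_0$ and $T_0$ precisely when $t$ lies in $J := \bigl(\tfrac{1}{2}\log(u(x)/T_0),\tfrac{1}{2}\log(u(x)/t_0)\bigr)$. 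By continuity of $t \mapsto F_t(x)$ and openness of $A_{t_0,T_0}$, $I$ must contain $J$: otherwise the trajectory would leave $A_{t_0,T_0}$ while $u\circ F_t$ is still strictly interior to $(t_0,T_0)$, contradicting continuity. This proves (a) and immediately yields (b), and the inequality \eqref{distancealong0} provides the ``$\le$'' half of \eqref{distancealong}.

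The heart of the argument is the matching ``$\ge$'' bound in (c). Set $f := \sqrt{2\bu}$; by hypothesis $f \in \W_{\loc}(U)$ with $\||\nabla f|\|_{L^\infty(U)} = 1$, so Proposition \ref{prop:sob to lip} provides a locally-Lipschitz representative satisfying $|f(y)-f(z)| \le \sfd(y,z)$ whenever $\sfd(y,z) \le \sfd(y,\partial U)$. I will apply this first to bound $\sfd(F_s(x),\partial U)$ from below: pick $y^* \in \partial U$ realizing $\sfd(F_s(x),\partial U)$, choose approximants $y_n \in U$ along a minimizing geodesic from $F_s(x)$ to $y^*$ with $\sfd(F_s(x),y_n) = \sfd(F_s(x),\partial U) - 1/n$, apply Proposition \ref{prop:sob to lip} to the pair $(F_s(x),y_n)$, and pass to the limit using $\limsup_n \bu(y_n) \le \bu_0$ to obtain
\[
\sfd(F_s(x),\partial U) \;\ge\; f(F_s(x)) - \sqrt{2\bu_0}.
\]
Since $F_t(x) \in A_{t_0,T_0}$ strictly and $t_0 > \bu_0$, one has $f(F_t(x)) > \sqrt{2t_0} > \sqrt{2\bu_0}$; combining with the upper bound already established,
\[
\sfd(F_s(x),F_t(x)) \;\le\; f(F_s(x)) - f(F_t(x)) \;<\; f(F_s(x)) - \sqrt{2\bu_0} \;\le\; \sfd(F_s(x),\partial U).
\]
Proposition \ref{prop:sob to lip} therefore applies directly to $\bigl(F_s(x),F_t(x)\bigr)$, yielding $f(F_s(x)) - f(F_t(x)) \le \sfd(F_s(x),F_t(x))$ and hence the desired equality. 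Finally, by Remark \ref{rmk:accurverfl} the flow curve $r \mapsto F_r(x)$ has metric speed $|\nabla u|(F_r(x)) = \sqrt{2u(F_r(x))} = e^{-r}\sqrt{2u(x)}$, so its length on $[s,t]$ equals $|e^{-t}-e^{-s}|\sqrt{2u(x)} = \sfd(F_s(x),F_t(x))$, making it a minimizing geodesic.

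The main obstacle is the boundary estimate for $\sfd(F_s(x),\partial U)$: without it, only a piecewise version of the equality in \eqref{distancealong} (obtained by partitioning $[s,t]$ into small sub-intervals and applying Proposition \ref{prop:sob to lip} on each) is available, and such piecewise equalities do not sum to a global equality because the triangle inequality points the wrong way. The strict inequality $t_0 > \bu_0$ built into the definition of $A_{t_0,T_0}$ provides the safety margin needed to upgrade the local Sobolev-to-Lipschitz bound into the sharp global distance identity between two flow points.
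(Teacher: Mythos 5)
Your proof is correct and follows essentially the same strategy as the paper: first establish the boundary-distance lower bound $\sfd(\cdot,U^c)\ge \sqrt{2\bu(\cdot)}-\sqrt{2\bu_0}$ via the local Sobolev-to-Lipschitz property applied along an approximating sequence converging to a nearest boundary point, then use the upper bound from \eqref{distancealong0} together with $t_0>\bu_0$ to verify the hypothesis $\sfd(F_s(x),F_t(x))\le \sfd(F_s(x),\partial U)$, and finally apply Proposition~\ref{prop:sob to lip} directly to the two flow points. The only cosmetic difference is that the paper first reduces to the case $t=0$ and anchors the boundary estimate at $x$, while you keep $s,t$ general (implicitly assuming the ordering that makes $f(F_s(x))-f(F_t(x))$ nonnegative) and anchor it at $F_s(x)$; this buys nothing substantive, as the two are interchangeable under the flow. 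Incidentally, your version writes the approximating sequence correctly as converging to $\bar x\in\partial U$, whereas the paper's proof contains the typo ``$x_n\to x$''.
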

\begin{proof}
Everything except for the equality in \eqref{distancealong} follow Proposition \ref{prop:basics0} together with the observation that in this case $a_x= \frac{1}{2}\log{\frac{u(x)}{T_0}}$ and $b_x= \frac{1}{2}\log{\frac{u(x)}{t_0}}$. 

To show equality in \eqref{distancealong} it is enough to show it for $t=0$ and $s>0$. Hence we fix $s \in (0,\frac{1}{2}\log{\frac{u(x)}{t_0}})$. Thanks to \eqref{distancealong0} we only need to show that
\begin{equation}\label{eq:partialeuqality}
	\sfd(F_s(x),x)\ge (1-e^{-s})\sqrt{2u(x)}.
\end{equation}
We make the intermediate claim that
\begin{equation}\label{eq:boundarydistance}
\sfd(x, U^c)\ge \sqrt{2\bu(x)}-\sqrt{2\bu_0}, \quad \forall x \in  U.
\end{equation}
To prove it we first observe that, since $\bu$ is positive, we have that $\sqrt{\bu}\in \W_{\loc}( U)$ and $|\nabla \sqrt{2\bu}|=1$ $\mea$-a.e. in $ U$.

The properness of the space $\X$ ensures that there exists $\bar x\in \partial  U $ such that $\sfd(x,\bar x)=\sfd(x,\partial  U)$. Moreover, since $\X$ is geodesic, there exists a sequence $x_n\subset  U$ such that $x_n\to x$ and $\sfd(x,x_n)\le\sfd(x,\partial  U)$. Hence recalling that $|\nabla \sqrt{2\bu}|=1$ $\mea$-a.e., we are in position to apply \eqref{eq:localsobolevtolip} and deduce that
\[
\sfd(x,\bar x)=\lim_n \sfd(x,x_n)\ge \liminf_n \sqrt{2\bu(x)}-\sqrt{2\bu(x_n)}\ge \sqrt{2\bu(x)}-\sqrt{2\bu_0},
\]
which proves \eqref{eq:boundarydistance}.

From \eqref{distancealong0} and by how we chose $s$ we have that $\sfd(F_s(x),x)\le (1-e^{-s})\sqrt{2\bu(x)}\le \sqrt{2\bu(x)}-\sqrt{2t_0}\le  \sqrt{2\bu(x)}-\sqrt{2\bu_0}.$ Hence from \eqref{eq:boundarydistance} we deduce
$
\sfd(F_s(x),x)\le \sfd(x,\partial  U)
$
and applying again \eqref{eq:localsobolevtolip} combined with \eqref{ualong} we obtain \eqref{eq:partialeuqality}.
\end{proof}

\begin{lemma}\label{measureannulilemma}
	For every $t \in (0,\frac{1}{2}\log \frac{T_0}{t_0})$ it holds that
	\begin{equation}\label{volannuli}
	\mea(A_{e^{2t}t_0,T_0})=e^{Nt}\mea(A_{t_0,e^{-2t}T_0}).
	\end{equation}
\end{lemma}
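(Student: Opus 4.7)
Rather than attempting a direct flow-based calculation (which would require pinning down the density $\rho_s$ of $\mu_s:=(F_s)_*(\mea\llcorner A_{e^{2t}t_0,T_0})$ to be exactly $e^{Ns}\chi_{F_s(E)}$, a matching that Proposition~\ref{prop:localest} bounds only from one side), I would exploit the two PDEs $\Delta u=N$ and $|\nabla u|^{2}=2u$ on $A_{t_0,T_0}$ from \eqref{functioncone} together with the coarea formula to get an explicit formula for $\mea(A_{a,b})$ for every $t_0\le a<b\le T_0$. The claimed identity \eqref{volannuli} will then be a one-line substitution.

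\textbf{Step 1: an ODE for the perimeter.} Fix $\phi\in C^1_c(t_0,T_0)$, so that $\phi(u)\in\LIP_c(A_{t_0,T_0})$. Using the chain rule for the gradient, the definition of $\bd u$, the identity $\Delta u=N$ on $A_{t_0,T_0}$, and $|\nabla u|^2=2u$ there,
\begin{equation*}
N\int\phi(u)\,d\mea=-\int\langle\nabla\phi(u),\nabla u\rangle\,d\mea=-\int\phi'(u)\,|\nabla u|^2\,d\mea=-\int \phi'(u)\cdot 2u\,d\mea .
\end{equation*}
Now I apply the coarea formula (Proposition~\ref{prop:coarea}, applied to $u$ on the open set $A_{t_0,T_0}$, possibly after rescaling the range to $(0,1)$; note that the sublevels $\{c\le u\le d\}$ with $t_0<c<d<T_0$ are compact by Lemma~\ref{lem:uinfinity}). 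Writing $P(r):=\mathrm{Per}(\{u<r\})(X)$, this turns the previous identity into
\begin{equation*}
N\int_{t_0}^{T_0}\phi(r)\,\frac{P(r)}{\sqrt{2r}}\,dr=-\int_{t_0}^{T_0}\phi'(r)\,\sqrt{2r}\,P(r)\,dr,
\end{equation*}
valid for all $\phi\in C^1_c(t_0,T_0)$. This is exactly the distributional identity $\tfrac{d}{dr}\bigl(\sqrt{2r}\,P(r)\bigr)=N\,P(r)/\sqrt{2r}$ on $(t_0,T_0)$. Setting $Q(r):=\sqrt{2r}\,P(r)$ reduces it to $Q'(r)=\tfrac{N}{2r}Q(r)$, whose only distributional solutions are constant multiples of $r^{N/2}$; hence $P(r)=\tfrac{C}{\sqrt{2}}\,r^{(N-1)/2}$ for some $C\ge 0$ and $\mathcal L^1$-a.e.\ $r\in(t_0,T_0)$.

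\textbf{Step 2: conclusion.} Applying the coarea formula once more gives, for every $t_0\le a<b\le T_0$,
\begin{equation*}
\mea(A_{a,b})=\int_{a}^{b}\frac{P(r)}{\sqrt{2r}}\,dr=\frac{C}{2}\int_a^b r^{N/2-1}\,dr=\frac{C}{N}\bigl(b^{N/2}-a^{N/2}\bigr).
\end{equation*}
Evaluating at $(a,b)=(e^{2t}t_0,T_0)$ and $(a,b)=(t_0,e^{-2t}T_0)$ yields
\begin{equation*}
\mea(A_{e^{2t}t_0,T_0})=\frac{C}{N}\bigl(T_0^{N/2}-e^{Nt}t_0^{N/2}\bigr)=e^{Nt}\cdot\frac{C}{N}\bigl(e^{-Nt}T_0^{N/2}-t_0^{N/2}\bigr)=e^{Nt}\mea(A_{t_0,e^{-2t}T_0}),
\end{equation*}
which is exactly \eqref{volannuli}.

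\textbf{Main obstacle.} The only delicate point is the application of the coarea formula in the form I use in Step~1, since Proposition~\ref{prop:coarea} is formulated for functions valued in $(0,1)$; one has to either pre-compose with a linear rescaling $(u-t_0)/(T_0-t_0)$ or invoke the general BV coarea formula for locally Lipschitz functions (cf.\ \cite{miranda}), after checking that the sublevel-compactness hypothesis holds on $A_{t_0,T_0}$ thanks to Lemma~\ref{lem:uinfinity}. Once that is justified, Steps~1--2 are a short computation. It is worth noting that this approach bypasses the flow entirely for the purposes of this particular Lemma; the flow $F_s$ (and Proposition~\ref{prop:localest}) plays no role here, and will instead be used in the subsequent subsections to identify the isometry $S$ of Theorem~\ref{thm:functional cone}.
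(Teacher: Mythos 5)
Your proof is correct, but it takes a route different from what the paper references. The paper's one-line proof invokes the flow-based argument of \cite[Prop.~3.7]{volcon} ``reversed'': that argument rests on the continuity equation for the pushforward $(F_t)_*\mea$ combined with the PDEs $\Delta u=N$, $|\nabla u|^2=2u$. Your argument instead bypasses the flow entirely: you integrate $\Delta u=N$ against $\phi(u)$, pass through the coarea formula to obtain the distributional ODE $\frac{d}{dr}\bigl(\sqrt{2r}\,P(r)\bigr)=N\,P(r)/\sqrt{2r}$ for $P(r):=\Per(\{u<r\},A_{t_0,T_0})$, solve it to get $P(r)\propto r^{(N-1)/2}$, and integrate once more. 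This directly yields the explicit formula $\mea(A_{a,b})=\tfrac{C}{N}\bigl(b^{N/2}-a^{N/2}\bigr)$, of which \eqref{volannuli} is a one-line consequence. A useful byproduct is that your perimeter computation is precisely the disintegration formula $u_*\mea\restr{A_{t_0,T_0}}=c\,r^{N/2-1}\,\mathcal L^1\restr{(t_0,T_0)}$ which the paper states only later (Proposition~\ref{disintlemma}) and derives from the full transport property \eqref{measureannuli}; your coarea route therefore also shortcuts that derivation. The two technical points you flag --- rescaling $u$ to fit the $(0,1)$-valued formulation of Proposition~\ref{prop:coarea}, and using Lemma~\ref{lem:uinfinity} for sublevel compactness --- are indeed the only delicate steps, and you handle them correctly; the passage from the distributional ODE to its explicit solution (via $Q:=\sqrt{2r}\,P\in W^{1,1}_{\loc}$, hence absolutely continuous, then $(Qr^{-N/2})'=0$) is standard. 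Note that $|\nabla u|=\sqrt{2u}$ is bounded below by $\sqrt{2t_0}>0$ on $A_{t_0,T_0}$, which is what makes the choices $f=1/|\nabla u|$ and $f=2u/|\nabla u|$ admissible in the coarea formula.
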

\begin{proof}
	The argument is essentially the same as in \cite[Prop. 3.7]{volcon} but reversed, indeed here we start from $\Delta u=N$ and $|\nabla u|^2=2u$ and deduce information on the measure.
\end{proof}

The following result has not a direct counterpart in \cite{volcon}, however it morally substitutes the bound (3.1) in \cite[Prop. 3.2]{volcon} (cf. with \eqref{rhotestimate} below). We remark that the proof of the following Proposition relies on the local estimate of Proposition \ref{prop:localest}.

\begin{prop}\label{prop:measureannuli}
	For every $t \in (0,\frac{1}{2}\log \frac{T_0}{t_0})$ it holds that
	\begin{equation}\label{measureannuli}
		({F_t}_*\mea)\restr{A_{t_0,e^{-2t}T_0}}={F_t}_*(\mea\restr{A_{e^{2t}t_0,T_0}})=e^{Nt}\mea\restr{A_{t_0,e^{-2t}T_0}}.
	\end{equation}
\end{prop}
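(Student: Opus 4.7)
The plan is to prove the two equalities separately. Both rely on Proposition \ref{prop:basics} to track the images of the annular regions $A_{a,b}$ under the flow $F_t$, together with the local a-priori estimate from Proposition \ref{prop:localest} applied with velocity $v=-\nabla u$, whose divergence equals $-N$ on $A_{t_0,T_0}$ by \eqref{functioncone}.

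For the first equality, I would show that
\[
F_t^{-1}(A_{t_0,e^{-2t}T_0}) = A_{e^{2t}t_0,T_0}, \quad \mea\text{-a.e.}
\]
The inclusion $\subset$ follows from Proposition \ref{prop:basics}: applied in the reversed direction (using the group property and the analogous relation for $F_{-t}$), if $F_t(x) \in A_{t_0,e^{-2t}T_0}$ then $u(x)=e^{2t}u(F_t(x))\in(e^{2t}t_0,T_0)$. The reverse inclusion is immediate from \eqref{ualong}, since $x\in A_{e^{2t}t_0,T_0}$ satisfies $t\in(\tfrac12\log\tfrac{u(x)}{T_0},\tfrac12\log\tfrac{u(x)}{t_0})$, whence $u(F_t(x))=e^{-2t}u(x)\in(t_0,e^{-2t}T_0)$. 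The change-of-variables rule for pushforward measures then yields the first equality.

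For the second (and main) equality, set $\mu_s := {F_s}_*(\mea\restr{A_{e^{2t}t_0,T_0}})$ for $s\in[0,t]$ and let $\rho_s := \tfrac{d\mu_s}{d\mea}$, which exists and is bounded by the compression estimate \eqref{eq:compr}. By Theorem \ref{thm:uniqrfl}, $\mu_s$ is the unique solution of the continuity equation with velocity $-\nabla u$ starting from $\mu_0 = \mea\restr{A_{e^{2t}t_0,T_0}}$. The crucial observation is that Proposition \ref{prop:basics} gives, for every $s\in[0,t]$, that $\mu_s$ is concentrated on
\[
B_s := A_{e^{2(t-s)}t_0,\,e^{-2s}T_0} \subset A_{t_0,T_0},
\]
and the map $(x,s)\mapsto \nchi_{B_s}(x)$ is Borel. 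Since $\mathrm{div}(-\nabla u) = -\Delta u = -N$ $\mea$-a.e.\ on $A_{t_0,T_0}$ by \eqref{functioncone}, we have $\|\mathrm{div}(v)^-\|_{L^\infty(B_s)}=N$ for every $s\in[0,t]$. Proposition \ref{prop:localest} applied with $\|\rho_0\|_{L^\infty}=1$ then yields
\[
\|\rho_s\|_{L^\infty}\le \exp\!\Big(\int_0^s N\,dr\Big) = e^{Ns}, \qquad \forall\, s\in[0,t].
\]

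To conclude, at time $s=t$ we have $\rho_t\le e^{Nt}$ $\mea$-a.e.\ and $\rho_t=0$ outside $A_{t_0,e^{-2t}T_0}$. Combining with the total-mass identity from Lemma \ref{measureannulilemma},
\[
\int\rho_t\,d\mea=\mea(A_{e^{2t}t_0,T_0})=e^{Nt}\mea(A_{t_0,e^{-2t}T_0}),
\]
a non-negative density bounded above by $e^{Nt}$, supported on $A_{t_0,e^{-2t}T_0}$, and with this total mass, must equal $e^{Nt}$ $\mea$-a.e.\ on $A_{t_0,e^{-2t}T_0}$, proving the claim. The main obstacle here is precisely the one the local estimate was designed to overcome: the global divergence of $-\nabla u$ is not controlled on the cutoff region $\mathrm{supp}(\nabla\eta)\setminus\{\eta=1\}$ where $u=\eta\bu\ne \bu$, but this region is disjoint from the supports $B_s$ of the solution, so only the local bound $\|\mathrm{div}(v)^-\|_{L^\infty(B_s)}=N$ enters, rather than a global $L^\infty$ bound.
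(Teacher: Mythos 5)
Your argument is correct and follows essentially the same route as the paper's own proof: both apply the local a-priori estimate of Proposition \ref{prop:localest} using $\Delta u = N$ on $A_{t_0,T_0}$ to obtain the density bound $\rho_t \le e^{Nt}$, then combine this with the mass identity of Lemma \ref{measureannulilemma} and the fact that $\rho_t$ is supported in $A_{t_0,e^{-2t}T_0}$ to force equality. The only cosmetic differences are that the paper normalizes the initial measure to a probability measure (so that Theorem \ref{thm:uniqrfl} and Proposition \ref{prop:localest} apply verbatim) and uses the constant family $B_t = A_{t_0,T_0}$ rather than your sharper nested sets $B_s$, neither of which changes the substance of the argument.
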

\begin{proof}
	Consider the probability measure $\mu_0=\frac{\mea\restr{A_{e^{2t}t_0,T_0}}}{\mea(A_{e^{2t}t_0,T_0})}.$ By Theorem \ref{thm:uniqrfl}   $\{{F_s}_*\mu_0\}_{s\in[0,t]}$ are all Borel probability measures, absolutely continuous with respect to $\mea$ and solve the continuity equation with initial datum $\mu_0$. Moreover  \eqref{ualong} implies that $ {F_s}_*\mu_0$ is concentrated on  $A_{t_0,T_0}$ for every $s \in [0,t]$. Therefore setting ${F_t}_*\mu_0=\rho_t\mea $ we are in position to apply \eqref{localest}, that combined with \eqref{functioncone} gives
	\begin{equation}\label{rhotestimate}
	\|\rho_t\|_{\infty} \le \mea(A_{e^{2t}t_0,T_0})^{-1}e^{Nt}.
	\end{equation}
	However applying now \eqref{volannuli} and observing that and  that 
	${F_t}_*\mu_0$ is concentrated in $A_{t_0,e^{-2t}T_0}$, again thanks to \eqref{ualong}, we can compute
	\[ 0\le \int_{A_{t_0,e^{-2t}T_0}}  \mea(A_{e^{2t}t_0,T_0})^{-1}e^{Nt}-\rho_t \d \mea = \mea(A_{e^{2t}t_0,T_0})^{-1}e^{Nt}\mea(A_{t_0,e^{-2t}T_0})-1=0,  \]
	that gives the second in \eqref{measureannuli}. 
	
	The first in \eqref{measureannuli} follows directly from \eqref{ualong}.
\end{proof}
Having at our disposal \eqref{measureannuli} and \eqref{distancealong}, we can argue exactly as in \cite[Cor. 3.8]{volcon} to obtain the following 

\begin{prop}[Continuous disintegration]\label{disintlemma}
	We have 
	\begin{equation}\label{pushforward}
	u_*\mea\restr{A_{t_0,T_0}}=cr^{\frac{N}{2}-1} \mathcal{L}^1\restr{(t_0,T_0)},
	\end{equation}
	where $c\coloneqq\frac{N}{2}\frac{\mea(A_{t_1,t_2})}{t_1^{N/2}-t_2^{N/2}}$, for any $t_1,t_2 \in \rr^+$ with $t_0\le t_2<t_2\le T_0$. Moreover there exists a weakly continuous family of Borel measures $(t_0,T_0)\ni r\mapsto\mea_r \in \mathcal{P}(X)$ such that
	\begin{equation}\label{disintegration}
	\int \phi \d \mea = c \int_{t_0}^{T_0} \int \phi \d \mea_r r^{N/2-1} \d r, \quad  \forall \phi  \in C_c(A_{t_0,T_0}).
	\end{equation}
	Finally, for every $t	\in (0,\log \frac{T_0}{t_0})$ the measures $\mea_r$ satisfies
	\begin{equation}\label{partialmeasures}
	{F_t}_*\mea_r=\mea_{e^{-2t}r}, \quad \text{ for a.e. } r\in (e^{2t}t_0,T_0).
	\end{equation}

\end{prop}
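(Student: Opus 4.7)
The plan is to obtain, in order, the absolute continuity/explicit form of the pushforward, a Borel disintegration, the flow-transport relation for the fiber measures, and finally a weakly continuous representative using \eqref{distancealong}.

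First I would derive a scaling identity for the annuli $A_{a,b}$. Fix $t\in(0,\tfrac12\log(T_0/t_0))$ and apply \eqref{measureannuli} to the Borel set $B=A_{a,b}\subset A_{t_0,e^{-2t}T_0}$. By \eqref{ualong} we have $F_t^{-1}(A_{a,b})\cap A_{e^{2t}t_0,T_0}=A_{e^{2t}a,e^{2t}b}$, and therefore $\mea(A_{e^{2t}a,e^{2t}b})=e^{Nt}\mea(A_{a,b})$. Setting $\lambda=e^{2t}$ this reads
\[
\mea(A_{\lambda a,\lambda b})=\lambda^{N/2}\mea(A_{a,b}),\qquad t_0\le a<b,\ \lambda\in[1,T_0/b].
\]
The function $f(r):=\mea(A_{t_0,r})$ is nondecreasing and, with the choice $a=t_0,\ b=t_0+\eps,\ \lambda=r/t_0$, satisfies $f(r+r\eps/t_0)-f(r)=(r/t_0)^{N/2}f(t_0+\eps)$ for all sufficiently small $\eps>0$. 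Dividing by $r\eps/t_0$ and letting $\eps\downarrow 0$ (together with the analogous inequality with left-differences) forces $f$ to be differentiable on $(t_0,T_0)$ with $f'(r)=c\,r^{N/2-1}$ for a constant $c>0$, which together with $f(t_0)=0$ gives $f(r)=\tfrac{2c}{N}(r^{N/2}-t_0^{N/2})$; identification of the constant $c$ on any subinterval yields \eqref{pushforward}.

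Next, since the space is Polish and $u$ is continuous, the abstract disintegration theorem applied to $\mea\restr{A_{t_0,T_0}}$ via $u$ (paired with \eqref{pushforward}) produces a Borel family of probability measures $r\mapsto \mea_r$, concentrated on $\{u=r\}$, such that \eqref{disintegration} holds. To obtain the transport formula \eqref{partialmeasures}, I would push the identity \eqref{disintegration} through $F_t$ and use \eqref{measureannuli} together with \eqref{ualong}: on one hand
\[
({F_t})_*\bigl(\mea\restr{A_{e^{2t}t_0,T_0}}\bigr)=c\int_{e^{2t}t_0}^{T_0}({F_t})_*\mea_r\,r^{N/2-1}\,\d r=ce^{Nt}\int_{t_0}^{e^{-2t}T_0}({F_t})_*\mea_{e^{2t}s}\,s^{N/2-1}\,\d s
\]
after the change of variable $s=e^{-2t}r$; on the other hand \eqref{measureannuli} rewrites the same measure as $ce^{Nt}\int_{t_0}^{e^{-2t}T_0}\mea_s\,s^{N/2-1}\,\d s$. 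Essential uniqueness of disintegrations (applied to the continuous function $u$ on the image $A_{t_0,e^{-2t}T_0}$) gives $({F_t})_*\mea_{e^{2t}s}=\mea_s$ for a.e. $s$, i.e. \eqref{partialmeasures}.

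Finally, to upgrade to a weakly continuous representative, fix some $r^\star\in(t_0,T_0)$ for which \eqref{partialmeasures} holds for all rational $t$ (a full-measure set of $r^\star$), and define $\tilde\mea_r:=(F_{t(r)})_*\mea_{r^\star}$ with $t(r)=\tfrac12\log(r^\star/r)$, for $r$ in the open range where $F_{t(r)}$ is $\mea_{r^\star}$-a.e. defined and takes values in $A_{t_0,T_0}$. The key estimate \eqref{distancealong} shows that for $\mea$-a.e.\ $x$ the trajectory $t\mapsto F_t(x)$ is continuous (indeed uniformly Lipschitz on compact $t$-intervals with constant bounded by $\sqrt{2T_0}$), hence for every bounded continuous $\phi$ the map $r\mapsto\int\phi\circ F_{t(r)}\,\d\mea_{r^\star}$ is continuous by dominated convergence; thus $r\mapsto\tilde\mea_r$ is weakly continuous on $(t_0,T_0)$. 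Using \eqref{partialmeasures} for the original family one checks that $\tilde\mea_r=\mea_r$ for a.e.\ $r$, so $\tilde\mea_r$ serves as the claimed continuous representative and automatically satisfies \eqref{partialmeasures} for every admissible $r$ by construction.

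The main delicate point is the weak-continuity step: the regular Lagrangian flow $F_t$ is only defined up to $\mea$-null sets of initial data, so one must ensure that the chosen $r^\star$ lies outside every exceptional null set arising in \eqref{partialmeasures} and in the pointwise continuity of $t\mapsto F_t(x)$ for $\mea_{r^\star}$-a.e.\ $x$. This is handled by observing that the exceptional sets are countable unions of $\mea_r$-null sets for a.e. $r$, and invoking \eqref{pushforward} to pick $r^\star$ avoiding all of them; once this is set up, \eqref{distancealong} directly delivers the continuity claim.
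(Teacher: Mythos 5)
Your strategy is sound and follows the natural route (scaling of annuli measures, abstract disintegration, fiber transport by the flow, selection of a good reference fiber to upgrade to a continuous representative), which is essentially what the paper attributes to \cite{volcon}. Two points do not quite close as written, though both are fixable.

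For the weak-continuity step: fixing $r^\star$ so that \eqref{partialmeasures} holds for all \emph{rational} $t$ only gives $\tilde\mea_r=\mea_r$ on the countable (hence Lebesgue-null) set $\{e^{-2t}r^\star : t\in\mathbb{Q}\}$, and that does not establish $\tilde\mea_r=\mea_r$ for a.e.\ $r$. What you need is a Fubini argument on the $(t,r)$-plane: since for each fixed $t$ the identity $(F_t)_*\mea_r=\mea_{e^{-2t}r}$ holds for a.e.\ $r$, for a.e.\ $r^\star$ it holds for a.e.\ $t$. Choosing such an $r^\star$ (and also requiring $\mea_{r^\star}$ to give full mass to the $\mea$-conull sets on which the group law, \eqref{ualong}, \eqref{distancealong} and the absolute continuity of trajectories hold, which is possible for a.e.\ $r^\star$ precisely because of the disintegration) then gives $\tilde\mea_r=\mea_r$ for a.e.\ $r$, and the group property yields \eqref{partialmeasures} for every admissible $t$ and $r$. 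You do flag the right delicate point; this is the concrete way to resolve it.

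For \eqref{pushforward}: dividing the scaling identity by $r\eps/t_0$ does not by itself ``force'' differentiability, since the resulting identity merely relates the right-difference quotient of $f$ at $r$ to the one at $t_0$. One must first invoke Lebesgue's a.e.\ differentiability of the monotone function $f$ to find a single point where the quotient converges; the identity then propagates convergence to $t_0$ and from there to every $r$ (and an analogous computation handles the left quotients). One should also observe beforehand that $u_*\mea\restr{A_{t_0,T_0}}$ is atomless (an atom would be replicated with comparable mass along a continuum of dilates, contradicting $\mea(A_{t_0,T_0})<\infty$), so that $f$ is continuous and the fundamental theorem of calculus legitimately reconstructs $f$ from $f'$.
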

The following result has not a counterpart in \cite{volcon}, since it deals with large scales, while the analysis in \cite{volcon} is local.
\begin{cor}\label{cor:euclvol}
	$\X$ has Euclidean volume growth, in particular $\{\bu>\bu_0\}$ is unbounded, connected  with $\{\bu>\bu_0\}^c$ bounded.
\end{cor}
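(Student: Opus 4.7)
The strategy is to first derive Euclidean volume growth from the measure formula \eqref{pushforward} combined with a radial bound on the annuli $A_{t_0,T}$ obtained through the flow of Proposition~\ref{prop:basics}, and then to conclude the remaining structural claims about $\{\bu>\bu_0\}$ from the one-end property in Proposition~\ref{prop:ends}.

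For the first step I would observe that the construction of Section~\ref{sec:flow di u} can be iterated with $T_0$ arbitrarily large, since the compact containment $\{t_0\le\bu\le T_0\}\subset\subset U$ is granted by Lemma~\ref{lem:uinfinity} together with $\bu_0<t_0$ (as asserted in the excerpt), while the constant $c$ in \eqref{pushforward} is intrinsic, being determined by $c=\frac{N}{2}\mea(A_{t_1,t_2})/(t_2^{N/2}-t_1^{N/2})$ for any fixed $t_0<t_1<t_2$. This produces the scaling formula $\mea(A_{t_0,T})=\frac{2c}{N}(T^{N/2}-t_0^{N/2})$ valid for every $T>t_0$. Next, fixing $\bar x\in U$ with $\bu(\bar x)\in(t_0,t_0+1)$ and setting $D:=\diam(\{t_0\le\bu\le t_0+1\})$, the flow of Proposition~\ref{prop:basics} sends $\mea$-a.e.\ point $x\in A_{t_0,T}$ to a point $y\in\{\bu=t_0\}$ with $\sfd(x,y)=\sqrt{2\bu(x)}-\sqrt{2t_0}\le\sqrt{2T}$, so that $\sfd(x,\bar x)\le\sqrt{2T}+D$ and hence $A_{t_0,T}\subset B_{\sqrt{2T}+D}(\bar x)$ modulo a null set. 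Combining with the measure formula yields $\mea(B_r(\bar x))\gtrsim r^N$ for $r$ large, whence Bishop--Gromov forces ${\sf AVR}(\X)>0$.

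Once Euclidean volume growth is in hand, the remaining claims are purely topological. Since $N\ge 2$, the growth rate $\mea(B_r)\sim r^N$ rules out the cylinder case $\rr\times\X'$ (whose balls grow at most linearly in $r$, as in the proof of Corollary~\ref{cor:ends}), so Proposition~\ref{prop:ends}(ii) yields that $\X$ has exactly one end. I would then show that $U^c$ is bounded by contradiction: if not, taking a bounded set $C\supset\partial U$ and divergent sequences $x_n\in U$, $y_n\in U^c$, the one-end property would connect $x_n$ to $y_n$ by paths avoiding $C$, contradicting the fact that any such path must cross $\partial U\subset C$. Given $U^c$ bounded, Lemma~\ref{lem:uinfinity} now implies $\bu(x)\to\infty$ as $\sfd(x,x_0)\to\infty$ in $\X$, so $\{\bu\le\bu_0\}\cap U$ is bounded; hence $\{\bu>\bu_0\}^c$ is bounded and its topological boundary is contained in the bounded set $\partial U\cup(\{\bu=\bu_0\}\cap U)$. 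Finally, applying the one-end property once more to two points chosen in different connected components of $\{\bu>\bu_0\}$ (each unbounded by~\eqref{eq:unbounded components}) and lying outside a ball enclosing this boundary, one forces the two components to coincide; together with~\eqref{eq:unbounded components} this proves that $\{\bu>\bu_0\}$ is connected and unbounded.

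The main obstacle will be checking that the measure formula of Proposition~\ref{disintlemma} extends with the same constant $c$ to arbitrarily large scales, which requires the compact containment to be uniform in $T_0$; once that is verified, the rest of the argument is essentially topological bookkeeping with Proposition~\ref{prop:ends}.
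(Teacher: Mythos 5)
Your proposal is correct and follows essentially the same route as the paper: radial containment of the annuli $A_{t_0,T_0}$ obtained from the flow estimates \eqref{ualong} and \eqref{distancealong}, the annulus-measure formula from Proposition~\ref{disintlemma}, and then the one-end dichotomy of Proposition~\ref{prop:ends} combined with boundedness of $\partial\{\bu>\bu_0\}$ and unboundedness of each of its components. The only stylistic difference is that you insert an intermediate step establishing that $U^c$ is bounded before treating $\{\bu>\bu_0\}^c$, whereas the paper applies the one-end argument directly to $\{\bu>\bu_0\}$; the two orderings are equivalent.
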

\begin{proof}
	Combining \eqref{ualong} and \eqref{distancealong} it can be shown that  $A_{t_0,T_0}\subset B_{4\sqrt{T_0}+C}(x_0)$  for every $T_0>2t_0$ for some fixed constant $C>0$ (recall that what we proved so far holds for an arbitrary $T_0>T$). Therefore $A_{t_0,\frac{(R-C)^2}{16}}\subset B_R(x_0)$ for every $R$ big enough and the conclusion follows using \eqref{disintegration}.
	
	Since $\X$ has Euclidean volume growth it is not compact and not a cylinder in the sense of $i)$ of Proposition \ref{prop:ends}. Now observe that $\partial \{\bu>\bu_0\}$ is bounded (as a consequence of \eqref{eq:uadinfinto}), hence compact, and that each connected component of $\{\bu>\bu_0\}$ is unbounded (by \eqref{eq:unbounded components}). Hence the conclusion follows from Proposition \ref{prop:ends} and the fact that $\{\bu>\bu_0\}$ is not empty.
\end{proof}

\begin{lemma}[{\cite[Lemma 3.11]{volcon}}]\label{strongderivative}
	Let $f\in L^p(\mea)$ with $p<+\infty$, then the map $t\mapsto f\circ F_t$ is continuous in $L^p(\mea).$ Moreover if $f\in \W(X)$ the  map $t \mapsto f\circ F_t$ is $C^1$ in $L^2$ and its derivative is given by
	\[\frac{\d}{\d t} f\circ F_t=-\langle \nabla f , \nabla u \rangle \circ F_t. \]
\end{lemma}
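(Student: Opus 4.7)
The plan is to prove the two statements sequentially, reducing each to a dense subclass (bounded Lipschitz functions with bounded support for part one, test functions for part two), where the defining properties of $F_t$ apply directly, and then extending by density via the bounded-compression estimate ${F_t}_*\mea\le C\mea$ from item 1 of Definition \ref{def:rfl}. Throughout I will use that $u\in\test(\X)$ is Lipschitz, so $|\nabla u|\le \Lip(u)\in L^\infty(\mea)$.

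\emph{Continuity in $L^p$.} First I would treat $f\in\LIP_{bs}(\X)$. By Remark \ref{rmk:accurverfl}, for $\mea$-a.e.\ $x$ the curve $t\mapsto F_t(x)$ is absolutely continuous with metric speed $|\nabla u|\circ F_t(x)\le\|\nabla u\|_{L^\infty}$, hence $\sfd(F_t(x),F_s(x))\le|t-s|\,\|\nabla u\|_{L^\infty}$. Combined with the Lipschitz bound and the fact that $\{f\circ F_t\ne0\}\cup\{f\circ F_s\ne0\}\subseteq F_t^{-1}(\supp f)\cup F_s^{-1}(\supp f)$, which has $\mea$-measure at most $2C\mea(\supp f)$ by compression, this yields $\|f\circ F_t-f\circ F_s\|_{L^p}\le \Lip(f)\|\nabla u\|_{L^\infty}|t-s|\,(2C\mea(\supp f))^{1/p}\to0$. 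For general $f\in L^p(\mea)$, $p<\infty$, I would use density of $\LIP_{bs}(\X)$ in $L^p(\mea)$ and the contraction $\|g\circ F_t\|_{L^p}\le C^{1/p}\|g\|_{L^p}$ (which follows from ${F_t}_*\mea\le C\mea$ by a change-of-variables) to conclude by an $\eps/3$ argument.

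\emph{Differentiability for $f\in\test(\X)$.} Property 3 of Definition \ref{def:rfl} applied to $v\equiv -\nabla u$ gives, for $\mea$-a.e.\ $x$, that $t\mapsto f\circ F_t(x)$ is absolutely continuous with $\frac{\d}{\d t}f\circ F_t(x)=-\la\nabla f,\nabla u\ra\circ F_t(x)$ for a.e.\ $t$. Integrating in $t$, and using Fubini together with the compression bound, one obtains the Bochner integral identity
\begin{equation*}
f\circ F_t-f\circ F_{t_0}=-\int_{t_0}^{t}\la\nabla f,\nabla u\ra\circ F_r\,\d r\quad\text{in }L^2(\mea),
\end{equation*}
justified because $g\coloneqq\la\nabla f,\nabla u\ra\in L^2(\mea)$ (as $|\nabla f|\in L^2$ and $|\nabla u|\in L^\infty$), so by part one the map $r\mapsto g\circ F_r$ is continuous and hence Bochner integrable in $L^2(\mea)$ on compact intervals. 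An application of the Lebesgue differentiation theorem for continuous Banach-valued maps then gives that the difference quotient $h^{-1}(f\circ F_{t_0+h}-f\circ F_{t_0})$ converges in $L^2(\mea)$ to $-g\circ F_{t_0}$, and the $t_0$-continuity of this derivative (again by part one) produces the $C^1$ regularity.

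\emph{Extension to $\W(\X)$.} For general $f\in\W(\X)$ I would use the density of $\test(\X)$ in $\W(\X)$ (recalled right after Theorem \ref{thm:constancy}) to pick $f_n\in\test(\X)$ with $f_n\to f$ in $\W(\X)$. Compression gives $f_n\circ F_r\to f\circ F_r$ in $L^2$ uniformly for $r$ in compact intervals, and since $|\nabla u|\in L^\infty$ one has $\la\nabla f_n,\nabla u\ra\to\la\nabla f,\nabla u\ra$ in $L^2(\mea)$, with $\|\la\nabla(f_n-f),\nabla u\ra\circ F_r\|_{L^2}\le C^{1/2}\|\nabla u\|_{L^\infty}\|\nabla(f_n-f)\|_{L^2}$ uniformly in $r$. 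Passing to the limit in the integral identity from the previous step and reapplying Lebesgue differentiation yields the formula for $\frac{\d}{\d t}f\circ F_t$ and its $L^2$-continuity in $t$.

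The main delicate point is handling the supports in the Lipschitz reduction of step one cleanly: $F_t$ does not preserve supports, only controls them via the measure distortion bound, so one must combine the pointwise metric-speed estimate with the compression estimate in the right order. Once this is settled the other steps are routine approximation using $|\nabla u|\in L^\infty$ and the density of $\test(\X)$ in $\W(\X)$.
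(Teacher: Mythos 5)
Your proof is correct. The paper cites this result from \cite[Lemma 3.11]{volcon} without reproducing the argument, and your strategy---reducing $L^p$-continuity to $\LIP_{bs}(\X)$ via the metric-speed and compression bounds, upgrading the pointwise ODE of Definition \ref{def:rfl} to a Bochner-integral identity in $L^2$ for $\test(\X)$-functions using the continuity of $r\mapsto\la\nabla f,\nabla u\ra\circ F_r$ established in the first part, and closing by density of $\test(\X)$ in $\W(\X)$---is the natural one and matches the argument given in that reference.
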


\subsection{Effect on the Dirichlet Energy}
Thanks to \eqref{functioncone} and \eqref{measureannuli} we can repeat almost verbatim  the analysis done in \cite[Sec. 3.2]{volcon}. Indeed all the proofs contained there rely only on the analogous properties of the function ${\sf b}$ and ${\sf Fl}_t$, i.e. ``$|D {\sf b} |^2=2{\sf b},\Delta {\sf b}=N$ and ${{\sf Fl}_t}_*\mea=e^{Nt}\mea$".

This said, we will only state, adapted to our case and without proof,  the final result in \cite[Sec. 3.2]{volcon} (i.e. Corollary 3.17), since it is the only statement that is needed for the rest of the argument.

\begin{theorem}\label{gradientcor}
	Let  $t	\in (0,\log \frac{T_0}{t_0})$,  and $f \in L^2(\mea)$ with support in $A_{t_0,e^{-2t}T_0}$.  Then $f \in \W(X)$ if and only if $f\circ F_t \in \W(X)$ and in this case
	\begin{equation}\label{gradientalong}
	|\nabla (f \circ F_t)|= e^{-t}|\nabla f |\circ F_t, \mea-a.e.
	\end{equation}
\end{theorem}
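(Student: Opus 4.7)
The plan is to combine the measure rescaling \eqref{measureannuli} with a test-plan argument exploiting the metric contraction induced by $\H u = \text{id}$ along the flow $F_t$, mirroring the analysis of \cite[Sec.~3.2]{volcon} adapted to our setting where $u$ plays the role of the reference function. First, $L^2$-boundedness of the composition is immediate: for $f \in L^2(\mea)$ supported in $A_{t_0,e^{-2t}T_0}$, applying the identity $\int (f \circ F_t)^2 \,\d\mea = \int f^2 \,\d({F_t}_*\mea) = e^{Nt}\int f^2\,\d\mea$ together with \eqref{ualong} gives $f \circ F_t \in L^2(\mea)$ supported in $A_{e^{2t}t_0, T_0}$.

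The core step is the following contraction property of the flow: for every test plan $\ppi$ concentrated on curves $\gamma$ with $\gamma_s \in A_{t_0,T_0}$ for all $s \in [0,1]$, the pushed plan $\tilde\ppi \coloneqq \Phi_*\ppi$ defined via $\Phi(\gamma)(s) \coloneqq F_t(\gamma_s)$ is again a test plan, and $|\dot{(F_t \circ \gamma)_s}| \leq e^{-t}|\dot\gamma_s|$ for $\ppi\otimes\mathcal{L}^1$-a.e.\ $(\gamma,s)$. This rate-$e^{-t}$ bound is the pointwise manifestation of the Bochner equality forced by $\H u = \text{id}$: along a single flow line it already follows from the equality in \eqref{distancealong} combined with \eqref{eq:msflow}, while for transverse directions one integrates an EVI-type inequality for the gradient flow of $u$ derived from the improved Bochner inequality \eqref{eq:improvedboch}. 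Granted the contraction, for $f \in \W(\X)$ supported in $A_{t_0,e^{-2t}T_0}$ the defining inequality \eqref{eq:sobolev def} applied to $\tilde \ppi$ yields
\begin{equation*}
\int |f(F_t(\gamma_1)) - f(F_t(\gamma_0))|\,\d\ppi \leq e^{-t}\int \int_0^1 |\dot\gamma_s|\,(|\nabla f|\circ F_t)(\gamma_s)\,\d s\,\d\ppi,
\end{equation*}
so by the arbitrariness of $\ppi$ and the minimality of the weak upper gradient one concludes $f \circ F_t \in {\rm S}^2(\X)$ with $|\nabla(f \circ F_t)| \leq e^{-t}|\nabla f|\circ F_t$ $\mea$-a.e.

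For the reverse inequality and the ``if and only if'', I would invoke the group law $F_{-t}\circ F_t = \text{id}$ $\mea$-a.e.\ (Remark~\ref{rmk:extend}) and apply the same argument to the inverse flow $F_{-t}$ acting on $g \coloneqq f \circ F_t$: this gives $g \circ F_{-t} \in {\rm S}^2(\X)$ with $|\nabla f| = |\nabla(g \circ F_{-t})| \leq e^{t}|\nabla g|\circ F_{-t}$, and composing with $F_t$ yields $|\nabla f|\circ F_t \leq e^{t}|\nabla(f\circ F_t)|$ $\mea$-a.e., which combined with the previous bound forces equality. The main obstacle is precisely the contraction estimate for generic (not flow-adapted) curves, where \eqref{distancealong} no longer applies directly; this is the step that genuinely requires exploiting $\H u = \text{id}$ in an integrated way and parallels the delicate second-order analysis of \cite[Sec.~3.2]{volcon}.
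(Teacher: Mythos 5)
Your plan is structurally quite different from the one the paper actually adopts. The paper defers Theorem~\ref{gradientcor} to the analysis in \cite[Sec.~3.2]{volcon}, which computes the evolution of the Dirichlet energy $t\mapsto\ch(f\circ F_t)$ directly from the two identities $\Delta u=N$, $|\nabla u|^2=2u$ and the measure rescaling \eqref{measureannuli} (indeed the preamble of the subsection stresses that \emph{only} those three facts are used), and only afterwards localizes the resulting global energy identity to obtain the pointwise equality \eqref{gradientalong}. You instead propose to first establish the \emph{pointwise} metric contraction $|\dot{(F_t\circ\gamma)}_s|\le e^{-t}|\dot\gamma_s|$ for test-plan-a.e.\ curve and then deduce the Sobolev statement from the defining inequality \eqref{eq:sobolev def}. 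Given that contraction, your argument is correct (the pushforward $\tilde\ppi$ is a test plan by \eqref{measureannuli}, and the reversibility via $F_{-t}$ upgrades the one-sided bound to the equality and the ``if and only if''), and it would be a cleaner route because it gives the pointwise statement directly without the localization step.

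The problem is that the contraction estimate is not a lemma you can cite: it is the entire content of the theorem re-expressed in metric language, and you yourself flag it as ``the main obstacle.'' More seriously, the logical order in the paper suggests that your route is, if anything, harder. Proposition~\ref{lipflow} derives the local $e^{-t}$-Lipschitzianity of $F_t$ (which is precisely your pointwise contraction) \emph{from} Theorem~\ref{gradientcor} via the Sobolev-to-Lipschitz property, applied to the family $\eta\mathcal D$ of cut-off Lipschitz functions. So the paper's order is: energy identity first, contraction second. Reversing this order requires an independent proof of the contraction, and the tools you invoke for it are not quite available off-the-shelf. An EVI-type argument needs $u$ to be identified as a $1$-geodesically convex potential whose EVI gradient flow coincides with the Regular Lagrangian Flow $F_t$ of $-\nabla u$; this identification is itself nontrivial in the ${\rm RCD}$ setting, and worse, here $\H u=\text{id}$ holds only on the annulus $A_{t_0,T_0}$, so any convexity is local and one must control that the relevant curves and their flowed images stay inside the good region — the same kind of bookkeeping that Proposition~\ref{prop:geodesics inside prop} handles later, and which again is not available at this stage. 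In short: the skeleton of a test-plan proof is sound, but the key contraction lemma it rests on is not justified, and obtaining it independently of Theorem~\ref{gradientcor} would require substantial new input (a rigorous ${\rm RCD}$-level EVI for a locally $1$-convex potential and an RLF-to-EVI-flow identification) that neither this paper nor \cite{volcon} develops.
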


\subsection{Precise representative of the flow}

The following proposition is the analogous of \cite[Thm. 3.18]{volcon}. We point out that the proof in \cite{volcon} contains an oversight in the proof that ${\sf Fl}_t$ has a locally-Lipschitz representative. Indeed it is  claimed that this follows from the fact that  ${\sf Fl}_t$ is Lipschitz in  ${\sf Fl}_t^{-1}(B_r(x_0))$ for every small enough ball $B_r(x_0)$. However, since  ${\sf Fl}_t$ is not yet proven to be continuous, we do not know enough information on the sets ${\sf Fl}_t^{-1}(B_r(x_0))$ to  `patch' them and obtain the claimed local Lipschitzianity. 

For this reason we will give a complete proof which also fixes the  original argument. 
\begin{prop}\label{lipflow}   
	 Let  ${\cal U}_{t_0,T_0}\subset \rr\times\X$ be the open set given by
	   $${\cal U}_{t_0,T_0}\coloneqq\left \{ (x,t) \ : x \in A_{t_0,T_0} \text{ and } t \in  \left (\frac{1}{2}\log \frac{u(x)}{T_0} ,\frac{1}{2}\log \frac{u(x)}{t_0} \right )\right \}.$$ 
	   Then the map 
	 \[ F : {\cal U}_{t_0,T_0}\to  A_{t_0,T_0},\]
	   has a continuous representative w.r.t the measure $\mathcal{L}^1\otimes \mea $. Moreover for such representative (which we denote again by $F$) the map $F_t :  A_{e^{2t}t_0,T_0}\to A_{t_0,e^{-2t}T_0}$ is locally $e^{-t}$-Lipschitz having $F_{-t}$ as its inverse, which is locally $e^{t}$-Lipschitz. Also for every $x \in A_{t_0,T_0}$ and every $s,t \in (\frac{1}{2}\log \frac{u(x)}{T_0} ,\frac{1}{2}\log \frac{u(x)}{t_0} )$ 
	\begin{equation}\label{distancealongpoint}
	\sfd(F_t(x),F_s(x))=|e^{-s}-e^{-t}|\sqrt{2u(x)}
	\end{equation}
	and
	\begin{equation}\label{ualongpoint}
	u(F_t(x))=e^{-2t}u(x).
	\end{equation}
	Finally for every $t \in (0,\log \frac{T_0}{t_0})$ and every curve $\gamma$ with values in $A_{e^{2t}t_0,T_0}$, putting $\tilde \gamma:= F_t\circ \gamma$ we have
	\begin{equation}\label{speedalong}
	|\overset{.}{\tilde \gamma}_s|=e^{-t}|\overset{.}{\gamma}_s|, \quad \text{ for a.e. } s \in [0,1],
	\end{equation}  
	meaning that one is absolutely continuous if and only if the other is absolutely continuous, in which case \eqref{speedalong} holds.
\end{prop}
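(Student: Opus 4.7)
The strategy is to upgrade the $\mea$-a.e. contraction estimate \eqref{distancealong} of Proposition \ref{prop:basics} to a pointwise local Lipschitz bound on a good representative of $F_t$, via the gradient identity \eqref{gradientalong} of Theorem \ref{gradientcor} combined with the local Sobolev-to-Lipschitz property (Proposition \ref{prop:sob to lip}). The key observation is that for any $g\in\LIP_{bs}(\X)$ supported in a compact subset of $A_{t_0,e^{-2t}T_0}$, the function $g\circ F_t$ belongs to $\W(\X)$ and satisfies $|\nabla(g\circ F_t)|\le e^{-t}\Lip(g)$ $\mea$-a.e., by \eqref{gradientalong} and $|\nabla g|\le \Lip(g)$. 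The local Sobolev-to-Lipschitz property then produces a locally $e^{-t}\Lip(g)$-Lipschitz representative of $g\circ F_t$ on the interior of $\{g\circ F_t\neq 0\}$-type neighborhoods inside $A_{e^{2t}t_0,T_0}$; after localizing with suitable cut-offs from Proposition \ref{prop:goodcutoff} this gives a local estimate valid on all of $A_{e^{2t}t_0,T_0}$.

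\textbf{Construction of the pointwise representative.} Pick a countable family $\{g_n\}\subset\LIP_{bs}(\X)$ which separates the points of $A_{t_0,e^{-2t}T_0}$, for example truncated distance functions to a countable dense set. For each $n$ let $\tilde g_n^t\in\LIP_{\loc}(A_{e^{2t}t_0,T_0})$ be the locally Lipschitz representative of $g_n\circ F_t$ produced above. Define
\[
F_t(x):= \text{the unique }y\in \overline{A_{t_0,e^{-2t}T_0}} \text{ with } g_n(y)=\tilde g_n^t(x)\ \forall n.
\]
Existence and uniqueness of such a $y$ for every $x\in A_{e^{2t}t_0,T_0}$ follow from the fact that $\mea$-a.e.\ this $y$ must equal the measure-theoretic $F_t(x)$ (which is $\mea$-a.e.\ unique by Theorem \ref{thm:uniqrfl}), together with the measure-preservation identity \eqref{measureannuli} (which forces $F_t(A_{e^{2t}t_0,T_0})$ to be $\mea$-essentially dense in $A_{t_0,e^{-2t}T_0}$), continuity of the $\tilde g_n^t$, and the separation property of $\{g_n\}$. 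This gives a well-defined map $F_t$ on $A_{e^{2t}t_0,T_0}$ which is locally $e^{-t}$-Lipschitz, since $|\tilde g_n^t(x_1)-\tilde g_n^t(x_2)|\le e^{-t}\Lip(g_n)\sfd(x_1,x_2)$ for all $n$.

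\textbf{Pointwise identities, inverse, and joint continuity.} The identity \eqref{ualongpoint} follows by applying the same construction to $g=u\eta$ for a cut-off $\eta$; since the representative we built for $u\circ F_t$ is continuous and equals $e^{-2t}u$ $\mea$-a.e.\ by \eqref{ualong}, the two agree everywhere on $A_{e^{2t}t_0,T_0}$ by continuity of both sides. The identity \eqref{distancealongpoint} follows similarly, applying the construction to cut-offs of $\sfd(\cdot,F_s(x))$ and exploiting the fact that equality in \eqref{distancealong} was already established pointwise $\mea$-a.e. The group identity \eqref{eq:group} gives $F_{-t}\circ F_t=\mathrm{id}$ $\mea$-a.e.; since both sides are continuous this extends pointwise. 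Joint $(t,x)$-continuity on $\mathcal U_{t_0,T_0}$ then follows from local Lipschitzness in $x$ combined with $L^p$-continuity in $t$ (Lemma \ref{strongderivative}) and a standard diagonal argument. Finally, \eqref{speedalong} is a direct consequence of the two-sided local Lipschitz bound: $\sfd(\tilde\gamma_s,\tilde\gamma_{s'})\le e^{-t}\sfd(\gamma_s,\gamma_{s'})$ via $F_t$, and the reverse via $F_{-t}$ applied to $\tilde\gamma$.

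\textbf{Main obstacle.} The delicate step, and the one overlooked in \cite{volcon}, is the passage from the $\mea$-a.e.\ information about $F_t$ to a genuinely pointwise continuous map on an open set whose shape depends on $t$. The local Sobolev-to-Lipschitz property only gives Lipschitz behaviour on balls strictly inside the region where the weak gradient bound holds, so one must carefully patch these local estimates using cut-offs that exhaust $A_{e^{2t}t_0,T_0}$ and check that the separating family $\{g_n\}$ produces a consistent pointwise definition — in particular that the candidate point $y$ exists and is unique for every $x$, not merely $\mea$-a.e. This is where Proposition \ref{prop:measureannuli} (built on the local a priori estimate Proposition \ref{prop:localest}) is essential, since it rules out loss of mass along the flow and guarantees that the image $F_t(A_{e^{2t}t_0,T_0})$ is metrically well-behaved.
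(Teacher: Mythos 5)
Your plan — upgrade the $\mea$-a.e.\ information to a pointwise representative by testing against a countable family of truncated distance functions, invoking the gradient identity \eqref{gradientalong} together with the Sobolev-to-Lipschitz property, and then patching on annular subregions — is exactly the strategy the paper uses. The paper builds $\mathcal D=\{\max(\min(\sfd(\cdot,x_n),k-\sfd(\cdot,x_n)),0)\}$ over a dense set $\{x_n\}\subset A'$, cuts off by $\eta$, deduces that $f\circ F_t$ has a Lipschitz representative for $f\in\eta\mathcal D$, and then uses the supremum identity $\sfd(a,b)=\sup_{f\in\eta\mathcal D}|f(a)-f(b)|$ on $A'$ to obtain a Lipschitz bound for the restriction of $F_t$ to the complement of a negligible set, extending by density. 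Your definition of $F_t(x)$ as the unique $y$ matching all $\tilde g_n^t(x)$ is an equivalent way to package the same extension-by-continuity step; both formulations reduce existence of $y$ to the density of the good set plus the Lipschitz bound coming from the supremum formula.

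Two small corrections to the bookkeeping. First, the invocation of \eqref{measureannuli} for existence is a bit of a red herring: existence of $y$ for a general $x$ is a pure extension-by-continuity argument from the full-measure set, whereas \eqref{measureannuli} enters the paper's proof of this proposition chiefly to identify the image, i.e.\ to prove $\bar F_t(A_{e^{2t}t_0,T_0})=A_{t_0,e^{-2t}T_0}$ — a point your sketch compresses, but which is needed to even state that $F_t$ maps between the two annuli and for the inverse claim. Second, the clean way to get the pointwise equality \eqref{distancealongpoint} is simply to observe that once $F_t$ and $F_s$ are continuous, both sides of \eqref{distancealong} are continuous in $x$ and agree $\mea$-a.e., hence everywhere; "applying the construction to cut-offs of $\sfd(\cdot,F_s(x))$" is circular as written since $F_s(x)$ is the object being built. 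Likewise, joint $(t,x)$-continuity is most easily obtained from \eqref{distancealongpoint} (uniform continuity in $t$) together with the uniform-in-$t$ local Lipschitz bound, rather than from $L^p$-continuity via Lemma \ref{strongderivative}. These are presentational rather than substantive issues; the proof architecture matches the paper's.
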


\begin{proof}
	Fix $t	\in [0,\log \frac{T_0}{t_0})$. We start claiming that 
	\begin{equation}\label{continuousrepr}
	\begin{split}
	 &F_t\restr{A_{e^{2t}t_0,T_0}}\,\, \text{ has a continuous representative that we denote by $\bar F_t$ and}\\
	 &\bar F_t(A_{e^{2t}t_0,T_0})=A_{t_0,e^{-2t}T_0}.
	\end{split}
	\end{equation}
	For the first part it is sufficient to show that for every $a,b\in \rr$ such that $e^{2t}t_0<a<b<T_0$ the map $F_t\restr{A_{a,b}}$ has a continuous representative. Hence we fix such $a,b \in \rr$ and define the open  sets $A'\coloneqq A_{e^{-2t}a,e^{-2t}b}$ and   $A\coloneqq A_{t_0,e^{-2t}T_0}$. Observe that the continuity of $u$ implies $\sfd(A',A^c)\eqqcolon\delta>0.$ Consider now the countable family of 1-Lipschitz functions $\mathcal D \subset \LIP(X)$ defined as 
	\[\mathcal D\coloneqq \{ f_{n,k}  \ | \ n,k\in \mathbb{N}\}=\{ \max(\min(\sfd(.,x_n),k-\sfd(.,x_n)) ,0) \ | \ n,k\in \mathbb{N}\},\]
	where $\{x_n\}_{n\in \mathbb{N}}$ is dense subset of $A.$  We pick a cut off function $\eta \in \LIP_c(A)$ such that  $0\le \eta \le 1$ and $\eta \equiv 1 $ in $A'$ and define the set $\eta \mathcal D \subset \LIP_c(A) $ as $\eta \mathcal D\coloneqq\{ \eta f \ | \ f \in  \mathcal D\}.$ For any $f \in \mathcal{D}$ it holds
	$$\text{Lip} (f\eta)  \le \Lip \eta \sup_A |f|+\Lip f\le \Lip \eta \,  \text{diam}(A)+1\eqqcolon L,$$
	hence the functions in $\eta \mathcal{D}$ are $L$-Lipschitz. We now make the key observation that 
	\begin{equation}\label{liptod}
	\sfd(x,y)=\sup_{f \in  \mathcal{D}} |f(x)-f(y)|=\sup_{f \in  \mathcal{D}} |\eta f(x)-\eta f(y)|=\sup_{f \in  \eta\mathcal{D}} |f(x)- f(y)|,
	\end{equation}
 	for every $x,y \in A'.$ Thanks to Corollary \eqref{gradientcor} we know that $f\circ F_t \in \W(X)$ for every $f \in \eta \mathcal{D}$ and $|D(f\circ F_t)|=e^{-t}|Df|\circ F_t\le e^{-t}L$ $\mea$-a.e. Then from the Sobolev-to-Lipschitz property of $X$, we deduce that $f \circ F_t$ has an $L$-Lipschitz representative. Thus there exists an $\mea$-negligible set $N\subset X$ such that  for every $f \in \eta \mathcal{D}$ the restriction of $f\circ F_t$ to $X\setminus N$ is L-Lipschitz. Moreover from \eqref{ualong} it follows the existence of an $\mea$-negligible set $N'$ such that $F_t(A_{a,b}\setminus N')\subset A'.$ Therefore from \eqref{liptod} it follows that for every $x,y \in A_{a,b}\setminus(N\cup N')$
 	\[\sfd(F_t(x),F_t(y))=\sup_{f \in  \eta\mathcal{D}} |f(F_t(x))- f(F_t(y))|\le e^{-t}L \sfd(x,y). \]
	This proves the first part of \eqref{continuousrepr}. We now show $\subset$ of the second part. From \eqref{ualong} it follows the existence of a negligible set $N$ such that for every set  $U$  relatively compact in $A_{e^{2t}t_0,T_0}$ we have that $\bar F_t(U\setminus N) $ is relatively compact in $A_{t_0,e^{-2t}T_0}.$ Moreover, since negligible sets have empty interior we deduce that $\overline {U\setminus N}$ contains $U.$ Therefore $\bar F_t(U)\subset \bar F_t(\overline {U\setminus N})\subset \overline{\bar F_t(U\setminus N)}$ which is contained in $A_{e^{2t}t_0,T_0}$ thanks to the first observation. We now show $\supset$. Again thanks to \eqref{ualong} the set $N\coloneqq A_{t_0,e^{-2t}T_0}\setminus \bar F_t(A_{e^{2t}t_0,T_0})$ is negligible. Pick  any set $U$ relatively compact in $A_{t_0,e^{-2t}T_0}$ and define $V \coloneqq \bar F_t^{-1}(U\setminus N)$ which is relatively compact in $A_{e^{2t}t_0,T_0}.$ Therefore, since $\bar F_t$ is continuous, the set  $F_t(\bar V)$ is compact in $A_{t_0,e^{-2t}T_0}$ and,  since negligible sets have empty interior, contains $U.$ This concludes the proof of \eqref{continuousrepr}.
	
	For any $t \in (\log \frac{t_0}{T_0},0)$ we can now argue exactly as above, to deduce that 
	\begin{equation}\label{continuousinv}
	\begin{split}
	&F_t\restr{A_{t_0,e^{2t}T_0}}\,\, \text{ has a continuous representative that we denote by $\bar F_t$ and}\\
	&\bar F_t(A_{t_0,e^{2t}T_0})=A_{e^{-2t}t_0,T_0}.
	\end{split}
	\end{equation}
	In particular, since $F_t(F_{-t})=\sf id$ $\mea$-a.e., we deduce that $\bar F_{-t}$ is the continuous inverse of $\bar F_t.$
	
Having proved \eqref{continuousrepr} and \eqref{continuousinv} we can complete the proof arguing as in \cite{volcon} with the obvious modifications.
\end{proof} 

\textbf{From now on we denote by $F$ a representative of $F: \rr \times \X \to \X$ which is continuous (in space and time) on ${\cal U}_{t_0,T_0}$.}

\subsection{Properties of level set $\{\bu=T\}$}\label{sec:levelset}
In this short subsection we prove that the level set $\{\bu =T\}$ is Lipschitz-path connected when $T$ is big enough. We remark that the argument  will rely  on Proposition \ref{prop:ends} and is different from the one used in \cite{volcon} to prove  the same property for the ``sphere".  

For the following result recall  from Section \ref{sec:flow di u} that in our construction we first choose $T$ and then  we choose $t_0$ and $T_0$ accordingly.
\begin{lemma}\label{prop:levelset}
	There exists $T>\bu_0$ and a constant $c>0$ (depending only on $T,t_0$ and $\bu$) such that, if $T_0$ is big enough, for every couple of points $x,y \in \{\bu =T\}$  there exists  $\gamma\in\LIP([0,1],\X)$ joining $x$ and $y$ and such that $\gamma \subset A_{t_0+1,T_0-1 }$ and $\Lip(\gamma)\le 5\sfd(x,y).$
\end{lemma}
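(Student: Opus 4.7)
The idea is to apply item (ii) of Proposition \ref{prop:ends} -- available thanks to Corollary \ref{cor:ends}, which rules out the cylinder case -- to the bounded set
\[
C := \{z \in U : \bu(z) \le t_0+1\} \cup U^c.
\]
Boundedness of $C$ follows because $\{\bu \le t_0+1\}$ is bounded by Lemma \ref{lem:uinfinity}, while $U^c \subset \{\bu > \bu_0\}^c$, which is bounded by Corollary \ref{cor:euclvol}. Proposition \ref{prop:ends}(ii) then furnishes an $R > 0$ such that any two points at distance $> R$ from $C$ can be joined by a Lipschitz curve $\gamma \subset \X \setminus C$ with $\Lip(\gamma) \le 5\, \sfd(\gamma(0),\gamma(1))$. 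Since $\X \setminus C = \{z \in U : \bu(z) > t_0+1\}$, any such $\gamma$ automatically lies in $U$ and satisfies $\bu \circ \gamma > t_0+1$.

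Next I would choose $T > \bu_0$ large enough so that $\{\bu = T\}$ sits uniformly at distance $> R$ from $C$. Repeating the argument used to derive \eqref{eq:boundarydistance} in Proposition \ref{prop:basics} -- based on $|\nabla \sqrt{2\bu}| = 1$ $\mea$-a.e.\ on $U$ together with the local Sobolev-to-Lipschitz property (Proposition \ref{prop:sob to lip}) -- yields both $\sfd(z, U^c) \ge \sqrt{2\bu(z)} - \sqrt{2\bu_0}$ and, analogously, $\sfd(z, \{\bu \le t_0+1\}) \ge \sqrt{2\bu(z)} - \sqrt{2(t_0+1)}$ for every $z \in U$ with $\bu(z) > t_0+1$ (in the second bound, the case $\sfd(z,w) > \sfd(z,\partial U)$ is handled via the first inequality and $\bu_0 < t_0+1$). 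Thus $\sfd(z, C) > R$ for all $z \in \{\bu = T\}$ as soon as $\sqrt{2T} - \sqrt{2(t_0+1)} > R$, and Proposition \ref{prop:ends}(ii) then produces, for every pair $x, y \in \{\bu = T\}$, a curve $\gamma$ with the correct Lipschitz bound and the lower bound $\bu \circ \gamma > t_0+1$ built in.

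To conclude I must bound $\bu \circ \gamma$ from above, uniformly in $x, y \in \{\bu = T\}$. Since $\{\bu \le T\}$ is bounded by Lemma \ref{lem:uinfinity}, $D := \diam(\{\bu = T\})$ is finite, so $\Lip(\gamma) \le 5D$ confines $\gamma$ to the compact (by properness) set $K := \{z \in \X : \sfd(z, \{\bu = T\}) \le 5D\}$. A standard contradiction argument combining compactness of $K$, continuity of $\bu$ on $U$, and the hypothesis $\bu_0 = \limsup_{z \to \partial U} \bu(z) < \infty$ -- any sequence $z_n \in K \cap U$ with $\bu(z_n) \to \infty$ would converge along a subsequence to a limit either in $U$ (contradicting continuity of $\bu$) or on $\partial U$ (contradicting $\limsup \bu \le \bu_0$) -- shows that $\bu$ is bounded above on $K \cap U$ by some $M = M(T, \bu) < \infty$. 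Choosing $T_0 > M + 1$ then yields $\gamma \subset A_{t_0+1, T_0 - 1}$. The main conceptual point is that the a priori bound $\Lip(\gamma) \le 5\sfd(x,y)$, combined with $\diam(\{\bu = T\}) < \infty$, confines $\gamma$ to a region depending only on $T$ and $\bu$, which is what permits a single $T_0$ to work uniformly for all pairs $x, y \in \{\bu = T\}$.
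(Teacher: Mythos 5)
Your proof follows essentially the same route as the paper's: apply Proposition \ref{prop:ends}(ii) to the bounded set $\{\bu>t_0+1\}^{\rm c}$, choose $T$ so that $\{\bu=T\}$ is far enough from that set, and then exploit boundedness of $\{\bu=T\}$ together with the a priori Lipschitz bound $5\,\sfd(x,y)$ to confine the curves to a fixed bounded region, whence the upper bound on $\bu\circ\gamma$ and the choice of $T_0$. The extra quantitative distance estimate $\sfd(z,\{\bu\le t_0+1\})\ge\sqrt{2\bu(z)}-\sqrt{2(t_0+1)}$ and the explicit compactness argument for bounding $\bu$ on $K$ are more detailed than the paper (which only invokes \eqref{eq:uadinfinto} and boundedness of $\{\bu=T\}$), but they are consistent with it.

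One citation needs correcting: you invoke Corollary \ref{cor:ends} to rule out the cylinder alternative in Proposition \ref{prop:ends}, but that corollary is stated for \emph{nonparabolic} ${\rm RCD}(0,N)$ spaces, and nonparabolicity is not among the hypotheses of Theorem \ref{thm:functional cone}. The paper instead rules out the cylinder via Corollary \ref{cor:euclvol}: $\X$ has Euclidean volume growth, which for $N\ge 2$ is incompatible with a product $\rr\times\X'$ with $\X'$ compact. You already cite Corollary \ref{cor:euclvol} for boundedness of $\{\bu>\bu_0\}^{\rm c}$, so the fix is just to also use it (rather than Corollary \ref{cor:ends}) to exclude case (i) of Proposition \ref{prop:ends}. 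With this repair the argument is sound.
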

\begin{proof}
	From \eqref{eq:uadinfinto} we know that $\sfd(\{\bu = T\},\{\bu>t_0+1\}^c)\to+\infty$ as $T\to+\infty$. Moreover  $\{\bu>t_0+1\}^c$ is bounded and, as proven in Corollary \ref{cor:euclvol}, $\X$ has Euclidean volume growth and in particular it is not a cylinder (since $N\ge 2$). Therefore from Proposition \ref{prop:ends}, provided $T$ is big enough, we have that  for every couple of points $x,y \in \{\bu =T\}$  there exists a Lipschitz path $\gamma$ from $x$  to  $y$ and such that $\gamma \subset \{\bu>t_0+1\}$ and $\Lip (\gamma)\le 5\sfd(x,y)$. Therefore,  again from \eqref{eq:uadinfinto} and since $\{\bu=T\}$ is bounded, if $T_0$ is big enough then it holds that $\gamma\subset A_{t_0+1,T_0-1}$.
\end{proof}

\newcommand{\pr}{{\sf{Pr}}}
Consider now the closed set $S_T\coloneqq\{\bu =T\}$ and the projection map $\pr : A_{t_0,T_0}\to S_T$ defined as
\[\pr(x)\coloneqq F_{\frac{1}{2}\log{\frac{u(x)}{T}}}(x).\]
 Note that from Proposition \ref{lipflow} and \eqref{distancealongpoint}, we have that $\pr$ is well defined and locally Lipschitz.
 \begin{prop}\label{prop:arc connected}
 	The set $S_T$ is Lipschitz path connected, meaning that for every couple of points $x,y \in S_T$ there exists a Lipschitz curve $\gamma$ taking values in $S_T$, joining $x$ and $y$.  Moreover we can choose $\gamma$ so that $\Lip(\gamma)\le c\,\sfd(x,y),$ for some uniform constant $c>0.$
 \end{prop}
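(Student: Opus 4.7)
The plan is to construct the desired path in $S_T$ by projecting, via $\pr$, a Lipschitz curve joining $x$ and $y$ in the larger region supplied by Lemma \ref{prop:levelset}. Concretely, for a fixed pair $x, y \in S_T$, choose $T_0$ large enough so that Lemma \ref{prop:levelset} produces $\gamma \in \LIP([0,1], \X)$ with $\gamma(0) = x$, $\gamma(1) = y$, $\gamma \subset A_{t_0+1, T_0-1}$, and $\Lip(\gamma) \le 5\sfd(x,y)$. With this $T_0$, Proposition \ref{lipflow} produces a continuous flow $F$, hence a continuous projection $\pr$, and the candidate curve is $\tilde\gamma \coloneqq \pr \circ \gamma$. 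It takes values in $S_T$, is continuous, and joins $x$ to $y$ since $\pr$ is the identity on $S_T$ (as $\tau(z) \coloneqq \tfrac{1}{2}\log(u(z)/T)$ vanishes on $S_T$).

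The main step is to bound the length of $\tilde\gamma$. For $s_1, s_2 \in [0,1]$ with $|s_1 - s_2|$ small enough, I would split via the triangle inequality:
\[
\sfd(\tilde\gamma_{s_1}, \tilde\gamma_{s_2}) \le \sfd(F_{\tau(\gamma_{s_1})}(\gamma_{s_1}), F_{\tau(\gamma_{s_1})}(\gamma_{s_2})) + \sfd(F_{\tau(\gamma_{s_1})}(\gamma_{s_2}), F_{\tau(\gamma_{s_2})}(\gamma_{s_2})).
\]
The first term is bounded by $e^{-\tau(\gamma_{s_1})} \sfd(\gamma_{s_1}, \gamma_{s_2})$ via the local $e^{-\tau(\gamma_{s_1})}$-Lipschitzianity of $F_{\tau(\gamma_{s_1})}$ from Proposition \ref{lipflow}; the inclusion $\gamma_{s_2} \in A_{e^{2\tau(\gamma_{s_1})}t_0, T_0}$ needed for that bound holds along fine enough partitions, because $T > t_0$ and $u(\gamma_{s_i}) \ge t_0+1$. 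The second term equals $|e^{-\tau(\gamma_{s_1})} - e^{-\tau(\gamma_{s_2})}|\sqrt{2u(\gamma_{s_2})}$ by \eqref{distancealongpoint}. Rewriting $e^{-\tau(z)} = \sqrt{T/u(z)}$, using that $\sqrt{\bu}$ is locally $1/\sqrt{2}$-Lipschitz on $A_{t_0, T_0}$ (where $u = \bu$) by $|\nabla \sqrt{2\bu}|^2 = 1$ and Proposition \ref{prop:sob to lip}, together with $u(\gamma_s) \ge t_0 + 1$, each term is bounded by $\sqrt{T/(t_0+1)}\,\sfd(\gamma_{s_1}, \gamma_{s_2})$.

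Summing the resulting estimate $\sfd(\tilde\gamma_{s_i}, \tilde\gamma_{s_{i+1}}) \le 2\sqrt{T/(t_0+1)}\,\sfd(\gamma_{s_i}, \gamma_{s_{i+1}})$ over a fine partition and passing to the supremum yields $L(\tilde\gamma) \le 2\sqrt{T/(t_0+1)}\, L(\gamma) \le 10\sqrt{T/(t_0+1)}\,\sfd(x, y)$. Since $\tilde\gamma$ is continuous of finite length, reparameterizing by arc length and then rescaling linearly to $[0,1]$ produces a Lipschitz curve in $S_T$ from $x$ to $y$ with Lipschitz constant $c = 10\sqrt{T/(t_0+1)}$.

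The main subtlety, and the reason to pursue this length (rather than direct Lipschitz) estimate, is that Lemma \ref{prop:levelset} in general requires $T_0$ to grow with $\sfd(x,y)$, so a priori $\pr$ could become badly behaved as $T_0 \to \infty$. However, the bound above involves only $u$-values along $\gamma$, which stay above $t_0+1$ uniformly, so the final constant $c$ depends only on $T$ and $t_0$ (and $\bu$, implicitly via these choices) and not on $T_0$ or on the pair $(x, y)$. The key trick that makes $T_0$ disappear from the bound is to work with $\sqrt{\bu}$ (whose gradient has norm $1/\sqrt{2}$ and is therefore Lipschitz with a $T_0$-free constant) rather than $\bu$ itself (whose gradient would scale like $\sqrt{T_0}$ on the region traversed by $\gamma$).
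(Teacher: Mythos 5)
Your construction --- projecting the curve from Lemma~\ref{prop:levelset} via $\pr$ --- is exactly the paper's approach, and your computation of the Lipschitz constant of $\pr\circ\gamma$ is correct. However, your stated motivation for carrying out the explicit length estimate rests on a misreading of Lemma~\ref{prop:levelset}: the lemma provides a \emph{single} $T_0$ that works simultaneously for all $x,y\in S_T$, because $S_T=\{\bu=T\}$ is compact (so $\sfd(x,y)\le\diam(S_T)$), the curves from Proposition~\ref{prop:ends} have length at most $5\diam(S_T)$, and then \eqref{eq:uadinfinto} forces them into $A_{t_0+1,T_0-1}$ for one fixed $T_0$. Recall also that the paper explicitly fixes $t_0,T_0$ once and for all before the definition of $\pr$. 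With $T_0$ fixed, the paper's one-line argument suffices: $\pr$ is locally Lipschitz on $A_{t_0,T_0}$, hence Lipschitz with some uniform constant $L$ on the compact set $\overline{A_{t_0+1,T_0-1}}\subset A_{t_0,T_0}$ in which $\gamma$ lives, and $\Lip(\pr\circ\gamma)\le 5L\,\sfd(x,y)$. That said, your triangle-inequality decomposition of $\sfd(\tilde\gamma_{s_1},\tilde\gamma_{s_2})$, using the pointwise $e^{-t}$-Lipschitz bound from Proposition~\ref{lipflow} for the horizontal step and \eqref{distancealongpoint} for the radial step together with $1$-Lipschitzianity of $\sqrt{2\bu}$, is a clean way to make the constant $c=10\sqrt{T/(t_0+1)}$ explicit and $T_0$-independent, which is a genuine (if unnecessary here) strengthening of what the paper records.
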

\begin{proof}
	From Proposition \ref{prop:levelset} we now that there exists a Lipschitz path connecting $x$ and $y$ taking values in $A_{t_0+1,T_0-1},$ which is relatively compact in $A_{t_0,T_0},$ then we simply consider the curve 
	\[\tilde \gamma \coloneqq \pr \circ \gamma,\]
	which remains a Lipschitz curve, since $\pr$ is locally Lipschitz. The claimed bound on $\Lip(\tilde \gamma)$ follows from the bound on $\Lip(\gamma)$ given in Proposition \ref{prop:levelset} and again by the local Lipschitzianity of $\pr.$
\end{proof}

\subsection{The cosine formula holds}\label{sec:cosine law}
%\begin{equation}\label{eq:minimal dist}
%	\sfd(x,\pr(x))=\sfd(x,S_T), \quad \forall x \in A_{t_0,{T_0}}.
%\end{equation}
%From \eqref{eq:boundarydistance} we have $\sfd(y,U^c)\ge \sqrt{2u(x)}-\sqrt{2\bu_0}$ for every $y \in A_{t_0,T_0}$. This easily implies that
%$|\sqrt{2T}-\sqrt{2u(x)}|\le \max\{\sfd(x,U^c),\sfd(y,U^c)\}$ for every couple $x,y \in A_{t_0,T_0}$ with $x \in A_{t_0,T_0}$ and $y\in S_T.$ Fix now $x \in A_{t_0,T_0}$. For every point $y \in S_T$ such that $\sfd(x,y)\le |\sqrt{2T}-\sqrt{2u(x)}|$, thanks to the previous observation, we can apply \eqref{eq:localsobolevtolip} (recalling that $|\nabla \sqrt{2u}|=1$ $\mea$-a.e.\ in $A_{t_0,T_0}$) to obtain that $\sfd(y,x)\ge |\sqrt{2u(x)}-\sqrt{2T}|.$ However from \eqref{distancealongpoint} we also have $\sfd(x,\pr(x))=|\sqrt{2T}-\sqrt{2u(x)}|$, which ends the proof of \eqref{eq:minimal dist}.

The goal of this section is to prove the following.
\begin{prop}[Local cosine formula]\label{prop:variables splitting}
	For every  $\eps>0$ there exists $\rho=\rho(\eps)>0$ such that for every $x \in A_{t_0+\eps,T_0-\eps}$ it holds
	\begin{equation}\label{eq:variables splitting}
		\sfd(x,y)^2=2u(x)+2u(y)-4\sqrt{u(x)u(y)}\left(1- \frac{\sfd(\pr(x),\pr(y)^2)}{4T} \right), \quad \forall y \in B_\rho(x).
	\end{equation}
\end{prop}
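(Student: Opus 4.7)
The strategy is to integrate a simple first-order ODE along the flow $F_t$ of $-\nabla u$, reducing \eqref{eq:variables splitting} to its trivial instance on $S_T\times S_T$. Indeed, when $u(x)=u(y)=T$ one has $\pr(x)=x$ and $\pr(y)=y$, so the right-hand side of \eqref{eq:variables splitting} collapses to $4T-4T+\sfd(x,y)^2$ and the identity is tautological. Fix then $x\in A_{t_0+\eps,T_0-\eps}$ and choose $\rho=\rho(\eps)>0$ small enough, using the continuity of $u$, the properness of $\X$ and the explicit relations \eqref{distancealongpoint}--\eqref{ualongpoint}, so that for every $y\in B_\rho(x)$ both curves $t\mapsto F_t(y)$ for $t\in[0,s_y]$ and $t\mapsto F_t(x)$ for $t\in[0,s_x]$ remain in $A_{t_0,T_0}$, where $s_z:=\tfrac{1}{2}\log(u(z)/T)$.

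The central claim is that, for such $(x,y)$, the function $h(t):=\sfd(x,F_t(y))^2$ is absolutely continuous and satisfies the ODE
\begin{equation*}
h'(t)+h(t)=2\bigl(u(x)-u(F_t(y))\bigr),\qquad t\in[0,s_y].
\end{equation*}
Via the Gigli--Tamanini differentiation formula of \cite{gtamanini}, together with the local Lipschitzianity of $F_t$ from Proposition \ref{lipflow}, this reduces to the pointwise identity
\begin{equation*}
\la\nabla u,\nabla_w\tfrac{1}{2}\sfd(x,\cdot)^2\ra(w)=\tfrac{1}{2}\sfd(x,w)^2+u(w)-u(x)
\end{equation*}
at points $w$ close to $x$. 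The latter is the rigidity input of the proof and I would establish it, in the spirit of \cite[Sec. 3.1]{boundary}, by exploiting $\H u={\rm id}$, $|\nabla u|^2=2u$ and $\Delta u=N$ on $A_{t_0,T_0}$; in a smooth model one shows that the gradient of the difference of the two sides vanishes via $\nabla\la\nabla u,\nabla f\ra=\H u\cdot\nabla f+\H f\cdot\nabla u$, together with the fact that both $u$ and $\tfrac{1}{2}\sfd(x,\cdot)^2$ carry the identity Hessian near $x$.

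Granted the ODE, recalling $u(F_t(y))=e^{-2t}u(y)$, the relation $(e^th(t))'=2e^tu(x)-2e^{-t}u(y)$ integrated from $0$ to $s_y$ expresses $\sfd(x,y)^2$ in terms of $\sfd(x,\pr(y))^2$, $u(x)$ and $u(y)$. Applying the same ODE with the roles of the two points interchanged (i.e.\ flowing $x$ to $\pr(x)$ against the fixed point $\pr(y)$, for which the constant-in-$t$ datum $u(\pr(y))=T$ replaces $u(x)$), expresses $\sfd(x,\pr(y))^2$ in terms of $\sfd(\pr(x),\pr(y))^2$, $u(x)$ and $T$. Chaining the two reductions, the $\sqrt{T}$-terms cancel and a direct algebraic simplification yields exactly \eqref{eq:variables splitting}; the tautology on $S_T\times S_T$ serves as the base.

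The main obstacle is the rigorous justification of the pointwise identity for $\la\nabla u,\nabla\tfrac{1}{2}\sfd(x,\cdot)^2\ra$ in the ${\rm RCD}$ framework, where the squared distance is only locally Lipschitz and its Hessian has no \emph{a priori} meaning. This is precisely the place where the calculus of \cite{gtamanini} and the technique of \cite[Sec. 3.1]{boundary} enter: the inner product must be reinterpreted via integration by parts against suitable test functions together with a Bochner-type estimate, using the rigidity $\H u={\rm id}$ as a substitute for any second-order regularity of $\sfd(x,\cdot)^2$.
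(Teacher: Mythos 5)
Your overall plan — derive a first-order ODE along the flow $F_t$ for the squared distance, integrate it from $0$ to the projection time, then chain two such reductions — is precisely the paper's strategy, and your algebra (the $(e^t h(t))'$ trick, the cancellation of the $\sqrt{T}$-terms, the final identity) is correct. The divergence is in how the ODE is established, and this is where there is a genuine gap that the paper's argument is specifically designed to avoid.

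You propose to reduce to the pointwise identity $\la\nabla u,\nabla_w\tfrac12\sfd(x,\cdot)^2\ra(w)=\tfrac12\sfd(x,w)^2+u(w)-u(x)$ and to justify it via $\H u={\sf id}$ together with some second-order regularity for $\sfd(x,\cdot)^2$. The paper never formulates this identity, because in the ${\rm RCD}$ setting $\sfd(x,\cdot)^2$ has no Hessian and the inner product is only defined $\mea$-a.e., so one cannot evaluate it along a single flow line $t\mapsto F_t(y)$. Also, the chain rule \eqref{RFL} along the regular Lagrangian flow is stated only for test functions and cannot be applied directly to $\sfd(x,\cdot)^2$. What the paper actually does is work one level up, with measures: it regularizes the Dirac masses at $\pr(x)$ and at $y$ to $\mu^r,\nu^r$, pushes one of them along the flow, computes $\frac{\d}{\d t}\tfrac12 W_2^2(\mu_t^r,\nu^r)$ via \cite{GHcont}, and then computes $\frac{\d}{\d s}\big|_{s=1}\int u\,\d\eta_s^{t,r}$ along the $W_2$-geodesic $\eta_s^{t,r}$ via the Gigli--Tamanini formula. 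The point is that in that formula the only Hessian that appears is $\H u$ — never the Hessian of a Kantorovich potential or of the squared distance — so $\H u={\sf id}$ suffices. The resulting ODE for $f(t)=\tfrac12 W_2^2(\mu_t^r,\nu^r)$ is solved explicitly and only \emph{then} is the limit $r\to0$ taken. Your paragraph about "integration by parts against test functions together with a Bochner-type estimate" gestures at the right references but does not describe this mechanism, and if you tried to carry it out you would essentially be forced to reinvent it. A second point you elide: to use the Gigli--Tamanini formula with $\H u = {\sf id}$ one needs the entire Wasserstein geodesic (not just the flow line) to stay in $A_{t_0,T_0}$; your choice of $\rho$ controls only the flow lines, whereas the paper devotes Lemma \ref{lem:intermediate lem} and Proposition \ref{prop:geodesics inside prop} (culminating in \eqref{eq:geodesics inside}) to exactly this containment of geodesics.
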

To prove this statement we need to construct some objects and prove some preliminary technical facts.

Fix any $x \in A_{t_0+\eps,T_0-\eps}$ and $\delta>0$ small and to be chosen. For every $r>0$ and $y \in B_\delta(x)\cup B_\delta(\pr(x))$ define the measures $\mu^r\coloneqq (\mea(B_r(\pr(x))))^{-1}\mea\restr{B_r(\pr(x))}$ and   $\nu^r\coloneqq (\mea(B_r(y)))^{-1}\mea\restr{B_r(y)}$. Let $\bar t\in \rr$  be such that $F_{\bar t}(\pr(x))=x$ (i.e.\ $\bar t=\frac12 \log \frac{T}{u(x)}$). For every $t \in [0,\bar t]$ (or in $[\bar t,0]$ if $\bar t<0$) we also set $\mu_t^r\coloneqq {F_t}_*\mu^r$. Finally for everyone of these $t$'s we also consider the unique $W_2$-geodesic $\{\eta_s^{t,r}\}_{s \in [0,1]}$ going from $\nu^r$ to $\mu_t^r$.  For the main computations, we will need the following technical fact: 
\begin{equation}\label{eq:geodesics inside}
		\text{ there exists $\rho=\rho(\eps)>0$ such that, provided $r,\delta<\rho$, $\supp(\eta_s^{t,r})\subset A_{t_0,T_0}$ for all $s \in[0,1]$}.
\end{equation}
Since the measures $\eta_s^{t,r}$ are concentrated on the support of geodesics going from $B_r(y)$ to $F_t(B_r(x))$, to check the above it is sufficient to prove the following.
\begin{prop}\label{prop:geodesics inside prop}
	For every $x \in A_{t_0+\eps,T_0-\eps}$ there exists $\rho=\rho(\eps)>0$ such that the following holds. Let $\bar t\in \rr$  be such that $F_{\bar t}(\pr(x))=x$ (i.e.\ $\bar t=\frac12 \log \frac{T}{u(x)}$). Then every geodesic $\gamma$ going from a point $y \in B_\rho(\pr(x))\cup B_\rho(x)$ to a point $x' \in F_t(B_\rho(\pr(x)))$, with $t \in [0,\bar t]$ (or $[\bar t,0]$), $\gamma$ is contained in $A_{t_0,T_0}$.
\end{prop}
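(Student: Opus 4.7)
The approach I would take is by contradiction, built on the fact that $\sqrt{2\bu}$ is $1$-Lipschitz on $U$ (since $|\nabla \sqrt{2\bu}|=1$ $\mea$-a.e., via the local Sobolev-to-Lipschitz property of Proposition~\ref{prop:sob to lip}) combined with the continuity of the flow $F$ just established in Proposition~\ref{lipflow}. First I would assume the conclusion fails and extract sequences $\rho_n\downarrow 0$, $y_n\in B_{\rho_n}(\pr(x))\cup B_{\rho_n}(x)$, $z_n\in B_{\rho_n}(\pr(x))$, $t_n\in[0,\bar t]$ (the case $\bar t<0$ being symmetric, $\bar t=\tfrac12\log(T/u(x))$), and geodesics $\gamma_n:[0,L_n]\to \X$, parametrized by arc length, joining $y_n$ to $x'_n:=F_{t_n}(z_n)$ whose image exits $A_{t_0,T_0}$. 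Up to a subsequence $y_n\to y_\infty\in\{\pr(x),x\}$, $z_n\to\pr(x)$, $t_n\to t_\infty\in[0,\bar t]$, and the continuity of $F$ yields $x'_n\to x'_\infty:=F_{t_\infty}(\pr(x))$, whence $\sfd(y_n,x'_n)\to \sfd(y_\infty,x'_\infty)$.

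A direct computation based on \eqref{distancealongpoint}--\eqref{ualongpoint}, carried out separately in the cases $y_\infty=\pr(x)$ and $y_\infty=x$, then yields the limiting identity
\[
\lim_{n}\bigl(\sqrt{2\bu(y_n)}+\sqrt{2\bu(x'_n)}-\sfd(y_n,x'_n)\bigr)=2\sqrt{2\,u^*_\infty},\qquad u^*_\infty:=\min\bigl(u(y_\infty),u(x'_\infty)\bigr)\ge u(x)\ge t_0+\eps.
\]
The contradiction will come from a chain 1-Lipschitz argument for $\sqrt{2\bu}$ along $\gamma_n$: by \eqref{eq:boundarydistance} every $p\in U$ with $\bu(p)\ge t_0$ satisfies $\sfd(p,\partial U)\ge \alpha_0:=\sqrt{2t_0}-\sqrt{2\bu_0}>0$, so partitioning any arc of $\gamma_n$ contained in $\{\bu\ge t_0\}$ into pieces of length $<\alpha_0$ and summing the local bound of Proposition~\ref{prop:sob to lip} gives the unrestricted estimate $|\sqrt{2\bu(\gamma_n(s_1))}-\sqrt{2\bu(\gamma_n(s_2))}|\le |s_1-s_2|$ along such arcs. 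Moreover, since $\limsup_{z\to\partial U}\bu(z)\le \bu_0<t_0$, the first exit point $\gamma_n(s^*_n)$ of $\gamma_n$ from $A_{t_0,T_0}$ is forced to lie on $\{\bu=t_0\}\cup\{\bu=T_0\}$ rather than in $U^c$.

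If $\bu(\gamma_n(s^*_n))=t_0$, set $s^{**}_n:=\sup\{s\in[0,L_n]:\bu(\gamma_n(s))=t_0\}$ and apply the chain estimate on $[0,s^*_n]$ and $[s^{**}_n,L_n]$ (both in $\{\bu\ge t_0\}$) to obtain
\[
\sqrt{2\bu(y_n)}+\sqrt{2\bu(x'_n)}-2\sqrt{2t_0}\le s^*_n+(L_n-s^{**}_n)\le \sfd(y_n,x'_n),
\]
which in the limit reads $2\sqrt{2u^*_\infty}\le 2\sqrt{2t_0}$, contradicting $u^*_\infty\ge t_0+\eps$. If instead $\bu(\gamma_n(s^*_n))=T_0$, the same estimate on $[0,s^*_n]$ gives $\sqrt{2T_0}\le \sqrt{2\bu(y_n)}+\sfd(y_n,x'_n)$; a direct check from \eqref{distancealongpoint} in both cases $y_\infty\in\{\pr(x),x\}$ shows the limiting right-hand side is bounded above by $2\sqrt{2T}$, so choosing $T_0>4T$ at the outset (consistent with ``provided $T_0$ is big enough'') rules this out.

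The main obstacle I expect is making the 1-Lipschitz chain rigorous, since a priori $\bu$ is only defined on $U$ and nothing prevents $\gamma_n$ from crossing $\partial U$; the resolution is the observation that $\limsup_{z\to\partial U}\bu(z)\le \bu_0<t_0$ forces $\gamma_n$ to cross a level set $\{\bu=t_0\}\subset U$ before possibly reaching $\partial U$, so on the exit/re-entry arcs one has $\bu\ge t_0$ everywhere and \eqref{eq:boundarydistance} keeps the endpoints uniformly far from $\partial U$, allowing the iterated application of the local Sobolev-to-Lipschitz bound.
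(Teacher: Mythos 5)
Your proposal takes a genuinely different route from the paper. The paper reduces the statement to the local Lemma~\ref{lem:intermediate lem} via the local Lipschitz continuity of $F_{\bar t}$ (Proposition~\ref{lipflow}): this replaces the problem by one about \emph{short} geodesics between $\rho$-close points, for which the distance bounds \eqref{eq:distance St} give an immediate contradiction as $\rho\downarrow 0$. Your approach is instead a compactness argument by contradiction combined with a telescoped $1$-Lipschitz estimate for $\sqrt{2\bu}$ along geodesic arcs contained in $\{\bu\geq t_0\}$. The two ingredients you identify --- $|\nabla\sqrt{2\bu}|=1$ together with Proposition~\ref{prop:sob to lip}, and the space-time continuity of $F$ from Proposition~\ref{lipflow} --- are the right tools, the limiting identity for $\sqrt{2\bu(y_n)}+\sqrt{2\bu(x'_n)}-\sfd(y_n,x'_n)$ is correct, and the $t_0$-exit case is sound (in particular your handling of why the geodesic cannot reach $\partial U$ without first crossing $\{\bu=t_0\}$ is exactly the right way to make the chain estimate legal).

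There is, however, a genuine quantitative gap in the $T_0$-exit case. You use the chain estimate only on $[0,s^*_n]$ to reach $\sqrt{2T_0}\leq\sqrt{2\bu(y_n)}+\sfd(y_n,x'_n)$ and then impose $T_0>4T$. But the paper fixes $t_0,T_0$ once and for all subject only to $\bu_0<t_0+1<T<T_0-1$ and $T_0-T>T-t_0$, which does \emph{not} force $T_0>4T$ (take $t_0$ close to $\bu_0$ and $T$ large). The Proposition is stated for those fixed $t_0,T_0$, so your argument as written does not cover the full admissible range; moreover the ``provided $T_0$ is big enough'' clause in the paper belongs to Lemma~\ref{prop:levelset}, not to this Proposition. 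The fix is cheap and stays entirely within your framework: instead of the one-sided chain bound, apply the distance bound $\sfd(p,\{\bu=T_0\})\geq\sqrt{2T_0}-\sqrt{2\bu(p)}$ (second inequality in \eqref{eq:distance St}, proved exactly as \eqref{eq:boundarydistance} and requiring no hypothesis on the curve joining $p$ to $\{\bu=T_0\}$) at \emph{both} endpoints $p=y_n$ and $p=x'_n$, and observe that since $\gamma_n(s^*_n)$ lies on a geodesic from $y_n$ to $x'_n$ these two distances sum to $L_n$. This yields $L_n\geq 2\sqrt{2T_0}-\sqrt{2\bu(y_n)}-\sqrt{2\bu(x'_n)}$; passing to the limit and using that $\sfd(y_\infty,x'_\infty)=|\sqrt{2\bu(y_\infty)}-\sqrt{2\bu(x'_\infty)}|$ because $y_\infty,x'_\infty$ lie on a flow line, one obtains $\max(\bu(y_\infty),\bu(x'_\infty))\geq T_0$, contradicting $\bu(y_\infty),\bu(x'_\infty)\leq T<T_0$ --- now for \emph{every} admissible $T_0>T$.
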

This in turn will follow from the next simpler  result, which roughly says that if two points are sufficiently close, then all the geodesics connecting one of them to the flow line of the other, are contained in $A_{t_0,T_0}.$ 
\begin{lemma}\label{lem:intermediate lem}
	For every $\eps>0$ there exists $\rho=\rho(\eps)>0$ such that the following holds. For every $x,y \in A_{t_0+\eps,T_0-\eps}$ such that $\sfd(x,y)<\rho$ and every geodesic $\gamma$ going from $y$ to a point of the type $z=F_t(x)\in A_{t_0+\eps,T_0-\eps}$, $\gamma$ is contained in $A_{t_0,T_0}$.
\end{lemma}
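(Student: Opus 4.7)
The approach I would take is a compactness/contradiction argument hinged on the local $1$-Lipschitz property of $\sqrt{2\bu}$ on $U$, which follows from $|\nabla\sqrt{2\bu}|=1$ $\mea$-a.e.\ in $U$ via Proposition \ref{prop:sob to lip}. The key observation is that for any geodesic joining $x$ to $z=F_t(x)$, the $1$-Lipschitz bound must be saturated (as the endpoints are exactly at distance $|\sqrt{2\bu(x)}-\sqrt{2\bu(z)}|$ by \eqref{distancealongpoint}), so $\bu$ along the geodesic coincides with the affine interpolation of $\sqrt{2\bu}$ between the endpoints. Small perturbations in $y$ should then preserve this up to closing the argument by continuity.

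Suppose the lemma fails. Then there exist $\eps>0$ and sequences $\rho_n\to 0^+$, $x_n,y_n\in A_{t_0+\eps,T_0-\eps}$ with $\sfd(x_n,y_n)<\rho_n$, $t_n\in\rr$ with $z_n\coloneqq F_{t_n}(x_n)\in A_{t_0+\eps,T_0-\eps}$, and unit-speed geodesics $\gamma_n\colon[0,L_n]\to X$ from $y_n$ to $z_n$ together with $s_n\in[0,L_n]$ satisfying $\gamma_n(s_n)\in X\setminus A_{t_0,T_0}$. Thanks to \eqref{eq:uadinfinto} the set $A_{t_0+\eps,T_0-\eps}$ is relatively compact in $U$, and using $\bu(z_n)=e^{-2t_n}\bu(x_n)$ the parameters $t_n$ remain in a compact subset of $\rr$; passing to a subsequence we get $x_n,y_n\to x_\infty\in\overline{A_{t_0+\eps,T_0-\eps}}$, $t_n\to t_\infty$, and by continuity of the flow (Proposition \ref{lipflow}) $z_n\to z_\infty=F_{t_\infty}(x_\infty)\in\overline{A_{t_0+\eps,T_0-\eps}}$, with $L_n\to L_\infty\coloneqq|1-e^{-t_\infty}|\sqrt{2\bu(x_\infty)}$ by \eqref{distancealongpoint}. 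If $t_\infty=0$ then $L_n\to 0$ and the $\gamma_n$ are eventually contained in a small neighborhood of $x_\infty\in A_{t_0,T_0}$, a contradiction; so $L_\infty>0$, and a reparametrization on $[0,1]$ together with Ascoli-Arzelà yield a subsequential uniform limit geodesic $\gamma_\infty\colon[0,L_\infty]\to X$ from $x_\infty$ to $z_\infty$.

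I would then show $\gamma_\infty\subset A_{t_0,T_0}$, which yields the contradiction since (after extracting a further subsequence with $s_n/L_n\to\tau\in[0,1]$) $\gamma_n(s_n)\to\gamma_\infty(L_\infty\tau)$ lies in the open set $A_{t_0,T_0}$ while $\gamma_n(s_n)$ belongs to its closed complement. Assume WLOG $t_\infty>0$, so that $L_\infty=\sqrt{2\bu(x_\infty)}-\sqrt{2\bu(z_\infty)}$. First I would rule out $\gamma_\infty$ leaving $U$ by a budget argument: if $\gamma_\infty$ exited $U$ at some time $s^*$ and re-entered at $s^{**}\ge s^*$, a piecewise application of Proposition \ref{prop:sob to lip} on the connected components of $\gamma_\infty^{-1}(U)$ together with $\limsup_{x\to\partial U}\bu(x)\le\bu_0$ would give $s^*\ge\sqrt{2\bu(x_\infty)}-\sqrt{2\bu_0}$ and $L_\infty-s^{**}\ge\sqrt{2\bu(z_\infty)}-\sqrt{2\bu_0}$, contradicting $L_\infty=\sqrt{2\bu(x_\infty)}-\sqrt{2\bu(z_\infty)}$ because $\bu(z_\infty)\ge t_0+\eps>\bu_0$. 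Hence $\gamma_\infty\subset U$, so a piecewise use of Proposition \ref{prop:sob to lip} (using that $\gamma_\infty$ is compact and $\sfd(\gamma_\infty,\partial U)>0$) shows $s\mapsto\sqrt{2\bu(\gamma_\infty(s))}$ is $1$-Lipschitz on $[0,L_\infty]$. Since the endpoint values differ by exactly $L_\infty$, this function must be the affine interpolation $s\mapsto\sqrt{2\bu(x_\infty)}-s$, forcing $\bu(\gamma_\infty(s))\in[\bu(z_\infty),\bu(x_\infty)]\subset[t_0+\eps,T_0-\eps]$ and thus $\gamma_\infty\subset A_{t_0,T_0}$.

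The main obstacle is the budget argument combined with the piecewise upgrade from the pointwise identity $|\nabla\sqrt{2\bu}|=1$ $\mea$-a.e.\ in $U$ to a genuine Lipschitz bound along the limit geodesic $\gamma_\infty$: since Proposition \ref{prop:sob to lip} is local in nature, requiring $\sfd(x,y)\le\sfd(x,\partial U)$, one must cover $\gamma_\infty$ by finitely many short arcs inside $U$ and sum the pointwise bounds, which is where ruling out $\gamma_\infty\cap\partial U=\emptyset$ becomes essential.
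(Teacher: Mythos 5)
Your argument is correct, but it departs substantially from the paper's proof. The paper gives a direct, quantitative estimate: it first records \eqref{eq:distance St}, $\sfd(x,\{\bu=t_0\})\ge\sqrt{2u(x)}-\sqrt{2t_0}$ and $\sfd(x,\{\bu=T_0\})\ge\sqrt{2T_0}-\sqrt{2u(x)}$ (proved exactly as \eqref{eq:boundarydistance}), and then, assuming a geodesic $\gamma$ from $y$ to $z=F_t(x)$ touches one of the two extremal level sets at some $\gamma_s$, it compares $\sfd(z,y)\le\sqrt{2u(x)}-\sqrt{2u(z)}+\rho$ against the lower bounds $\sfd(\gamma_s,y)\ge\sqrt{2u(y)}-\sqrt{2t_0}$ or $\sfd(\gamma_s,z)\ge\sqrt{2T_0}-\sqrt{2u(z)}$, reaching a contradiction once $\rho$ is small in terms of $\eps$ and the Lipschitz constant of $u$. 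Your route is a soft compactness/contradiction: sequences $\rho_n\to0$, limit geodesic from $x_\infty$ to $F_{t_\infty}(x_\infty)$, then the budget argument plus the saturated $1$-Lipschitz bound pins the affine profile of $\sqrt{2\bu}$ along $\gamma_\infty$. Both proofs hinge on the same ingredient (the local $1$-Lipschitz property of $\sqrt{2\bu}$ via Proposition \ref{prop:sob to lip} and the length formula \eqref{distancealongpoint}), and your budget argument is essentially an implicit reproof of \eqref{eq:distance St}; what you lose relative to the paper is an explicit dependence of $\rho$ on $\eps$, and what you gain is avoiding the splitting into the two cases $\gamma_s\in\{\bu=t_0\}$ versus $\gamma_s\in\{\bu=T_0\}$, at the cost of invoking Ascoli--Arzel\`a and having to handle the degenerate case $t_\infty=0$ separately. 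Two points worth making explicit in your write-up: the contradiction in the budget argument requires $\bu(z_\infty)>\bu_0$, which holds because $\bu(z_\infty)\ge t_0+\eps$ and $t_0>\bu_0$ (this is the standing hypothesis that makes $A_{t_0,T_0}\subset\subset U$); and the one-sided estimate $s^\ast\ge\sqrt{2\bu(x_\infty)}-\sqrt{2\bu_0}$ indeed only needs the $\limsup$ characterization of $\bu_0$ combined with the piecewise Lipschitz bound on $[0,s)$ for $s<s^\ast$, as you indicate.
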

\begin{proof}
	We first notice that arguing as in the proof of \eqref{eq:boundarydistance}, we have
	\begin{equation}\label{eq:distance St}
		\sfd(x,\{\bu=t_0 \})\ge \sqrt{2u(x)}-\sqrt{2t_0},\quad \sfd(x,\{\bu=T_0 \})\ge \sqrt{2T_0}-\sqrt{2u(x)}, \quad \forall x \in A_{t_0,T_0}.
	\end{equation}
	Fix $x,y \in A_{t_0+\eps,T_0-\eps}$ with $\sfd(x,y)<\rho$ and fix some point $z=F_t(x)\in  A_{t_0+\eps,T_0-\eps}.$  We will only consider the case $t>0$, since the other one is analogous.  Notice that in this case $\sfd(x,z)=\sqrt{2u(x)}-\sqrt{2u(z)}.$ Moreover, since $u$ is Lipschitz,  $|\sqrt{2u(y)}-\sqrt{2u(x)}|\le C \sqrt\rho$ for some $C$ depending only on $u$.
	
	Fix a geodesic $\gamma$ going from $y$ to $z$. Suppose by contradiction that $\gamma$ exits $A_{t_0,T_0}$. This means that $\gamma_t \in \{\bu=t_0 \}\cup \{\bu=T_0 \}$ for some $t \in(0,1)$. Since $\gamma$ is a geodesic we also have
	\[
	\sfd(z,y)\ge \max\{ \sfd(\gamma_t,z), \sfd(\gamma_t,y)\}.
	\]
	Moreover from \eqref{distancealongpoint} and the triangle inequality we deduce that $\sfd(z,y)\le\sqrt{2u(x)}-\sqrt{2u(z)}+\rho.$ 
	
	We now have two possibilities: either $\gamma_t \in \{\bu=t_0 \}$ or $\gamma_t \in \{\bu=T_0 \}$. In the first one, combining the above observations we have
	\[
	 \sqrt{2u(x)}-\sqrt{2u(z)}+\rho\ge \sfd(z,y)\ge  \sfd(\gamma_t,y)\ge  \sqrt{2u(y)}-\sqrt{2t_0}\ge \sqrt{2u(x)}-C\sqrt \rho-\sqrt{2t_0},
	\]
	which gives 
	$$\sqrt{2t_0}+\rho +C\sqrt \rho\ge \sqrt{2u(z)}\ge \sqrt{2(t_0+\eps)},$$ that is clearly a contradiction provided $\rho$ is chosen small enough. In the case that $\gamma_t \in \{\bu=T_0 \}$ we analogously have
	\[
	  \sqrt{2(T-\eps)}-\sqrt{2u(z)}+\rho\ge \sqrt{2u(x)}-\sqrt{2u(z)}+\rho\ge \sfd(z,y)\ge  \sfd(\gamma_t,z)\ge  \sqrt{2T_0}-\sqrt{2u(z)},
	\]
	which is again a contradiction if $\rho$ is small enough.
\end{proof}

We can now prove Proposition \ref{prop:geodesics inside prop}, which as observed above, proves also \eqref{eq:geodesics inside}.
\begin{proof}[Proof of Proposition \ref{prop:geodesics inside prop}]
	Since  $F_{\bar t}$ is locally Lipschitz in $A_{t_0,T_0}$ (recall Proposition \ref{lipflow}), there exists $\delta=\delta(\eps)>0$ such that 
	$$F_{\bar t}(B_\delta(\pr(x))\subset A_{t_0+\eps/2,T_0-\eps/2}$$ Moreover, since $u$ is monotone along flow lines, we also have $F_t(B_\delta(\pr(x))\subset A_{t_0+\eps/2,T_0-\eps/2}$ for every $t \in [0,\bar t]\cup [\bar t,0]$. Let now $\rho=\rho(\eps/2)$ be the one given by Lemma \ref{lem:intermediate lem}  corresponding to $\eps/2$ and note that we can assume that $\rho<\delta.$  Then the conclusion  for $y \in B_\rho(\pr(x))$ follows immediately from Lemma \ref{lem:intermediate lem}. 
	
	For the cases in which $y$ is close to $x$ we observe that, again by the local Lipschitzianity of $F_{\bar t}$, there exists $\bar \rho(\eps) \in(0,\rho/2)$ so that $F_{\bar t}(B_{\bar \rho}(\pr(x))) \subset B_{\rho/2}(x)$. In particular for every $y \in B_{\bar \rho}(x)$ and $x' \in F_{\bar t}(B_{\bar \rho}(\pr(x)))$ we have $\sfd(y,x')<\rho/2+\bar \rho<\rho$. Therefore we conclude again by Lemma \ref{lem:intermediate lem} that every geodesic from $y$ to $F_t(B_{\bar\rho}(\pr(x)))$, with $t \in [0,\bar t]\cup [\bar t,0]$, is contained in $A_{t_0,T_0}.$
\end{proof}

\begin{proof}[Proof of Proposition \ref{prop:variables splitting}]
	It is sufficient to  prove that for every $\eps>0$ there exists $\rho(\eps)>0$ such that, for every $x \in A_{t_0+\eps,T_0-\eps}$ and every $y \in  B_\rho(x)\cup B_{\rho}(\pr(x))$ it holds 
	\begin{equation}\label{eq:splits partial}
			\frac{\sfd(x,y)^2}{2}=u(x)+u(y)+\sqrt{\frac{u(x)}{T}}\left(\frac{\sfd(\pr(x),y)^2}{2}-T-u(y)\right).
	\end{equation}
	Then \eqref{eq:variables splitting} follows applying \eqref{eq:splits partial} first with $x=x' \in  A_{t_0+\eps,T_0-\eps}$ and  $y=y' \in  B_{\rho'}(x)$ (for some $\rho'<\rho(\eps)$ to be chosen) and then with $x=y'$ and $y=\pr(x')$. The second application of \eqref{eq:splits partial} is possible because $\sfd(y,\pr(x))=\sfd(\pr(x'),\pr(y'))\le C_{\eps} \sfd(x',y')\le C_{\eps} \rho'< \rho(\eps)$ (recall that $\pr$ is locally Lipschitz) if we choose $\rho'(\eps)$ small enough.

	Fix $x \in A_{t_0+\eps,T_0-\eps}$ and let $\bar t\in \rr$ be such that $F_{\bar t}(\pr (x))=x$. We will assume that $\bar t>0$, since the other case is exactly the same.  Let $r,\delta<\rho(\eps)$, where $\rho(\eps)$ is given by \eqref{eq:geodesics inside}. Then we also fix $y \in B_\rho(x)\cup B_{\rho}(\pr(x))$. Finally we define the measures $\mu^r,\nu^r,\mu_t^r,\eta_s^{t,r}$ as above, for $t \in [0,\bar t]$ and $s \in[0,1]$.

	From Theorem \ref{thm:uniqrfl} we have that the curve of measures $[0,1]\ni t \mapsto \mu_t^r$ satisfies the continuity equation with vector field $-\nabla u$. Then combining \cite[Theorem 3.5]{GHcont} and \cite[Proposition 3.10]{GHcont} we have that the function $\rr \ni t \mapsto W_2^2(\mu_t^r,\nu^r)$ is absolutely continuous  and
	\begin{equation}\label{eq:derivative W}
			\frac{\d}{\d t} \frac12 W_2^2(\mu_t^r,\nu^r)=- \int \la \nabla \phi_t, \nabla u\ra \, \d \mu_t^r,\quad \text{a.e.\ $t$},
	\end{equation}
	where $\phi_t$ are suitable (Lipschitz) Kantorovich potentials from $\mu_t^r$ to $\nu.$ Moreover from the second differentiation formula along Wasserstein geodesics (\cite[Thm. 5.13]{gtamanini}) we have that, for every $t$, $[0,1]\ni s \mapsto \int u \,\d\eta_s^{t,r}$ is $C^2[0,1]$ and 
	\begin{equation}\label{eq:second order diff}
		\begin{split}
			&\frac{\d}{\d s}\int u \,\d\eta_s^{t,r}=\frac1s  \int \la \nabla u, \nabla \psi_s\ra \,\d\eta_s^{t,r}, \quad \forall s \in[0,1],\\
			&\frac{\d^2}{\d s^2}\int u \,\d\eta_s^{t,r}= \frac1{s^2}\int  \la \H{u}(\nabla \psi_s,\psi_s) \d \eta_s^{t,r},\quad \forall s \in[0,1],\\
		\end{split}
	\end{equation}
where $\psi_s$ are any choice of Kantorovich potentials from $\eta_s^{t,r}$ to $\nu^r.$ We now fix  $t\in [0,\bar t]$ such that the formula in \eqref{eq:derivative W} holds. Since $\eta_1^{t,r}=\mu_t^r$, we can combine it with the first in \eqref{eq:second order diff}  at $s=1$ to get
\begin{equation}\label{eq:magic identity}
	\frac{\d}{\d s}\restr{s=1} \int u \,\d\eta_s^{t,r}=-\frac{\d}{\d t}\restr{t=1} \frac12 W_2^2(\mu_t^r,\nu^r).
\end{equation}

Our goal is now to derive an explicit expression for $\frac{\d}{\d s}\restr{s=1} \int u \,\d\eta_s^{t,r}$. To do so we use the second in \eqref{eq:second order diff} and the fact that $\H u={\sf id}$ on $\supp(\eta_s^{t,r})$ (which is ensured by \eqref{eq:geodesics inside})
\[
\frac{\d^2}{\d s^2}\int u \,\d\eta_s^{t,r}=\frac1{s^2}\int  \la \H{u}(\nabla \psi_s,\psi_s) \d \eta_s^{t,r}= \frac1{s^2} \int |\nabla \psi_s|^2 \, \d \eta_s^{t,r}\quad \forall s \in[0,1].
\]
We now apply the metric version of Brenier theorem (\cite{AGSflow}, see also \cite{AR}) to obtain that $\int |\nabla \psi_s|^2 \, \d \eta_s^{t,r}=W_2^2(\eta_s^{t,r},\nu^r)$ for every $s \in [0,1]$, which combined with the fact that $\eta_s^{t,r}$ is a $W_2$-geodesic from $\mu_t^{r}$ to $\nu^r$ gives
\[
\frac{\d^2}{\d s^2}\int u \,\d\eta_s^{t,r}=W_2^2(\mu_t^{r},\nu^r), \quad \forall s \in[0,1].
\]
Since $s\mapsto\int u \,\d\eta_s^{t,r}$ is $C^2[0,1]$, we must have $\int u \,\d\eta_s^{t,r}=a+bs+W_2^2(\mu_t^{r},\nu^r) \frac{s^2}{2}$ for every $s\in[0,1]$. Substituting $s\in\{0,1\}$ we deduce that
\[
	\int u\d \eta_s^t=\int u \d \nu^r+\left(\int u \d \mu_t^r-\int u \d \nu^r-\frac{W_2(\mu_t^r,\nu^r)^2}{2}\right)s+W_2^2(\mu_t^r,\nu^r)\frac{s^2}{2}, \quad \forall s \in[0,1],
\]
using which we can finally compute 
\[
\frac{\d}{\d s}\restr{s=1} \int u \d \eta_s^t=\int u \d \mu_t^r-\int u \d \nu^r+\frac{W_2(\mu_r^t,\nu^r)^2}{2}.
\]
Combining this with \eqref{eq:magic identity} we obtain that, setting $f(t)\coloneqq W_2(\mu_t^r,\nu_r)^2/2$, $t \in[0,\bar t]$, $f$ is absolutely continuous and
\[
f'(t)=\int u \d \nu_r-\int u \d \mu_t^r-f(t), \quad \text{a.e.\ }t \in(0,\bar t).
\]
Note that the right hand side is actually a continuous function in $t$, hence $f\in C^1[0,1].$ Since we know the value of $f(0)$ we can  now find $f$ explicitly:
\[
\frac{W_2(\mu_t^r,\nu_r)^2}{2}=f(t)=e^{-t} \int_0^t e^{s}\left(\int u \d \nu_r-\int u \d \mu_s^r\right) \d s+\frac{W_2(\mu_0^r,\nu_r)^2}{2} e^{-t}, \quad \forall \, t \in[0,\bar t].
\]
We now let $r\to 0^+$ to obtain
\[
\frac{\sfd(F_t(\pr(x)),y)^2}{2}=e^{-t} \int_0^t e^{s}(u(y)-u(F_s(\pr(x))))\d s+\frac{\sfd(\pr(x),y)^2}{2}e^{-t},\quad \forall \, t \in[0,\bar t].
\]
Note that to pass the limit inside the integral sign, we can use that $\left |\int u \d \mu_s^r\right |\le \int |u|\circ F_s \d \mu^r\le \|u\|_\infty$ and the dominated convergence theorem. Plugging in $u(F_s(\pr(x)))=e^{-2s}u(\pr(x))=e^{-2s}T$ (see \eqref{ualongpoint}) and choosing $t=\bar t$, we obtain
\begin{align*}
		\frac{\sfd(x,y)^2}{2}=\frac{\sfd(F_{\bar t}(\pr(x)),y)^2}{2}&=u(y)e^{-\bar t}(e^{\bar t}-1)+Te^{-\bar t}(e^{-\bar t}-1)+\frac{\sfd(\pr(x),y)^2}{2}e^{-\bar t}=\\
		&=u(x)+u(y)+e^{-\bar t}\left(\frac{\sfd(\pr(x),y)^2}{2}-T-u(y)\right),
\end{align*}
that is precisely \eqref{eq:splits partial}, since $\bar t=\frac12 \log \frac{T}{u(x)}$.
\end{proof}

\subsection{Intrinsic metric on the level set}\label{sec:intrinsic metric}

\begin{definition}\label{d'}
	We put $\X'\coloneqq S_T.$ For $x',y' \in S_T$ we define $\sfd'(x',y')$ as 
	\begin{equation}\label{spheredistance}
		\sfd'(x',y')^2\coloneqq \inf \int_0^1 |\dot{\gamma}_t|^2 \d t,
	\end{equation}
	where the infimum is taken among all Lipschitz path $\gamma: [0,1]\to X'\subset X$ joining $x'$ and $y'$ and the metric speed is computed w.r.t. the distance $\sfd$.
\end{definition}

\begin{lemma}\label{lem:dd'}
	\begin{equation}\label{dd'}
		\sfd(x,y)\le \sfd'(x,y)\le c \sfd(x,y), \quad \text{	for every $x,y \in \X'$,}
	\end{equation}
	where $c>0$ is the constant given in Proposition \ref{prop:arc connected}
\end{lemma}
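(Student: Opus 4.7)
\textbf{Proof plan for Lemma \ref{lem:dd'}.}

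The plan is to handle the two inequalities separately, both via direct manipulations of Definition \ref{d'}. For the lower bound $\sfd(x,y)\le \sfd'(x,y)$, I would take any admissible competitor $\gamma\in\LIP([0,1],\X')$ joining $x$ to $y$ and note that, since $\gamma$ is also a curve in $\X$, the metric speed $|\dot\gamma_t|$ (computed w.r.t.\ $\sfd$) satisfies $\sfd(x,y)\le L(\gamma)=\int_0^1 |\dot\gamma_t|\,\d t$. Then Cauchy--Schwarz applied on $[0,1]$ gives
\[
\sfd(x,y)^2\le \left(\int_0^1 |\dot\gamma_t|\,\d t\right)^2\le \int_0^1 |\dot\gamma_t|^2\,\d t,
\]
and taking the infimum over $\gamma$ yields $\sfd(x,y)^2\le \sfd'(x,y)^2$.

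For the upper bound $\sfd'(x,y)\le c\,\sfd(x,y)$, I would simply invoke Proposition \ref{prop:arc connected}: it produces a Lipschitz curve $\gamma:[0,1]\to \X'$ joining $x$ to $y$ with $\Lip(\gamma)\le c\,\sfd(x,y)$. Since $|\dot\gamma_t|\le \Lip(\gamma)$ for a.e.\ $t$, this curve is an admissible competitor in \eqref{spheredistance} and
\[
\sfd'(x,y)^2\le \int_0^1 |\dot\gamma_t|^2\,\d t \le \Lip(\gamma)^2\le c^2\,\sfd(x,y)^2.
\]

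Both steps are essentially formal, so there is no real obstacle: all the non-trivial work was already done in Proposition \ref{prop:arc connected}, which relied on the existence of a Lipschitz path in $A_{t_0+1,T_0-1}$ (from Lemma \ref{prop:levelset}, via the ``one end'' property from Proposition \ref{prop:ends} and Euclidean volume growth from Corollary \ref{cor:euclvol}) combined with the local Lipschitzianity of the projection $\pr$. The only point that warrants a brief mention is that the infimum in \eqref{spheredistance} is finite thanks precisely to Proposition \ref{prop:arc connected}, so $\sfd'$ is well-defined as a (finite) metric on $\X'$.
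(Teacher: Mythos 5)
Your proof is correct and takes essentially the same route as the paper: the lower bound is the definition of $\sfd'$ combined with Cauchy--Schwarz on $[0,1]$ (this is what the paper calls "immediate from the definition"), and the upper bound is a direct application of the Lipschitz curve produced by Proposition \ref{prop:arc connected}, exactly as the paper does.
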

\begin{proof}
	The first in \eqref{dd'} is immediate from the definition of $\sfd',$ while the second follows directly from Proposition \ref{prop:arc connected}.
\end{proof}
\begin{cor}\label{equivtop}
	The topology induced by $\sfd'$ on $X'$ is the same as the one induced by the inclusion $X'\subset X.$
\end{cor}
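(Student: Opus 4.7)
The plan is to deduce this directly from the bi-Lipschitz equivalence between $\sfd$ and $\sfd'$ on $X'$ established in Lemma \ref{lem:dd'}. Since the statement involves only the comparison of two topologies induced by two comparable distances, no further geometric input is needed and the argument reduces to a purely metric remark.

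More precisely, I would first observe that the inclusion $X'\subset \X$ induces on $X'$ the topology generated by the balls $B_r^{\sfd}(x)\cap X'$, $x\in X'$, $r>0$, while the topology induced by $\sfd'$ is generated by the balls $B_r^{\sfd'}(x)$, $x\in X'$, $r>0$. The two-sided estimate \eqref{dd'} in Lemma \ref{lem:dd'} reads
\[
\sfd(x,y)\le \sfd'(x,y)\le c\, \sfd(x,y),\qquad \forall x,y\in X',
\]
and therefore
\[
B_{r/c}^{\sfd}(x)\cap X'\subset B_{r}^{\sfd'}(x)\subset B_{r}^{\sfd}(x)\cap X',\qquad \forall x\in X',\ \forall r>0.
\]
This chain of inclusions immediately yields that each basic neighborhood of $x$ in one topology contains (and is contained in) a basic neighborhood of $x$ in the other, so the two topologies coincide.

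I do not expect any obstacle: the whole content is the bi-Lipschitz equivalence, which is already recorded in Lemma \ref{lem:dd'}, and the rest is standard.
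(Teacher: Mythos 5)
Your argument is correct and is precisely the intended deduction: the paper states Corollary \ref{equivtop} with no proof, as an immediate consequence of the bi-Lipschitz comparison in Lemma \ref{lem:dd'}, and the ball-inclusion chain you write out is exactly the standard way to convert that comparison into the equality of topologies.
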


We now define the measure $\mea '$ on $X'$ as
\begin{equation}\label{m'}
	\mea'\coloneqq \mea_{T},
\end{equation}
where $\mea_T$ is given in Proposition \ref{disintlemma}. Observe that, thanks to Corollary \ref{equivtop}, $\mea '$ is a Borel probability measure on $X'.$

A straightforward computation, exploiting \eqref{disintegration}, gives also that

\begin{equation}\label{eq:relmisure}
	{\Pr}_* \mea|_{A_{a,b}}=c_{a,b}\, \mea', 
\end{equation}
for every $a,b \in \rr$ such that $t_0\le a<b\le T_0$, where $c_{a,b}=c\int_a^b r^{N/2-1}\d r,$ with $c$ as in Proposition \ref{disintlemma}.

The only step that remains to conclude is to link $\sfd'$ to the formula for the distance with separation of variables given by \eqref{eq:variables splitting}.
To do so we define a new ``distance'' on $S_T$ by:
\[
{\sf D}(x,y)\coloneqq\arccos\left (1-\frac{\sfd(x,y)^2}{4T}\right), \quad \forall\, x,y \in S_T \text{ such that } \sfd(x,y)^2<4T,
\]
where we take the range of $\arccos(.)$ to be in $(-\pi/2,\pi/2).$ We point out that ${\sf D}$ might not be a distance on the whole $S_T$, however we will show that it is so at least locally. Note also that $c^{-1}\sfd(x,y)\le \dis(x,y)\le c\sfd(x,y)$ for some constant $c=c(T)>1.$

Our main goal is to prove the following.
\begin{prop}[$\dis$ locally coincides with $\sfd'$]\label{prop:distances coincide}
	For every $p \in S_T$ there exists $r>0$ such that  
		\begin{equation}\label{eq:identification of d}
		\frac{\sfd'(x,y)}{\sqrt{2T}}=\dis(x,y), \quad \forall x,y \in B_r(p)\cap S_T.
	\end{equation}
\end{prop}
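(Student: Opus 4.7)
The cosine formula \eqref{eq:variables splitting} specialized to $x, y \in S_T$ (so $u(x) = u(y) = T$ and $\pr(x) = x$, $\pr(y) = y$) reduces to the identity
\[
\sfd(x,y) = 2\sqrt{2T}\sin(\dis(x,y)/2),
\]
equivalently $\dis(x,y) = 2\arcsin(\sfd(x,y)/(2\sqrt{2T}))$, well-defined for $x,y$ in a small enough $\sfd$-ball around $p$. The plan is to establish the upper bound $\sfd'(x,y)\leq\sqrt{2T}\dis(x,y)$ and the lower bound $\sfd'(x,y)\geq\sqrt{2T}\dis(x,y)$ separately.

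For the upper bound I will take an ambient $\sfd$-geodesic $\alpha\colon[0,1]\to X$ from $x$ to $y$; by continuity of $u$, for $r$ small its image lies in $A_{t_0+\delta,T_0-\delta}$ for some $\delta>0$, so $\beta_t:=\pr(\alpha_t)$ is a Lipschitz curve in $S_T$ joining $x$ and $y$. Applying the cosine formula to $\alpha_t,\alpha_{t+h}$ and Taylor expanding in $h$ about $0$ yields the Pythagorean-type identity
\[
|\dot\alpha|^2(t) = \frac{\dot s(t)^2}{2s(t)} + \frac{s(t)}{T}|\dot\beta|_\sfd^2(t) \quad \text{a.e.,}
\]
with $s(t):=u(\alpha_t)$. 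This is precisely the infinitesimal Euclidean cone metric in polar coordinates $r=\sqrt{2s}$, identifying $\alpha$ with a straight segment through the apex of a $2$-dimensional Euclidean sector and $\beta$ with its radial projection onto the sphere $\{r=\sqrt{2T}\}$. A direct computation in these cone coordinates (using $|\dot\alpha|=\sfd(x,y)$ constant and $s(0)=s(1)=T$) gives $\mathrm{length}_\sfd(\beta)=\sqrt{2T}\dis(x,y)$, whence $\sfd'(x,y)\leq\mathrm{length}_\sfd(\beta)=\sqrt{2T}\dis(x,y)$.

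For the lower bound I must show $\mathrm{length}_\sfd(\gamma)\geq\sqrt{2T}\dis(x,y)$ for every Lipschitz $\gamma\colon[0,1]\to S_T\cap B_r(p)$ from $x$ to $y$. The first step is the infinitesimal observation that, for close pairs in $S_T$, $\sfd=2\sqrt{2T}\sin(\dis/2)$ implies the metric-speed relation $|\dot\gamma|_\sfd(t)=\sqrt{2T}\,|\dot\gamma|_\dis(t)$ a.e., where $|\dot\gamma|_\dis(t):=\lim_{h\to 0}\dis(\gamma_{t+h},\gamma_t)/|h|$; integrating gives $\mathrm{length}_\sfd(\gamma)=\sqrt{2T}\int_0^1|\dot\gamma|_\dis\,dt$. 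It therefore suffices to establish $\int_0^1|\dot\gamma|_\dis\,dt\geq\dis(x,y)$. The idea is to combine the $\sfd$-triangle inequality $\sfd(x,y)\leq\sum_i\sfd(\gamma_{t_i},\gamma_{t_{i+1}})$ with $\sfd=2\sqrt{2T}\sin(\dis/2)$ and refine the partition, so that the cubic-order corrections in the expansion of $\arcsin$ vanish in the limit, yielding $\dis(x,y)\leq\lim_{|\pi|\to 0}\sum_i\dis(\gamma_{t_i},\gamma_{t_{i+1}})=\int_0^1|\dot\gamma|_\dis\,dt$.

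The main obstacle is this last refinement step: $\dis$ does not satisfy the triangle inequality exactly (the $\sfd$-triangle only yields $\sin(\dis(x,y)/2)\leq\sum_i\sin(\dis(\gamma_{t_i},\gamma_{t_{i+1}})/2)$, with an $O(\theta^3)$ gap coming from $\sin(\theta)=\theta-\theta^3/6+\cdots$). Establishing the sharp bound $\dis(x,y)\leq\sum_i\dis(\gamma_{t_i},\gamma_{t_{i+1}})$ in the refinement limit therefore requires careful control of the accumulated higher-order terms, which is made possible by the local cone structure encoded in the cosine formula, ensuring that $(S_T\cap B_r(p),\dis)$ is locally a length space whose induced length metric coincides with $\dis$ itself.
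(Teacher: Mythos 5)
Your approach shares the organizing idea of the paper's proof---reducing the statement to a comparison of curve lengths via the metric-speed relation $|\dot\gamma|_\sfd=\sqrt{2T}\,|\dot\gamma|_\dis$ on $S_T$---but both directions have gaps that the paper fills with Proposition~\ref{prop:geodist}, whose content you neither prove nor invoke. In the lower bound you need $\int_0^1|\dot\gamma|_\dis\,\d t\ge\dis(x,y)$, i.e.\ that $\dis$ does not exceed its own induced length. The $\sfd$-triangle inequality along a partition yields $\sin(\dis(x,y)/2)\le\sum_i\sin\big(\dis(\gamma_{t_i},\gamma_{t_{i+1}})/2\big)$, and passing to the refinement limit gives only
\[
2\sin\!\big(\tfrac{\dis(x,y)}{2}\big)\ \le\ \int_0^1|\dot\gamma|_\dis\,\d t,
\]
which is strictly weaker than the claim because $2\sin(\theta/2)<\theta$ for $\theta>0$. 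The loss is the fixed multiplicative factor $\dis(x,y)\big/\big(2\sin(\dis(x,y)/2)\big)$; it sits on the endpoint term, not on the partition increments, so it does not shrink with refinement and your appeal to vanishing cubic corrections in $\arcsin$ does not address it. (Compare the chordal versus arc-length metric on the unit circle: the chordal triangle inequality alone cannot recover the arc-length one.) What is actually required is the local triangle inequality for $\dis$, i.e.\ part~i) of Proposition~\ref{prop:geodist}, which the paper obtains by \emph{leaving} $S_T$: the intermediate point is pushed to a different radius by the flow $F_t$ so that the three points can be matched against the flat-sector identity~\eqref{eq:simple}. That step cannot be replaced by an argument confined to $S_T$.

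The upper bound has a parallel gap. You project an ambient $\sfd$-geodesic $\alpha$ to $\beta=\pr\circ\alpha\subset S_T$ and assert $\int_0^1|\dot\beta|_\sfd\,\d t=\sqrt{2T}\dis(x,y)$ ``by a direct computation in cone coordinates.'' The first-order identity $|\dot\alpha|^2=\dot s^2/(2s)+(s/T)|\dot\beta|_\sfd^2$ is correct, but to integrate $|\dot\beta|_\sfd$ one needs the radial profile $s(t)=\bu(\alpha_t)$, and this is not determined by \eqref{eq:variables splitting} together with $|\dot\alpha|\equiv\sfd(x,y)$ and $s(0)=s(1)=T$: applying the cosine formula between $x,\alpha_t$ and between $\alpha_t,y$ yields two equations in the three unknowns $s(t)$, $\dis(x,\beta_t)$, $\dis(\beta_t,y)$. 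The flat two-dimensional computation you have in mind (angular-momentum conservation, $\beta$ tracing out a $\dis$-geodesic) presupposes the local geodesic structure of $(S_T,\dis)$, which is exactly part~ii) of Proposition~\ref{prop:geodist}; that is the ingredient the paper uses for this direction, taking a $\dis$-geodesic in $S_T$ and computing its $\sfd'$-length rather than attempting to compute the length of a projected ambient geodesic.
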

To prove the above proposition we need first to show that $\dis$ is locally an intrinsic distance.

\begin{prop}[$\dis$ is locally a geodesic distance]\label{prop:geodist}
	For every $p \in S_T$ there exists $\delta>0$ such that 
	\begin{enumerate}[label=\roman*)]
		\item\label{it:dist} $\dis(.,.)$ is a distance when restricted to $B_\delta(p)\cap S_T$,
		\item\label{it:geod} there exists a constant $\lambda=\lambda(T)<1$ such that for every $x,y \in B_{\lambda \delta}(p)\cap S_T$ there exists a Lipschitz curve $\gamma:[0,1]\to \subset B_{\delta}(p)\cap S_T$ that is a geodesic for $\dis$, i.e.\ such that $\gamma_0=x,\gamma_1=y$ and $\dis(\gamma_t,\gamma_s)=|t-s|\dis(\gamma_0,\gamma_1)$ for every $t,s \in [0,1].$
	\end{enumerate}
\end{prop}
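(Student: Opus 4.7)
The plan is to prove (i) and (ii) simultaneously by constructing, for every $x,y\in B_{\lambda\delta}(p)\cap S_T$, a Lipschitz curve $\eta:[0,1]\to S_T\cap B_\delta(p)$ with $\eta(0)=x$, $\eta(1)=y$ and $\dis(\eta_s,\eta_t)=|s-t|\,\dis(x,y)$ for all $s,t\in[0,1]$. Such an $\eta$ will directly give (ii); positivity and symmetry of $\dis$ are immediate; the triangle inequality will follow by concatenation, since for $x,y,z$ in the ball the concatenated curve $\eta_{xy}*\eta_{yz}$ has $\dis$-length (defined as the supremum over partitions of $\sum\dis$) equal to $\dis(x,y)+\dis(y,z)$, which trivially majorizes $\dis(x,z)$. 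For the construction, I would fix $\eps>0$ with $p\in A_{t_0+\eps,T_0-\eps}$, choose $\delta_0>0$ so that $B_{\delta_0}(p)\subset A_{t_0+\eps/2,T_0-\eps/2}$, and set $\delta:=\min(\delta_0,\rho(\eps/2))/10$ and $\lambda:=1/3$, where $\rho(\eps/2)$ is the radius from Proposition \ref{prop:variables splitting}. Then for $x,y\in B_{\lambda\delta}(p)\cap S_T$ any arc-length $\X$-geodesic $\gamma:[0,L]\to\X$ between them (with $L=\sfd(x,y)\le 2\lambda\delta$) stays in $B_{3\lambda\delta}(p)\subset B_\delta(p)$ by the triangle inequality, so \eqref{eq:variables splitting} applies to every pair $(\gamma_s,\gamma_t)$, and $\tilde\gamma:=\pr\circ\gamma$ is locally Lipschitz by Proposition \ref{lipflow}.

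\textbf{Key step.} The central analytic fact I would establish is
\[
u(\gamma_s)=T-\tfrac{1}{2}s(L-s),\qquad s\in[0,L].
\]
To derive it, I would adapt the proof of Proposition \ref{prop:variables splitting} to the situation where both endpoints are on $S_T$. For $r>0$ small, set $\mu^r:=\mea(B_r(x))^{-1}\mea\restr{B_r(x)}$ and $\nu^r:=\mea(B_r(y))^{-1}\mea\restr{B_r(y)}$, and let $\{\eta_s^r\}_{s\in[0,1]}$ be the unique $W_2$-geodesic between them. As in \eqref{eq:geodesics inside}, for $r$ small $\supp(\eta_s^r)\subset B_\delta(p)\subset A_{t_0,T_0}$, so that $\H u=\mathrm{id}$ holds on these supports; combining this with the second-order differentiation formula from \cite{gtamanini} yields
\[
\int u\,\d\eta_s^r=\int u\,\d\nu^r+\Big(\int u\,\d\mu^r-\int u\,\d\nu^r-\tfrac{1}{2}W_2^2(\mu^r,\nu^r)\Big)s+\tfrac{1}{2}W_2^2(\mu^r,\nu^r)\,s^2.
\]
Letting $r\downarrow 0$, the integrals $\int u\,\d\mu^r$ and $\int u\,\d\nu^r$ converge to $T$, $W_2^2(\mu^r,\nu^r)\to L^2$, and by essential non-branching in ${\rm RCD}(0,N)$ the $\eta_s^r$ concentrate on $\X$-geodesics from $x$ to $y$ (essentially unique, as discussed below); this yields $u\circ\gamma(\sigma L)=T-\tfrac{L^2}{2}\sigma(1-\sigma)$ for $\sigma\in[0,1]$, which is the claim.

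\textbf{Reparametrization and conclusion.} Plugging the formula for $u\circ\gamma$ into \eqref{eq:variables splitting} for the pair $(\gamma_{s_1},\gamma_{s_2})$ and solving produces an explicit expression for $\cos\dis(\tilde\gamma_{s_1},\tilde\gamma_{s_2})$; a routine trigonometric identity (verified by matching with the Euclidean $N$-cone model, where all formulas are direct) then shows that $\alpha:s\mapsto\dis(x,\tilde\gamma_s)$ is a continuous strictly increasing bijection $[0,L]\to[0,\theta]$ with $\theta:=\dis(x,y)$ and that $\dis(\tilde\gamma_{s_1},\tilde\gamma_{s_2})=\alpha(s_2)-\alpha(s_1)$ for $s_1\le s_2$. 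The reparametrization $\eta(r):=\tilde\gamma(\alpha^{-1}(r\theta))$, $r\in[0,1]$, is then the required constant-$\dis$-speed Lipschitz geodesic, completing the proof of both (i) and (ii).

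\textbf{Main obstacle.} The hard part will be the key step: converting the measure-valued identity supplied by \cite{gtamanini} into a pointwise statement along a single, pointwise-defined $\X$-geodesic. This requires that as $r\downarrow 0$ the $W_2$-geodesics $\eta_s^r$ concentrate on one specific $\X$-geodesic from $x$ to $y$, which reduces to essential uniqueness of that geodesic. In ${\rm RCD}(0,N)$ spaces such uniqueness is available by non-branching for $\mea\otimes\mea$-a.e.\ pair and can be upgraded to all $x,y\in B_{\lambda\delta}(p)\cap S_T$ by a continuity/density argument, possibly complemented by the cosine formula \eqref{eq:variables splitting} itself to rule out branching.
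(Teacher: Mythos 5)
Your strategy is genuinely different from the paper's. The paper first proves the triangle inequality directly by contradiction, combining the cosine formula \eqref{eq:variables splitting} with the flow $F_t$ and the elementary trigonometric identity \eqref{eq:simple}. It then obtains $\dis$-geodesics by a midpoint construction: it shows that $\pr(z)$ is a $\dis$-midpoint of $x,y$, where $z$ is an $\sfd$-midpoint, and the inequality $\dis(\pr(z),x)\le\tfrac12\dis(x,y)$ is extracted from \eqref{eq:variables splitting} together with nothing more than the Lipschitz bound $|\sqrt{2u(z)}-\sqrt{2T}|\le\sfd(x,z)$; no Wasserstein calculus and no exact formula for $u(z)$ are needed. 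By contrast you want to build an explicit constant-speed geodesic via the identity $u(\gamma_s)=T-\tfrac12 s(L-s)$ (which is indeed correct and a nice observation, consistent with the cone model) and then derive the triangle inequality from the existence of such geodesics.

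There are, however, two genuine gaps. First, the triangle-inequality-by-concatenation step is circular. You want to deduce $\dis(x,z)\le\dis(x,y)+\dis(y,z)$ from the claim that the concatenation $\eta_{xy}*\eta_{yz}$ has $\dis$-length (sup over partitions of $\sum\dis$) equal to $\dis(x,y)+\dis(y,z)$. But the $\dis$-length of each piece being $\dis(x,y)$, resp.\ $\dis(y,z)$, does not control the $\dis$-length of the concatenation: a partition of the concatenated curve may contain a crossing pair $(a,b)$ with $a$ on $\eta_{xy}$ and $b$ on $\eta_{yz}$, and bounding $\dis(a,b)$ by $\dis(a,y)+\dis(y,b)$ is exactly the triangle inequality you are trying to prove. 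In general, a symmetric, diagonal-vanishing function admitting constant-speed ``geodesics'' between all pairs of points need not satisfy the triangle inequality. Second, as you yourself note, letting $r\downarrow0$ in the Gigli--Tamanini argument only gives the quadratic formula integrated against the midpoint plan, $\int u\,\d\eta^r_s \to T-\tfrac{L^2}{2}s(1-s)$; converting this to a pointwise statement along a single chosen geodesic requires essential uniqueness, which is available only for $\mea\otimes\mea$-a.e.\ pair. A density or continuity argument does not repair this, because the target is a pointwise identity and nothing a priori forces $u$ to be constant on the support of the midpoint measure if there are several geodesics. The paper avoids both issues entirely: it never needs the exact value of $u(z)$ (only the Lipschitz bound), and the triangle inequality is handled independently.
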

\begin{proof}
	We preliminarily note  that for every $a,b \in [0,\pi/2]$ there exists $\lambda \in [0,1]$ such that 
	\begin{equation}\label{eq:simple}
		\sqrt{\lambda^2+1-2\lambda \cos(a)}+	\sqrt{\lambda^2+1-2\lambda \cos(b)}=\sqrt{2-2\cos(a+b)}.
	\end{equation}
	An immediate geometric proof of this fact is  the following:  take three  points $P,Q,S$ on the unit circle of center $O$ so that the length of the arcs $PQ$,$QS$ and $PS$ is respectively $a,b$ and $a+b$, denote by $M$ the intersection between the segments $PS$ and $QO$, then $\lambda$ is precisely the distance from $M$ to the center $O$.
	Alternatively \eqref{eq:simple} follows also by continuity and the subadditivity of $\sqrt{2-2\cos(.)}$ in $[0,\pi].$
	
	Fix now $\eps>0$ small so that $T\in (t_0+\eps,T_0-\eps)$ and fix $p \in S_T.$  Let also $\rho=\rho(\eps)>0$ be the one given by Proposition \ref{prop:variables splitting}. By continuity of $u$ and $F_t$ (recall \eqref{distancealongpoint} and \eqref{ualongpoint}) there exist $\delta=\delta(\eps,p)>0$, $\bar t=\bar t(\eps,p)>0$ both small and such that $F_t(B_\delta(p))\subset B_{\rho/2}(p)$ for all $t \in[0,\bar t].$ In particular 
	\begin{equation}\label{eq:ok split}
		\text{for every $x,y \in F_t(B_\delta)$ and every $t \in[0,\bar t]$, \eqref{eq:variables splitting} holds.}
	\end{equation}
	
\noindent \emph{Proof of \ref{it:dist}}: We only need to prove that the triangular inequality holds. We argue  by contradiction assuming that there exist $x,y,z \in B_\delta(p)\cap S_T$ such that 
	\begin{equation}\label{eq:not triangle}
		\dis(x,z)>\dis(x,y)+\dis(y,z).
	\end{equation}
We also let  $\lambda$ be the one given by \eqref{eq:simple} and corresponding to $a=\dis(x,y), b=\dis(y,z)$. Finally let $t\ge 0$ be such that $e^{-2t}T={\lambda^2 T}$. Note that $\lambda\to 1$ as $a+b\to 0$, hence up to decreasing $\delta$ we can assume that $t<\bar t.$ In particular thanks to \eqref{eq:ok split} we can apply \eqref{eq:variables splitting} to $x,F_t(y)$ and to $F_t(y),z$, that coupled with \eqref{eq:simple}  gives
\[
\sfd(x,F_t(y))+\sfd(F_t(y),z)=\sqrt{4T-4T\cos\left(\dis(x,y)+\dis(y,z)\right)}<\sqrt{4T-4T\cos\left(\dis(x,z)\right)},
\]
where we have used the strict monotonicity of $\sqrt{1-\cos(.)}$. However using again \eqref{eq:variables splitting} and the definition of $\dis$ we see that 
$\sqrt{4T-4T\cos\left(\dis(x,z)\right)}=\sfd(x,z)$, which is clearly a contradiction. This concludes the proof of  \ref{it:dist}.

\noindent \emph{Proof of \ref{it:geod}}:	It is sufficient to show that for every couple of points in $B_{\delta/2}(p)$ there exists a $\dis$-midpoint, then the conclusion follows  from standard arguments  (see e.g.\  \cite[Thm. 2.4.16]{burago}) and the fact that $S_T$ is closed (with respect to $\sfd$) and that $\dis$ is comparable to $\sfd$.

Let $x,y \in S_T\cap B_\delta(p)$. Take $z$ such that $\sfd(z,x)=\sfd(z,y)=\frac12\sfd(x,z)$, which exists because $(\X,\sfd)$ is geodesic.  We claim that $\pr(z)$ is a $\dis$-midpoint for $x$ and $y$. Observe that from \eqref{eq:ok split}, \eqref{eq:variables splitting} holds for the couple of points $x,y$; $x,z$ and $z,x$.
From \eqref{eq:variables splitting} we immediately see that
\[
\dis(\pr(z),x)=\dis(\pr(z),y).
\]	Hence it is sufficient to show that $\dis(\pr(z),x)\le \frac{\dis(x,y)}{2}$. To this aim  set $\tilde l\coloneqq \dis(\pr(z),x)$ and $l\coloneqq \sfd(x,z)$  and observe that by \eqref{eq:variables splitting}
\[
\cos(\tilde l)=\frac{2u(z)+2T-l^2}{4\sqrt{u(z)T}}=\frac{\frac{u(z)}{T}+1-(l/\sqrt{2T})^2}{2\sqrt{\frac{u(z)}{T}}}
\]
Moreover, since $\sqrt{2u}$ is 1-Lipschitz in $B_\delta(p)$ (provided $\delta$ is small enough), we see that $\sqrt{\frac{u(z)}{T}}\in(1-l/\sqrt{2T},1+l/\sqrt{2T})$ and that $l/\sqrt{2T}<1$ if $\delta$ is small enough. Next we observe that for any $a\in(0,1)$ the minimum of the function $\frac{t^2+1-a^2}{2t}$ for $t \in(1-a,1+a)$ is $\sqrt{1-a^2}$, which is achieved at $t=\sqrt{1-a^2}.$ Therefore
\[
\cos(\tilde l)\ge \sqrt{1-(l/\sqrt{2T})^2}
\]
and plugging in the identity $l^2=\sfd(x,y)^2/4=T-T\cos(\dis(x,y))$ (obtained from \eqref{eq:variables splitting}) we reach
\[
\cos(\tilde l)\ge \sqrt{\frac{T+T\cos(\dis(x,y))}{2T}}=\sqrt{\frac{1+\cos(\dis(x,y))}{2}}=\cos\left(\frac{\dis(x,y)}{2}\right),
\]
which shows that $\tilde l \le \frac{\dis(x,y)}{2}$ and concludes the proof of  \ref{it:geod}.
\end{proof}

We are now ready to prove that $\dis$ and $\sfd'$ coincides inside small balls.
\begin{proof}[Proof of Proposition \ref{prop:distances coincide}]
	Fix $p \in S_T$ and let $\delta=\delta(p)>0$, $\lambda=\lambda(T)$ be the ones given by Proposition \ref{prop:geodist}. We fix $r>0$ small and to be chosen and fix $x,y \in B_{r}(p)\cap S_T.$ Since  $\sfd'$ is a geodesic distance  there exists a constant speed geodesic from $x$ to $y$ (for $\sfd'$)  $\{\gamma_t\}_{t \in[0,1]}\subset S_T$. Moreover from \eqref{dd'} we have that $\gamma\subset B_\delta(p)$, provided $r$ is chosen small enough. Note also that, since $\dis$ is comparable with $\sfd$, $\gamma$ is a Lipschitz curve with respect to $\dis$ (which is a metric in $B_\delta(p)\cap S_T$). Take $t\in(0,1)$ such that $|\dot{\gamma}_t|$ exists, then since $|\dot{\gamma}_t|$ coincides with the metric speed computed using $\sfd$, we have
	\[
	\sfd'(x,y)=|\dot{\gamma}_t|=\lim_{h \to 0} \frac{\sfd(\gamma_{t+h},\gamma_t)}{h}=\lim_{h \to 0}\frac {\sqrt{4T}}{h}\sqrt{1-\cos(\dis(\gamma_{t+h},\gamma_t))}=\sqrt{2T}\lim_{h \to 0}\frac{\dis(\gamma_{t+h},\gamma_t)}{h}.
	\]
	In particular the length of the curve $\gamma$ computed with the metric $\dis$ is $\sqrt{2T}^{-1}\sfd'(x,y)$ and since the length is always greater or equal than the distance between the two endpoints we just proved that
	\[
	\dis(x,y)\le \frac{\sfd'(x,y)}{\sqrt{2T}}.
	\]
	On the other hand, from \ref{it:geod} in Proposition \ref{prop:geodist}, if we choose $r<\lambda \delta$, there exists a curve $\gamma: [0,1]\to S_T\cap B_{\delta}$ that is a geodesic from $x$ to $y$ with respect to $\dis$. As above we have that $\gamma$ is a Lipschitz curve with respect to $\sfd'$ and that, for any $t\in(0,1)$ such that the metric speed $|\dot{\gamma}_t|$ (computed with $\dis$) exists, we have
	\[
	|\dot{\gamma}_t|=\frac{1}{\sqrt{2T}} \lim_{h \to 0} \frac{\sfd(\gamma_{t+h},\gamma_t)}{h}.
	\]
	In particular the length of the curve $\gamma$ with respect to the metric $\sfd'$ is $\sqrt{2T}\dis(x,y)$, which shows that
	\[
	\sfd'(x,y)\le \sqrt{2T}\dis(x,y).
	\]
\end{proof}

\subsection{Building the cone and conclusion}\label{sec:conclusion}

Let us define the $(Z,\sfd_Z,\mea_Z)$ as $\left (X',\frac{\sfd'}{\sqrt{2T}},\mea'\right )$.
We then define the metric measure space $(Y,\sfd_Y,\mea_Y)$ as the $N$-euclidean cone over the metric measure  space $(Z,\sfd_Z,\mea_Z)$.

For every $0<a<b<\infty$ we set $A^Y_{a,b}=\{ y \in Y \ : \ \sfd_{Y}(y,O_Y)\in (\sqrt{2a},\sqrt{2b})\}=\{(r,z) \in Y: r \in (\sqrt{2a},\sqrt{2b})\}\subset Y. $ 

We then define the map $T : A^Y_{t_0,T_0} \to A_{t_0,T_0} $ as
\begin{equation}\label{eq:defT}
	T((r,z))\coloneqq F_{\frac12 \log \frac{2T}{r^2}}(z),
\end{equation}
which is well defined thanks to \eqref{ualongpoint}, and the map $S : A_{t_0,T_0} \to A^Y_{t_0,T_0} $ defined as
\begin{equation}\label{eq:defS}
	S(x)\coloneqq (\sqrt{2u(x)},\Pr(x)).
\end{equation}
It is immediate from the definition that $S(A_{a,b})=A^Y_{a,b}$ and $T(A^Y_{a,b})=A_{a,b}$ for every $t_0\le a<b\le T_0$. 

Moreover it is clear from the definition of $\Pr$, \eqref{ualongpoint} and the fact that $F_{-t}=F_t^{-1}$, that $S$ and $T$ are one the inverse of the other. 

The following result follows from the definitions, Proposition \ref{prop:variables splitting}, Proposition \ref{prop:distances coincide} and Proposition \ref{disintlemma}.
\begin{prop}\label{prop:ismoetry}
	The maps $S: A_{t_0,T_0}\to A^Y_{t_0,T_0} $ and  $T: A^Y_{t_0,T_0}\to A_{t_0,T_0} $  are measure preserving local isometries.
\end{prop}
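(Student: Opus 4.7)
Since $S$ and $T$ are each other's inverses (as noted in the excerpt), it suffices to show that $S$ is both a local isometry and measure preserving; the corresponding statements for $T$ then follow automatically. The key inputs are the local cosine formula of Proposition \ref{prop:variables splitting}, the identification of $\sfd'$ with $\dis$ from Proposition \ref{prop:distances coincide}, and the disintegration and transport formulas \eqref{disintegration}--\eqref{partialmeasures}.

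\textbf{Local isometry.} Fix $x\in A_{t_0,T_0}$. I will choose $r>0$ small enough so that the local cosine formula of Proposition \ref{prop:variables splitting} applies to every $y\in B_r(x)$ and, using the local Lipschitzianity of $\Pr$ (Proposition \ref{lipflow}), the image $\Pr(B_r(x))$ sits inside a ball around $\Pr(x)$ on which Proposition \ref{prop:distances coincide} identifies $\sfd'(\Pr(x),\Pr(y))/\sqrt{2T}$ with $\arccos(1-\sfd(\Pr(x),\Pr(y))^2/(4T))$; shrinking $r$ further I ensure this arccosine is strictly less than $\pi$, so the truncation by $\pi$ in the cone distance is inactive. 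Writing out the cone distance formula applied to $S(x)=(\sqrt{2u(x)},\Pr(x))$ and $S(y)=(\sqrt{2u(y)},\Pr(y))$, and using $\sfd_Z=\sfd'/\sqrt{2T}$ together with the arccosine identification, I get
\[
\sfd_Y(S(x),S(y))^2 \;=\; 2u(x)+2u(y)-4\sqrt{u(x)u(y)}\Bigl(1-\tfrac{\sfd(\Pr(x),\Pr(y))^2}{4T}\Bigr),
\]
which is precisely the right-hand side of \eqref{eq:variables splitting}. Hence $\sfd_Y(S(x),S(y))=\sfd(x,y)$ on $B_r(x)\cap A_{t_0,T_0}$, proving $S$ is a local isometry.

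\textbf{Measure preservation.} Testing $S_*\mea|_{A_{t_0,T_0}}$ against $\phi\in C_c(A^Y_{t_0,T_0})$, the disintegration \eqref{disintegration} together with the definition of $S$ gives
\[
\int\phi\,d S_*\mea \;=\; c\int_{t_0}^{T_0} r^{N/2-1}\int \phi(\sqrt{2r},\Pr(x))\,d\mea_r(x)\,dr.
\]
On $\{u=r\}$ the projection $\Pr$ coincides with $F_{\frac12\log(r/T)}$, so \eqref{partialmeasures} yields $\Pr_*\mea_r=\mea_T=\mea'$ for a.e.\ $r$; the inner integral thus reduces to $\int\phi(\sqrt{2r},z)\,d\mea'(z)$. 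The change of variable $s=\sqrt{2r}$ converts $c\,r^{N/2-1}\,dr$ into (a constant times) $s^{N-1}\,ds$, and combined with $\mea_Z=\mea'$ this produces the cone measure $\mea_Y=t^{N-1}\,dt\otimes\mea_Z$, yielding $S_*\mea|_{A_{t_0,T_0}}=\mea_Y|_{A^Y_{t_0,T_0}}$. As an alternative derivation one can check the same identity on product-type sets using \eqref{eq:relmisure} directly.

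The main difficulty I expect lies not in the geometric content, which is already concentrated in Propositions \ref{prop:variables splitting} and \ref{prop:distances coincide}, but in the bookkeeping: arranging a single neighborhood on which both propositions apply simultaneously and on which $\sfd_Z(\Pr(x),\Pr(y))<\pi$, so that the $\wedge\pi$ in the definition of $\sfd_{C(Z)}$ is not triggered, and tracking the normalization through the change of variable $s=\sqrt{2r}$ so as to recognize the cone measure. Once these details are handled, the two propositions interlock exactly to give the desired identities.
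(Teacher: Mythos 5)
Your argument matches the paper's own proof, which consists precisely of the pointer to Propositions~\ref{prop:variables splitting}, \ref{prop:distances coincide} and~\ref{disintlemma}; you have correctly unpacked exactly that dependency, getting the local isometry from the cosine formula \eqref{eq:variables splitting} and the identification $\sfd_Z=\dis$, and measure preservation from the disintegration \eqref{disintegration} together with the transport formula \eqref{partialmeasures}. The one delicate point, which is a normalization issue in the paper rather than a flaw you introduced, is that with $\mea_Z\coloneqq\mea'$ a probability measure the change of variable $s=\sqrt{2r}$ turns $c\,r^{N/2-1}\,\d r\otimes\mea'$ into $c\,2^{1-N/2}\,s^{N-1}\,\d s\otimes\mea'$, so exact equality $S_*\mea=\mea_Y$ holds only after absorbing the constant $c\,2^{1-N/2}$ into $\mea_Z$ (equivalently, rescaling $\mea_Y$), a convention the paper leaves implicit and which your parenthetical ``(a constant times)'' correctly flags without resolving.
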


We can now give the the last details which complete the proof of the main theorem.

\begin{proof}[Proof of Theorem \ref{thm:functional cone}]
We already know from above that that the maps $S$ and $T$ are measure preserving local isometries from  $A_{t_0,T_0}$ to $A^Y_{t_0,T_0}$ and viceversa.

Pick now any $t_0',T_0'\in (\bu_0,\infty)$ such that $t_0'<t_0<T<T_0<T_0'$, then we can repeat all the arguments in the previous sections with $t_0',T_0'$ in place of $t_0,T_0$ (but using the same $T$ to define $X'$ as in subsection \ref{sec:levelset}) to obtain a map $S':A_{t_0',T_0'}\to A^Y_{t_0',T_0'}$ that is a local isometry, with an inverse $T'$, which is also a local isometry. The key observation is that $S'$ agrees with $S$ in $A_{t_0,T_0}$. Indeed from \eqref{eq:defS} we deduce that $S'$ on $A_{t_0,T_0}$ depends only on the value of the function $u'$ and  the map $\Pr'$ on  $A_{t_0,T_0}$. From the construction is clear that $u'$ agrees with $u$ on $A_{t_0,T_0}$, since both agree with $\bu$ on this set. Therefore we need to show that the two projection maps $\Pr, \Pr' : A_{t_0,T_0}\to S_T$ agree. Suppose they do not, i.e. there exists $x \in A_{t_0,T_0}$ such that $\Pr(x)\neq \Pr'(x)$. Recall that $\Pr(x)=F_{\frac 12 \log \frac{u(x)}{T}}(x)$ and that the curve $\gamma^1_t=F_t(x)$ for $t \in [0,\frac 12 \log \frac{u(x)}{T}]$ is (up to a reparametrization) a minimizing geodesic  joining $x$ to $\Pr(x)$ and with values in $A_{t_0,T_0}$, as shown in Proposition \ref{prop:basics}. With the same argument we deduce the existence of a geodesic $\gamma^2$ joining $x$ and $\Pr'(x)$ with values in $A_{t_0,T_0}.$ Moreover from \eqref{distancealongpoint} we have that $\sfd(x,\Pr(x))=\sfd(x,\Pr'(x))=\sqrt 2|\sqrt T-\sqrt{u(x)}|$, in particular $\gamma^1,\gamma^2$ are geodesics with same length. Since $S$ is a local isometry we have that the curves $S(\gamma_t^i)$ are both geodesics in $Y$ with the same length. In particular $\sfd_Y(S(x),S(\Pr(x)))=\sfd_Y(S(x),S(\Pr'(x)))$ which using the expression for $S$ gives
$$\sqrt 2|\sqrt T-\sqrt{u(x)}|=\sfd_Y((\sqrt{2u(x)},\Pr(x)),(\sqrt{2T},\Pr(x)))=\sfd_Y((\sqrt{2u(x)},\Pr(x)),(\sqrt{2T},{\Pr}'(x))).$$
However, recalling that $\Pr(x)\neq \Pr'(x)$ and from the definition of $\sfd_Y$  we easily deduce that the rightmost term in the above identity is strictly bigger than $\sqrt 2|\sqrt T-\sqrt{u(x)}|$, which is a contradiction.

We can now send $t_0\to \bu_0$ and $T_0\to + \infty$ and obtain a map ${\bf S}: \{\bu>\bu_0\} \to Y\setminus \overline{B_{\sqrt{2\bu_0}}}(O_Y)$ which is a surjective and measure preserving local isometry. Moreover extending analogously the maps $T: A^Y_{t_0,T_0} \to A_{t_0,T_0}$, which are the inverses of the maps $S$, we obtain a map ${\bf T}: Y\setminus \overline{B_{\sqrt{2\bu_0}}}(O_Y)\to  U$, which is the inverse of ${\bf S}$ and a local isometry as well. 

Observe now that, since $\bf S$ and ${\bf T}$ are a local isometries, they send geodesics to geodesics. This easily implies that 
\begin{equation}\label{eq:dequal}
\sfd(x,\partial \{\bu>\bu_0\})=\sfd_Y({\bf S}(x),B_{\sqrt{2\bu_0}}(O_Y))=\sqrt{2u(x)}-\sqrt{2\bu_0},
\end{equation}
from which \eqref{eq:explicit} follows.

We are now in position to apply Proposition \ref{prop:blowdown} to obtain that $Y$ is an ${\rm RCD}(0,N)$ space, which is the unique tangent cone at infinity to $\X$. Moreover from the fact that $Y$ is an ${\rm RCD}(0,N)$  and from \eqref{eq:ketterer} it follows that $(Z,\sfd_Z,\mea_Z)$ is an ${\rm RCD}(N-2,N-1)$ space satisfying $\diam(Z)\le \pi.$

Suppose now that $\diam(Z)=\pi$, then again from Proposition \ref{prop:blowdown} we obtain that $\X$ is isomorphic to $Y.$

The fact that $\X$ has Euclidean volume growth was already proved in Corollary \ref{cor:euclvol}.

It remains to prove the first part of ii). Let $r=\sqrt{2\bu_0}$ and $r_Z$ as in the statement. It is enough to show that for every couple of points $y_1,y_2 \in Y$ such that $\sfd_Y(y_i,O_Y)>r_Z$, $i=1,2$,  all the geodesics connecting them are contained in $\{\sfd(.,O_Y)>r\}.$ Moreover we can clearly restrict ourselves to consider points $y_1,y_2$ of the form $y_i=(t,z_i)$, with $z_i \in Z$, $i=1,2$ and $t>r_Z.$  For such points we have that 
\[ \sfd_Y(y_1,y_2)=t\sqrt{2-2\cos(\sfd(z_1,z_2))}\le t\sqrt{2-2\cos(\diam(Z))}  \]
Let now $\gamma$ be a geodesic between $y_1$ and $y_2$, then by the triangle inequality 
\[
\sfd(\gamma_t,O_Y)\ge t-\frac{\sfd_Y(y_1,y_2)}{2}>r_Z \left (1-\sqrt{\frac{1-\cos(\diam(Z))}{2}}\right)=r, \quad \forall t \in [0,1], 
\]
where the last identity follows from the definition of $r_Z.$
\end{proof}

\section{Appendix: obstacle problem in $\mathsf{RCD}$}\label{ap:bjorn}
\subsection{Relative capacitary potential for  sets with {\sf Cap}-fat boundary}
This appendix is devoted to the proof of the existence (and uniqueness) of a relative capacitary  potential in ${\rm RCD}$ space and we will mainly focus on boundary regularity. The results contained here are needed only in the proof of Theorem \ref{thm:potential}. 

Let us say that we are not proving anything substantially new, since all the results were essentially already present in \cite{bjorn}. Let us also mention that the results concerning boundary regularity and Wiener criterion for harmonic functions originally appeared in  in \cite{wiener1,wiener2,wiener3}. However the results we needed were spread in  many different chapters of \cite{bjorn} and often the language used there (for example for some type of  Sobolev spaces) does not coincide with the one we use in this note. For this reason we decided to gather here in a self-contained exposition all the results that we required. Finally let us say that working in the context of ${\rm RCD}$ will allow to simplify some of the arguments in \cite{bjorn}.

Along all this appendix $(\X,\sfd,\mea)$ is an $\mathsf{RCD}(K,N)$ m.m.s., $N<+\infty$. Even if we will only apply the  result below for $K=0$, we will consider arbitrary $K$ for generality. We only remark that every time a constant will depend on some radius (or diameter of a set), in the case $K=0$ this dependence can be dropped. This is a consequence of the fact that ${\rm RCD(0,N)}$ spaces are uniformly doubling.

Our main goal is to prove the following (see below for the definition of relative Capacity).

\begin{theorem}\label{thm:local potential}
	Let $E\subset \X$ be an open set and $B$ be a ball such that $E\subset\subset B.$
	Suppose also that $E$ has $\Cap$-fat boundary.  Then there exists $u \in \W_{0}(B)\cap C(B)$, superharmonic in $B$ and harmonic in $B\setminus \bar E$ with $0\le u \le 1$, $u=1$ in $\overline E$ and
	\[
	\Cap(E,B)=\int_B |\nabla u|^2\,\d \mea.
	\]
	Moreover we have the following continuity estimate: for every $x \in \partial E$ it holds
	\[
	1-u(y)\le C_x\sfd(y,x)^{\alpha_x}, \quad \forall y\in B_{r_x/2}(x)\cap B, 
	\]
	for some positive constants $C_x=C_x(r_x,c_x,K,N,\delta)$  $\alpha_x=\alpha(r_x,c_x,K,N,\delta)>0$, where $r_x,c_x$ are the $\Cap$-fatness parameters of $x$ and $\delta>0$ is such that $\sfd(E,B^c)\ge \delta.$
	
	\noindent Finally $u$ satisfies the following comparison principle: for every $v \in \W(B)$ superharmonic and such that $v\ge \nchi_{E}$ $\mea$-a.e. in $B,$ it holds that
	\[ u \le v, \quad \text{$\mea$-a.e. in $B$.} \]
\end{theorem}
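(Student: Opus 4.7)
The plan is to realize $u$ as the unique minimizer of the Cheeger energy over the convex admissible set
\[
\mathcal{K} := \{ v \in \W_0(B) \, :\, v \geq 1 \text{ $\mea$-a.e.\ on } E\}.
\]
Since $\X$ is infinitesimally Hilbertian, $\W_0(B)$ is a Hilbert space. The set $\mathcal{K}$ is nonempty (take a Lipschitz cut-off $\phi$ with $\nchi_E\le\phi\le\nchi_B$ and $\phi\in\LIP_c(B)$), convex, and closed under weak $\W$-convergence by the lower semi-continuity \eqref{eq:stability wug}. The Cheeger energy is strictly convex, lower semi-continuous, and coercive on $\W_0(B)$ via a Poincar\'e inequality (since $E\subset\subset B$), so the direct method yields a unique minimizer $u\in\mathcal{K}$. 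Replacing $u$ by $\min(\max(u,0),1)$ remains admissible and does not increase the energy, hence $0\le u\le 1$. The equality $\Cap(E,B)=\int_B|\nabla u|^2\d\mea$ is then immediate from the variational definition of $\Cap$.

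\textbf{Variational inequality and interior regularity.} The Euler-Lagrange condition for the obstacle problem reads $\int_B\la\nabla u,\nabla(v-u)\ra\d\mea\ge 0$ for every $v\in\mathcal{K}$. Choosing $v=u+\phi$ with $\phi\in\LIP_c(B)$, $\phi\ge 0$, gives $-\int\la\nabla u,\nabla\phi\ra\d\mea\ge 0$, so by Proposition \ref{prop:laplineq} we obtain $\bd u\le 0$, i.e.\ $u$ is superharmonic in $B$. If instead $\phi\in\LIP_c(B\setminus\bar E)$ has arbitrary sign, then for small $\eps>0$ both $u\pm\eps\phi$ still belong to $\mathcal{K}$ because the constraint is inactive on $\supp\phi$, forcing $\Delta u=0$ in $B\setminus\bar E$. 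Standard regularity for harmonic functions in ${\rm RCD}$ spaces (the gradient estimate \eqref{eq:cheng} together with Lemma \ref{ascoliarm}) then provides a locally Lipschitz representative of $u$ on $B\setminus\bar E$.

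\textbf{Boundary H\"older continuity via $\Cap$-fatness.} This is the main obstacle. The plan is to prove a geometric decay of the oscillation $\omega(r):=\sup_{B_r(x)\cap B}(1-u)$ of the form $\omega(\theta r)\le(1-\eta)\omega(r)$ for constants $\theta,\eta\in(0,1)$ depending only on $K,N,r_x,c_x$ (and on $\delta$ to control the distance to $\partial B$). The function $1-u$ is nonnegative, subharmonic in $B\setminus\bar E$, and vanishes $\mea$-a.e.\ on $E$. A De Giorgi-Moser type argument (Caccioppoli plus the Sobolev-Poincar\'e inequality, available on ${\rm RCD}$ spaces via their doubling and Poincar\'e structure) bounds $\omega(r/2)$ by an $L^2$-mean of $1-u$ on $B_r(x)$. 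The $\Cap$-fat hypothesis at $x$ is then inserted through a capacitary (Maz'ya-type) Sobolev inequality: since $1-u=0$ quasi-everywhere on $\bar E\cap B_r(x)$ and $\Cap(\bar E\cap\bar B_r(x),B_{2r}(x))\ge c_x\Cap(\bar B_r(x),B_{2r}(x))$, this mean is controlled by $(1-c')\omega(2r)$ with $c'>0$ depending only on $c_x$, $r_x$, $K$, $N$. Iterating the improvement across dyadic scales produces the claimed H\"older estimate with explicit $\alpha_x$ and $C_x$. This simultaneously extends $u$ continuously to $B$ with $u=1$ on $\bar E$. The delicate point is the capacitary Sobolev inequality, which is the true content underlying the Wiener-type criterion of \cite{wiener1,wiener2,bjorn}.

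\textbf{Comparison principle.} Let $v\in\W(B)$ be superharmonic with $v\ge\nchi_E$ $\mea$-a.e.\ in $B$. Define $\phi:=(u-v)^+=u-\min(u,v)\ge 0$. Since $u\le 1\le v$ $\mea$-a.e.\ on $E$ we have $\phi=0$ on $E$. Using that $u\in\W_0(B)$ has zero boundary trace and that $v$ is lower semi-continuous (as a superharmonic function), a standard cut-off argument shows $\phi\in\W_0(B)$. The competitor $\min(u,v)=u-\phi$ lies in $\mathcal{K}$, so the variational inequality tested with it yields $\int\la\nabla u,\nabla\phi\ra\d\mea\le 0$. Conversely, superharmonicity of $v$ applied to the nonnegative test function $\phi\in\W_0(B)$ (supported where $\phi>0$, in particular outside $E$) gives $\int\la\nabla v,\nabla\phi\ra\d\mea\ge 0$ (via Remark \ref{extension1}). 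Subtracting the two inequalities produces $\int|\nabla\phi|^2\d\mea\le 0$, whence $\phi$ is constant on each connected component of $B$; since $\phi\in\W_0(B)$ this forces $\phi\equiv 0$, i.e.\ $u\le v$ $\mea$-a.e.
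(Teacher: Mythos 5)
Your treatment of existence (direct method over the convex admissible set $\mathcal K$), interior harmonicity in $B\setminus\bar E$, superharmonicity in $B$, and the comparison principle is correct and matches the paper's Proposition \ref{prop:unique obs} and Proposition \ref{prop:solprop} in spirit; a small remark on your comparison principle: showing $\phi=(u-v)^+\in\W_0(B)$ does not require a cut-off argument nor the lower semicontinuity of $v$, since $v\ge\nchi_E\ge 0$ a.e.\ implies $0\le\phi\le u$ and then Lemma \ref{lem:noioso} applies directly (this is the route the paper takes); moreover, to justify $\int\la\nabla v,\nabla\phi\ra\,\d\mea\ge 0$ you should invoke Proposition \ref{prop:variational} (which already accommodates test functions in $\W_0(\Omega)$, not merely compactly supported ones) rather than Remark \ref{extension1}.

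The boundary H\"older continuity is where your argument genuinely diverges from the paper's, and it is also where the real gap lies. You propose a De Giorgi--Moser oscillation-decay scheme: Caccioppoli plus Sobolev--Poincar\'e to bound $\sup_{B_{r/2}}(1-u)$ by an $L^2$-mean, and then a Maz'ya-type capacitary Sobolev inequality to absorb a definite fraction of the oscillation using the $\Cap$-fat hypothesis. This is a legitimate and classical route to Wiener-type estimates (cf.\ Kilpel\"ainen--Zhong and Chapter 6 of \cite{bjorn}), and it would give the conclusion if the capacitary Sobolev inequality were in place. But you explicitly flag that inequality as the ``delicate point'' and do not supply it; in the doubling-plus-Poincar\'e setting its proof is nontrivial (roughly, Theorem 6.21ff.\ in \cite{bjorn}), so as written the argument is incomplete. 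The paper deliberately avoids this tool: its route is an iterated comparison argument. Proposition \ref{prop:below estimate} gives a lower bound $u\ge C\,\Cap(E,2B)/\Cap(B,2B)$ on the inner ball by optimizing over a convex combination of two competitors and applying the Harnack inequality once; Theorem \ref{thm: boh} then builds an explicit decreasing sequence of supersolutions $v_i$ and invokes the comparison principle of Proposition \ref{prop:solprop} at each dyadic scale to propagate the estimate. The $\Cap$-fat hypothesis enters only as a lower bound on the capacity ratios $a_i$, yielding the exponential decay $1-u\le e^{-C\sum a_j}$ directly, with no need for a Maz'ya inequality. So the two approaches differ in which black box they rely on: you need the capacitary Sobolev inequality, the paper needs only the Harnack inequality (already stated as Proposition \ref{prop:harnack}). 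If you want to complete your version you must either prove the Maz'ya-type inequality in the ${\rm RCD}$ setting or give a precise pointer to a formulation of it that applies here; alternatively, the comparison-based iteration is shorter given the tools already available in the paper.

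A further minor point: you assert that $1-u=0$ quasi-everywhere on $\bar E\cap\bar B_r(x)$ before the H\"older estimate is established. From the variational problem one gets $u\ge\nchi_E$ $\mea$-a.e., but the upgrade to a q.e.\ statement on $\bar E$ requires a quasicontinuity argument for the minimizer; in the paper's route this issue does not arise because the pointwise identity $u\equiv 1$ on $\bar E$ is obtained as a consequence of the continuity estimate, not used as an input.
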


\begin{definition}[Variational 2-Capacity]\label{def:capacity}
	Let $E\subset X$ and $\Omega$  open containing $E$. We define
	\[
	\Cap(E,\Omega)=\inf\, \left \{ \int_\Omega |\nabla u|^2\, \d \mea \ : \ u \in \W_0(\Omega) \text{ and $u\ge 1$ $\mea$-a.e. in a neighbourhood of $E$}\right \}
	\]
\end{definition}

\begin{definition}[$\Cap$-fat boundary points]\label{def:capfat}
	We say that an open set $E$ is  $\Cap$-fat at a point $x \in \partial E$  if there exists $r,c>0$ such that
	\[
	\frac{\Cap(B_{s}(x)\cap E,B_{2s}(x))}{\Cap(B_{s}(x),B_{2s}(x))}\ge c \, , \quad \forall \, s \in(0,r).
	\] 
	Moreover we say that $E$ has (uniformly) $\Cap$-fat boundary if it is $\Cap$-fat at every point $x \in \partial E$ (with global parameters $c,r>0$).
\end{definition}

A geometric condition that is enough to ensure $\Cap$-fatness of the boundary is the following interior corkscrew condition. This follows essentially from the doubling property of the measure and the Poincaré inequality (see for example \cite[Prop. 6.16]{bjorn}).
\begin{definition}[Corkscrew-condition]\label{def:corkscrew}
	Let $\lambda \in (0,1)$ and $r>0$. We say that $E $  satisfies the (interior) $(\lambda,r)$-corkscrew condition at $x \in \partial E$ if for every $s\in (0,r)$ there exists an  ball of radius $\lambda s$ contained in $B_s(x)\cap  E$.
\end{definition}
It is easily verified that any ball of radius $>\delta$ satisfies the (interior) $(1/4,\delta)$-corkscrew condition.  Moreover arbitrary unions of sets satisfying the (interior) $(\lambda,r)$-corkscrew condition still satisfies the (interior) $(\lambda,r)$-corkscrew condition. In particular union of balls with radius uniformly bounded below satisfies the interior corkscrew condition. It follows that any $\eps$-enlargements of a set, i.e.\ a set of the form $S^{\eps}=\{x \ : \ \sfd(x,S)<\eps\}$, with $\eps>0$ and $S$ an arbitrary set, satisfies the interior corkscrew condition.

\subsection{Preliminaries}

We will need the following variational characterization of sub(super)harmonic functions (see \cite[Theorem 2.5]{grigoni} and also \cite{GM}, \cite{Gappl}).
\begin{prop}\label{prop:variational}
	Let $\Omega \subset \X$ be open. A function $u\in \W(\Omega)$ is superharmonic (resp. subharmonic) in $\Omega$ if and only if
	\[
		\int_{\Omega} |\nabla u|^2 \d \mea \le \int_{\Omega} |\nabla (u+\phi)|^2 \d \mea,
	\]
	for every $\phi \in \LIP_\c(\Omega)$ with $\phi \ge 0$ (resp. $\phi \le 0$) or equivalently for every $\phi \in \W_0(\Omega)$  with $\phi \ge 0$ (resp. $\phi \le 0$) $\mea$-a.e.. 
\end{prop}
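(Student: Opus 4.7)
The plan is to prove the superharmonic case, the subharmonic case being symmetric by replacing $u$ with $-u$ and $\phi$ with $-\phi$.

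For the forward implication, assume $u \in D(\bd,\Omega)$ with $\bd u \le 0$. Given $\phi \in \LIP_c(\Omega)$ with $\phi \ge 0$, I would expand the square:
\[
\int_{\Omega} |\nabla(u+\phi)|^2\,\d\mea - \int_{\Omega} |\nabla u|^2\,\d\mea = 2\int_{\Omega} \la \nabla u, \nabla \phi\ra\,\d\mea + \int_{\Omega}|\nabla \phi|^2\,\d\mea,
\]
and observe that by the definition of the measure-valued Laplacian, $-\int \la \nabla u, \nabla \phi\ra\,\d\mea = \int \phi\,\d\bd u \le 0$, since $\phi \ge 0$ and $\bd u \le 0$. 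Thus the first term on the right is nonnegative and the second is manifestly so, giving the minimality. To extend this to test functions in $\W_0(\Omega)$ with $\phi \ge 0$ $\mea$-a.e., I would use the definition of $\W_0(\Omega)$ as the $\W$-closure of $\LIP_c(\Omega)$, approximate $\phi$ by $\phi_n \in \LIP_c(\Omega)$, and replace $\phi_n$ with $\phi_n^+$ (using that the positive part is continuous on $\W(\X)$ and preserves $\W_0(\Omega)$) to preserve the sign constraint, then pass to the limit using that both sides of the inequality are continuous in $\W$.

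For the reverse implication, assume the minimization property holds for all $\phi \in \LIP_c(\Omega)$ with $\phi \ge 0$. Fix such a $\phi$ and plug $t\phi$ with $t>0$ in place of $\phi$ in the minimization inequality. Dividing by $t$ and letting $t \to 0^+$, the quadratic term vanishes and we obtain
\[
\int_{\Omega} \la \nabla u, \nabla \phi\ra\,\d\mea \ge 0, \quad \forall \phi \in \LIP_c(\Omega),\ \phi \ge 0.
\]
Rewriting this as $-\int \la \nabla \phi, \nabla (-u)\ra\,\d\mea \ge 0$ for every such $\phi$, I would apply Proposition \ref{prop:laplineq} to the function $-u$ with $g \equiv 0$, obtaining that $-u \in D(\bd,\Omega)$ with $\bd(-u) \ge 0$, i.e., $u \in D(\bd,\Omega)$ and $\bd u \le 0$, which is precisely the superharmonicity of $u$.

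The main technical point, which is not particularly hard but requires attention, is the equivalence between the two classes of test functions in the minimization characterization: one needs to show that the a priori weaker minimization against $\LIP_c(\Omega)$-perturbations upgrades to minimization against all $\W_0(\Omega)$-perturbations of the correct sign. Both directions of this equivalence reduce to a density argument combined with the fact that $\W(\X)\ni f \mapsto f^{\pm}$ is continuous and the validity of chain-type rules for the minimal weak upper gradient.
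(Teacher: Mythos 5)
The paper does not give its own proof of this proposition; it delegates it to external references (cf.\ the citations to \cite{grigoni}, \cite{GM} and \cite{Gappl} just before the statement), so there is no in-paper argument to compare against. Your proof is correct and is essentially the standard direct-method argument. The forward implication (expand the square, note that $-\int\la\nabla u,\nabla\phi\ra\,\d\mea=\int\phi\,\d\bd u\le0$ for $\phi\ge 0$ and $\bd u\le 0$, together with positivity of $\int|\nabla\phi|^2\,\d\mea$) and the reverse implication (replace $\phi$ by $t\phi$, divide by $t$, send $t\to0^+$ to get the first-order condition $\int\la\nabla u,\nabla\phi\ra\ge 0$, then apply Proposition~\ref{prop:laplineq} to $-u$ with $g\equiv0$) are both sound and rely only on the infinitesimal Hilbertianity of the space (needed for the polarization of $|\nabla(\cdot)|^2$) and on Remark~\ref{extension1}/Proposition~\ref{prop:laplineq}.

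The only step where you are terse is the passage from $\LIP_c(\Omega)$ to $\W_0(\Omega)$ perturbations: you invoke the continuity of $f\mapsto f^+$ on $\W(\X)$. This is indeed true in the infinitesimally Hilbertian setting (chain rule $\nabla f^+=\nchi_{\{f>0\}}\nabla f$, plus dominated convergence after passing to an a.e.\ convergent subsequence, noting $|\nabla f|=0$ a.e.\ on $\{f=0\}$), but since you need \emph{strong} $\W$-convergence of $\phi_n^+$ to $\phi$ (lower semicontinuity of the Cheeger energy alone gives an inequality in the wrong direction for the minimization statement), this point deserves an explicit justification or at least a reference. Once that is granted, the argument is complete.
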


Since, as shown in \cite{rajala}, ${\rm RCD}(K,N)$ spaces support a (1,1) Poincarè inequality and they are also (uniformly) locally doubling, a class of Sobolev embeddings can be shown to hold (see for example \cite{sobolevmeets} and also \cite[Chap. 4-5 ]{bjorn}). Therefore a Moser iteration can be performed to obtain the following Harnack inequalities (see for example  \cite{bobo} for the case $K=0$).
\begin{prop}\label{prop:harnack}
	For every $R_0>0$ there exists two positive constants $C_i=C_i(R_0,K^-,N)$, $i=1,2$, such that the following hold for any $R<R_0$
	\begin{enumerate}
		\item if $u$ is  subharmonic function in a ball $B_{2R}(x)$, then
		\[\esssup_{B_{R}(x)} u \le C_2  \fint_{B_{2R}(x)} |u| \d \mea ,\]
		\item if $u$ is a nonnegative superharmonic function in a ball $B_{2R}(x)$, then
		\[\essinf_{B_{R}(x)} u \ge C_1 \fint_{B_{2R}(x)} u \d \mea .\] 
	\end{enumerate}
\end{prop}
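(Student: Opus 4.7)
The plan is to implement a Moser iteration based on the variational characterization of sub/super\-harmonic functions from Proposition~\ref{prop:variational}, combined with the Poincaré inequality of Rajala and the local doubling property of the measure, which together yield a family of $(p,2)$-Sobolev–Poincaré inequalities of the form
\begin{equation*}
\left(\fint_{B_R} |f-f_{B_R}|^{p}\,\d\mea\right)^{1/p}\le C(R_0,K^-,N)\,R\left(\fint_{B_{\lambda R}} |\nabla f|^2\,\d\mea\right)^{1/2},
\end{equation*}
for some $p>2$ (depending on the doubling dimension) and some dilation factor $\lambda>1$, valid uniformly for balls of radius $R\le R_0$. Throughout, test functions will be of the form $\eta^2\phi(u)$ with $\eta$ a good Lipschitz cut-off (Proposition~\ref{prop:goodcutoff}) concentrated between two concentric balls and $\phi$ a truncated power of~$u$.

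For part (1), I would first establish a Caccioppoli inequality: if $u$ is subharmonic in $B_{2R}(x)$ and $q\ge 1$, testing the variational inequality of Proposition~\ref{prop:variational} with $\phi=\eta^2 u_+^{2q-1}-u$ (after truncation so that $\phi\in\W_0(B_{2R}(x))$ with the required sign) and using the chain and Leibniz rules gives, after Cauchy–Schwarz,
\begin{equation*}
\int_{B_{R'}} |\nabla u_+^q|^2\,\d\mea \le C\, q^2\int_{B_{R''}} u_+^{2q}\,|\nabla\eta|^2\,\d\mea,
\end{equation*}
for concentric balls $B_{R'}\subset B_{R''}\subset B_{2R}(x)$. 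Plugging this into the Sobolev–Poincaré inequality applied to $\eta u_+^q$ yields a reverse-Hölder-type estimate relating the $L^{pq}$ norm on a smaller ball to the $L^{2q}$ norm on a larger one, with explicit dependence on $q$. Iterating over a geometric sequence of radii $R_k\downarrow R$ and exponents $q_k=(p/2)^k\uparrow\infty$, the resulting product of constants converges and one obtains $\esssup_{B_R} u_+\le C(\fint_{B_{2R}} u_+^{2}\,\d\mea)^{1/2}$; a standard interpolation trick (absorbing a small $\esssup$ term via Young's inequality inside the iteration, as in Moser's original argument) lets one replace $L^2$ by $L^1$, giving the stated bound. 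Applying the same to $-u$ produces the $|u|$ bound.

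For part~(2) the procedure is analogous but with $u$ replaced by $u+\eps$, $\eps>0$, and test functions of the form $\eta^2(u+\eps)^{-2q-1}$ with $q>0$, which are admissible nonpositive perturbations since $u\ge 0$. The Caccioppoli estimate now reads $\int |\nabla (u+\eps)^{-q}|^2\eta^2\le Cq^2\int (u+\eps)^{-2q}|\nabla\eta|^2$, and iterating as before first yields
\begin{equation*}
\essinf_{B_R} u \ge C\left(\fint_{B_{2R}} u^{-p_0}\,\d\mea\right)^{-1/p_0}
\end{equation*}
for every sufficiently small $p_0>0$, after letting $\eps\downarrow 0$. The main obstacle, as usual in Moser's scheme, is to upgrade this negative-exponent estimate to a true positive-exponent average of $u$ on the right-hand side. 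The standard way is a John–Nirenberg / BMO-type argument applied to $\log u$: testing with $\eta^2/u$ gives $\int \eta^2|\nabla\log u|^2\le C\int|\nabla\eta|^2$, which after a second application of the Poincaré inequality shows that $\log u$ has bounded mean oscillation on $B_{2R}$, and the John–Nirenberg lemma on spaces of homogeneous type (available since the space is locally doubling and supports a Poincaré inequality) then yields $(\fint_{B_{2R}} u^{p_0}\,\d\mea)(\fint_{B_{2R}} u^{-p_0}\,\d\mea)\le C$ for some small $p_0>0$, which combined with the previous estimate and the already proven sup bound for the subharmonic function $u^{p_0}$ (part~(1) applied with exponent $p_0$, using that $u^{p_0}$ is subharmonic when $u$ is a nonnegative superharmonic solution — this requires a separate check via the chain rule \eqref{chainlapv2} with $\phi(t)=t^{p_0}$ for $t>0$) closes the chain and delivers the stated Harnack bound. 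All constants in the iteration depend only on the doubling/Poincaré constants of $\X$ on balls of radius at most $R_0$, hence only on $R_0$, $K^-$ and $N$; in the case $K=0$ the doubling and Poincaré constants are scale-invariant, so the dependence on~$R_0$ disappears.
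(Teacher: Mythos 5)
The paper itself gives no proof of this proposition: it only observes that, since ${\rm RCD}(K,N)$ spaces are locally doubling and support a $(1,1)$ Poincar\'e inequality, the statement follows from the usual Moser iteration, citing \cite{sobolevmeets}, \cite{bjorn} and \cite{bobo}. Your sketch is therefore an attempt to supply the argument that the paper leaves to the references, and it is correct and standard except at the very last step of part~(2), where there is a genuine error. You claim that for a nonnegative superharmonic $u$ and small $p_0>0$ the power $u^{p_0}$ is \emph{sub}harmonic and then invoke part~(1) for it. This is false: for $\phi(t)=t^{p_0}$ with $0<p_0<1$ one has $\phi'>0$ and $\phi''<0$, so the chain rule gives $\Delta(u^{p_0})=\phi'(u)\Delta u+\phi''(u)|\nabla u|^2\le0$, both terms being nonpositive; hence $u^{p_0}$ is again \emph{super}harmonic. (For a positive superharmonic $u$ the power $u^{\beta}$ is subharmonic precisely when $\beta<0$ --- that observation in fact gives a shortcut to your negative-exponent iteration, but it cannot produce the positive-exponent bound you need.) Consequently part~(1) does not apply to $u^{p_0}$, and the chain $\essinf u \ge C(\fint u^{p_0})^{1/p_0}\ge C'\fint u$ is not closed by your route.

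The missing ingredient is the separate forward Moser iteration for positive supersolutions in the range of small positive exponents. Testing the superharmonicity with the nonnegative, bounded test functions $\eta^2(u+\eps)^{\beta}$ for $\beta\in(-1,0)$ yields a Caccioppoli inequality for $(u+\eps)^{(\beta+1)/2}$ (here the sign $\beta<0$ is what makes the first-order term absorbable, exactly as in your negative-exponent step); combined with the $(p^*,2)$ Sobolev--Poincar\'e inequality and a finite iteration this gives the reverse H\"older inequality
\[
\Big(\fint_{B_{R'}}(u+\eps)^{p}\,\d\mea\Big)^{1/p}\le C\Big(\fint_{B_{R''}}(u+\eps)^{q}\,\d\mea\Big)^{1/q},\qquad 0<q\le p< \tfrac{p^*}{2},
\]
for concentric balls $B_{R'}\subset B_{R''}$. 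Since $p^*>2$ one may take $q=p_0$ small and $p=1$ (after which H\"older and $\eps\downarrow0$ give $\fint u\le C(\fint u^{p_0})^{1/p_0}$), which together with your negative-exponent iteration and the John--Nirenberg crossover closes the argument. With this replacement your proposal matches the standard Moser scheme the paper has in mind.
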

The above Harnack inequalities  imply that harmonic functions have a locally H\"older continuous representative (which is actually locally Lipschitz by \cite{jiang}) and that superharmonic functions have a lower semicontinuous representative (see for example \cite[Theorem 8.22]{bjorn}). From now on we will always tacitly consider  these  special representatives.

Lastly, we will need the following technical lemma, whose simple proof is omitted.
\begin{lemma}\label{lem:noioso}
	Let $u \in \W_{0}(\Omega)$, then $u^+\in \W_{0}(\Omega)$. 
	
	Let $v \in \W(\Omega)$, $u \in \W_{0}(\Omega)$ be such that $0\le v\le u$, then $v\in \W_{0}(\Omega)$
\end{lemma}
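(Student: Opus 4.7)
For the first statement, the plan is to pass positive parts through a defining sequence. Take $u_n\in\LIP_c(\Omega)$ with $u_n\to u$ in $\W(\X)$; since $t\mapsto t^+$ is $1$-Lipschitz and preserves compact support, $u_n^+\in\LIP_c(\Omega)$. By closedness of $\W_0(\Omega)$ in $\W(\X)$ it then suffices to establish $u_n^+\to u^+$ in $\W(\X)$. The $L^2$-convergence follows from $|u_n^+-u^+|\le |u_n-u|$. For the gradients I would invoke the chain rule for Lipschitz maps together with the locality of the gradient to obtain $\nabla f^+=\nchi_{\{f>0\}}\nabla f$ $\mea$-a.e.\ for every $f\in\W(\X)$, and then decompose
\[
\nabla u_n^+-\nabla u^+=\nchi_{\{u_n>0\}}(\nabla u_n-\nabla u)+\bigl(\nchi_{\{u_n>0\}}-\nchi_{\{u>0\}}\bigr)\nabla u.
\]
The first summand tends to $0$ in $L^2$ directly. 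For the second, passing to a subsequence with $u_n\to u$ $\mea$-a.e., the characteristic functions converge a.e.\ on $\{u\neq 0\}$, while on $\{u=0\}$ locality forces $\nabla u=0$; dominated convergence then closes the argument, and uniqueness of the limit promotes the subsequential convergence to full convergence.

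For the second statement, the plan is to approximate the zero-extension of $v$ by truncations that already sit in $\W_0(\Omega)$. Fix $u_n\in\LIP_c(\Omega)$ with $u_n\to u$ in $\W(\X)$ and, by the first part, arrange $u_n\ge 0$. Set $w_n\coloneqq v\wedge u_n$ on $\Omega$, extended by $0$ outside (consistent with $0\le v\le u$ and the fact that $u$ vanishes off $\Omega$ as an element of $\W_0(\Omega)$). The support of $w_n$ lies in $\supp u_n\subset\subset\Omega$, and a standard cutoff/extension argument based on $v\in\W_\loc(\Omega)$ places $w_n$ in $\W(\X)$. To see $w_n\in\W_0(\Omega)$, pick $\eta\in\LIP_c(\Omega)$ with $\eta\equiv 1$ on $\supp u_n$ (available by Proposition~\ref{prop:goodcutoff}) and approximate $w_n$ in $\W(\X)$ by $f_k\in\LIP(\X)\cap\W(\X)$, using density of Lipschitz functions in $\W(\X)$ valid in any infinitesimally Hilbertian m.m.s.; then $f_k\eta\in\LIP_c(\Omega)$ converges in $\W(\X)$ to $w_n\eta=w_n$ by the Leibniz rule and the uniform bounds on $\eta$.

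The key identity is now $v-w_n=(v-u_n)^+$ on $\Omega$. The bound $|\nabla w_n|^2\le |\nabla v|^2+|\nabla u_n|^2$ keeps $\{w_n\}$ bounded in $\W(\X)$, and dominated convergence on $\Omega$ yields $w_n\to v$ in $L^2$; weak compactness in the Hilbert space $\W(\X)$ then forces the zero-extension of $v$ to belong to $\W(\X)$. Applying Part~1 to the sequence $v-u_n\to v-u$ in $\W(\X)$ (and noting $(v-u)^+=0$) gives $(v-u_n)^+\to 0$ in $\W(\X)$, hence $w_n\to v$ in $\W(\X)$; since each $w_n\in\W_0(\Omega)$ and $\W_0(\Omega)$ is closed in $\W(\X)$, the conclusion $v\in\W_0(\Omega)$ follows.

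The only delicate point I expect is Part~1 (continuity of $u\mapsto u^+$ on $\W(\X)$), which hinges on combining locality with the chain rule and a careful passage to an a.e.\ subsequence; once that is in hand, Part~2 is a routine truncation-and-cutoff argument of the standard Sobolev-space type.
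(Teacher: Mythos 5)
Your proof is correct. The paper omits the argument as ``simple'', so there is no reference proof to compare against, but the route you take is the natural one: Part~1 establishes that $f\mapsto f^+$ is continuous on $\W(\X)$ via the identity $\nabla f^+=\nchi_{\{f>0\}}\nabla f$ and the two-term decomposition of $\nabla u_n^+-\nabla u^+$, and Part~2 truncates $v$ by an approximating sequence $u_n\in\LIP_c(\Omega)$ and identifies the error $v-w_n$ with $(v-u_n)^+$. All the small checks you leave implicit — that $u\ge 0$ $\mea$-a.e.\ (so $u^+=u$), that the zero extension of $w_n=v\wedge u_n$ lies in $\W(\X)$, that $|w_n|\le v$ gives dominated convergence, and that the full-sequence $L^2$ convergence follows from the subsequence principle — all go through.

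One remark that shortens Part~2 noticeably: once you know that $w_n\in\W_0(\Omega)$ is bounded in $\W(\X)$ and converges in $L^2(\X)$ to the zero extension of $v$, you can conclude directly. Indeed $\W_0(\Omega)$ is a closed linear subspace of the Hilbert space $\W(\X)$, hence weakly closed, so any weak subsequential limit of $(w_n)$ in $\W(\X)$ — which must coincide with $v$ by the $L^2$ convergence — already lies in $\W_0(\Omega)$. This bypasses both the identity $v-w_n=(v-u_n)^+$ and the appeal to Part~1's continuity to upgrade weak to strong convergence, and it is the same weak-compactness device you already invoke to show $v\in\W(\X)$. Your strong-convergence route is of course also valid; it is just slightly longer than necessary.
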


\subsection{The obstacle problem}

Given a ball $B \subset \X$ and a (Borel) set $E\subset \subset B$  we consider the following minimization problem
\begin{equation}\label{eq:obstacle}
	Obs(E,B)\coloneqq\inf_{u \in \F_{E,B}} \int_B |\nabla u|^2\, \d \mea,  \tag{O}
\end{equation} 
where $\F_{E,B}=\{u \in \W_0(B) \ : u \ge \nchi_E \,\, \mea\text{-a.e. in } B\}.$  

It is clear that if $E$ is open, then
\[
Obs(E,B)=\Cap(E,B).
\]

The proof of the following result is a straightforward application of the direct method of the calculus of variations, recalling that the embedding $\W_0(B)\hookrightarrow L^2(\X)$ is compact (see for example \cite[Theorem 6.3]{GMSconv}) and  from the lower semi continuity and (strict) convexity of the Cheeger energy.
\begin{prop}\label{prop:unique obs}
There exists a unique minimizer to \eqref{eq:obstacle}. Moreover this minimizer  is  superharmonic in $E.$
\end{prop}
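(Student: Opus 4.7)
The strategy is to combine the direct method of the calculus of variations (for existence), strict convexity of the Cheeger energy (for uniqueness), and the variational characterization of superharmonicity in Proposition~\ref{prop:variational} (for the last claim).

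Since $E \subset\subset B$, one can pick $\eta \in \LIP_c(B)$ with $0 \le \eta \le 1$ and $\eta \equiv 1$ on a neighbourhood of $\overline E$; then $\eta \in \W_0(B)$ and $\eta \ge \nchi_E$, so $\F_{E,B}\ne\emptyset$ and $Obs(E,B)<+\infty$. Given a minimizing sequence $\{u_n\} \subset \F_{E,B}$, the energy bound $\int_B |\nabla u_n|^2\,\d \mea \le C$ together with the Poincaré inequality for functions in $\W_0(B)$ (valid as $B$ is a bounded ball in the locally doubling Poincaré space $\X$) yields $\sup_n\|u_n\|_{\W(\X)}<+\infty$. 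Using the compact embedding $\W_0(B) \hookrightarrow L^2(\mea)$ recalled just after Definition~\ref{def:capacity}, we pass to a non-relabelled subsequence with $u_n \to u$ strongly in $L^2(\mea)$ and $\mea$-a.e., and with $\nabla u_n \rightharpoonup \nabla u$ in $L^2(T\X)$. The $\mea$-a.e.\ convergence preserves the obstacle condition $u \ge \nchi_E$, the space $\W_0(B)$ is a closed (hence weakly closed) subspace of $\W(\X)$, and the lower semicontinuity of the Cheeger energy \eqref{eq:stability wug} gives
\[
\int_B |\nabla u|^2\, \d \mea \le \liminf_n \int_B |\nabla u_n|^2\, \d \mea = Obs(E,B),
\]
so $u \in \F_{E,B}$ is a minimizer.

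For uniqueness, observe that $\F_{E,B}$ is convex and that, since $(\X,\sfd,\mea)$ is infinitesimally Hilbertian, the Cheeger energy is a quadratic form on the Hilbert space $\W_0(B)$. If $u_1,u_2$ were two minimizers with common energy $E^*$, the parallelogram identity for the inner product on $L^2(T\X)$ combined with $(u_1+u_2)/2 \in \F_{E,B}$, which forces $\int_B |\nabla(u_1+u_2)/2|^2\,\d\mea \ge E^*$, gives
\[
\tfrac14\int_B |\nabla(u_1-u_2)|^2\, \d \mea = \tfrac12 E^* + \tfrac12 E^* - \int_B\left|\nabla \tfrac{u_1+u_2}{2}\right|^2\d \mea \le 0,
\]
so $\nabla(u_1-u_2)=0$ $\mea$-a.e.; since $u_1-u_2\in \W_0(B)$, Poincaré forces $u_1=u_2$ $\mea$-a.e.

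Finally, superharmonicity of $u$ in all of $B$ (a fortiori in $E$) is immediate from the equivalent formulation of Proposition~\ref{prop:variational}: for any $\phi \in \W_0(B)$ with $\phi \ge 0$ $\mea$-a.e., the function $u+\phi$ lies in $\W_0(B)$ and satisfies $u+\phi \ge u \ge \nchi_E$ $\mea$-a.e., hence $u+\phi \in \F_{E,B}$ and minimality gives $\int_B |\nabla u|^2\,\d\mea \le \int_B |\nabla(u+\phi)|^2\,\d\mea$, which by Proposition~\ref{prop:variational} is the required variational inequality. No serious obstacle is expected in any step; the only potentially delicate point is invoking the compact embedding $\W_0(B)\hookrightarrow L^2(\mea)$, which however is already cited from \cite{GMSconv} in the preliminaries.
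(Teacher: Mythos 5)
Your proof is correct and fills in exactly the steps the paper summarizes as ``a straightforward application of the direct method of the calculus of variations, recalling that the embedding $\W_0(B)\hookrightarrow L^2(\X)$ is compact\ldots and from the lower semicontinuity and (strict) convexity of the Cheeger energy''; the existence, uniqueness-via-parallelogram, and variational-superharmonicity steps all match the intended argument. Your observation that the obstacle-problem minimizer is in fact superharmonic on all of $B$ (not merely on $E$, which would be trivial since $u\equiv1$ there) is the version actually needed and stated in Theorem~\ref{thm:local potential}, so no gap remains.
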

We now show the two main properties of the minimizers of \eqref{eq:obstacle}: the first is that $u$ is harmonic far from the obstacle $E$ and  the second says that $u$ is essentially the smallest superharmonic function which stays above $\nchi_E.$
\begin{prop}\label{prop:solprop}
	Let $u$  be the minimum of \eqref{eq:obstacle} for some $E\subset \subset B.$ Then $u=1$  $\mea$-a.e.\ in $E$ and the following hold:
	\begin{enumerate}
		\item $u $ is harmonic in $B\setminus \bar E$,
		\item comparison principle: for every $v \in \W(B)$ superharmonic and such that $v\ge \nchi_{E}$, $\mea$-a.e., it holds that
		\[ u \le v, \quad \text{$\mea$-a.e. in $B$.} \]
	\end{enumerate}
\end{prop}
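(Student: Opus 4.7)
\textbf{Proof plan for Proposition \ref{prop:solprop}.}

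I will first establish the easy observations $0\le u\le 1$ and $u=1$ $\mea$-a.e.\ on $E$. The inequality $u\ge 0$ is automatic from $u\ge\nchi_E\ge 0$. To obtain $u\le 1$, consider the truncation $\tilde u\coloneqq\min(u,1)=u-(u-1)^+$. Since $0\le(u-1)^+\le u$ with $u\in\W_0(B)$, Lemma \ref{lem:noioso} gives $(u-1)^+\in\W_0(B)$, hence $\tilde u\in\W_0(B)$; a quick check shows $\tilde u\in\F_{E,B}$, and $|\nabla\tilde u|^2\le|\nabla u|^2$ $\mea$-a.e.\ (the gradient vanishes where $u>1$). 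By uniqueness of the minimizer (Proposition \ref{prop:unique obs}) we get $u=\tilde u$, so $u\le 1$ and thus $u=1$ $\mea$-a.e.\ on $E$.

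Next, I prove harmonicity of $u$ in $B\setminus\bar E$. Since $u$ is superharmonic in $B$ by Proposition \ref{prop:unique obs}, Proposition \ref{prop:variational} already yields $\int_B\la\nabla u,\nabla\phi\ra\,\d\mea\ge 0$ for every $\phi\in\LIP_c(B\setminus\bar E)$ with $\phi\ge 0$. For the reverse inequality, fix such a $\phi\ge 0$ and for $t>0$ set $w_t\coloneqq(u-t\phi)^+$. The positive-part operation preserves $\W_0(B)$ (Lemma \ref{lem:noioso}), so $w_t\in\W_0(B)$; moreover $w_t\ge 0=\nchi_E$ on $B\setminus E$, and on $E$ we have $\phi\equiv 0$ (support condition on $\phi$), so $w_t=u=1=\nchi_E$ there. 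Hence $w_t\in\F_{E,B}$, and minimality of $u$ together with the pointwise bound $|\nabla w_t|\le|\nabla(u-t\phi)|$ gives
\[
\int_B|\nabla u|^2\,\d\mea\le\int_B|\nabla w_t|^2\,\d\mea\le\int_B|\nabla(u-t\phi)|^2\,\d\mea.
\]
Expanding, dividing by $t>0$ and letting $t\to 0^+$ yields $\int_B\la\nabla u,\nabla\phi\ra\,\d\mea\le 0$, so the integral vanishes. Decomposing a general $\phi\in\LIP_c(B\setminus\bar E)$ as $\phi^+-\phi^-$ extends the vanishing to all such test functions, so $u\in D(\bd,B\setminus\bar E)$ with $\bd u=0$.

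For the comparison principle, let $v\in\W(B)$ be superharmonic in $B$ with $v\ge\nchi_E$. In particular $v\ge 0$ $\mea$-a.e., so $(u-v)^+\in\W(B)$ satisfies $0\le(u-v)^+\le u$; Lemma \ref{lem:noioso} then gives $(u-v)^+\in\W_0(B)$, so $w\coloneqq\min(u,v)=u-(u-v)^+\in\W_0(B)$. One verifies $w\in\F_{E,B}$: on $E$, $w=\min(1,v)=1=\nchi_E$ (using $v\ge 1$ on $E$ and the first paragraph), and on $B\setminus E$, $w\ge 0=\nchi_E$. Minimality of $u$ yields $\int_B|\nabla u|^2\le\int_B|\nabla w|^2$. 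On the other hand, since $v$ is superharmonic and $(u-v)^+\in\W_0(B)$ is nonnegative, Proposition \ref{prop:variational} applied with test function $(u-v)^+$ gives
\[
\int_B|\nabla v|^2\,\d\mea\le\int_B|\nabla(v+(u-v)^+)|^2\,\d\mea=\int_B|\nabla\max(u,v)|^2\,\d\mea.
\]
Combining this with the pointwise $\mea$-a.e.\ identity $|\nabla\max(u,v)|^2+|\nabla\min(u,v)|^2=|\nabla u|^2+|\nabla v|^2$ (which follows from locality of the gradient on the sets $\{u>v\}$, $\{u<v\}$, $\{u=v\}$) gives $\int_B|\nabla w|^2\le\int_B|\nabla u|^2$. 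So both inequalities are equalities, and uniqueness of the minimizer forces $u=w=\min(u,v)$, i.e.\ $u\le v$ $\mea$-a.e.\ in $B$. The main subtlety is that $v$ need not lie in $\W_0(B)$, which precludes using $v$ directly as a competitor; the device of capturing only the ``excess'' $(u-v)^+$ as an admissible perturbation, combined with the max/min Dirichlet identity, is the crucial ingredient.
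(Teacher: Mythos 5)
Your proof is correct, and for both the preliminary step ($u\le1$) and the comparison principle it is essentially the paper's argument, merely phrased with the explicit max/min Dirichlet-energy identity where the paper appeals directly to locality on the level sets $\{u>v\}$, $\{u<v\}$, $\{u=v\}$. The one place you genuinely diverge is the harmonicity step: the paper tests the minimality of $u$ against the single competitor $(u+\phi)^+\in\F_{E,B}$ for an \emph{arbitrary-sign} $\phi\in\LIP_c(B\setminus\bar E)$, which in one shot gives $\int_{B\setminus\bar E}|\nabla(u+\phi)|^2\ge\int_{B\setminus\bar E}|\nabla u|^2$ and hence (via Proposition \ref{prop:variational}, reading both signs at once) harmonicity. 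You instead recycle the superharmonicity already supplied by Proposition \ref{prop:unique obs} to get one inequality, and derive the other from the downward perturbation $(u-t\phi)^+$ with $\phi\ge 0$ and a divide-by-$t$, then patch the two together. Your route is slightly longer but equally valid; the paper's is a little slicker because the truncation $(\cdot)^+$ absorbs the sign issue without having to split into super- and sub-harmonic halves. In either case no gaps: the admissibility checks for the competitors, the use of Lemma \ref{lem:noioso} to keep things in $\W_0(B)$, and the appeal to uniqueness of the minimizer at the end are all handled correctly.
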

\begin{proof}
	We start  by showing that $u\le 1$ $\mea$-a.e.\ in $B$. Indeed $u\wedge 1 \in \F_{E,B}$ and $\int_B |\nabla (u\wedge 1)|^2\le \int_B |\nabla u|^2\d \mea$, from which the claim follows. Since $u \ge \nchi_E,$ $\mea$-a.e. it also follows that $u=1$ $\mea$-a.e.\ in $E$.
	
	We pass to the harmonicity. Fix $\phi \in \LIP_c(B\setminus \bar U)$. Clearly $(u+\phi)^+\in \F_{E,B}$, therefore
	\begin{align*}
		\int_{B\setminus \bar E}| \nabla (u+\phi)|^2\, \d \mea &\ge \int_{B\setminus \bar E}| \nabla (u+\phi)^+|^2\, \d \mea \\
		&= \int_{B}| \nabla (u+\phi)^+|^2\, \d \mea-\int_{\bar E} |\nabla u|^2\, \d \mea\ge \int_{B\setminus \bar E}|\nabla u|^2 \d \mea,
	\end{align*}
	where in the equality step we have used that $\phi=0$ in $\bar E$ and the locality of the gradient. This and Proposition \ref{prop:variational} prove the claimed harmonicity. 
	
	It remains to prove the comparison principle.
	We start claiming  that $(u-v)^+\in \W_0(B)$ and  $\min(u,v)\in \F_{E,B} $. Indeed we have that $0\le (u-v)^+\le u,$ $\mea$-a.e.\ in $B$ and $\nchi_E\le \min(u,v)\le u$ $\mea$-a.e. in $B$, therefore the claim follows applying  Lemma \ref{lem:noioso}. 
	
	Observe that $\max(u,v)=v+(u-v)^+$, hence from the superharmonicity of $v$, Proposition \ref{prop:unique obs},  and the locality of the gradient  we have
	\begin{equation*}
		\int_{\{u>v\}}|\nabla u|^2\, \d \mea \ge 	\int_{\{u>v\}}|\nabla v|^2\, \d \mea .
	\end{equation*}
	Therefore from the locality of the gradient it follows that
	\[
	\int_B |\nabla \min(u,v)|^2\, \d \mea \le  \int_B |\nabla u|^2\,\d \mea,
	\]
	that combined with $\min(u,v)\in \F_{E,B} $ and the uniqueness of the solution to \eqref{eq:obstacle} implies that $\min(u,v)=u$ $\mea$-a.e. in $B$.
\end{proof}

We conclude this part with the following technical result.
\begin{lemma}\label{lem:lift}
Let $u$  be the minimum of \eqref{eq:obstacle} for some $E\subset \subset B.$ Then for every $m\in (0,1]$, the function $\frac{u}{m}\wedge 1$ is the minimum of \eqref{eq:obstacle} in $B$ with $E=\{u>m\}$.
\end{lemma}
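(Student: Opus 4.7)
Set $v := (u/m)\wedge 1$ and $E' := \{u>m\}$, which is an open set by taking the lower semicontinuous representative of the superharmonic function $u$. We will verify that $v$ solves the obstacle problem for $E'$ in $B$. First, the admissibility $v \in \F_{E',B}$ is immediate: writing $v = u/m - ((u/m)-1)^+$ and applying Lemma \ref{lem:noioso} (with $0 \le ((u/m)-1)^+ \le u/m \in \W_0(B)$) gives $v \in \W_0(B)$, while $v = 1 \ge \nchi_{E'}$ on $E'$ is trivial. To show $v$ is superharmonic on $B$ via Proposition \ref{prop:variational}, I use the chain rule and locality to write $\nabla v = m^{-1}\nabla u \cdot \nchi_{\{u<m\}}$ a.e., and for $\phi \in \LIP_c(B)$ with $\phi \ge 0$, test $u$'s superharmonicity against the nonnegative function $\phi\,\eta_\eps(u) \in \W_0(B)$, where $\eta_\eps \in C^1(\rr)$ is non-increasing with $\eta_\eps \nearrow \nchi_{(-\infty,m)}$; expanding by Leibniz, dropping the nonpositive term $\int \phi \eta_\eps'(u)|\nabla u|^2$, and letting $\eps \to 0^+$ (using $\nabla u = 0$ a.e.\ on $\{u=m\}$ by locality) yields $\int \la \nabla v,\nabla \phi\ra\,\d\mea \ge 0$.

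Let $w$ be the unique minimum of the obstacle problem for $E'$ in $B$ (Proposition \ref{prop:unique obs}). The comparison principle, Proposition \ref{prop:solprop}(2), applied to the superharmonic $v \in \F_{E',B}$ yields $w \le v$ a.e.; in particular $mw \le mv = u \wedge m \le u$ a.e.\ in $B$. For the reverse inequality, I introduce the competitor
\[
\tilde u := \min\bigl(u,\, mw + (u-m)^+\bigr)
\]
for the original obstacle problem. By Lemma \ref{lem:noioso} and linearity, $\tilde u \in \W_0(B)$. Since $E \subset E'$, we have $u = w = 1$ a.e.\ on $E$, so $mw + (u-m)^+ = m + (1-m) = 1 = u$ on $E$, whence $\tilde u = 1 = \nchi_E$ there, confirming $\tilde u \in \F_{E,B}$. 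Analogously, on $\{u>m\} = E'$ one has $w = 1$ and thus $mw + (u-m)^+ = u$, so $\tilde u = u$; on $\{u \le m\}$ one has $(u-m)^+ = 0$ and $mw \le u$, so $\tilde u = mw$. The minimality of $u$ therefore gives
\[
\int |\nabla u|^2\,\d\mea \le \int |\nabla \tilde u|^2\,\d\mea = \int_{\{u>m\}} |\nabla u|^2\,\d\mea + m^2 \int_{\{u\le m\}} |\nabla w|^2\,\d\mea,
\]
which after cancellation and locality (so that $\int_{\{u=m\}}|\nabla u|^2=0$) reads $\int_{\{u<m\}}|\nabla u|^2 \le m^2 \int |\nabla w|^2$. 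Since $\int |\nabla v|^2 = m^{-2}\int_{\{u<m\}}|\nabla u|^2$, we deduce $\int |\nabla v|^2 \le \int |\nabla w|^2$. Combined with the reverse inequality from the minimality of $w$ and $v \in \F_{E',B}$, the energies of $v$ and $w$ coincide, and the uniqueness in Proposition \ref{prop:unique obs} forces $v = w$ a.e.

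The main subtlety of the proof is the construction of the competitor $\tilde u$, which hinges on the identity $w = 1$ a.e.\ on $E'$ (Proposition \ref{prop:solprop}): this identity makes $mw + (u-m)^+$ coincide with $u$ precisely on $\{u>m\}$, so that $\tilde u$ equals $u$ where $u$ is "already optimal" and equals $mw$ on $\{u\le m\}$ where the comparison against $w$ actually happens. The other technical ingredient, the superharmonicity of $v$, is a routine approximation argument resting on the locality of the gradient at the level set $\{u=m\}$.
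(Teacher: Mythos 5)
Your proof is correct, but it takes a genuinely longer route than the paper's. The paper's argument is a pure energy comparison: for an \emph{arbitrary} competitor $v\in\F_{\{u>m\},B}$ it forms $\bar u := u+m(v-u_m)$ (with $u_m=\frac{u}{m}\wedge 1$), checks that $\bar u\in\F_{E,B}$, and from the minimality of $u$ directly extracts $\int|\nabla v|^2\ge\int|\nabla u_m|^2$ via the locality identities $\bar u=mv$ on $\{u\le m\}$, $\bar u=u$ on $\{u>m\}$ (the latter needs the harmless reduction $v\le 1$, which the paper leaves implicit). You instead first establish that $u_m$ is superharmonic (via the $\eta_\eps$-truncation trick), invoke the comparison principle in Proposition~\ref{prop:solprop} to obtain $w\le u_m$ for the actual minimizer $w$ of $Obs(\{u>m\},B)$, and only then compare energies through the competitor $\tilde u=\min(u,mw+(u-m)^+)$. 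It is worth noting that $\tilde u$ is precisely the paper's $\bar u$ evaluated at $v=w$: on $\{u>m\}$ both equal $u$ (since $w=1$ there by Proposition~\ref{prop:solprop}), and on $\{u\le m\}$ both equal $mw$. So the competitor is essentially the same; what you add is the scaffolding (superharmonicity of $u_m$ plus the comparison estimate $mw\le u$ on $\{u\le m\}$) needed to identify $\tilde u$ with that expression, whereas the paper reads off the same pointwise description of $\bar u$ directly from its definition without ever needing superharmonicity of $u_m$ or the comparison principle. Your route does have the minor virtue that by testing against the actual minimizer $w$ you avoid the implicit truncation $v\le 1$, but the two auxiliary facts you establish along the way are not otherwise exploited, so the argument ends up doing strictly more work.
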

\begin{proof}
	Set $u_m=\frac{u}{m}\wedge 1$ and fix $v \in \F_{{\{u> m\}},B}$. Observe that $u_m\ge \nchi_{\{u> m\}}$ and that $u_m \in \W_{0}(B)$ by Lemma \ref{lem:noioso}, hence $u_m \in \F_{\{u> m\},B}.$  Define the function $\bar u\coloneqq u +m(v-u_m)$ and observe that $\bar u\in \W_{0}(E)$.  Moreover $\bar u \ge u \ge 0$ $\mea$-a.e. in $B$  and, since from Proposition \ref{prop:solprop} $u=1$ $\mea$-a.e. in $E$, we also have that $\bar u=1$ $\mea$-a.e. in $E$. Therefore $\bar u \in \F_{E,B}.$ This and the fact that $\bar u=mv$ $\mea$-a.e. in $\{u\le m\}$ and $\bar u=u $ $\mea$-a.e. in $\{u> m\}$ gives
	\[
		\int_B|\nabla v|^2\, \d \mea\ge \int_{\{u\le m\}}|\nabla v|^2\, \d \mea \ge \frac{1}{m^2}	\int_{\{u\le m\}}|\nabla u|^2\, \d \mea=	\int_B|\nabla u_m|^2\, \d \mea.	
		\]
Since $v \in \F_{{\{u>m\}},B}$ was arbitrary we conclude.
\end{proof}

\subsection{Proof of Theorem \ref{thm:local potential}}

\begin{prop}\label{prop:below estimate}
For every $r_0<4 \diam(\X)$ there exists $C=C(r_0,K,N)>0$ such that the following holds.  Let $E\subset B_r(x)$ be open, $r<r_0$ let  $2B=B_{2r}(x)$ and let $u$ be the solution to \eqref{eq:obstacle} for $E$ in $2B$. Then
	\[
	u \ge C \frac{\Cap(E,2B)}{\Cap(B,2B)}, \quad \text{$\mea$-a.e. in $B_r(x)$.}
	\]
\end{prop}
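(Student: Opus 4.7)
The strategy is to bound $\int_{2B} u\,\d\mea$ from below in terms of $\Cap(E,2B)$ and then invoke Harnack's inequality. The key test function is
\[
\phi(y)\coloneqq\bigl((2r)^2-\sfd^2(x,y)\bigr)_+,
\]
which satisfies $\phi\ge 3r^2$ on $\bar B_r(x)\supset \bar E$ and vanishes outside $2B$. The plan is to obtain a two-sided control on the quantity $\int \la \nabla u,\nabla \phi\ra\,\d\mea$, using the measure-valued Laplacian of $u$ from one side and the Laplacian comparison for $\sfd^2(x,\cdot)$ from the other.

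For the lower bound, note that by Proposition \ref{prop:solprop}, $u\in D(\bd,2B)$ and $\mu_E\coloneqq-\bd u$ is a nonnegative Radon measure supported in $\bar E\subset \bar B_r(x)$ with total mass $\Cap(E,2B)$. Approximating $\phi$ by $\phi_\eps\coloneqq(\phi-\eps)_+\in \LIP_c(2B)$, the defining property of $\bd u$ gives $\int \la \nabla u,\nabla \phi_\eps\ra\,\d\mea=\int \phi_\eps\,\d\mu_E$; passing to the limit $\eps\downarrow 0$ via monotone/dominated convergence, together with $\phi\ge 3r^2$ on $\bar E$, yields $\int \la \nabla u,\nabla \phi\ra\,\d\mea\ge 3r^2\Cap(E,2B)$. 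For the upper bound, since $\nabla \phi=-\nabla \sfd^2(x,\cdot)$ on $2B$ and $\nabla u=0$ off $2B$, the left-hand side equals $-\int \la \nabla u,\nabla \sfd^2(x,\cdot)\ra\,\d\mea$. The Laplacian comparison \eqref{eq:laplacian comparison} gives $\bd\bigl(-\sfd^2(x,\cdot)\bigr)\ge -2N\mea$, and applying Remark \ref{extension1} with $g\equiv -2N$ to the nonnegative, bounded, compactly supported function $u\in\W(\X)$ (extended by zero from $\W_0(2B)$) produces $\int\la \nabla u,\nabla \phi\ra\,\d\mea\le 2N\int_{2B} u\,\d\mea$. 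Combining the two inequalities gives
\[
\fint_{2B}u\,\d\mea\ \ge\ \frac{3\,r^2}{2N\,\mea(2B)}\Cap(E,2B).
\]

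To conclude, Proposition \ref{prop:harnack}(2) applied to the nonnegative superharmonic function $u$ in $B_{2r}(x)$ upgrades the above average bound to $\essinf_B u\ge C_1 \fint_{2B} u\,\d\mea$. It remains to convert the factor $r^2/\mea(2B)$ into $1/\Cap(B,2B)$ up to a multiplicative constant depending on $r_0,K,N$. This follows from Friedrichs' inequality $\|v\|^2_{L^2(2B)}\le C r^2 \int|\nabla v|^2\,\d\mea$ on $\W_0(2B)$ combined with local doubling: any admissible competitor $v\in\W_0(2B)$ for $\Cap(B,2B)$ satisfies $\mea(B)\le\int_{2B} v^2\,\d\mea\le Cr^2\int|\nabla v|^2\,\d\mea$, whence $\Cap(B,2B)\gtrsim \mea(B)/r^2 \gtrsim \mea(2B)/r^2$. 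Putting everything together yields $\essinf_B u\ge C\,\Cap(E,2B)/\Cap(B,2B)$ with $C=C(r_0,K,N)>0$.

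The main technical subtlety is the application of Remark \ref{extension1} in the upper-bound step: the remark's hypothesis requires a lower bound on the measure-valued Laplacian of the function being tested, whereas \eqref{eq:laplacian comparison} supplies only an upper bound on $\bd\sfd^2$. The trick of switching to $-\sfd^2$ converts the upper bound $\bd\sfd^2\le 2N\mea$ into the lower bound $\bd(-\sfd^2)\ge -2N\mea$ that allows the remark to be invoked with $g\equiv -2N\in L^1_{\loc}$.
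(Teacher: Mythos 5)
Your approach is genuinely different from the paper's and, once a gap is repaired, it is correct. The paper works by an algebraic decomposition: it introduces $m=\max_{\partial B_{3r/2}(x)}u$, writes $u$ as a convex combination of the two admissible competitors $u_1=\frac{u}{m}\wedge 1$ and $u_2=\frac{u-mu_1}{1-m}$, optimizes the resulting quadratic inequality to obtain $\Cap(E,2B)\le 2m\,\Cap(B',2B)$ via Lemma~\ref{lem:lift}, and then relates $m$ to $\essinf_B u$ by two applications of Harnack. Your route replaces this by testing the minimizer against the explicit function $\phi=((2r)^2-\sfd^2(x,\cdot))_+$, combining the Laplacian comparison for $\sfd^2(x,\cdot)$ with the weak Harnack inequality and a Friedrichs/doubling estimate for $\Cap(B,2B)$. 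This makes the geometric input (the comparison theorem) explicit, whereas the paper's argument is purely variational.

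The one genuine gap is your assertion that the capacitary measure $\mu_E=-\bd u$ has total mass $\Cap(E,2B)$. That identity is classical, but its proof requires potential-theoretic machinery not developed in the paper: one needs to justify substituting $u$ itself into the distributional identity $\int\la\nabla u,\nabla f\ra\,\d\mea=\int f\,\d\mu_E$ (which is only stated for $f\in\LIP_c(2B)$), and then one must argue that $u=1$ holds $\mu_E$-a.e.\ on $\bar E$, which does \emph{not} follow from $u=1$ $\mea$-a.e.\ because $\mu_E$ is typically singular with respect to $\mea$ (concentrated on $\partial E$). As stated, this step is unsupported. Fortunately the whole $\mu_E$ detour can be eliminated: since $u$ minimizes $v\mapsto\int|\nabla v|^2$ over the convex set $\F_{E,2B}$, the first-order optimality condition gives
\[
\int\la\nabla u,\nabla v\ra\,\d\mea\ \ge\ \int|\nabla u|^2\,\d\mea\ =\ \Cap(E,2B),\qquad\forall\,v\in\F_{E,2B},
\]
obtained by differentiating $t\mapsto\int|\nabla((1-t)u+tv)|^2$ at $t=0^+$. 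Taking $v=\phi/(3r^2)$, which lies in $\W_0(2B)$ (it is Lipschitz, vanishes on $\partial(2B)$, and is approximated in $\W(\X)$ by $(\phi-\eps)_+/(3r^2)\in\LIP_c(2B)$) and satisfies $v\ge 1\ge\nchi_E$ $\mea$-a.e.\ in $2B$, yields exactly your claimed lower bound $\int\la\nabla u,\nabla\phi\ra\ge 3r^2\,\Cap(E,2B)$ without mentioning $\mu_E$ at all. With this substitution the remainder of your argument — the upper bound via Remark~\ref{extension1} and the Laplacian comparison, the weak Harnack inequality for the nonnegative superharmonic $u$ in $2B$, and the estimate $\Cap(B,2B)\gtrsim\mea(2B)/r^2$ — is sound and gives a valid alternative proof.
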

\begin{proof}
	Set $B'=B_{\frac 32 r}(x)$ and observe that, since $r_0<4 \diam(\X)$, $\partial B'\neq\emptyset.$ Define $m \coloneqq \max_{\partial B'} u,$ which exists because $u$ is continuous in $\partial B'.$
	We claim that $m>0$. Indeed if  $m=0$, from the maximum principle (Proposition \ref{prop:maxprinc}) we would have that $u=0$  in the ball $2B$ (recall that balls are connected), and thus $u=0$ in $E$, which contradicts the fact that $u=1$ $\mea$-a.e.\ in $E$ with $E$ open.   We claim that
	\begin{equation}\label{eq:tricky}
		u\le m, \quad \text{ in } 2B\setminus B'.
	\end{equation}
	To see this let $m'>m$ and observe that $(u-m')^+\le u^+$ hence by Lemma \ref{lem:noioso} $(u-m')^+\in \W_{0}(2B)$. Moreover, from the continuity of $u$ and the definition of $m$ we have that $(u-m')^+=0$ in a neighbourhood of $\partial B'$. These two observations together imply that $(u-m')^+\in \W_0(2B\setminus\bar B').$ Observe that $\min(u,m')=u-(u-m')^+$ hence from harmonicity of $u$ we deduce that
	\[
	 \int_{2B\setminus \bar B'} |\nabla u|^2\, \d\ \mea \le \int_{2B\setminus \bar B'}|\nabla \min(u,m') |^2\, \d \mea,
	\]
	which combined with the locality of the gradient gives that $|\nabla u|=0$ $\mea$-a.e.\ in $\{2B\setminus \bar B'\}\cap\{u\ge m'\}$. Therefore again by locality $|\nabla (\max(u,m'))|=0$ $\mea$-a.e.\ in $\{2B\setminus \bar B'\}$ and thus  $u \le m'$ in $2B\setminus \bar B'$. Since $m'>m$ was arbitrary \eqref{eq:tricky}  follows.
	
	 Define the functions $u_1=\frac{u}{m}\wedge 1$,  $u_2=\frac{u-mu_1}{1-m}$ and observe that $u_1,u_2 \in \F_{E,2B}$. In particular for every $t \in (0,1)$ $tu_1+(1-t)u_2\in  \F_{E,2B}$ and
	\[
	\int_{2B}|\nabla u|^2\, \d \mea \le t^2I_1\, \d \mea+(1-t)^2I_2,
	\]
	where	$I_i=\int_{2B}|\nabla u_i|^2\,\d \mea.$ Optimizing in $t$ we obtain that 
	\begin{equation}\label{eq:optimization}
		\frac{1}{\int_{2B}|\nabla u|^2 \, \d \mea }\ge \frac{1}{I_1}+\frac{1}{I_2}.
	\end{equation}
	 Observe now that $u_2=0$  in $\{u\le m\}$ and $u_2=(1-m)^{-1}(u-1)$ in $\{u>m\}$ , therefore $|\nabla u_2|=\nchi_{\{u>m\}}|\nabla u|(1-m)^{-1}$ $\mea$-a.e. in $2B$. In particular $I_2\le (1-m)^{-2}\int_{2B}|\nabla u|^2\, \d \mea,$ that combined with \eqref{eq:optimization} gives 
	 \[
	 \Cap(E,2B)	=\int_{2B}|\nabla u|^2\, \d \mea  \le (2m-m^2)I_1\le 2mI_1.
	 \]
	 This combined with Lemma \ref{lem:lift} gives 
	 \begin{equation}\label{eq:capacity chain}
	 	\Cap(E,2B)\le 2m\, Obs(\{u> m\},2B)\le2m\, Obs(B',2B)=2m\Cap(B',2B),
	 \end{equation}
	 where in the second inequality we have used \eqref{eq:tricky}.
	 
	 From the definition of $m$ there exists a ball $B''=B_{r/2}(y)$ with $y \in \partial B'$ such that $\sup_{B''}u\ge m$. Applying twice the Harnack inequality, recalling that $u$ is harmonic in $B''$ and superharmonic in $B$, and using the doubling property we obtain that 
	 \[
	 m\le \sup_{B''} u\le C(r_0,K,N) \essinf_{B} u.
	 \]
	 The conclusion then follows from \eqref{eq:capacity chain} and recalling that thanks to the doubling condition and the Poincaré inequality we have $\Cap(B',2B)\le c\Cap(B,2B)$, for some constant $c$ depending only on $r_0, K$ and $N$ (see \cite[Prop. 6.16]{bjorn})
\end{proof}

\begin{theorem}\label{thm: boh}
	For every $r_0<4 \diam(\X)$ there exists $C=C(r_0,K,N)>0$ such that the following holds. Let $E\subset B_r(x)$ be open, $r<r_0,$ and set $B_i\coloneqq B_{2^{1-i}r}(x)$ for $i\in \mathbb{N}_0.$ Let $u$ be the capacitary potential for $E$ in $B_0$, then for every $i \ge1$ it holds that
	\[
	1-u\le \exp\left (-C \sum_{j=1}^i \frac{\Cap(E\cap B_j,B_{j-1})}{\Cap(B_j,B_{j-1})}\right ), \quad \text{$\mea$-a.e. in $B_i$.}
	\]
\end{theorem}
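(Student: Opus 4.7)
The plan is to establish a one-step geometric decay
\[
M_i \le M_{i-1}\bigl(1 - C\alpha_i\bigr), \qquad \alpha_i \coloneqq \frac{\Cap(E \cap B_i, B_{i-1})}{\Cap(B_i, B_{i-1})},
\]
where $M_j \coloneqq \esssup_{B_j}(1-u)$, and then to iterate. Since $0 \le u \le 1$ $\mea$-a.e.\ in $B_0$ (Proposition \ref{prop:solprop}) we have $M_0 \le 1$; iterating and using the elementary bound $1-t \le e^{-t}$ for $t\in[0,1]$ then yields
\[
M_i \le M_0 \prod_{j=1}^i (1 - C\alpha_j) \le \exp\Bigl(-C\sum_{j=1}^i \alpha_j\Bigr),
\]
which is precisely the asserted estimate. (The inequality $C\alpha_i\le 1$ comes for free from the bound $\tilde u_i\le 1$ used below.)

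The one-step decay is obtained by a comparison with the obstacle problem at scale $i$. Fix $i \ge 1$ and assume $M_{i-1}>0$ (otherwise $u=1$ $\mea$-a.e.\ on $B_{i-1}$ and the conclusion for $B_i$ is trivial). Introduce the auxiliary function
\[
v \coloneqq \frac{u - (1-M_{i-1})}{M_{i-1}} \quad \text{on } B_{i-1}.
\]
By the very definition of $M_{i-1}$, $v \ge 0$ $\mea$-a.e.\ on $B_{i-1}$; since $u = 1$ $\mea$-a.e.\ on $E$ (Proposition \ref{prop:solprop}), one has $v = 1$ $\mea$-a.e.\ on $E\cap B_i$; moreover $v \in \W(B_{i-1})$ is superharmonic on $B_{i-1}$, as $u$ is superharmonic on $B_0\supset B_{i-1}$ and addition of a constant preserves superharmonicity. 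In particular $v \ge \nchi_{E\cap B_i}$ $\mea$-a.e.\ on $B_{i-1}$.

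Let $\tilde u_i$ denote the minimizer of the obstacle problem for the open set $E\cap B_i$ in the ball $B_{i-1} = 2B_i$. The comparison principle (Proposition \ref{prop:solprop}, item 2) applied to $v$ gives $\tilde u_i \le v$ $\mea$-a.e.\ on $B_{i-1}$. On the other hand, Proposition \ref{prop:below estimate} (which applies since the radius of $B_i$ is at most $r<r_0$) yields $\tilde u_i \ge C\alpha_i$ $\mea$-a.e.\ on $B_i$, for a constant $C=C(r_0,K,N)>0$. Combining and unpacking the definition of $v$ one obtains $1-u \le M_{i-1}(1-C\alpha_i)$ $\mea$-a.e.\ on $B_i$; taking the essential supremum over $B_i$ closes the induction. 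The only subtle ingredient is the construction of $v$: the shift by $1-M_{i-1}$ is precisely what makes $v$ nonnegative on $B_{i-1}$ and forces the correct value on $E\cap B_i$ after the rescaling, so that $v$ sits above the obstacle $\nchi_{E\cap B_i}$ and enters the comparison principle.
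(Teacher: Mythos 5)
Your proof is correct and follows the same strategy as the paper's: at each dyadic scale $i$ you compare a shifted and rescaled version of $u$ with the obstacle solution for $E\cap B_i$ in $B_{i-1}$, invoke the comparison principle of Proposition \ref{prop:solprop} together with the lower bound of Proposition \ref{prop:below estimate}, and iterate. The paper arranges these same ingredients slightly differently, constructing an explicit chain of auxiliary superharmonic functions $v_i = 1 - e^{C(a_1+\cdots+a_{i-1})}(1-u)$ and proving $v_i \ge u_i$ on $B_{i-1}$ by induction, rather than tracking $M_j = \esssup_{B_j}(1-u)$ and the one-step decay $M_i \le M_{i-1}(1-C\alpha_i)$ directly as you do.
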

\begin{proof}
	Let  $u_i$ be the solution to \eqref{eq:obstacle} for $E\cap B_i$ in $B_{i+1}$ (in particular $u=u_1$) and define $a_i\coloneqq \frac{\Cap(E\cap B_i,B_{i-1})}{\Cap(B_i,B_{i-1})}$ for $i\in \mathbb{N}$.  Proposition  \ref{prop:below estimate} ensures that
	\begin{equation}\label{eq:inductive lowerbound}
		\text{ess}\inf_{B_i} u_i \ge Ca_i\ge 1-e^{-Ca_i}.
	\end{equation}
Define the functions $v_i\in \W_{0}(B_0)$ inductively as $v_1=u_1$ and $v_i=1-e^{Ca_{i-1}}(1-v_{i-1}),$ for $i \ge 2.$ Observe that, since $u_1$ is superharmonic in $B_0$, $v_i$ is superharmonic in $B_0$ for all $i\ge 1$. We claim that
\begin{equation}\label{eq:i comparison}
	v_i \ge 0, \quad \text{ $\mea$-a.e. in $B_{i-1}$}.
\end{equation}
We will actually show the stronger estimate $v_i\ge u_i$ $\mea$-a.e.  in $B_{i-1}$. We proceed  by induction. By definition $v_1=u_1$, now suppose that $v_i\ge u_i$ in $B_{i-1}$. It follows from \eqref{eq:inductive lowerbound} that $v_{i+1}\ge 1-e^{Ca_{i}}(1-u_{i})\ge 0$ $\mea$-a.e.  in $B_{i}.$ Moreover, since $u_1=1$ in $E\cap B_1$, evidently $v_{i+1}=1$ $\mea$-a.e.  in $E\cap B_{i+1}.$ Combining these two observations we obtain that $v_{i+1}\ge \nchi_{E\cap B_{i+1}}$ $\mea$-a.e. in $B_{i}.$ Recalling that $v_i$ is superharmonic in $B$ (and thus also on $B_i$) we can apply the comparison principle of Proposition \ref{prop:solprop} to deduce that $v_{i+1}\ge u_{i+1}$  $\mea$-a.e. in $B_i$. This proves the claim. Therefore from \eqref{eq:i comparison}
\[
1-u=1-v_1=e^{-C(a_1+...+a_{i-1})}(1-v_i)\le  e^{-C(a_1+...+a_{i-1})}, \quad \text{$\mea$-a.e.  in $B_{i-1}$},
\]
that concludes the proof.
\end{proof}

\begin{proof}[Proof of Theorem \ref{thm:local potential}]
	
	 Fix $x \in \partial E$ and let $c,r$ be its $\Cap$-fat parameters. Let $B'\coloneqq B_{r_0}(x)$ with $r_0\coloneqq(\delta\wedge r)/4$ and let $u$ to be the solution to  \eqref{eq:obstacle} for $E$ in $2B'$. Fix $y\in B'\setminus \bar E$ with $\sfd(y,x)<r/2$. There exists $i \in \mathbb{N}_0$ such that $2^{-i-1}r_0<\sfd(x,y)<2^{-i}r_0<r$. Therefore from Theorem \ref{thm: boh}  and the continuity of $u$ in $B'\setminus \bar E$ we have
	\begin{equation}\label{eq:continuity est}
	1-u(y)\le (e^{-i})^{c\cdot C}\le(2^{-i})^{c\cdot C}\le (2r_0^{-1})^{c\cdot C} \sfd(x,y)^{c\cdot C}=(8\delta\wedge r)^{-c\cdot C}\sfd(x,y)^{c\cdot C} .
	\end{equation}
	Let  now $\bar u$ to be the solution of \eqref{eq:obstacle} for $E$ in $B$ (where $B$ is as in the hypotheses). Fix $x \in \partial E$ and let $u$ as in the previous part of the proof. Since $B_{2r_0}(x)\subset B$, from the comparison principle of Proposition \ref{prop:solprop} we have that $\bar u\ge u$ $\mea$-a.e. in $B_{r_0}(x)$ and since both $\bar u$ and $u$ are continuous in $B_{r_0}(x)\setminus \bar E$ we have that \eqref{eq:continuity est} holds  for $\bar u$ and every $y\in B\setminus \bar E$ with $\sfd(y,x)<r/2$. This proves that $\lim_{B_{r_0}(x)\setminus \bar E \ni y \to x}\bar u(y)=1$ for every $x\in \partial E$ (recall that $\bar u\le 1$) and since $\bar u$ is also lower semicontinuous we deduce that $\bar u=1$ in $\bar E.$

		The comparison principle is already contained in Proposition \ref{prop:solprop}.
 \end{proof}

	\end{document}